\newcounter{notes}
\newcommand{\ignore}[1]{}
\newtheorem{theorem}{Theorem}
\newtheorem{proposition}[theorem]{Proposition}
\newtheorem{corollary}[theorem]{Corollary}
\newtheorem{lemma}[theorem]{Lemma}
\newtheorem{sublemma}[theorem]{Sublemma}
\newtheorem{problem}[theorem]{Probem}
\newtheorem{question}[theorem]{Question}
\newtheorem{observation}[theorem]{Observation}
\theoremstyle{definition}
\newtheorem{definition}[theorem]{Definition}
\newtheorem{remark}[theorem]{Remark}
\newtheorem{notation}[theorem]{Notation}
\newtheorem{convention}[theorem]{Convention}
\newtheorem{assumption}[theorem]{Assumption}
\newtheorem*{claim*}{Claim}
\newtheorem*{acknowledge}{Acknowledgements}
\newtheoremstyle{theoremwithref}{}{}{\itshape}{}{\bfseries}{.}{.5em}{#1 #2 #3}
\theoremstyle{theoremwithref}
\newcommand{\C}{\mathbb{C}}
\newcommand{\R}{\mathbb{R}}
\newcommand{\Q}{\mathbb{Q}}
\newcommand{\Z}{\mathbb{Z}}
\newcommand{\N}{\mathbb{N}}
\newcommand{\PSL}{\mathrm{PSL}(2,\mathbb{R})}
\newcommand{\PSLk}{\mathrm{PSL}^k(2,\mathbb{R})}
\newcommand{\SO}{\mathrm{SO}}
\newcommand{\Hom}{\mathrm{Hom}}
\newcommand{\HOS}{\mathrm{Homeo}^+(S^1)}
\newcommand{\HOZ}{\mathrm{Homeo}_\Z^+(\R)}
\newcommand{\rot}{\mathrm{rot}}
\newcommand{\rotild}{\widetilde{\mathrm{ro}}\mathrm{t}}
\newcommand{\eu}{\mathrm{eu}}
\newcommand{\id}{\mathrm{id}}
\newcommand{\Fix}{\mathrm{Fix}}
\newcommand{\Per}{\mathrm{Per}}
\newcommand{\AF}{\mathit{A}}
\newcommand{\bbrac}[1]{
  \left(\mkern-5mu\left(#1\right)\mkern-5mu\right)}
\DeclareMathOperator{\Homeo}{Homeo}
\title[Rigidity and geometricity]{Rigidity and geometricity for surface group actions on the circle}
\author{Kathryn Mann}
\address{Department of Mathematics, Brown University, 151 Thayer Street, Providence, RI 02912, USA
}
\email{mann@math.brown.edu}
\author{Maxime Wolff}
\address{Sorbonne Universit\'es, UPMC Univ.\ Paris 06, Institut de Math\'ematiques
de Jussieu-Paris Rive Gauche, UMR 7586, CNRS, Univ. Paris Diderot, Sorbonne
Paris Cit\'e, 75005 Paris, France}
\email{maxime.wolff@imj-prg.fr}
\begin{document}

\maketitle
\numberwithin{theorem}{section}
\numberwithin{equation}{section}

\begin{abstract}

We prove that rigid representations of $\pi_1\Sigma_g$ in $\HOS$
are geometric, thereby establishing a converse statement of a theorem
by the first author in~\cite{KatieInvent}. 

\vspace{0.2cm}
MSC Classification: 58D29, 20H10, 37E10, 37E45, 57S25.

\end{abstract}


\section{Introduction}


\subsection{Character spaces and rigidity}
Let $\Gamma$ be a discrete group and $G$ a topological group. If
$G \subset \Homeo(X)$ for some space $X$, then the {\em representation space}
$\Hom(\Gamma,G)$, equipped with the compact-open topology, parameterizes
actions of $\Gamma$ on $X$ with image in $G$.
Typically, $G$ is used to specify the regularity of the action -- for instance,
taking $G = \mathrm{Diff}(X)$ parametrizes smooth actions, and if $G$ is a 
Lie group acting transitively on $M$ these are \emph{geometric} actions in the sense of Ehresmann.
Since conjugate actions are dynamically equivalent, 
the appropriate moduli space of actions is
the quotient $\Hom(\Gamma,G)/G$ under the natural conjugation action of $G$.
However, this quotient space is typically non-Hausdorff.   

When $G$ is a Lie group and $\Hom(\Gamma,G)$ is an affine variety,
algebraic geometers solve this problem by considering the 
quotient $\Hom(\Gamma,G)/\!/G$ from geometric invariant theory.
In the special case where $G$ is a semi-simple complex reductive Lie group, 
this GIT quotient is simply the quotient of $\Hom(\Gamma,G)$ by the equivalence relation
$\rho_1 \sim \rho_2$ whenever {\em the closures} of their conjugacy classes
intersect \cite{Luna1, Luna2}.  
In particular, this relation makes the quotient space Hausdorff.
In the well-studied case of
$G=\mathrm{SL}(n,\C)$,
the GIT quotient agrees with the space of \emph{characters} of
$G$-representations, motivating the terminology in the following definition.
\begin{definition}
  For any discrete group $\Gamma$ and topological group $G$,
  the {\em character space} $X(\Gamma, G)$ is the largest Hausdorff
  quotient\footnote{Recall the largest Hausdorff quotient $X_H$ of a
  topological space $X$ is a space with the universal property that any
  continuous map $f\colon X\to Y$ from $X$ to a Hausdorff topological space
  factors canonically through the projection $X \to X_H$.
  One construction of $X_H$ is as the quotient of $X$ by the intersection
  of all equivalence relations $\sim$ such that $X/\sim$ is Hausdorff.}
  of $\Hom(\Gamma, G)/G$.
  We say that two representations are {\em $\chi$-equivalent}
  if they give the same point in $X(\Gamma,G)$.
\end{definition}
A representation $\rho: \Gamma \to G$ is \emph{rigid}, loosely speaking,
if all deformations of $\rho(\Gamma)$ in $G$ are trivial.   This notion can be made
precise in the setting of character spaces, as follows. 
\begin{definition}
  A representation $\rho \in \Hom(\Gamma, G)$ is \emph{rigid} if the image of $\rho$ is an
  isolated point in the character space $X(\Gamma, G)$.
\end{definition}
This is a strong condition on $\rho$, and can be loosened to more explicit conditions. In particular, we will say that $\rho$ is
\emph{path-rigid} if the path component of $\rho$ in $\Hom(\Gamma, G)$ is
contained in a single $\chi$-equivalence class.

The case of interest in this article is when $G=\HOS$, the group of
orientation-preserving homeomorphisms of the circle, and
$\Gamma = \Gamma_g = \pi_1(\Sigma_g)$ is the fundamental group of an orientable
surface of genus $g \geq 2$.
In this case $\Hom(\Gamma, G)$ has an important interpretation as the space of
\emph{flat} or \emph{foliated} topological circle bundles over $\Sigma$.
As will be explained in Section \ref{ssec:CharSpace}, in this setting the
character space $X(\Gamma, G)$ is the space of
\emph{semi-conjugacy} classes
of actions of $\Gamma$ on $S^1$, and path-rigid representations are those $\rho$ such that every path through $\rho$
can be obtained by a continuous family of semi-conjugacies.   Both rigid and path-rigid representations
can be thought of as corresponding to foliated bundles that admit only trivial types of deformations.  

A second motivation for the study of $X(\Gamma_g, \HOS)$ comes from Goldman's
seminal work on $X(\Gamma_g, \PSL)$ and its relations with Teichm\"uller
spaces.
In \cite{Goldman88}, Goldman showed that the connected components of
$X(\Gamma_g,\PSL)$ are classified by the \emph{Euler number}.  The Euler number is a characteristic integer, which   
Milnor \cite{Milnor} showed takes values in $[-2g+2, 2g-2] \cap \Z$ on
(equivalence classes of) representations in $X(\Gamma_g, \PSL)$.  
This is the famous \emph{Milnor--Wood} inequality; to which Wood's
contribution was an extension of Milnor's result to representations into
$\HOS \supset \PSL$ \cite{Wood}.    However, as was shown in \cite{KatieInvent}, the 
Euler number does not classify connected components of $X(\Gamma_g, \PSL)$, and 
extending Goldman's work to representations into $\HOS$ appears to be a difficult task.  
We will comment further on this in the next section. 


\subsection{Geometric representations}

The first known example of a rigid representation of a surface group into
$\HOS$ comes from a celebrated theorem of Matsumoto \cite{Matsumoto87}.
He showed that the set of representations with maximal Euler number, i.e. Euler number equal to $2g-2$, 
in $X(\Gamma_g,G)$ consists of a single
point. As the Euler number is a continuous function on $\Hom(\Gamma_g, G)$,
this implies that representations of maximal Euler number are rigid.
(The same holds for representations with Euler number $-2g+2$.)

Phrased otherwise, Matsumoto's result says that all 
representations with Euler number $\pm(2g-2)$ are
$\chi$-equivalent to discrete, faithful representations into $\PSL$.
This hints at an underlying phenomenon for rigidity, namely that these 
are representations coming from a geometric structure.

\begin{definition}[\cite{KatieSurvey}]
  Let $M$ be a manifold, and $\Gamma$ a countable group. A representation
  $\rho\colon\Gamma\to\Homeo(M)$ is called {\em geometric} if
  it is $\chi$-equivalent to a 
  faithful representation with image a cocompact lattice in a transitive,
  connected Lie group $G\subset\Homeo(M)$.
\end{definition}
It is not difficult to classify the geometric representations of surface
groups in $\HOS$: up to $\chi$-equivalence,
all are either discrete, faithful representations into $\PSL$, or obtained by lifting such
a representation to a finite cyclic extension of $\PSL$. See \cite{KatieSurvey} for details.

The main result of \cite{KatieInvent} is the following.
\begin{theorem}[Mann \cite{KatieInvent}]\label{theo:KatieGeomRigid}
  In the space $\Hom(\Gamma_g,\HOS)$, all geometric
  representations are rigid.
\end{theorem}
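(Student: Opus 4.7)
The plan is to exploit the classification of geometric representations stated just before the theorem: every geometric representation is $\chi$-equivalent to a faithful representation $\rho$ whose image is a cocompact lattice in either $\PSL$ itself or in one of its finite cyclic covers $\PSLk$. Since rigidity depends only on the $\chi$-equivalence class (indeed, on the point in the Hausdorff quotient $X(\Gamma_g, \HOS)$), I may assume $\rho$ has this form. By the interpretation of $\chi$-equivalence as semi-conjugacy that the paper refers to in Section~\ref{ssec:CharSpace}, the goal reduces to showing that there is a neighborhood of $\rho$ in $\Hom(\Gamma_g, \HOS)$ consisting entirely of representations semi-conjugate to $\rho$.

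The case $k=1$ is the easy one. Here $\rho$ is Fuchsian, so $\eu(\rho) = \pm(2g-2)$. Because $\eu$ is a continuous, integer-valued function on $\Hom(\Gamma_g,\HOS)$ and obeys the Milnor--Wood bound, it is locally constant and remains maximal on a neighborhood of $\rho$. Matsumoto's theorem then says every representation of maximal Euler number is semi-conjugate to $\rho$, closing the case.

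The case $k \geq 2$ is genuinely more delicate, and this is where I expect the main obstacle. The Euler number is now $\pm(2g-2)/k$, and there are many representations with this Euler number not semi-conjugate to $\rho$, so $\eu$ alone cannot serve as the obstruction to deformation. My plan is to use rotation numbers as a finer invariant. Since $\rho_0 := p \circ \rho\colon \Gamma_g \to \PSL$ (where $p\colon \PSLk \to \PSL$ is the covering) is a cocompact Fuchsian representation, it contains abundantly many hyperbolic elements whose translation lengths are irrational multiples of $2\pi/k$; for each such $\gamma$, the rotation number $\rot(\rho(\gamma))$ is irrational. At such elements, $\rot$ is a continuous function on $\Hom(\Gamma_g,\HOS)$, so any sufficiently small deformation $\rho'$ of $\rho$ satisfies $\rot(\rho'(\gamma)) = \rot(\rho(\gamma))$. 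Combined with local constancy of $\eu$, I would then argue that $\rho'$ admits a lift $\widetilde{\rho'}$ compatible with the $k$-fold covering structure, whose composition with $p$ pushes forward to a representation of maximal Euler number in $\PSL$; Matsumoto's theorem applied at that level gives a semi-conjugacy which descends back to a semi-conjugacy between $\rho'$ and $\rho$.

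The hard part is executing the last step of this plan rigorously: identifying a sufficiently rich set of rotation numbers to pin down the semi-conjugacy class, and then upgrading the pointwise rigidity of $\rot$ to an actual semi-conjugacy. The subtlety is that rotation number is only continuous at elements with irrational rotation number, so the argument is constrained to a specific (though large) subset of $\Gamma_g$, and one must show that preservation of rotation numbers along this subset, together with the group relations in $\Gamma_g$ and the Euler number constraint, is enough to force the structural conclusion that $\rho'$ lifts in a manner compatible with $\rho$. The rest of the work should then be bookkeeping about how semi-conjugacies pull back along the cyclic cover $\PSLk \to \PSL$.
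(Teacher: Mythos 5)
This theorem is not proved in the present paper: it is quoted from \cite{KatieInvent}, so there is no internal proof to compare against. Judged on its own terms, your proposal is fine in the Fuchsian case $k=1$ (local constancy of the integer-valued Euler number plus Matsumoto's characterization of maximal representations is exactly the standard argument, and it does yield an isolated point of $X(\Gamma_g,\HOS)$ since the Euler number descends to a continuous function on the character space). The case $k\geq 2$, however, rests on a false premise.

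You claim that a cocompact Fuchsian representation $\rho_0 = p\circ\rho$ ``contains abundantly many hyperbolic elements whose translation lengths are irrational multiples of $2\pi/k$; for each such $\gamma$, the rotation number $\rot(\rho(\gamma))$ is irrational.'' The translation length of a hyperbolic isometry along its axis has nothing to do with its rotation number on $S^1=\partial\HH^2$: every hyperbolic element of $\PSL$ has fixed points on the circle and hence rotation number $0$, regardless of translation length. Since $\Gamma_g$ is torsion-free and the lattice is cocompact, \emph{every} nontrivial element of $\rho_0(\Gamma_g)$ is hyperbolic, so every element of $\rho(\Gamma_g)\subset\PSLk$ is a lift of a hyperbolic element and has rotation number in $\frac{1}{k}\Z/\Z$ --- always rational. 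The set of elements at which you propose to exploit continuity of $\rot$ is therefore empty, and since $\rot$ is not continuous at maps with rational rotation number, the strategy has no way to get started for $k\geq2$. This is not a repairable bookkeeping issue: pinning down the semi-conjugacy class of a $\PSLk$-geometric representation under deformation is precisely the hard content of \cite{KatieInvent}, where it is done by showing that finer semi-conjugacy invariants (the translation numbers $\rotild$ of lifts of specific products and commutators, i.e.\ the cocycle $\tau$ of Theorem~\ref{thm:GhysRot}, evaluated on simple closed curves) cannot change under deformation, via a dynamical argument rather than a continuity argument.
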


In fact, the main theorem of \cite{KatieInvent} is stated in a weaker form; 
it says that the connected component of $\Hom(\Gamma_g, \HOS)$ is a single semi-conjugacy, or $\chi$-equivalence, class (we will see soon that these two notions coincide).  
However, the proof of the theorem is carried out on the level of semi-conjugacy invariants of representations, so actually 
proves the stronger result that geometric representations descend to isolated points in $X(\Gamma_g, \HOS)$.   


\subsection{Results}

This article is devoted to proving the converse of
Theorem~\ref{theo:KatieGeomRigid}. We show the following.
\begin{theorem}\label{theo:RigidGeom}
  Every rigid representation in $\Hom(\Gamma_g,\HOS)$ is geometric.
\end{theorem}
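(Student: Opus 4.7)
The plan is to prove the contrapositive: every non-geometric representation in $\Hom(\Gamma_g,\HOS)$ fails to be rigid. First I reduce rigidity to path-rigidity: if $[\rho]$ is isolated in the Hausdorff quotient $X(\Gamma_g,\HOS)$, then its preimage under $\Hom(\Gamma_g,\HOS)\to X(\Gamma_g,\HOS)$ is open and hence contains the path component of $\rho$, so $\rho$ is path-rigid. It therefore suffices to produce, for any non-geometric $\rho$, a continuous path $(\rho_t)_{t\in[0,1]}$ with $\rho_0=\rho$ whose image in $X(\Gamma_g,\HOS)$ is non-constant.

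The argument will be organized around the standard trichotomy for $\HOS$-actions: $\rho$ has a finite orbit, admits a unique exceptional minimal Cantor set, or is minimal. In the \emph{finite orbit} case, passing to the finite-index subgroup $\Gamma'\leq\Gamma_g$ that fixes a point of the orbit yields an action with a global fixed point, hence an action on an interval, where abundant local deformations are available (for instance, small compactly supported conjugations near a non-fixed point of a chosen element). The challenge is to extend such deformations of $\rho|_{\Gamma'}$ back to $\rho$ in a continuous fashion guaranteed to change the rotation number of some element, and hence the $\chi$-class.

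In the \emph{exceptional minimal set} case, collapsing the complementary wandering intervals yields a minimal action $\bar\rho$ semi-conjugate to $\rho$. If $\bar\rho$ were Fuchsian (or a finite cyclic lift thereof), then $\rho$ itself would be $\chi$-equivalent to a geometric representation, contradicting non-geometricity; so $\bar\rho$ is minimal and non-geometric, reducing the problem essentially to the minimal case. The connecting step is to use a continuous family of blow-ups along the wandering intervals to transfer a path at $\bar\rho$ to a path at $\rho$.

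The essential case---and the principal obstacle---is that of a \emph{minimal, non-geometric} representation. By general results on semi-conjugacy, minimal actions on $S^1$ are $\chi$-equivalent precisely when topologically conjugate, so the task becomes to construct a path of representations through $\rho$ crossing conjugacy classes. The strategy is a bending construction: choose a simple closed curve $c\subset\Sigma_g$ giving a splitting of $\Gamma_g$ as an amalgamated product or HNN extension over $\langle c\rangle$, then deform by inserting a continuous family $h_t$ taken from the centralizer of $\rho(c)$ in $\HOS$ on one side of the splitting. For geometric $\rho$, Theorem~\ref{theo:KatieGeomRigid} ensures every such bending stays in a single $\chi$-class; the technical heart of the proof is to show that for any non-geometric minimal $\rho$ one can find a curve $c$ and a bending for which the $\chi$-class genuinely varies, most likely by detecting this variation through the rotation number of some element in a pants decomposition and exploiting structural features (such as non-hyperbolic fixed-point behavior) that distinguish non-geometric from Fuchsian dynamics.
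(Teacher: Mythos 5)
Your opening reduction (rigid $\Rightarrow$ path-rigid, since the preimage of an isolated point of $X(\Gamma_g,\HOS)$ is clopen in $\Hom(\Gamma_g,\HOS)$ and hence contains the path component) is correct, and the trichotomy finite orbit / exceptional minimal set / minimal is a sensible organizing frame. But the proposal then commits to proving the strictly stronger statement that \emph{every} non-geometric representation fails to be \emph{path}-rigid, and this is precisely what the paper states it is unable to do: Theorem~\ref{theo:Main} only shows that a path-rigid non-geometric representation has Euler class zero and a genus $g-1$ subsurface with a finite orbit, and the authors explicitly leave open whether path-rigidity alone forces geometricity. The actual proof of Theorem~\ref{theo:RigidGeom} must invoke the full hypothesis of rigidity at one essential point, Lemma~\ref{lem:WeakABR}, where a very good torus is shown to carry only finitely many finite-orbit points by constructing a perturbation $\rho'$ that is close to $\rho$ in the compact-open topology but is \emph{not} produced along a path. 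Your strategy of only ever deforming along paths therefore cannot, as far as is currently known, close the argument; you would either need to solve the open problem or reintroduce non-path perturbations of exactly this kind.

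The more serious issue is that everything you label ``the challenge'' or ``the technical heart'' is the entire content of the paper, and none of it is supplied. Showing that a minimal non-geometric $\rho$ admits a bending along some simple closed curve whose $\chi$-class varies is not a single lemma detectable by ``the rotation number of some element in a pants decomposition'': the paper needs the machinery of the sets $P(f,g)$, $N(f,g)$, $U(f,g)$ controlling periodic sets under bending (Section~\ref{sssec:Twist}), rationality of rotation numbers of nonseparating curves (Proposition~\ref{prop:RatRot}), the dichotomy $\Per(a)\cap\Per(b)=\emptyset\Rightarrow S_k(a,b)$ (Theorem~\ref{theo:PerDisjointsSk}), the local-to-global step via Matsumoto's basic partitions (Theorem~\ref{thm:SkImpliqueGeom}), and the good/bad/very good tori analysis of Section~\ref{sec:Section5}, before the endgame in Section~\ref{sec:Fin} can even be set up. Likewise, the finite-orbit and exceptional-minimal-set cases are not separate easy cases to be dispatched first: a minimal non-geometric representation is not assumed a priori to have any finite orbit on a subsurface; that such a subsurface exists is the hard-won conclusion of Theorem~\ref{theo:Main}, not an input. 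As written, the proposal is a plausible table of contents rather than a proof, and the one concrete strategic decision it makes (work only with path deformations) points away from the route that is actually known to succeed.
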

In other words, the \emph{only} source of rigidity for actions of $\Gamma_g$
on $S^1$ is the existence of an underlying geometric structure.
Our main technical result in the course of proving
Theorem~\ref{theo:RigidGeom} is stronger for representations of non-zero Euler class, 
as we need to assume only path-rigidity in this case.

\begin{theorem}\label{theo:Main}
  Let $\rho\colon\pi_1\Sigma_g\to\HOS$ be a path-rigid representation.
  If $\rho$ is not geometric, then its Euler class is zero, and there exists
  a one-holed, genus $g-1$ subsurface $\Sigma' \subset \Sigma_g$ such that
  $\rho|_{\pi_1 \Sigma'}$ has a finite orbit.
\end{theorem}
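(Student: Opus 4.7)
The plan is to argue the contrapositive: assuming $\rho$ is path-rigid and not geometric, we show both that $\eu(\rho)=0$ and that some one-holed genus-$(g-1)$ subsurface $\Sigma' \subset \Sigma_g$ satisfies $\rho|_{\pi_1\Sigma'}$ having a finite orbit on $S^1$. The basic technical tool throughout is the \emph{bending deformation}. Given an essential simple closed curve $\gamma \subset \Sigma_g$ and a continuous path $h_t$ in the centralizer $Z(\rho(\gamma))\subset\HOS$ with $h_0=\id$, one constructs a continuous deformation $\rho_t$ of $\rho$: if $\gamma$ separates $\Sigma_g = \Sigma_1\cup_\gamma\Sigma_2$, set $\rho_t=\rho$ on $\pi_1\Sigma_1$ and $\rho_t=h_t\rho(\cdot)h_t^{-1}$ on $\pi_1\Sigma_2$; for non-separating $\gamma$ one uses the analogous HNN construction. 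Path-rigidity forces each $\rho_t$ to remain $\chi$-equivalent to $\rho$; since rotation numbers are semi-conjugacy invariants, this forces $\rot(\rho_t(\alpha))=\rot(\rho(\alpha))$ for every $\alpha\in\pi_1\Sigma_g$ and every $t$.

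The centralizer $Z(\rho(\gamma))$ is always large: either $\rho(\gamma)$ has fixed points and $Z(\rho(\gamma))$ contains all homeomorphisms supported in complementary arcs that commute with the restriction of $\rho(\gamma)$ there, or $\rho(\gamma)$ is conjugate to an irrational rotation and $Z(\rho(\gamma))$ contains a full rotation subgroup. For each simple closed curve $\alpha$ crossing $\gamma$, the constancy of $\rot(\rho_t(\alpha))$ imposes a rigid alignment between the fixed-point set (or rotation data) of $\rho(\gamma)$ and the dynamics of $\rho(\alpha)$ on $S^1$. Iterating this analysis over a rich enough family of curves yields a dichotomy: either (a) the totality of alignment conditions forces $\rho$ into the rigid combinatorial pattern of a geometric representation, so that Matsumoto's theorem together with the classification of geometric representations of $\Gamma_g$ in $\HOS$ forces $\rho$ itself to be geometric -- contradicting our hypothesis -- or (b) there exists a separating curve bounding a one-holed torus $T \subset \Sigma_g$ whose complement $\Sigma'$ has genus $g-1$ and one boundary component, and $\rho|_{\pi_1\Sigma'}$ preserves a finite subset of $S^1$. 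Non-geometricity thus puts us in case (b).

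To deduce $\eu(\rho)=0$, we exploit the finite orbit just produced. Passing to the finite-index subgroup $K \leq \pi_1\Sigma'$ fixing each point of that orbit pointwise, $\rho|_K$ takes values in the subgroup of $\HOS$ fixing those points, a product of copies of $\Homeo^+(I)$ -- a contractible target -- so $\rho|_K$ has vanishing Euler class. By multiplicativity in finite covers, $\rho|_{\pi_1\Sigma'}$ then has zero relative Euler class, and any contribution to $\eu(\rho)$ must come from the one-holed torus side $T$. Using a point of the finite orbit on the $\Sigma'$ side as an ``anchor'' lying in the centralizer of $\rho(\gamma)$, one now constructs an explicit bending supported on $\pi_1 T$ that varies this residual contribution; path-rigidity forces it to vanish, giving $\eu(\rho)=0$.

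The main obstacle is ensuring that the bending directions chosen in $Z(\rho(\gamma))$ actually escape the semi-conjugacy class of $\rho$: bending by elements of a certain ``trivial'' subgroup of $Z(\rho(\gamma))$ can always be absorbed by a global semi-conjugacy and hence change no rotation number. The argument must identify, in each configuration of fixed-point data, an explicit direction in $Z(\rho(\gamma))$ transverse to this absorbable subgroup, and detect the change via the rotation number of a suitably chosen element of $\pi_1\Sigma_g$. This case analysis, parametrised by the combinatorics of the fixed-point set of $\rho(\gamma)$ relative to adjacent handle generators, is where the bulk of the technical work lies.
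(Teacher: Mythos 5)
There is a genuine gap, in two places. First, the heart of the theorem is the dichotomy you state in your second paragraph --- ``either the alignment conditions force the combinatorial pattern of a geometric representation, or some genus-$(g-1)$ subsurface acquires a finite orbit'' --- but you assert this dichotomy rather than prove it; it is essentially a restatement of the conclusion. You give no mechanism for actually \emph{producing} the finite orbit. The argument requires: (i) showing all nonseparating simple closed curves have rational rotation number (Proposition~\ref{prop:RatRot}); (ii) a classification of one-holed tori as bad, good, or very good, with the facts that a bad torus forces every disjoint torus to be very good (Proposition~\ref{prop:BadToriIntro}, via the domination order and the jump function $j_T$), and that two disjoint good-but-not-very-good tori force $S_k^{++}$ and hence geometricity (Corollary~\ref{cor:PasDeuxBons}); and (iii) a propagation argument (Proposition~\ref{prop:MagicProposition}, resting on Lemma~\ref{lem:EHN} and Proposition~\ref{prop:Promenade}) showing that a subsurface all of whose tori are very good has a genuinely \emph{global} finite orbit, not just finite orbits torus-by-torus. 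None of these steps is present or even gestured at in your proposal, and they constitute the bulk of the proof.

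Second, your argument that $\eu(\rho)=0$ does not work as written. Having a finite orbit for $\pi_1\Sigma'$ does give relative Euler number zero on $\Sigma'$ (since $\rot$ restricts to a homomorphism there), but your treatment of the remaining one-holed torus $T$ is wrong: you propose a bending supported on $\pi_1 T$ that ``varies this residual contribution,'' with path-rigidity forcing it to vanish. The relative Euler number of $T$ is a semi-conjugacy invariant (it equals $\rotild$ of a commutator of lifts), so under path-rigidity \emph{no} path deformation can change it --- path-rigidity tells you it is constant, not that it is zero. What is actually needed is the Milnor--Wood bound $|\eu_T(\rho)|\le 1$ for a one-holed torus, together with the separate fact that $\eu_T(\rho)=\pm 1$ forces $S_1^{++}(a,b)$ on that torus and hence (Corollary~\ref{cor:TwoCurvesSuffice}) that $\rho$ is geometric, contradicting the hypothesis; only then can one conclude $\eu_T(\rho)=0$ and, by additivity, $\eu(\rho)=0$.
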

The condition of having a large subsurface with a finite orbit makes it
very unlikely that such a representation cannot be deformed along a path.
This gives strong evidence for the fact that all path-rigid representations
should in fact be geometric.
However, at the time of writing we are unable to prove this.

While the proof of Theorem~\ref{theo:Main} is quite long, a much simpler argument can be carried out
under the additional assumption that the relative Euler number on some 
genus $1$ subsurface is equal to $1$ (this is the case in particular for
representations of Euler class $\geq g$). This simpler, though much weaker, proof is presented in the companion
article~\cite{AuMoinsG}.
Although the present article is self-contained, the reader may prefer to
take \cite{AuMoinsG} as a starting point.  

We conclude this introduction by putting our result in the perspective of
the following ambitious problem (which remains wide open) in the natural
continuation of Goldman's work on $X(\Gamma_g, \PSL)$.

\begin{question}\label{q:CC}
  What are the connected components of $X(\Gamma_g,\HOS)$?
  What are its path components?
\end{question}
One of the implications of Theorem~\ref{theo:KatieGeomRigid} was that the space
$X(\pi_1\Sigma_g,\HOS)$ has strictly more connected components than
$X(\pi_1\Sigma_g,\mathrm{PSL}(2,\R))$.  At this time, we do not even know if
the former has finitely many connected components. The present article,
more than simply proving Theorem~\ref{theo:RigidGeom}, aims at providing
some technical tools towards this question; we hope to address it in 
future work.


\subsection{Strategy of the proof and organization of the article}

The main ingredient in the
proof of Theorem \ref{theo:Main} is the effect of {\em bending deformations}
on the periodic sets of simple closed curves.
Bending deformations are classical in (higher) Teichm\"uller theory, 
(see paragraph \ref{sss:bending} for a reminder); 
and we extend their study to representations to $\HOS$.  We now outline the major steps. 

\noindent \textit{Step 1: local-to-global.}
Our proof starts by making a strong additional technical hypothesis on representations that
forces them to look ``locally'' (i.e. on the level of some pairs of curves) like representations 
into $\PSLk$.  Specifically, we say that the action of two
elements $a,b \in \Gamma_g$ representing standard generators of a one-holed torus subsurface of $\Sigma_g$
satisfies $S_k(a,b)$ if $\rho(a)$ and $\rho(b)$ are separately conjugate to hyperbolic elements of $\PSLk$, and 
their fixed points alternate around the circle.  We show the following.
\begin{theorem}\label{theo:SkRigid}
  Let $\rho$ be a path-rigid, minimal representation, and suppose furthermore
  that there exists $k\geq 1$ such that $S_k(a,b)$ holds for all standard generators of one-holed torus subsurfaces. 
  Then $\rho$ is geometric.
 \end{theorem}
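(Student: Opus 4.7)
The plan is to combine the local $\mathrm{PSL}^k$-structure coming from $S_k$ with bending deformations along commutators of one-holed torus pairs, using path-rigidity to convert each local bending obstruction into global algebraic information. The end goal is to produce a single conjugating homeomorphism $h\in\HOS$ with $h\rho(\cdot)h^{-1}\subset\PSLk$, after which the classification of geometric surface group representations in \cite{KatieSurvey} finishes the proof.

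First I would set up the local picture. For each standard generating pair $(a,b)$ of a one-holed torus subsurface $\Sigma'\subset\Sigma_g$, the hypothesis $S_k(a,b)$ provides a homeomorphism $h_{a,b}\in\HOS$ conjugating the restriction $\rho|_{\langle a,b\rangle}$ into $\PSLk$ (the alternation of fixed points ensures the pair really embeds this way, rather than commuting or sharing fixed points). The boundary class $c=[a,b]$ then maps into $\PSLk$ as well. Minimality of $\rho$ is used to pin down these conjugators: once we normalize one pair, the requirement that $\rho$ have no finite orbit or invariant proper closed subset forces the conjugator $h_{a,b}$ to be essentially unique on the closure of the orbit of the relevant fixed-point sets. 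This uniqueness is what will allow local conjugators to be promoted to a global one.

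Next comes the bending step, which is the heart of the argument. Writing $\Sigma_g=\Sigma'\cup_c \Sigma''$, classical bending deforms $\rho$ by conjugating $\rho|_{\Sigma''}$ by a continuous family $g_t$ taken in the centralizer $Z_{\HOS}(\rho(c))$, yielding a path $\rho_t\in\Hom(\Gamma_g,\HOS)$ with $\rho_0=\rho$. When $\rho(c)$ is a hyperbolic element of $\PSLk$, its centralizer in $\PSLk$ is one-dimensional while its centralizer in $\HOS$ is typically much larger (any homeomorphism fixing the $2k$ fixed points of $\rho(c)$ pointwise and commuting with $\rho(c)$ on each complementary arc works). Path-rigidity forces the entire bending family $\rho_t$ to lie in a single semi-conjugacy class. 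The key technical claim I would try to establish is that the only bending paths producing semi-conjugate representations are those for which $g_t$ lies (up to semi-conjugacy) in the $\PSLk$-centralizer of $\rho(c)$; any genuinely non-$\PSLk$ deformation changes the combinatorics of periodic orbits of some simple closed curve and hence the semi-conjugacy class. I expect this to be the main obstacle: ruling out exotic centralizer deformations requires analyzing how bending perturbs the periodic set of curves transverse to $c$, which is precisely the type of calculation the introduction advertises as the paper's core technical input.

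Finally I would globalize. Given the rigidity output above, any two pairs $(a,b)$ and $(a',b')$ of one-holed-torus generators satisfying $S_k$ must have their local $\PSLk$-conjugators agree on the overlap, since any discrepancy would yield a non-trivial bending deformation. Because the graph whose vertices are standard generating pairs of one-holed torus subsurfaces, with edges for pairs sharing a curve, is connected in $\Sigma_g$ (for $g\geq 2$), and because any element of $\Gamma_g$ lies in some such subsurface subgroup, the local conjugators assemble to a single $h\in\HOS$ with $h\rho h^{-1}\subset\PSLk$. Once inside $\PSLk$, the representation is a lift of a representation into $\PSL$; minimality and path-rigidity then force this $\PSL$-representation to be discrete and faithful (any non-discrete or non-faithful $\PSL$-representation admits non-trivial deformations in $\PSL$, and hence in $\HOS$, violating path-rigidity). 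This identifies $\rho$ as a lift of a Fuchsian representation, i.e.\ a geometric representation in the sense of the definition above, concluding the proof.
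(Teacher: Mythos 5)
Your opening step contains the essential gap: you assert that $S_k(a,b)$ ``provides a homeomorphism $h_{a,b}$ conjugating $\rho|_{\langle a,b\rangle}$ into $\PSLk$.'' It does not. The hypothesis only says that $\rho(a)$ and $\rho(b)$ are \emph{each separately} conjugate to hyperbolic elements of $\PSLk$ and that their periodic points alternate; this places no constraint on how $\rho(b)$ behaves away from its $2k$ periodic points relative to $\rho(a)$, and the pair need not even be semi-conjugate into $\PSLk$ (for $k=1$: take $a$ a genuine hyperbolic M\"obius map and $b$ any homeomorphism with two alternating hyperbolic-type fixed points that is wild in between). Producing a semi-conjugacy of the subsurface groups onto geometric models is precisely the hard content of the theorem, and it consumes most of Section~\ref{sec:Exercices}: one first needs path-rigidity (not just $S_k$) to pin down the cyclic order of periodic points along chains (Lemma~\ref{lem:OrdrePer}, Propositions~\ref{prop:OrdrePer} and~\ref{prop:OrdrePer2}), to put pairs of curves in $k$-Schottky position (Propositions~\ref{prop:Schottky}, \ref{prop:Schottky2}), and then to compute that the relative Euler class on every pair of pants is $-1/k$ (Theorem~\ref{thm:Pants}) -- without which one does not even know that $\frac{2g-2}{k}$ is an integer, so the candidate geometric model $\rho_0$ need not exist. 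Since the local conjugators of your first step do not exist as stated, the subsequent bending and gluing steps have nothing to act on; and even granting them, ``the conjugators agree on the overlap or else there is a nontrivial bending'' is not substantiated -- a discrepancy between two conjugating homeomorphisms is not itself a deformation of $\rho$, so path-rigidity cannot be invoked directly against it.

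The paper's actual route is different in kind: rather than gluing conjugators, it builds a single candidate $\rho_0$ (a lift to $\PSLk$ of a Fuchsian representation, chosen so that rotation numbers match $\rho$ on a pants decomposition and on the edge generators of a graph-of-groups presentation), verifies that $\rho$ and $\rho_0$ admit \emph{Basic Partitions} with identical combinatorics on each pair of pants and \emph{combinable entrances} across each gluing curve, and then invokes Matsumoto's theorem that a cyclic-order-preserving bijection of Basic Configurations promotes to a global semi-conjugacy; minimality then upgrades this to a conjugacy. Your proposal contains no substitute for either the Euler-number computation or the Matsumoto machinery, so as written it does not close.
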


The proof of Theorem \ref{theo:SkRigid} uses
bending deformations of $\rho$
to move the periodic points of generators of $\pi_1\Sigma_g$; 
provided $\rho$ is path-rigid, we are able to conclude 
the periodic points of many simple closed curves are in the same cyclic order as if $\rho$ were geometric.
In the companion article \cite{AuMoinsG}, (whose additional hypothesis
guarantees that $k=1$) this same process was sufficient to demonstrate that $\rho$ has maximal
Euler number, hence is geometric.
Here in the general case, we need to use a more sophisticated tool, and
use Matsumoto's theory of \emph{Basic Partitions}
(see Section~\ref{subsec:Matsumoto}). \smallskip

\noindent \textit{Step 2: good and bad tori.} 
We next make extensive use of bending deformations to prove the following.
\begin{proposition}\label{prop:RatRotIntro}
  If a representation $\pi_1\Sigma_g\rightarrow G$ is path-rigid then all
  nonseparating simple closed curves have rational rotation number.
\end{proposition}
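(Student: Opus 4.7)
The plan is a contradiction argument via bending along $c$. Suppose $c \subset \Sigma_g$ is a nonseparating simple closed curve with $\rot(\rho(c)) \notin \Q$. I will construct a continuous path $\{\rho_s\}$ in $\Hom(\pi_1\Sigma_g, \HOS)$ through $\rho_0 = \rho$ along which the rotation number of some group element varies nontrivially with $s$. Since $\chi$-equivalence in $\HOS$ coincides with semi-conjugacy (see Section \ref{ssec:CharSpace}), and semi-conjugacy preserves the rotation number of every element, path-rigidity forces this rotation number to be constant along the path --- a contradiction.

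Cutting $\Sigma_g$ along $c$ yields a connected subsurface $\Sigma'$ of genus $g-1$ with two boundary components $c_1, c_2$, giving the HNN presentation $\pi_1\Sigma_g \cong \pi_1\Sigma' \ast_{\langle c \rangle}$ with stable letter $t$ (a simple closed curve meeting $c$ once transversely) and relation $t c_1 t^{-1} = c_2$, so that $\rho(c_2) = \rho(t)\rho(c)\rho(t)^{-1}$. Since $\rot(\rho(c_2)) = \rot(\rho(c)) \notin \Q$, the centralizer $Z(\rho(c_2)) \subset \HOS$ contains a nontrivial continuous one-parameter family $\{\phi_s\}$. In the simplest case, where $\rho(c_2)$ is minimal, it is conjugate to an irrational rotation $R_\alpha$ by some $h \in \HOS$, and we set $\phi_s := h^{-1} R_s h$ (the Denjoy case is addressed below). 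Define
$$\rho_s|_{\pi_1\Sigma'} := \rho|_{\pi_1\Sigma'}, \qquad \rho_s(t) := \phi_s\rho(t).$$
The HNN relation is preserved because $\rho_s(t)\rho(c_1)\rho_s(t)^{-1} = \phi_s\rho(c_2)\phi_s^{-1} = \rho(c_2)$, using $\phi_s \in Z(\rho(c_2))$. Thus $\{\rho_s\}$ is a bona fide continuous path in $\Hom(\pi_1\Sigma_g, \HOS)$.

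It remains to show $s \mapsto \rot(\rho_s(t))$ is non-constant. In the minimal case, conjugating by $h$ gives $\rot(\rho_s(t)) = \rot(R_s\psi)$ with $\psi := h\rho(t)h^{-1}$. Lifting to $\widetilde{\HOS}$ with translation number $\tau$, the map $s \mapsto \tau(T_s\tilde\psi)$ is continuous and non-decreasing (by pointwise monotonicity of $T_s$ in $s$) and satisfies $\tau(T_{s+1}\tilde\psi) = \tau(T_s\tilde\psi) + 1$ (because $T_1$ is central in $\widetilde{\HOS}$ and $\tau$ is additive on commuting pairs). It follows that $s \mapsto \rot(R_s\psi)$ is a continuous non-decreasing degree-one map $S^1 \to S^1$, whose total increase over a period is $1$, so it cannot be constant. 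This contradicts the invariance of $\rot(\rho_s(t))$ forced by path-rigidity, concluding the minimal case.

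The main obstacle is the Denjoy case, in which $\rho(c)$ has irrational rotation number but is only semi-conjugate, not conjugate, to $R_\alpha$, so the formula $\phi_s := h^{-1}R_s h$ does not make direct sense. To construct a continuous one-parameter family $\{\phi_s\} \subset Z(\rho(c_2))$ here, one must exploit the structure of the unique invariant Cantor minimal set $K$ and extend a rotational action on $K$ equivariantly across the complementary wandering intervals, then verify an analogue of the variational computation for $\rot(\phi_s\rho(t))$ (one must be careful, as families supported on the wandering intervals alone may fail to change rotation numbers if $\rho(t)$ preserves $K$). An alternative route is to first semi-conjugate $\rho$ to a representation in which $\rho(c)$ is minimal, run the minimal-case argument there, and transfer the contradiction back via $\chi$-equivalence. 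Either approach forms the technical heart of the proof.
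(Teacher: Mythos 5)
Your rotation case is correct and coincides with the paper's: if $\rho(c)$ is conjugate to an irrational rotation it sits in a one-parameter group of ``rotations,'' and bending the transverse curve along it moves a rotation number (your degree-one monotone circle map argument is a clean way to see this). The problem is the Denjoy case, which you explicitly leave open, and neither of the two routes you sketch can be completed as stated. For route (a): when $\rho(c)$ has an exceptional minimal set $K$, the centralizer of $\rho(c)$ contains \emph{no} continuous family acting rotationally on $K$. Any $\phi$ commuting with $\rho(c)$ induces, on the circle obtained by collapsing the gaps of $K$, a homeomorphism commuting with an irrational rotation, hence a rotation $R_s$; but $R_s$ must permute the (countably many) gap classes, which confines $s$ to a countable set. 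So the only continuous families in the centralizer act trivially on $K$ (they are supported on the wandering intervals), and — as you yourself worry — whether such a family changes $\rot(\phi_s\rho(t))$ is exactly the hard point; you give no argument. Route (b) is worse: semi-conjugating the \emph{representation} $\rho$ to its minimal model does not make the single element $\rho(c)$ minimal (a minimal surface group action can perfectly well contain Denjoy-like elements), and collapsing the gaps of $K$ is equivariant for the rest of $\Gamma_g$ only if $K$ is globally invariant — which is precisely what one cannot assume.

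The paper's proof takes a different tack in this case that you should compare with. After replacing $\rho$ by its minimal model (legitimate, since path-rigidity and the conclusion are semi-conjugacy invariants), it shows that the exceptional minimal set $K$ of $\rho(a)$ would have to be invariant under $\rho(b)$ for every $b$ with $i(a,b)=1$; since such curves together with $a$ generate $\Gamma_g$, this makes $K$ a proper closed invariant set, contradicting minimality of $\rho$. The invariance is forced as follows: if $\rho(b)(K)\not\subset K$, one finds $N$ with $\rot(\rho(a^Nb))=0$, and then a bending along the curve $a^Nb$ (whose image \emph{does} lie in a positive one-parameter family, by Lemma~\ref{lem:PosDeform}) deforms $\rho(a)$ to a homeomorphism with rational rotation number $k/M$, contradicting path-rigidity. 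In other words, the paper never needs to deform inside the centralizer of the irrational element; it manufactures an auxiliary simple closed curve with rotation number zero and bends along that instead. Your write-up is missing this idea, and without it the Denjoy case — which is the actual content of the proposition — remains unproved.
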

\begin{theorem}\label{theo:PerDisjointsSkIntro}
  Suppose $\rho$ is path-rigid and minimal. Then, for all standard generators
  $a,b$ of one-holed subsurfaces,
  we have the implication
  \[ \Per(\rho(a))\cap\Per(\rho(b))=\emptyset \Rightarrow S_k(a,b)\text{ for some }k. \]
\end{theorem}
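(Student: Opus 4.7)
The plan is to exploit path-rigidity via bending deformations along the curve $a$, using centralizers of $\rho(a)$ supported in regions isolated by the disjointness of the periodic sets. By Proposition~\ref{prop:RatRotIntro}, both rotation numbers are rational, say $p_a/k_a$ and $p_b/k_b$ in lowest terms, so $\Per(\rho(a))=\Fix(\rho(a)^{k_a})$ and $\Per(\rho(b))=\Fix(\rho(b)^{k_b})$ are nonempty closed sets. The disjointness hypothesis forces each point of $\Per(\rho(b))$ to sit strictly inside a complementary arc of $\Per(\rho(a))$, which is what provides ``room'' for the bending constructions.

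\textbf{Step 1: Finiteness of $\Per(\rho(a))$.} Suppose not. Then $S^1\setminus\Per(\rho(a))$ contains a $\rho(a)$-orbit of arcs meeting $\Per(\rho(b))$. On such an orbit I would build a nontrivial continuous family $\{g_t\}\subset Z_{\HOS}(\rho(a))$ that moves points of $\Per(\rho(b))$. Since $a$ is non-separating in $\Sigma_g$ (it is a generator of a one-holed torus subsurface), cutting $\Sigma_g$ along $a$ and regluing using $g_t$ defines a bending deformation $\rho_t$ with $\rho_t(a)=\rho(a)$ and $\rho_t(b)=g_t\rho(b)$, extended by conjugation on the complementary subsurface so that the surface relation is preserved. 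Path-rigidity forces $\rho_t$ to be $\chi$-equivalent to $\rho$, i.e.\ joined by a family of semi-conjugacies. The bending must be engineered so that it alters a semi-conjugacy-invariant combinatorial datum of the cyclic pattern of $\Per(\rho_t(a))\cup\Per(\rho_t(b))$, giving a contradiction. The symmetric argument gives finiteness of $\Per(\rho(b))$.

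\textbf{Steps 2--3: Hyperbolic type and alternation.} With $\Per(\rho(a))$ a single finite $\rho(a)$-orbit of size $N_a$, the same bending technique applies near any parabolic-type periodic point (one that is only one-sided attracting/repelling) to contradict path-rigidity, forcing $\rho(a)$ to be topologically conjugate to a hyperbolic element of $\mathrm{PSL}^{k_a}(2,\R)$ with $N_a=2k_a$. The same conclusion holds for $\rho(b)$. Finally, if the cyclic orders of $\Per(\rho(a))$ and $\Per(\rho(b))$ do not strictly alternate---whether because $k_a\neq k_b$ or because some arc between consecutive points of $\Per(\rho(a))$ contains $\neq 1$ point of $\Per(\rho(b))$---bending along $a$ by a centralizer supported in the offending arc(s) again yields a contradiction. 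The result is $S_k(a,b)$ with $k=k_a=k_b$.

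\textbf{Main obstacle.} The delicate point is engineering the bending in each step to genuinely change the $\chi$-class of $\rho$: a homeomorphism $g_t$ supported on an arc preserves the cyclic order of points in its support, so moving a single $\Per(\rho(b))$ point within one complementary arc of $\Per(\rho(a))$ is not by itself sufficient. The argument must exploit the global interaction of $\rho(a)$ and $\rho(b)$, tracking several distinct $\rho(a)$-orbits of arcs simultaneously, or---more naturally---use Matsumoto's theory of basic partitions (Section~\ref{subsec:Matsumoto}) to package a bending-sensitive yet semi-conjugacy-invariant combinatorial datum. This bookkeeping, already invoked in the proof of Theorem~\ref{theo:SkRigid}, is the natural tool to complete each step.
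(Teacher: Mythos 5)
Your proposal correctly identifies the general strategy (bending along $a$ via centralizers of $\rho(a)$ and invoking path-rigidity), but the core mechanism is missing, and you acknowledge as much in your ``Main obstacle'' paragraph. The gap is twofold. First, your Step~1 --- proving finiteness of $\Per(\rho(a))$ directly by a bending contradiction --- cannot work as stated, because the cardinality of a periodic set is \emph{not} a semi-conjugacy invariant: an element with a Cantor set of periodic points can be semi-conjugate to one with finitely many. The paper never proves finiteness of $\Per(\rho(a))$ for the original action directly. Instead it deforms to a representation in which the deformed element has exactly $2k$ periodic points and then transfers back using minimality of $\rho$ together with Observation~\ref{obs:CardinalityPer} (the minimal model minimizes $|\Per|$ within a semi-conjugacy class). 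The semi-conjugacy invariants that the paper actually uses to derive contradictions from path-rigidity are (i) the disjointness of $\Per(a)$ and $\Per(b)$, via Lemma~\ref{lem:PerCommuns}, and (ii) the number $m$ of intervals of type $(a,b)$, via Remark~\ref{rem:PerNonCommun}; your proposal names neither.

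Second, even granting the combinatorial structure (the alternation of positive and negative intervals, Lemma~\ref{lem:DynAlt}, which gives $m=2k$; and the interval-mapping behaviour of $a$ on $X_b$, Lemma~\ref{lem:Preserv2}), one still needs to produce a deformation parameter $t$ for which $b_t a$ has a \emph{unique} fixed point in each of the $2k$ relevant intervals. This is a genuine analytic difficulty, resolved in the paper by Lemma~\ref{lem:EHN}, a measure-theoretic statement showing that the set of ``bad'' times $t$ has finite Lebesgue measure; your proposal contains nothing playing this role. Finally, your suggestion to fall back on Matsumoto's basic partitions is circular here: in this paper those are deployed only in the proof of Theorem~\ref{theo:SkRigid}, \emph{after} property $S_k$ is already in hand, whereas the whole point of the present theorem is to establish $S_k(a,b)$.
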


The upshot of these results is that, if a path-rigid and minimal representation
\emph{fails} to be geometric, then many curves are forced to have common
periodic points.
Common periodic points hint at the existence of a finite orbit for $\rho$, so we next look for a finite orbit in order to derive a
contradiction (indeed, representations with a finite orbit are easily seen
to be non-path-rigid).
This idea proves difficult to implement, 
so we search first for curves
with rotation number zero, as the dynamics of these are easier to control. 
This search can be performed separately in every one-holed torus in the surface, where
the action of the mapping class group is simple to work with. Accordingly,
a one-holed torus in $\Sigma_g$ is called a {\em good torus} if it
contains a nonseparating simple loop with rotation number zero; otherwise
we say it is a {\em bad torus}. Further, a one-holed torus is called {\em very good} if its fundamental
group has a finite orbit in $S^1$. We  prove:
\begin{proposition}\label{prop:BadToriIntro}
  Let $\rho$ be path-rigid.
  Suppose that $\Sigma_g$ contains a bad torus $\Sigma'$.
  Then its complement $\Sigma''$ contains only very good tori.
\end{proposition}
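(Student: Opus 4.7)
The plan is to argue by contradiction. Assume $\rho$ is path-rigid, $\Sigma' \subset \Sigma_g$ is a bad torus with complementary subsurface $\Sigma''$, and some one-holed torus $T \subset \Sigma''$ fails to be very good, i.e.\ $\rho(\pi_1 T)$ admits no finite orbit on $S^1$. The goal is to produce a continuous, nontrivial bending deformation of $\rho$ along the separating curve $c = \partial\Sigma' = \partial\Sigma''$ and to use it to contradict path-rigidity.

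The first step is to exploit the hypothesis on $\Sigma'$. By Proposition~\ref{prop:RatRotIntro}, every nonseparating simple loop has rational rotation number; the badness hypothesis adds that no such loop in $\Sigma'$ has rotation number zero. A structural analysis of $\rho|_{\pi_1\Sigma'}$ under this constraint (invoking Theorem~\ref{theo:PerDisjointsSkIntro}, Theorem~\ref{theo:SkRigid}, and Matsumoto's basic partition technology mentioned in the overview) should pin down the dynamics of the generators $a_1, b_1$ tightly enough that the centralizer $Z(\rho(c))$ of the boundary commutator $\rho(c) = \rho([a_1,b_1])$ in $\HOS$ contains a nontrivial continuous one-parameter family $\{\phi_t\}_{t \in [0,1]}$ with $\phi_0 = \id$.

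With such $\phi_t$ in hand, I would define the bending
\[ \rho_t|_{\pi_1\Sigma'} \;=\; \rho|_{\pi_1\Sigma'}, \qquad \rho_t|_{\pi_1\Sigma''} \;=\; \phi_t\,\rho|_{\pi_1\Sigma''}\,\phi_t^{-1}. \]
Because $\phi_t$ commutes with $\rho(c)$, the two sides glue, so $\rho_t$ is a continuous path in $\Hom(\Gamma_g, \HOS)$ starting at $\rho$. Path-rigidity forces each $\rho_t$ to be $\chi$-equivalent to $\rho$, so every rotation number, and more generally every semi-conjugacy invariant, must stay constant in $t$.

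The crux is then to choose an explicit nonseparating simple loop $\gamma \subset \Sigma_g$ crossing $c$ and penetrating $T$, written as an alternating word in $\pi_1\Sigma'$ and $\pi_1 T$, so that $\rho_t(\gamma)$ is a genuine twist of $\rho(\gamma)$ by $\phi_t$. One then argues that the invariance of $\rot(\rho_t(\gamma))$ along the bending, combined with the dynamical constraints on $\rho|_{\pi_1\Sigma'}$ coming from the badness of $\Sigma'$, forces $\phi_t$ to preserve a common finite invariant set of $\rho(\pi_1 T)$ for all $t$ -- equivalently, $\pi_1 T$ admits a finite orbit on $S^1$, contradicting the hypothesis that $T$ is not very good. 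The main obstacle is precisely this last implication: converting the rigidity of the rotation number $\rot(\rho_t(\gamma))$ into the existence of a finite orbit for $\pi_1 T$ requires a delicate dynamical argument, effectively showing that the only way the bending freedom provided by $Z(\rho(c))$ can be compatible with frozen rotation numbers is through the presence of a $\rho(\pi_1 T)$-invariant finite set.
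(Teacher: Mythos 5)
Your outline has a genuine gap, and it is the one you yourself flag at the end: the entire content of the proposition lives in the step you defer. Two concrete problems. First, the claim that badness of $\Sigma'$ can be analyzed via Theorems~\ref{theo:SkRigid} and~\ref{theo:PerDisjointsSkIntro} is off target: those results describe the $S_k$ picture (finitely many alternating periodic points, conjugacy into $\PSLk$), whereas a bad torus is the opposite dynamical regime. What the paper actually extracts from badness (Lemma~\ref{lem:SmallRot}, Proposition~\ref{prop:SmallRot}) is a sequence of standard generating pairs $(a_k,b_k)$ of $\Sigma'$ whose canonical lifts are ordered by domination and whose rotation numbers decrease to $0$ without ever reaching it; the periods $q(a_k)$ blow up, so no $S_k$-type structure is available. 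The largeness of the centralizer of $\rho([a_1,b_1])$ is a nontrivial \emph{output} of this analysis (Proposition~\ref{prop:Jump} shows $[a_1,b_1]$ pointwise fixes the infinite set $\AF_{\Sigma'}$ of points moved arbitrarily little by the $a_k$), not something you may assume at the outset.

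Second, the mechanism you propose --- freeze $\rot(\rho_t(\gamma))$ for a word $\gamma$ crossing $\partial\Sigma'$ and deduce a finite orbit for $\pi_1 T$ --- is not the mechanism that works, and you give no argument for the implication. A bending of the type you describe does occur in the paper (Step~2 of Proposition~\ref{prop:NoTwoBadTori}), but it is used only to perturb the jump function $j_{\Sigma'}$ while leaving that of a hypothetical second bad torus unchanged, contradicting the equality $j_{\Sigma'}=j_{T'}$ established in Step~1; this rules out two disjoint bad tori, so every torus in $\Sigma''$ is good. The passage from good to very good then has nothing to do with rotation numbers of mixed words: Lemmas~\ref{lem:OneBad} and~\ref{lem:FixBad} show, via the domination order, that every rotation-number-zero nonseparating simple closed curve outside $\Sigma'$ must fix $\AF_{\Sigma'}$ pointwise; applying this to $a'$ and to its deformations $b'_ta'$ and $a'_tb'$ for a good torus $T(a',b')\subset\Sigma''$ places the infinite set $\AF_{\Sigma'}$ inside $P(a',b')$, and Lemma~\ref{lem:ProprietesPNU}(1) then produces the finite orbit. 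The finite orbit is thus inherited from an explicit invariant set built out of the bad torus; without a concrete substitute for the construction of $\AF_{\Sigma'}$ and the domination arguments, your sketch does not constitute a proof.
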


\begin{proposition}\label{prop:NoTwoJustGoodIntro}
  Let $\rho$ be path-rigid, and non-geometric.
  Then there cannot exist two disjoint
  good tori that are not very good.
\end{proposition}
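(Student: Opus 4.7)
The plan is to argue by contradiction: assume two disjoint good-but-not-very-good tori $\Sigma_1,\Sigma_2\subset\Sigma_g$, and construct a bending deformation of $\rho$ along which some rotation number strictly varies. This contradicts path-rigidity, since along a path of $\chi$-equivalent representations in $\HOS$ every rotation number is constant. For each $i\in\{1,2\}$, goodness provides standard generators $(a_i,b_i)$ of $\pi_1\Sigma_i$ with $\rot(\rho(a_i))=0$, so that $F_i:=\Fix(\rho(a_i))\neq\emptyset$; not-very-goodness ensures $\pi_1\Sigma_i$ has no finite orbit on $S^1$, and in particular $\rho(b_i)$ does not preserve $F_i$ setwise.

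Since $a_i$ is nonseparating, we use the HNN decomposition $\pi_1\Sigma_g=\pi_1(\Sigma_g\setminus a_i)*_{\langle a_i\rangle}$ with $b_i$ as a stable letter. For any continuous family $\phi_i^t\in\HOS$ with $\phi_i^0=\id$ commuting with $\rho(a_i)$, the bending $\rho_i^t$ defined by $\rho_i^t(a_i)=\rho(a_i)$, $\rho_i^t(b_i)=\phi_i^t\rho(b_i)$, and $\rho_i^t=\rho$ on the remaining generators of $\pi_1(\Sigma_g\setminus a_i)$, is a well-defined path of representations. The hypothesis $F_i\neq\emptyset$ guarantees plenty of nontrivial such flows $\phi_i^t$, for instance supported in a complementary interval of $F_i$. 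Because $a_1$ and $a_2$ are disjoint, with $\Sigma_2\subset\Sigma_g\setminus a_1$ and vice versa, the two bendings can be carried out simultaneously, giving a two-parameter family $\rho^{s,t}$.

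The core of the argument is to show that some $\rot(\rho^{s,t}(\gamma))$ is non-constant. Path-rigidity would force $\rot(\phi_i^t\rho(b_i))$ to be constant in $t$ for every admissible choice of $\phi_i^t$, a strong semi-conjugacy constraint on the dynamics of $\rho(b_i)$ relative to $F_i$. Following the template of Theorem~\ref{theo:SkRigid}, I would invoke Matsumoto's basic partition machinery (as recalled in Section~\ref{subsec:Matsumoto}) to translate this constancy into a genuine semi-conjugacy between $\rho$ and $\rho_i^t$, and then extract from that semi-conjugacy a finite invariant set for $\pi_1\Sigma_i$---contradicting the assumption that $\Sigma_i$ is not very good. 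The two-torus setup is essential here: with only one good-not-very-good torus, a compensating deformation of $\rho$ on the complementary subsurface could potentially absorb the bending; the simultaneous independent bendings from $\Sigma_1$ and $\Sigma_2$ leave no room for such absorption, since any compensation on $\Sigma_g\setminus\Sigma_1$ must also respect the bending originating from $\Sigma_2\subset\Sigma_g\setminus\Sigma_1$. The non-geometric hypothesis is used to exclude the final alternative that these combined semi-conjugacies assemble into the rigidity coming from an underlying geometric structure.

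The main obstacle is this last rigidity-to-finite-orbit conversion. Concretely, one must control how Matsumoto's basic partitions for the generators of each $\pi_1\Sigma_i$ interact with a bending along the other $a_j$, and show that the only way for all rotation numbers to be preserved along the two-parameter family is for $\pi_1\Sigma_i$ to stabilize a finite subset of $S^1$ for at least one $i$. I expect this to require a careful synthesis of the techniques developed for Proposition~\ref{prop:BadToriIntro} together with the $S_k$-dichotomy of Theorem~\ref{theo:PerDisjointsSkIntro}, applied separately inside each torus and then patched together through the disjointness of their supports.
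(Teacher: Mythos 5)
Your overall setup (bending along $a_i$ using $\rot(\rho(a_i))=0$ and exploiting the disjointness of the two tori) matches the paper's starting point, but your endgame has a genuine gap. You propose to show that constancy of rotation numbers under bending forces a finite invariant set for some $\pi_1\Sigma_i$, contradicting not-very-goodness. That implication is false as stated: a Fuchsian (or $\PSLk$) geometric representation is rigid, so every bending deformation preserves all rotation numbers, yet no one-holed torus subgroup has a finite orbit there. So ``all rotation numbers constant under admissible bendings'' cannot by itself produce a finite orbit; the true dichotomy is \emph{finite orbit or geometric-like dynamics}, and your one-sentence appeal to the non-geometric hypothesis to ``exclude the final alternative'' is exactly where all the content of the proof must live --- you supply no mechanism for it.

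The paper makes this dichotomy precise along a different route. Not-very-goodness translates, via Lemma~\ref{lem:ProprietesPNU}~(1), into $P(a,b)=\emptyset$ and $P(e,d)=\emptyset$ for suitable generators with $\rot(a)=\rot(e)=0$. Completing to a $5$-chain $(a,b,c,d,e)$, the emptiness of $P(e,d)$ provides the freedom to deform $d$ so that the periodic set of the deformed curve avoids the finite set $\partial N(a,b)$ (finiteness coming from Lemma~\ref{lem:PasDAcc}); Lemma~\ref{lem:DeuxAlorsQuatre} then gives $\Per(b)\cap\Per(c)=\emptyset$, Theorem~\ref{theo:PerDisjointsSk} upgrades this to $S_k(b,c)$, Proposition~\ref{prop:PasDeuxBons} to $S_k^{++}(b,c)$, and Corollary~\ref{cor:TwoCurvesSuffice} propagates $S_k$ over the whole surface to conclude that $\rho$ is geometric, contradicting the hypothesis. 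Note also that the role of the second torus is not to block a ``compensating deformation'' as you suggest, but to supply the extra bending freedom needed to separate the periodic sets of $b$ and $c$; your two-parameter family and the Matsumoto machinery, as you deploy them, do not substitute for this chain of lemmas.
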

\begin{theorem}\label{theo:VeryGoodFiniteOrbitIntro}
  Let $\rho$ be a path-rigid representation.
  Let $\Sigma_{g',1}$ be a subsurface in which all tori are very good.
  Then $\pi_1\Sigma_{g',1}$ has a finite orbit.
\end{theorem}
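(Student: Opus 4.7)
My approach is to induct on the genus $g'$. The base case $g'=1$ is immediate: $\Sigma_{1,1}$ is itself a one-holed torus, so the hypothesis that it is very good is exactly the desired conclusion.

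For $g'\geq 2$, the crux is the following compatibility lemma: any two very good one-holed tori $T,T'\subset\Sigma_{g',1}$ whose fundamental groups share a non-separating simple closed curve $\alpha$ admit a common finite invariant set under $\langle \pi_1 T,\pi_1 T'\rangle$. Granting this lemma, a standard topological argument (using the handle decomposition of $\Sigma_{g',1}$ together with auxiliary one-holed tori spanning simple closed curves that cross between handles) yields a finite chain $T_0,\ldots,T_k$ of one-holed tori in which consecutive tori share a simple closed curve and whose fundamental groups collectively generate $\pi_1\Sigma_{g',1}$. Iterating the compatibility lemma along this chain produces a single finite invariant set for $\pi_1\Sigma_{g',1}$.

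The compatibility lemma should be proved by contradiction via bending deformations and path-rigidity. Both finite orbits $F$ of $\pi_1 T$ and $F'$ of $\pi_1 T'$ lie inside $\Per(\rho(\alpha))$, and by Proposition~\ref{prop:RatRotIntro} the element $\rho(\alpha)$ has rational rotation number. Assuming no common finite invariant set exists, I would construct a non-trivial one-parameter family $\phi_t\in\HOS$ centralizing $\rho(\alpha)$ (for instance by rotating periodic orbits of $\rho(\alpha)$ in the gaps of $\Per(\rho(\alpha))$) and define a bending deformation $\rho_t$ of $\rho$ by conjugating $\rho|_{\pi_1 T'}$ by $\phi_t$ while leaving $\rho$ on a complementary subsurface of $\Sigma_g$ untouched. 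For a suitable choice of $\phi_t$, the path $\rho_t$ moves $F'$ relative to $F$, altering the cyclic ordering of periodic points of some standard generator pair, which is a semi-conjugacy invariant. Thus $\rho_t$ leaves the $\chi$-equivalence class of $\rho$ for small $t>0$, contradicting path-rigidity.

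The main obstacle is ensuring the bending defines an honest path in $\Hom(\Gamma_g,\HOS)$, not merely in $\Hom(\pi_1\langle T,T'\rangle,\HOS)$. This requires the bending to be performed along a separating curve of $\Sigma_g$ bounding a subsurface containing $T'$ on one side, which may mean replacing $\alpha$ with an auxiliary separating curve such as the boundary of a regular neighborhood of $T'$ in $\Sigma_g$. A further delicate point is to verify that the prescribed change in cyclic ordering of periodic points is genuinely detected by the $\chi$-equivalence relation: here one can appeal to Matsumoto's basic partition machinery (already invoked in the proof of Theorem~\ref{theo:SkRigid}) to translate relative positions of periodic points into combinatorial invariants of semi-conjugacy classes.
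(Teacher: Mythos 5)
Your high-level plan (chain one-holed tori together and use bending plus path-rigidity to force their finite orbits to be compatible) is in the right spirit, but there are two genuine gaps, and they sit exactly where the paper has to do its real work. First, the iteration step does not follow from your pairwise compatibility lemma: even if each adjacent pair $(T_j,T_{j+1})$ admits a common finite invariant set, the set produced for $(T_0,T_1)$ and the one for $(T_1,T_2)$ need not be related, so ``iterating along the chain'' does not yield a single finite orbit for the whole group. What is actually needed is the stronger inductive statement that a subgroup already known to have a finite orbit, together with the next torus in the chain, still has one. The paper's Proposition~\ref{prop:MagicProposition} proves exactly this, and it requires a non-trivial limiting argument: one produces, for each $n$, a bending deformation $c(n)\gamma_{k-1}$ with $\Fix(c(n)\gamma_{k-1})\cap\Per(a)$ contained in a $1/n$-neighborhood of $P(\gamma_{k-1},\gamma_k)$ (using Proposition~\ref{prop:Promenade}), extracts full orbits $\mathcal{O}_n$, and passes to a pointwise limit. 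Moreover, to even set this up one must first normalize the generating system so that the relevant curves have rotation number \emph{zero} and consecutive pairs have global \emph{fixed} points, not merely finite orbits; this is the content of Proposition~\ref{prop:SiBonTresBon} (a Dehn-twist computation on a $5$-chain), which your proposal skips entirely.

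Second, the proposed proof of the compatibility lemma does not work as described. A one-parameter family $\phi_t$ commuting with $\rho(\alpha)$ necessarily fixes $\partial\Per(\rho(\alpha))$ pointwise (Lemma~\ref{lem:PosDeform}), and both $F$ and $F'$ are finite $\rho(\alpha)$-invariant sets, hence lie in $\Per(\rho(\alpha))$; so ``rotating periodic orbits of $\rho(\alpha)$ in the gaps of $\Per(\rho(\alpha))$'' does not move $F'$ relative to $F$ at all whenever $\Per(\rho(\alpha))$ has empty interior. Switching to a bending along $\partial T'$ does not repair this: since $T$ and $T'$ share $\alpha$, the subgroup $\pi_1 T$ is not contained in either side of the splitting along $\partial T'$, so $F$ is not left ``untouched'' by the deformation and the relative motion of $F$ and $F'$ is uncontrolled. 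Finally, a change in the cyclic order of periodic points is not by itself a semi-conjugacy obstruction; the invariant the paper actually uses is that two elements \emph{sharing a periodic point} is preserved under semi-conjugacy (Lemma~\ref{lem:PerCommuns}), and to get a contradiction one must verify that the relevant pair does \emph{not} already share a periodic point at time $t=0$ --- a point your sketch does not address.
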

These three last statements show that if $\rho$ is a path-rigid and
non-geometric representation, then it has a subsurface of genus $g-1$
with a finite orbit; the statement about the Euler class in
Theorem~\ref{theo:Main} then follows easily.
\smallskip

\noindent \textit{Conclusion.}
Provided $g\geq 3$, Theorem~\ref{theo:VeryGoodFiniteOrbitIntro} implies 
that if $\rho$ is path-rigid and non-geometric, then there exist 
curves $a,b$, generating a torus subsurface of $\Sigma_g$,
such that $\rho(a)$ and $\rho(b)$ have a common fixed point. 
It then follows from a recent theorem of Alonso, Brum and Rivas \cite{ABR} 
that $\rho$ cannot be rigid.  However, path-rigidity and the genus $g = 2$ case do not follow, so we prove an independent, 
simple lemma on rigid representations that shows all torus subsurfaces have only finitely many finite orbits. This
applies to all genera, and allows us conclude the proof of Theorem~\ref{theo:RigidGeom}.  

The article is organized as follows. Section~\ref{sec:Prelim}
introduces tools and frameworks that will be frequently used in the proof.  
We review background and prove new results on complexes of based curves; then prove a series of results on the movement of periodic sets under
specific bending deformations; and finally discuss character spaces, semi-conjugacy, and the Euler class.  
In Section~\ref{sec:Exercices} we prove Theorem~\ref{theo:SkRigid}.
In Section~\ref{sec:Per} we prove Proposition~\ref{prop:RatRotIntro}
and Theorem~\ref{theo:PerDisjointsSkIntro}.
The proof of Theorem~\ref{theo:Main} is then completed in
Section~\ref{sec:Section5}.
Finally, in Section~\ref{sec:Fin} we complete the proof of Theorem~\ref{theo:RigidGeom}
and state some open questions and directions for further work.

\begin{acknowledge}
This work was started at MSRI during spring 2015 at a program supported by NSF grant 0932078.
Both authors also acknowledge support of the U.S. National Science Foundation grants
DMS 1107452, 1107263, 1107367 ``RNMS: Geometric Structures and Representation
Varieties'' (the GEAR network).
K. Mann was partially supported by NSF grant DMS-1606254, and thanks the Inst. Math. Jussieu and Foundation Sciences Math\'ematiques de Paris.   Parts of this work were written as M. Wolff was visiting the Institute for Mathematical Sciences, NUS, Singapore, and the Universidad de la Rep\'ublica, Montevideo, Uruguay; he wants to thank them for
their great hospitality.
\end{acknowledge}


\section{Preliminaries}\label{sec:Prelim}



\subsection{Based curves on surfaces}\label{ssec:Marquage}

This subsection should seem familiar to low
dimensional topologists, except that we will give much more attention to
{\em based} curves than is usually present in the literature.
As in the introduction, we use the notation $\Gamma_g = \pi_1\Sigma_g$.
While this notation omits mention of a basepoint, all elements of $\Gamma_g$ are always assumed based.
This is crucial -- for example, we recall, as a warning, that the set of {\em based} simple closed
loops, in $\Gamma_g$, is not closed under conjugation.   We now set some conventions.  

Since we are interested in actions of $\Gamma_g$ by homeomorphisms on
the circle, we will write words in $\Gamma_g$ (ie, products of loops by
concatenation) from right to left, in the same order as composition of
homeomorphisms. We also fix the commutator notation to be
$[a,b]:=b^{-1}a^{-1}ba$.

The based curves $(a_1,b_1,\ldots,a_g,b_g)$, depicted in Figure~\ref{fig:BaseStandard}, are called a {\em standard system of loops}, and give the following standard presentation
of~$\Gamma_g$:

\[
\Gamma_g=\langle a_1,b_1,\ldots,a_g,b_g\,|\,[a_g,b_g]\cdots[a_1,b_1]=1\rangle.
\]
\begin{figure}[hbt]
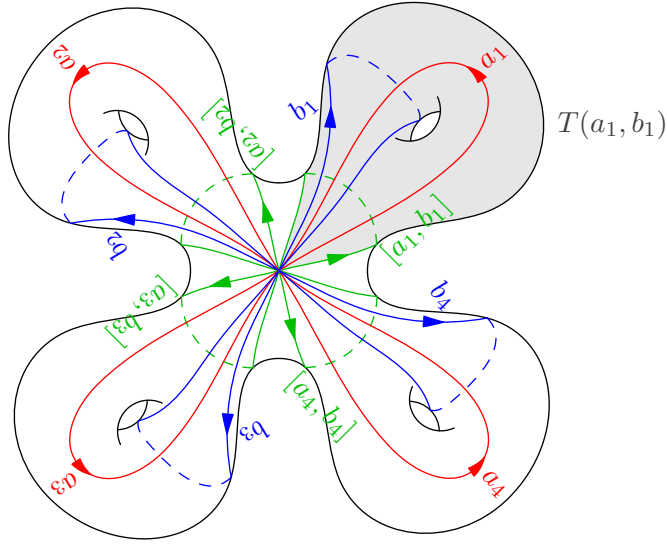

\begin{asy}
  import geometry;
  unitsize(1pt);
  
  picture anse0, anse1, anse2, anse3, anse4;
  
  real r = 30, R = 110, ct = 70; //
  point p1 = 74*dir(-29), p2 = 74*dir(32);
  path GC = r*dir(-45){dir(45)}..p1..(R,0){up}..p2..r*dir(45){dir(135)}; //
  path BT = (ct-10,4)..(ct,-3)..(ct+10,4); //
  path HT = relpoint(BT,0.2)..(ct,2.6)..relpoint(BT,0.8);
  path c11 = (0,0){dir(30)}..relpoint(GC,0.96){dir(40)}; //
  path c12 = relpoint(GC,0.96){dir(-45)}..relpoint(GC,0.04){dir(-135)}; //
  path c13 = relpoint(GC,0.04){dir(160)}..(0,0){dir(150)};
  path b1 = (0,0){dir(-12)}..(ct,-16)..(95,0)..(ct,16)..(0,0){dir(186)};
  path a11 = (0,0){dir(20)}..30*dir(20)..relpoint(GC,0.8);
  path a12 = relpoint(GC,0.8){dir(0)}..(ct+3,10)..relpoint(HT,0.5){dir(-150)};
  path a13 = relpoint(HT,0.5){dir(150)}..35*dir(5)..(0,0){dir(185)};
  
  path d11 = (0,0){dir(-7)}..40*dir(-7)..relpoint(BT,0.3); //
  path d12 = relpoint(BT,0.3){dir(-45)}..(ct-8,-23){left}..20*dir(-45){dir(-135)}..ct*dir(-96){down}..(-I)*relpoint(GC,0.6);
  path d13 = (-I)*relpoint(GC,0.6){dir(50)}..50*dir(-70)..(0,0){dir(104)};
  
  path GrandContourT = buildcycle(c11, reverse(GC), c13);
  path PetitContourT = buildcycle(BT,reverse(HT));
  fill(anse0, GrandContourT, lightgrey); fill(anse0, PetitContourT, white);
  
  draw(anse1, GC); draw(anse1, BT); draw(anse1, HT); draw(anse1, c11, heavygreen);
  draw(anse1, c12, heavygreen+dashed); draw(anse1, reverse(c13), heavygreen, Arrow(Relative(0.7)));
  draw(anse1, b1, red, Arrow(Relative(0.52)));
  draw(anse1, a11, blue, Arrow(Relative(0.8)));
  draw(anse1, a12, blue+dashed); draw(anse1, a13, blue);
  add(anse2, anse1); add(anse3, anse1); add(anse4, anse1);
  
  label(anse1, "\small $a_1$",(103,0),red);
  label(anse1, "\small $b_1$",relpoint(a11,0.8),NW,blue);
  label(anse1, "\small $[a_1,b_1]$",relpoint(c13,0.1),SE,heavygreen);
  label(anse2, "\small $a_4$",(103,0),red);
  label(anse2, "\small $b_4$",relpoint(a11,0.8),NW,blue);
  label(anse2, "\small $[a_4,b_4]$",relpoint(c13,0.1),SE,heavygreen);
  label(anse3, "\small $a_3$",(103,0),red);
  label(anse3, "\small $b_3$",relpoint(a11,0.8),NW,blue);
  label(anse3, "\small $[a_3,b_3]$",relpoint(c13,0.1),SE,heavygreen);
  label(anse4, "\small $a_2$",(103,0),red);
  label(anse4, "\small $b_2$",relpoint(a11,0.8),NW,blue);
  label(anse4, "\small $[a_2,b_2]$",relpoint(c13,0.1),SE,heavygreen);
  
  add(scale(1.1)*rotate(45)*anse0,(0,0));
  add(scale(1.1)*rotate(45)*anse1,(0,0));
  add(scale(1.1)*rotate(-45)*anse2,(0,0));
  add(scale(1.1)*rotate(-135)*anse3,(0,0));
  add(scale(1.1)*rotate(135)*anse4,(0,0));
  label("\small $T(a_1,b_1)$", (125, 55), heavygrey);
\end{asy}
\caption{Standard generators on the genus $g$ surface ($g=4$)}
\label{fig:BaseStandard}
\end{figure}

We will make extensive use of systems of curves that look like those in Figure~\ref{fig:Chaine}.
\begin{figure}[hbt]
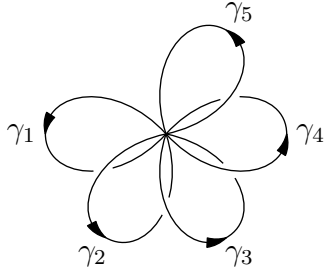

\begin{asy}
  import geometry;
  real t = 48;
  real T=60;
  //
  path courba = (0,0){dir(180-t)}..(-45,0)..(0,0){dir(t)};
  path courbb = rotate(T)*courba, courbc = rotate(T)*courbb;
  path courbd = rotate(T)*courbc, courbe = rotate(T)*courbd;
  //
  point pointab = intersectionpoint(subpath(courba,0.1,1.9),courbb);
  point pointbc = intersectionpoint(subpath(courbb,0.2,1.8),courbc);
  point pointcd = intersectionpoint(subpath(courbc,0.2,1.8),courbd);
  point pointde = intersectionpoint(subpath(courbd,0.2,1.8),courbe);
  //
  draw("$\gamma_1$",courba,Arrow(Relative(0.5))); dot(pointab,7pt+white);
  draw("$\gamma_2$",courbb,Arrow(Relative(0.5))); dot(pointbc,7pt+white);
  draw("$\gamma_3$",courbc,Arrow(Relative(0.5))); dot(pointcd,7pt+white);
  draw("$\gamma_4$",courbd,Arrow(Relative(0.5))); dot(pointde,7pt+white);
  draw("$\gamma_5$",courbe,Arrow(Relative(0.5)));
\end{asy}
\caption{A directed chain of length $5$}
\label{fig:Chaine}
\end{figure}
Accordingly, we will say that a tuple $(\gamma_1,\ldots,\gamma_k)$ of elements
of $\Gamma_g$ is an {\em oriented, directed $k$-chain} if these elements of
$\Gamma_g$ can be realised by differentiable based loops,
$[0,1]\to\Sigma_g$, that do not intersect outside the base point, and with
cyclic order
$(\gamma_1'(0),\gamma_2'(0),-\gamma_1'(1),\gamma_3'(0),-\gamma_2'(1),\gamma_4'(0),\ldots,-\gamma_k'(1))$. 
In other words, an oriented, directed $k$-chain is a $k$-tuple of loops
arising from an orientation-preserving embedding of the graph of
Figure~\ref{fig:Chaine} (note that we do not require this embedding to be
$\pi_1$-injective). If we do not insist that the embedding be
orientation-preserving, we call it a {\em directed $k$-chain}, and, similarly, 
 $(\gamma_1,\ldots,\gamma_k)$ is simply a {\em $k$-chain} if
there exist signs $\epsilon_1,\ldots,\epsilon_k$ such that
$(\gamma_1^{\epsilon_1},\ldots,\gamma_k^{\epsilon_k})$ is a directed $k$-chain.
Also, we will say that a (oriented and/or directed) $k$-chain is
{\em completable} if it sits in the middle of a (orientable and/or directed)
$k+2$-chain.

For example, $(a_1^{-1}b_1a_1,a_1,b_1^{-1})$ is a non-completable $3$-chain
in $\Sigma_g$, and the collection
$(a_1,\delta_1,a_2,\delta_2,\ldots,\delta_{g-1},a_g,b_g^{-1})$
(as well as its sub-chains), where we have set
$\delta_i=a_{i+1}^{-1}b_{i+1}a_{i+1}b_i^{-1}$, forms a directed chain.
Also, the family $(a_1^{-1}b_1a_1,a_1,\delta_1,a_2,b_2^{-1})$ forms a
(non-completable) $5$-chain that will be handy in
Section~\ref{ssec:OrbitesFinies}.

If two simple closed loops $a,b\in\Gamma_g$ do not intersect outside of the
base point, we will write $i(a,b)=1$ if $(a,b)$ is an oriented, directed
$2$-chain, and we will write $i(a,b)=-1$ if $i(b,a)=1$. Otherwise we will
write $i(a,b)=0$; if $a$ and $b$ are nonseparating; this is equivalent to the
existence of a curve $c$ such that $(a,c,b)$ is a $3$-chain.  Though reminiscent of the algebraic intersection number,  
$i(a,b)$ is an {\em ad hoc} definition, as we do not
define $i(a,b)$ for most pairs $(a,b)$ of elements of $\Gamma_g$.

Finally, if two curves $a,b\in\Gamma_g$ satisfy $i(a,b)=\pm 1$, we will
denote by $T(a,b)$ the genus $1$ subsurface of $\Sigma_g$ defined by $a$ and
$b$ (Figure~\ref{fig:BaseStandard} illustrates $T(a_1,b_1)$). While
$T(a,b)$ is only defined up to based homotopy, it still makes sense to
say, for example, that a curve $\gamma$ is {\em disjoint} from $T(a,b)$,
if $i(a,\gamma)=i(b,\gamma)=i([a,b],\gamma)=0$.

We conclude this paragraph with
some considerations on complexes of pairs of based curves.
\begin{lemma}\label{lem:BallonsConnexe}
  Let $G_0$ denote graph whose vertices are the pairs $(a,b)\in\Gamma_g^2$ with
  $i(a,b)=\pm 1$, with an edge between two pairs $(a,b)$ and $(b,c)$
  whenever $(a,b,c)$ is a $3$-chain.  Then $G_0$ is connected.
\end{lemma}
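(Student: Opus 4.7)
The plan is to prove connectedness by showing every vertex of $G_0$ is connected to the standard vertex $(a_1,b_1)$.

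My first step would be a \emph{completability lemma}: for every vertex $(a,b)$ of $G_0$, there exist based loops $c,c'$ such that $(c',a,b)$ and $(a,b,c)$ are 3-chains. The hypothesis $g \geq 2$ implies that $T(a,b)$ is a one-holed torus whose complement in $\Sigma_g$ has genus $g-1 \geq 1$, meeting $T(a,b)$ along a boundary circle through the basepoint. A sufficiently small based simple loop in this complement, placed on the correct side of $b$ (respectively $a$) with the correct tangent direction at the basepoint, gives the required $c$ (respectively $c'$). In particular every vertex of $G_0$ has at least one neighbor.

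My main construction would then handle two vertices $(a,b)$ and $(a',b')$ whose subsurfaces $T(a,b)$ and $T(a',b')$ can be made disjoint up to based homotopy. In this case I would find an auxiliary based loop $c$ in the common complement such that, after possibly replacing some entries by their inverses, $(a,b,c,a',b')$ forms a 5-chain. This yields the edge-path
\[ (a,b) \;\text{---}\; (b,c) \;\text{---}\; (c,a') \;\text{---}\; (a',b') \]
in $G_0$. It remains to connect an arbitrary vertex $(a,b)$ to some vertex whose torus subsurface is disjoint from $T(a_1,b_1)$. For $g \geq 3$, one completion step suffices: choose the completing curve $c$ to lie deep in $\Sigma_g \setminus (T(a,b) \cup T(a_1,b_1))$, which is non-empty since two one-holed tori do not fill a genus $\geq 3$ surface. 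For $g = 2$, the complement of $T(a,b)$ is itself a one-holed torus; here I would iterate the completion procedure, exploiting that an edge move from $(a,b)$ to $(b,c)$ effectively performs a Dehn-twist-type change of torus subsurface, so that finitely many iterations bring us to a vertex whose torus is disjoint from $T(a_1,b_1)$.

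The main technical obstacle will be the bookkeeping of signs $i(\cdot,\cdot) \in \{\pm 1\}$ and cyclic orders of tangent vectors at the basepoint throughout the above constructions. In particular, handling the case $i(a,b) = -1$ may require one additional edge-step to reverse orientation, realized by a carefully chosen completion in which the auxiliary loop $c$ is placed on the opposite side of $b$ from what the $i(a,b)=+1$ case requires. Once these cases are checked, the combination of the completability lemma and the explicit disjoint-tori path yields connectedness of $G_0$ for all $g \geq 2$.
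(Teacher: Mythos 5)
Your completability lemma and the disjoint-tori move (producing the path $(a,b)\text{ -- }(b,c)\text{ -- }(c,a')\text{ -- }(a',b')$ from a $5$-chain) are both sound and are in the same spirit as the first observation in the paper's proof of Lemma~\ref{lem:BallonsConnexe2}. The gap is in the reduction step, which is asserted rather than proved, and which is in fact the entire content of the lemma. For $g\geq 3$ your claim that ``one completion step suffices'' is wrong: an edge out of $(a,b)$ leads to a vertex of the form $(b,c)$, which still contains the original curve $b$; since $b$ may intersect $a_1$ and $b_1$ arbitrarily many times away from the basepoint, $T(b,c)$ is not disjoint from $T(a_1,b_1)$ no matter how $c$ is chosen. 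Even with two steps $(a,b)\to(b,c)\to(c,d)$ you would need a simple based loop $c$ meeting $b$ exactly once at the basepoint while avoiding $a_1\cup b_1$ entirely, and there is no reason such a $c$ exists when $b$ crosses $a_1\cup b_1$ many times. (The auxiliary claim that two one-holed tori cannot fill a surface of genus $\geq 3$ is also unjustified: each torus is a regular neighborhood of a filling-candidate pair of curves, and four curves can certainly fill a genus $3$ surface if the intersection pattern is complicated enough.) For $g=2$ you openly defer to ``finitely many iterations,'' with no complexity that decreases, so nothing terminates.

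What is missing is precisely the inductive engine that the paper supplies. The paper first proves that the graph $G_1$ of based nonseparating simple loops (edges given by $i=\pm1$) is connected, by passing to the arc graph of the punctured surface and promoting paths; it then proves (Observation~\ref{obs:Ballons2}) that two vertices $(a,b)$ and $(a',b)$ sharing a curve are connected in $G_0$, by induction on the number of intersection points of $a$ and $a'$ outside the basepoint, using an explicit surgery that replaces $a'$ by a hybrid curve $a''$ meeting both $a$ and $a'$ fewer times. Concatenating these two facts gives the lemma. To repair your argument you would need to replace the ``get disjoint from $T(a_1,b_1)$ in one step / finitely many steps'' claim by some such induction on geometric intersection number (or an appeal to connectivity of an arc or curve complex); without it the proof does not go through.
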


The main results of this article do not depend on this lemma, as
we will simply need to work on a connected component of this graph (our proof
in the companion article \cite{AuMoinsG} follows this strategy).  However, the lemma is quite elementary, so here we take the honest approach of giving the proof and using the whole connected graph instead of making reference to a connected component.

The proof of Lemma \ref{lem:BallonsConnexe} is divided into two main observations.  
It essentially copies the proof of Proposition~6.7 of~\cite{Mod2}, but corrects a minor mistake there, where the complex
of {\em based} curves should have been used instead of the standard curve complex.

\begin{observation}\label{obs:Ballons1}
  Let $G_1$ be the graph whose vertices are the elements of $\Gamma_g$ 
  represented by simple, non-separating curves, and with edges between $a$ and $b$ if and only if $i(a,b)=\pm 1$.
  Then $G_1$ is connected.  
\end{observation}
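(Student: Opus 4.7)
The plan is to compare $G_1$ with its classical unbased analogue $\overline{G_1}$: the graph whose vertices are free homotopy classes of simple non-separating closed curves on $\Sigma_g$, with an edge between two classes whenever they have geometric intersection number one. For $g\geq 2$, the graph $\overline{G_1}$ is well known to be connected (see \cite{Mod2}). The natural forgetful map $\pi\colon G_1\to\overline{G_1}$ is surjective on vertices, so it suffices to check that (a) every edge of $\overline{G_1}$ lifts to an edge of $G_1$, and (b) any two based representatives of the same free homotopy class lie in a single connected component of $G_1$.

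For (a), suppose $[c_1]$ and $[c_2]$ are joined by an edge of $\overline{G_1}$, realized by simple closed curves meeting transversely at a single point $p$. Choose a simple arc $\sigma$ from the basepoint to $p$ that meets $c_1\cup c_2$ only at its endpoint $p$. Then the based loops $\sigma c_1\sigma^{-1}$ and $\sigma c_2\sigma^{-1}$ are simple, meet only at the basepoint, and have alternating tangent vectors there. In the notation of this section, this means the pair satisfies $i(\cdot,\cdot)=\pm 1$, giving an edge in $G_1$.

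For (b), let $\alpha,\alpha'\in\Gamma_g$ be two based representatives of a common free homotopy class, so $\alpha'=\eta^{-1}\alpha\eta$ for some $\eta\in\Gamma_g$. My approach would be induction on a suitable notion of complexity of $\eta$, for instance the minimal number of intersections away from the basepoint between $\eta$ and $\alpha$, or the word length of $\eta$ in a fixed standard generating set. In the base case one produces a common partner $\beta\in\Gamma_g$ satisfying $i(\alpha,\beta)=\pm 1$ and $i(\alpha',\beta)=\pm 1$, exhibiting a path $\alpha\sim\beta\sim\alpha'$ in $G_1$; the inductive step factors $\eta=\eta_1\eta_2$ and passes through the intermediate based loop $\eta_2^{-1}\alpha\eta_2$, reducing to shorter conjugators.

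Combining (a) and (b) along a path in $\overline{G_1}$ joining the $\pi$-images of two given vertices of $G_1$ produces the desired path in $G_1$. The main obstacle is step (b): controlling the effect of conjugation by an arbitrary element of $\Gamma_g$ on a simple based loop, while maintaining the $2$-chain adjacency relation needed for edges in $G_1$.
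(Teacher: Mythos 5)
Your step (a) is fine in substance (modulo the small point that $\sigma c_i\sigma^{-1}$ is not literally an embedded loop but is based-homotopic to one, obtained by dragging the intersection point $p$ along $\sigma$ to the basepoint), and the overall architecture --- reduce to the connected unbased graph $\overline{G_1}$ and then deal with the fibers of the forgetful map --- could in principle work. But step (b), which you yourself identify as the main obstacle, is where the actual content of the statement lives, and it is not proved. Worse, the induction you sketch is structurally flawed: the intermediate element $\eta_2^{-1}\alpha\eta_2$ need not be represented by a simple based loop at all --- the paper explicitly warns at the start of Section~\ref{ssec:Marquage} that the set of based simple closed loops in $\Gamma_g$ is \emph{not} closed under conjugation --- so your intermediate stop may not even be a vertex of $G_1$, and the base case (producing a common neighbour $\beta$ of two based representatives of the same free class) is asserted rather than constructed. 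As it stands, (b) is a genuine gap, not a routine verification.

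The paper's proof avoids the fiber problem entirely by changing graphs rather than quotienting: it enlarges $G_1$ to the graph $G_3$ of all based simple loops (separating or not) with edges given by disjointness away from the basepoint, identifies $G_3$ with the arc graph of the once-punctured surface $\Sigma_g^1$ (whose connectivity is classical, e.g.\ by Hatcher's surgery argument), and then promotes a path in $G_3$ first to one through nonseparating curves and finally to one realizing $i(\cdot,\cdot)=\pm1$, using a pair-of-pants neighbourhood of two disjoint based nonseparating loops to insert an intermediate curve. If you want to salvage your approach, the honest fix for (b) is essentially to redo this kind of arc-graph or surgery argument inside each fiber, at which point you have reproduced the paper's proof with extra steps; working with based loops as arcs in the punctured surface from the outset is the cleaner route.
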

\begin{proof}
  Let $G_2$ be the graph with the same vertices, but with edge between
  $a$ and $b$ whenever $i(a,b)$ is well defined.
  Let $G_3$ be the graph with vertex set consisting of the elements
  of $\Gamma_g$ represented by simple curves (possibly separating),
  with an edge between $a$ and $b$ whenever $i(a,b)$ is well defined.

  By drilling a puncture in $\Sigma_g$ at the base point, $G_3$
  can be identified with the arc graph of the surface $\Sigma_g^1$, which is
  well-known to be connected (see eg \cite{Hatcher91}). 
  Given a path in $G_3$ between two vertices of $G_2$, every time a separating curve appears 
  we may either delete it or
  replace it by a nonseparating curve, producing a new path in $G_2$. 
  Thus, $G_2$ is connected.
  Finally, we prove that any path in $G_2$ can be promoted to a path
  in $G_1$.  Let
  $a_1-a_2$ be an edge of $G_2$ which is not in $G_1$, ie,
  we have $i(a_1,a_2)=0$. Then a neighborhood of
  the curves $a_1$ and $a_2$ in $\Sigma_g$ is a pair of pants $P$, with three
  boundary components, freely homotopic to $a_1$, $a_2$ and $a_1a_2^{\pm 1}$.
  If $\Sigma$, $\Sigma'$ and $\Sigma''$ are, respectively, the connected
  components of $\Sigma_g\smallsetminus P$ separated from $P$ by $a_1$, $a_2$
  and $a_1a_2^{\pm 1}$, then we cannot have $\Sigma'\neq\Sigma''$, for
  otherwise $a_1$ or $a_2$ would be separating. Hence, there exists a curve
  $b$ such that $a_1-b-a_2$ is a path in $G_1$.
\end{proof}
\begin{observation}\label{obs:Ballons2}
  Let $a,b$ and $a'$ be such that $i(a,b)=\pm 1$ and $i(a',b)=\pm 1$.
  Then $(a,b)$ and $(a',b)$ lie in the same connected component of the graph $G_0$ from
  Lemma~\ref{lem:BallonsConnexe}.
\end{observation}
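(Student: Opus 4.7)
The strategy is to exhibit an explicit length-two path
\[
(a,b) \;-\; (b,c) \;-\; (a',b)
\]
in $G_0$, where $c$ is an auxiliary simple based loop chosen so that both $(a,b,c)$ and $(a',b,c)$ are $3$-chains. The entire argument then reduces to constructing such a $c$.

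Concretely, $c$ must be a simple based loop in $\Sigma_g$ that is disjoint from $a \cup a'$ outside the basepoint, meets $b$ exactly once at the basepoint, and whose tangent vectors at the basepoint can be oriented to satisfy the prescribed cyclic order for both triples. I would produce $c$ as follows. Cut $\Sigma_g$ open along $b$: since $i(a,b) = \pm 1$ makes $b$ non-separating, the result is a surface $S$ of genus $g-1$ with two boundary circles identified at the basepoint, in which $a$ and $a'$ become properly embedded arcs joining one boundary circle to the other. Because $g \geq 2$, the surface $S$ has positive genus, and a standard cut-and-paste argument on the arc complex (iteratively cutting $S$ along $a$ and then along $a'$, noting that each cut along a boundary-to-boundary arc merely increases Euler characteristic by one while preserving enough topological complexity) yields a properly embedded arc $c$ in $S$ from one boundary circle to the other, disjoint from $a \cup a'$ outside the basepoint. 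Reattaching along $b$ turns $c$ into the desired based loop on $\Sigma_g$.

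It remains to verify the local $3$-chain condition at the basepoint. Since the definition of $k$-chain allows independent reorientation of each constituent loop, and since $c$ is already disjoint from $a$ and from $a'$ near the basepoint and transverse to $b$ there, a suitable choice between $c$ and $c^{-1}$ (together with orientations of $a$, $a'$, $b$) simultaneously realizes the prescribed cyclic-order pattern for both $(a,b,c)$ and $(a',b,c)$. This completes the edges $(a,b)-(b,c)$ and $(b,c)-(a',b)$ of $G_0$ and hence the desired path.

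The main obstacle I anticipate is the low-genus case $g = 2$, where $S = \Sigma_{1,2}$ has minimal topological slack and the arcs $a$, $a'$ may intersect inside $S$ in intricate ways; existence of an arc $c$ disjoint from both then requires a careful but finite casework on these intersection patterns (and, if genuinely necessary, one can extend the path to length four by inserting an auxiliary vertex $(c,d)$ in between, with $d$ disjoint from $b$ and meeting $c$ and $a'$ once). For $g \geq 3$, the construction of $c$ is essentially automatic from the slack in $S$.
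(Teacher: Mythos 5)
Your overall plan—find a single auxiliary loop $c$ so that $(a,b,c)$ and $(a',b,c)$ are both $3$-chains, giving the length-two path $(a,b)-(b,c)-(a',b)$—founders exactly at the step you delegate to a ``standard cut-and-paste argument,'' and this is not a technicality confined to $g=2$. After cutting along $b$, you need an embedded arc $c$ from $p_1$ to $p_2$ in $S=\Sigma_{g-1,2}$ meeting $a\cup a'$ only at its endpoints; such an arc exists precisely when some complementary region of $a\cup a'$ in $S$ is accessible from both $p_1$ and $p_2$. When $a$ and $a'$ intersect many times this can fail: an Euler characteristic count shows that once the number of intersection points exceeds $2g-4$ the two arcs can fill $S$, and in a filling configuration there is no reason for any complementary disk to touch both boundary components near the basepoints. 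Iteratively cutting along $a$ and then $a'$ does not produce the arc either, since after the first cut $a'$ becomes a collection of arcs whose complement you have no control over. Extra genus gives no slack here (the arcs can be made to fill a surface of any genus), and the proposed fallback of a length-four path inherits the same existence problem for its auxiliary curves. A secondary, unverified point is the simultaneous cyclic-order condition at the basepoint for both triples, which you assert can always be arranged by choosing signs; this needs checking but is minor compared to the existence of $c$.

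The paper circumvents all of this by inducting on $N$, the minimal number of intersections of $a$ with $a'$ away from the basepoint. When $N=0$ either $(a,b,a')$ is already a $3$-chain, or one first replaces $a$ by $b^{\epsilon}a$ (a move that is easily absorbed into $G_0$ via a common neighbor, since $a$ and $b^\epsilon a$ are parallel away from the basepoint) to reduce to that case. When $N\geq 1$ one surgers $a'$ along $a$ at the last intersection point to produce $a''$ with $i(a'',b)=\pm 1$ and strictly fewer intersections with each of $a$ and $a'$, and applies the inductive hypothesis twice. If you want to keep your ``common neighbor'' strategy, you must first reduce to the case where $a$ and $a'$ have few intersections—at which point you have essentially rediscovered the induction.
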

\begin{proof}
  Let $\sim$ denote the equivalence relation on vertices of $G_0$ 
  of being in the same connected component.  
  Let $a,b,a'$ be as in the statement of the observation, and let 
  $N$ be the (geometric) minimum number of disjoint intersections,
  besides the base point, between the based curves $a$ and $a'$.
  We will proceed by
  induction on $N$, starting with the base case $N=0$. In this case $i(a,a') \in \{0, \pm1\}$.  
  If $i(a,a')=0$, then $(a,b,a')$ is a $3$-chain and
  $(a,b)\sim (b,a')$. If $i(a,a')=\pm 1$, then for some $\epsilon\in\{-1,1\}$,
  we have $i(b^\epsilon a,a')=0$ (this is seen by looking at a
  neighborhood of the base point), hence $(b^\epsilon a,b,a')$ is a
  $3$-chain and $(b^\epsilon a)\sim(b,a')$. Now $(b^\epsilon a,b)\sim(a,b)$,
  because there exists a curve $c$ such that $(b^\epsilon a,b,c)$
  and $(a,b,c)$ are both $3$-chains. This proves the base case.  
  
  Now, suppose $N\geq 1$. Orient the curves $a$ and $a'$ so that their tangent vectors at $t=0$ 
  are on the same side of $b$ at the base point. 
  Let $(x_1,\ldots,x_N)$ be the intersection points
  of $a$ and $a'$, as ordered along the path $a$.
  Let $a''$ be the path obtained from following $a'$ 
  until we hit $x_N$ (actually, any of the $x_i$ would do),
  and then following the end of the path $a$.
  Then we have $i(a,b)=\pm 1$, $i(a',b)=\pm 1$, $i(a'',b)=\pm 1$ and
  the intersections of $a$ and $a'$ with $a''$ outside the base point are
  strictly less than $N$; this concludes our induction.
\end{proof}
\begin{proof}[Proof of Lemma~\ref{lem:BallonsConnexe}]
  Let $(a,b)$ and $(c,d)$ be such that $i(a,b)=\pm 1$ and $i(c,d)=\pm 1$.
  There exists a path between $b$ and $c$ in $G_1$,
  which can be extended to a path
  $\gamma_1-\gamma_2-\cdots-\gamma_n$ in $G_1$ with 
  $(a,b,c,d)=(\gamma_1,\gamma_2,\gamma_{n-1},\gamma_n)$.
  By Observation~\ref{obs:Ballons2},
  for all $j\in\{1,\ldots,n-2\}$, $(\gamma_j,\gamma_{j+1})$ is
  connected to $(\gamma_{j+1},\gamma_{j+2})$ in $G_0$, hence $(a,b)$ is
  connected to $(c,d)$. 
\end{proof}
Finally, we will also use the following easy variation of Lemma~\ref{lem:BallonsConnexe}.
\begin{lemma}\label{lem:BallonsConnexe2}
  Let $G$ denote graph whose vertices are the pairs $(a,b)\in\Gamma_g^2$ with
  $i(a,b)=\pm 1$, with an edge between two pairs $(a,b)$ and $(b,c)$
  whenever $(a,b,c)$ is a {\em completable} $3$-chain. Then $G$ is connected.
\end{lemma}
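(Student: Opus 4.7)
The plan is to adapt the proof of Lemma~\ref{lem:BallonsConnexe} essentially verbatim, with the single extra verification that every $3$-chain constructed along the way can be taken to be completable; the genus hypothesis $g\geq 2$ should provide the necessary room.

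First I would recall the structure of the previous argument. Observation~\ref{obs:Ballons1} only concerns single curves and is unaffected by the completability requirement, so it can be reused without change. The whole proof then reduces to an analogue of Observation~\ref{obs:Ballons2}: given $a,a',b\in\Gamma_g$ with $i(a,b)=\pm 1=i(a',b)$, show that $(a,b)$ and $(a',b)$ lie in the same component of $G$. I would run the same induction on $N$, the minimum number of intersections of $a$ and $a'$ away from the basepoint, and check the two situations where $3$-chains appear.

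In the inductive step, the curve $a''$ is built by concatenating arcs of $a$ and $a'$; since we are free to perturb it in its based isotopy class, we may do so in a tubular neighborhood of $a\cup a'\cup b$ and stay well inside $\Sigma_g$, which has at least one more handle. Thus $a''$ can be taken to lie in general position with respect to the rest of the surface so that the auxiliary $3$-chains $(a,b,a'')$ and $(a'',b,a')$ are completable (one may explicitly produce the extending curves $x,y$ in a complementary handle). The base case requires more care. When $i(a,a')=\pm 1$, the curve $c$ in the original argument is chosen essentially freely, so it can be placed in a handle disjoint from a neighborhood of $a\cup b\cup a'$, and the same holds for the curves $x,y$ completing the chains $(a,b,c)$ and $(b^\epsilon a,b,c)$; both chains then become completable. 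The analogous placement makes $(b^\epsilon a,b,a')$ completable as long as $a,b,a'$ do not already fill a subsurface of $\Sigma_g$ of maximal complexity.

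The main obstacle, and the only genuinely new situation compared to Lemma~\ref{lem:BallonsConnexe}, is therefore the subcase where the base $3$-chain $(a,b,a')$ (or its variant $(b^\epsilon a,b,a')$) fails to be completable because $a\cup b\cup a'$ fills a one-holed torus $T\subset \Sigma_g$. In this case I would build an explicit detour in $G$ rather than use a single edge. Since $g\geq 2$, the complement $\Sigma_g\setminus T$ contains a nonseparating simple based curve $d$, and by Lemma~\ref{lem:BallonsConnexe} (or by direct construction in a complementary handle) one can find a chain of completable $3$-chains realizing a sequence of edges
\[ (a,b)\,-\,(b,d_1)\,-\,(d_1,e_1)\,-\cdots-\,(e_k,b)\,-\,(b,a'), \]
each step justified by the completability of the associated $3$-chain, whose extending curves live in the handle that contains $d$. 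Concatenating this detour with the rest of the inductive argument yields the analogue of Observation~\ref{obs:Ballons2}, and hence the connectedness of $G$ follows exactly as in the proof of Lemma~\ref{lem:BallonsConnexe}.
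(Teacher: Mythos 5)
Your plan has the right key idea hidden in its last paragraph, but the main mechanism you propose for the other cases does not work. The central problem is your repeated appeal to ``general position'', ``perturbation in the based isotopy class'', or ``producing the extending curves $x,y$ in a complementary handle'' to make a $3$-chain completable. Completability of a directed $3$-chain $(a,b,c)$ is an \emph{intrinsic} property of the three based homotopy classes: it is equivalent to the product $ca$ being nonseparating (this is the observation the paper's proof hinges on). If $ca$ is separating, no isotopy of the curves and no choice of the outer curves of a would-be $5$-chain can complete it, so the claim that $(a,b,a'')$ and $(a'',b,a')$ ``can be taken to be completable'' is unjustified and false in general. (As a side remark, the inductive step of Observation~\ref{obs:Ballons2} never actually forms a $3$-chain --- it only invokes the inductive hypothesis on pairs with fewer intersections --- so the $3$-chains you need to worry about all occur in the base case; but there the same objection applies to the forced chains $(a,b,a')$ and $(b^\epsilon a,b,a')$, whose completability you cannot arrange by placement. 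Only the auxiliary curve $c$ in step (iii) of the base case is genuinely free, and there the correct reason a good choice exists is homological: choosing $c$ to run through a disjoint handle makes $ca$ nonseparating.)

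The detour you describe for the ``fills a one-holed torus'' subcase is essentially the paper's entire proof, and the fix is to apply it \emph{uniformly} rather than only in that subcase. The paper does not redo the induction at all: it takes the connectivity of $G_0$ (Lemma~\ref{lem:BallonsConnexe}) as given, observes that (i) if $T(a,b)$ and $T(c,d)$ are disjoint then $(a,b)$ and $(c,d)$ lie in the same component of $G$, and (ii) whenever a $3$-chain $(a,b,c)$ is non-completable, i.e.\ $ca$ is separating, the curves $a,b,c$ miss some one-holed torus $T(d,e)$; hence both $(a,b)$ and $(b,c)$ connect to $(d,e)$ in $G$. Every edge of $G_0$ is therefore either an edge of $G$ or replaceable by a path in $G$, and connectivity follows. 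If you reorganize your argument around this uniform patching of bad edges --- and justify non-completability via the criterion on $ca$ rather than via perturbation --- your proof becomes correct and coincides with the paper's.
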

\begin{proof}
  First, observe that whenever $T(a,b)$ and $T(c,d)$ are disjoint,
  $(a,b)$ and $(c,d)$ are in the same connected component of $G$.
  Now, observe that if $(a,b,c)$ is a directed $3$-chain, then it is
  completable if and only if $ca$ is nonseparating.
  (The reader may find it helpful to draw a picture.)
  It follows that, if $(a,b,c)$ is a non-completable $3$-chain in $\Sigma_g$,
  then there exists a pair $(d,e)$ such that $a,b,c$ do not enter $T(d,e)$.
  Hence, $(a,b)$ and $(b,c)$ are connected to $(d,e)$ in $G$, and it follows
  that $G$ is connected.
\end{proof}


\subsection{Actions on the circle}\label{ssec:Per}

\subsubsection{Basic dynamics of circle homeomorphisms}\label{sssec:NotationPer}

We quickly review some definitions for the purpose of setting notation.
For more detailed background on this material, the reader may consult
\cite{Ghys87,KatieSurvey,Ghys01,Navas} for example.

We denote by $\HOZ$ the group of homeomorphisms
of $\R$ commuting with translation by 1; we have a natural
central extension \[ \Z \to \HOZ \to \HOS. \]
The {\em translation number} (or rotation number) of an element
$f\in\HOZ$ is defined as
$\rotild(f):=\lim_{n\to \infty}\frac{f^n(0)}{n}\in\R$,
and the Poincar\'e {\em rotation number} of an element $f\in\HOS$ is defined
as $\rot(f):=\rotild(\widetilde{f})~\mathrm{mod}~\Z$, where $\widetilde{f}$
is any lift of $f$.

We assume the reader is familiar with these invariants,
and with their essential properties.
Those that we will use most frequently are that $\rot$ and $\rotild$
are homomorphisms when restricted to abelian (eg. cyclic)
subgroups, that $\rot(f) = p/q \in \Q$ mod~$\Z$ if and only if $f$ has a
periodic orbit of period $q$, and that $\rotild$, and hence $\rot$,
are invariant under semi-conjugacy.
(The definition of semi-conjugacy is recalled
in Section \ref{ssec:CharSpace} where we will be using it.)

We denote by $\Per(f)=\{x \in S^1 \mid \exists n \in \Z,\ f^n(x) = x\}$
the set of periodic points of $f$.
If $n=1$, we also denote this by $\Fix(f)$.
For $\widetilde{f} \in \HOZ$, we use $\Per(\widetilde{f})$ to denote the set
of all lifts of points of $\Per(f)$ to $\R$.

For $f \in \HOS$ with $\Per(f) \neq \emptyset$, let $q(f)$ denote the
smallest non-negative integer such that $\Fix(f^{q(f)}) \neq \emptyset$,
and let $p(f)$ be the least non-negative integer
such that $f$ has rotation number equal to $\frac{p(f)}{q(f)}$ mod~$\Z$.

An \emph{attracting periodic point} for $f$ is a point $x \in \Per(f)$ with
a neighborhood $I$ of $x$ such that $f^{n q(f)}(I) \to x$ as $n \to \infty$.
A \emph{repelling} periodic point of $f$ is defined as an attracting periodic
point of $f^{-1}$. The sets of attracting and repelling periodic points
will be denoted $\Per^+(f)$ and $\Per^-(f)$ respectively.

\subsubsection{One-parameter families and bending deformations}\label{sss:bending}

Let $\gamma\in\Gamma_g$ be a based, simple loop. Cutting $\Sigma_g$ along
$\gamma$ decomposes $\Gamma_g$ into an amalgamated product
$\Gamma_g=A\ast_{\langle\gamma\rangle}B$, or an HNN-extension
$A\ast_{\langle\gamma\rangle}$, depending on whether $\gamma$ is separating.

In both cases, if $\rho\colon\Gamma_g\to\HOS$ is a representation and
if $(\gamma_t)_{t\in\R}$ is a continuous family of homeomorphisms
commuting with $\rho(\gamma)$ we may define a deformation of $\rho$, as
follows. If $\gamma$ is separating and
$\Gamma_g=A\ast_{\langle\gamma\rangle}B$, we define $\rho_t$ to agree with
$\rho$ on $A$, while setting $\rho_t(\delta)=\gamma_t\rho(\delta)\gamma_t^{-1}$
for all $\delta\in B$. If $\gamma$ is nonseparating, we may write
$a_1=\gamma$ and complete it into a standard generating system
$(a_1,\ldots,b_g)$, and set $\rho_t$ to agree with $\rho$ on all the
generators except $b_1$, and put $\rho_t(b_1)=\gamma_t\rho(b_1)$.

In both cases, we call this deformation a {\em bending along $\gamma$}.  
This is the analog of a bending deformation from the theory of quasi-Fuchsian
or Kleinian groups, in a very special case as
we bend only along one simple curve.

Most of the time (but not all) we will use these bendings with a
one-parameter group $\gamma_t$, ie, a morphism $\R\to\HOS$,
$t\mapsto\gamma_t$, as provided by Lemma~\ref{lem:PosDeform} below.
In the spacial case when $\rho(\gamma)=\gamma_1$, then the deformation defined
above, at $t=1$, is the precomposition of $\rho$ with
${\tau_\gamma}_\ast$, where $\tau_\gamma$ is the Dehn twist along $\gamma$.
However, for a Dehn twist to make sense as an automorphism of
$\Gamma$ (not up to inner automorphisms), we will use the following
convention.
\begin{convention}
  Suppose we are given a directed $k$-chain $(\gamma_1,\ldots,\gamma_k)$,
  and wish to write a Dehn twist along the loop $\gamma_i$.
  Then we will always do so by pushing $\gamma_i$ outside the base point in
  such a way that it intersects only $\gamma_{i-1}$ and $\gamma_{i+1}$
  (if these curves exist), in a neighborhood of the chain.
  Accordingly, if $\rho$ is a given representation and $\gamma_i^t$ is a
  one-parameter family commuting with $\rho(\gamma_i)$, then the deformation
  leaves $\gamma_j$ unchanged for $|j-i|\geq 2$ and $j=i$, and changes
  $\rho(\gamma_{i-1})$ into $\gamma_i^{-t}\rho(\gamma_{i-1})$
  and $\rho(\gamma_{i+1})$ into $\rho(\gamma_{i+1})\gamma_i^t$.
\end{convention}

Not all elements of $\HOS$ embed in a one-parameter subgroup.
In fact, if $\rot(f)$ is irrational, 
then $f$ embeds in such a subgroup if and only if $\Per(f) = S^1$, in which case $f$ 
is conjugate to a rotation.
However, elements with rational rotation number do have large centralizers,
giving us some flexibility in the use of bending deformations.
We formalize this in the next lemma. Here, and later on, it will be
convenient to fix a section of $\HOS$ in $\HOZ$.

\begin{notation}\label{not:Chapeau}
  For $f \in \HOS$, let $\widehat{f} \in \HOZ$ be the (unique) lift
  of $f$ with $\rotild(\widehat{f}) \in [0,1)$;
  we will call it the {\em canonical lift} of~$f$.
  Later, we will also need to refer to the lift of $f$ with translation number
  in $(-1, 0]$, this we denote by $\widecheck{f}$. Note that
  $\widehat{f}^{-1}=\widecheck{f^{-1}}$.
\end{notation}

\begin{lemma}(Positive 1-parameter families)\label{lem:PosDeform}
  Let $f\in\HOS$ have rational rotation number, and suppose
  $\Per(f)\neq S^1$. Then
  there exists a one-parameter group $(f_t)_{t\in\R}$, which commutes
  with $f$, such that $\forall t\neq 0$, $\Fix(f_t)=\partial\Per(f)$,
  and for all $t>0$ and $x \in \R \smallsetminus \partial\Per(\widetilde{f})$,
  we have $\widehat{f_t}(x)>x$.
\end{lemma}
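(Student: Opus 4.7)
The plan is to build $(f_t)$ componentwise on the decomposition of $S^1$ induced by $\partial\Per(f)$. Set $q := q(f)$, so $\Per(f) = \Fix(f^q)$; the hypothesis $\Per(f) \neq S^1$ makes $\partial\Per(f)$ a nonempty closed set whose complement $S^1\smallsetminus\partial\Per(f)$ is a disjoint union of open intervals of two kinds: \emph{outer} intervals, which are the connected components of $S^1\smallsetminus\Per(f)$ and on which $f^q$ acts without fixed points; and \emph{inner} intervals, which are the components of $\mathrm{int}(\Per(f))$ and on which $f^q$ is the identity. The group $\langle f\rangle$ permutes these intervals, and I pick a representative $J$ in each orbit, with minimal period $q_J$ (so $q_J \mid q$ and $f^{q_J}(J) = J$).

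For each such $J$, the restriction $g := f^{q_J}|_J$ is either the identity (inner case: $g^{q/q_J} = \mathrm{id}_J$, and an orientation-preserving homeomorphism of $J \cong \R$ of finite order must be trivial) or a fixed-point-free orientation-preserving homeomorphism (outer case, since $\Fix(g) \subseteq \Fix(f^q)\cap J = \emptyset$). In either case I can choose an orientation-preserving homeomorphism $\psi_J\colon \R \to J$ conjugating $g$ to $x\mapsto x$ or $x\mapsto x\pm 1$, and I define $\phi_t^J := \psi_J \circ T_t \circ \psi_J^{-1}$ with $T_t(x) = x+t$. This $\phi_t^J$ commutes with $g$ (translations commute), satisfies $\phi_t^J(y) > y$ for all $t > 0$ in the induced $S^1$-orientation on $J$, and has no fixed point in $J$ for $t\neq 0$. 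I then propagate to the full orbit by setting $f_t := f^k \circ \phi_t^J \circ f^{-k}$ on $f^k(J)$ for $0 \le k < q_J$: this is well-defined modulo $q_J$ because $\phi_t^J$ commutes with $g$, and by construction commutes with $f$ on the entire orbit. Doing this for every orbit and extending by the identity on $\partial\Per(f)$ yields bijections $f_t\colon S^1 \to S^1$.

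The remaining checks are essentially routine. Continuity of $f_t$ at $x \in \partial\Per(f)$ follows because any sequence $y_n \to x$ not eventually in $\partial\Per(f)$ lies in components $J_n$ of diameter tending to $0$ with $x \in \overline{J_n}$, so $f_t(y_n) \in J_n$ converges to $x$; hence each $f_t$ is a homeomorphism. The one-parameter-group relation and the commutation $f_t\circ f = f \circ f_t$ are immediate from the construction, and $\Fix(f_t) = \partial\Per(f)$ for $t\neq 0$ is built in. Finally, since $f_t$ has a nonempty fixed-point set, its canonical lift $\widehat{f_t}$ has translation number $0$ and fixes the preimage of $\partial\Per(f)$ in $\R$; on each complementary component it is the unique lift of some $\phi_t^J$ pinned to the identity at both endpoints, and by positivity of $\phi_t^J$ such a pinned lift satisfies $\widehat{f_t}(x) > x$ for all $t > 0$. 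The only mildly delicate point in the whole argument is the continuity verification when $\partial\Per(f)$ has Cantor-type structure, which is handled by the vanishing-diameter argument above; otherwise the proof is a direct componentwise construction, the main care being to choose the parametrizations $\psi_J$ coherently with the ambient $S^1$-orientation.
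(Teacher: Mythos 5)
Your proposal is correct and follows essentially the same route as the paper: decompose $S^1\smallsetminus\partial\Per(f)$ into its complementary intervals, conjugate the return map on a representative of each $f$-orbit of intervals to a translation (so that it embeds in the translation flow), and extend $f$-equivariantly by the identity on $\partial\Per(f)$. Your version merely adds detail the paper leaves implicit (the continuity check at $\partial\Per(f)$ and the verification of positivity of the canonical lift), and uses the minimal period $q_J$ of each interval in place of $q(f)$, which changes nothing essential.
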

\noindent Here and in what follows $\partial X$ denotes the \emph{frontier}
of a subset $X$ of $\R$ or $S^1$.
\begin{proof}
  The set  $S^1\smallsetminus\partial\Per(f)$ consists of a union of open
  intervals permuted by $f$. Choose a single representative interval $I_\alpha$
  from each orbit. Note that $f^{q(f)}(I_\alpha) = I_\alpha$ for any such
  interval, and the restriction of $f^{q(f)}$ to $S^1 \smallsetminus \Per(f)$
  is either fixed point free or the identity.
  Thus, we may identify each $I_\alpha$ with $\R$ such that $f^{q(f)}$, in
  coordinates, is $x\mapsto x+C$, for some $C\in\{-1,0,1\}$.
  Define $s_t$ on $I_\alpha$ to be $x \mapsto x+t$.
  Since these $I_\alpha$ are in different orbits of the action of $f$ on
  $S^1$, we may extend $s_t$ equivariantly to a 1-parameter family of
  homeomorphisms of $S^1$.
\end{proof}

In all the rest of this text, if $f\in\HOS$, any family $f_t$ as in
Lemma~\ref{lem:PosDeform} will be called a
{\em positive one-parameter family commuting with $f$}, or simply
a {\em positive one-parameter family} if $f$ is understood.

\subsubsection{Periodic sets under deformations}\label{sssec:Twist}

We now make some observations on how periodic sets change under bending
deformations using positive one-parameter families.  The main application of these comes in Section~\ref{ssec:P},  
but they will also make a few earlier appearances.  

Let $f$ and $g \in \HOS$ have rational rotation numbers. It follows immediately
from the definition of canonical lift that
\[ x \in\Per(\widehat{f})\Leftrightarrow \widehat{f}^{q(f)}(x)=x+p(f). \]   
Let $f_t$ be a positive one-parameter family 
commuting with $f$. Let $g_t : = f_t\circ g$,
and let $\widetilde{g_t}=\widehat{f_t}\circ\widehat{g}$.
Note that $\widetilde{g_t}=\widehat{g_t}$, provided the rotation number
of $g_t$ is constant as $t$ varies.

For all $(x,t)\in S^1\times\R$, we set
\[ \Delta_{f,g}(x,t_1,\ldots,t_{q(g)}) =
\widetilde{g_{t_{q(g)}}}\circ\cdots\circ\widetilde{g_{t_1}}(\tilde{x})-
\tilde{x}-p(g), \]
\[\text{and }\hfill
\delta_{f,g}(x, t) =
\Delta_{f,g}(x,t,\cdots,t)
=(\widetilde{g_t})^{q(g)}(\tilde{x})-\tilde{x} - p(g). \hfill \]
This does not depend on the lift $\tilde{x}\in\R$ of $x$, but does depend on
the choice of the one-parameter family $f_t$ (so we are somewhat abusing
notation). Further, we set
\[ P(f,g)=\left\lbrace x \in S^1\,|\,
  \forall t\in\R, \, \delta_{f,g}(x, t) = 0 \right\rbrace, \]
\[ N(f,g)=\left\lbrace x \in S^1\,|\,
  \forall t\in\R, \, \delta_{f,g}(x, t) \neq 0 \right\rbrace, \]
\[\text{ and } U(f,g)=\left\lbrace x\in S^1\,|\,
  \exists! t\in\R, \, \delta_{f,g}(x, t) = 0 \right\rbrace. \]
Unlike $\delta_{f,g}$, these
sets do not depend on the choice of the positive one-parameter family (provided that it is chosen as in Lemma \ref{lem:PosDeform}).  

Assuming $\rot(g_t)$ is constant, then
$P(f,g) = \bigcap_{t \in \R} \Per(g_t)$ is the set of {\em persistent} periodic
points; $N(f,g)$ is the set of points that are \emph{never} periodic for any
$g_t$, and $U(f,g)$ is the set of points that lie in $\Per(g_t)$ for a
\emph{unique} time $t$.

Let $T_{f,g}\colon U(f,g)\to\R$ be the map that assigns 
to each $x\in U(f,g)$, the unique time $t\in\R$ for which $\delta_{f,g}(x,t)=0$.
\begin{lemma}\label{lem:ProprietesPNU}
  Suppose $g_t$ has constant rotation number. Then we have the following
  properties.
  \begin{enumerate}
  \item The set $P(f,g)$ is closed, moreover
    \[ P(f,g)=\Per(g)\cap\bigcap_{k=0}^{q(g)-1}g^k(\partial\Per(f)); \]
    in particular, if $\rot(f)=0$ then every element of
    $P(f,g)$ has a finite orbit under the group $\langle f,g\rangle$.
  \item The sets $P(f,g)$, $N(f,g)$ and $U(f,g)$ partition the circle.
  \item The set $U(f,g)$ is open, and the map $T_{f,g}\colon U(f,g)\to\R$
    is continuous.
   \item For any $\varepsilon >0$, there exists $t_0$ such that
  $\Per(f_t\circ g)$ lies in the $\varepsilon$-neighborhood of
  $P(f,g) \cup \partial N(f,g)$ for all $t > t_0$.
  \end{enumerate}
\end{lemma}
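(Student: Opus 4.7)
The plan is to exploit the strict monotonicity in $t$ of $\widehat{f_t}$ on the complement of $\partial\Per(\widetilde{f})$, provided by Lemma~\ref{lem:PosDeform}, to establish a monotonicity dichotomy for $\delta_{f,g}(x,\cdot)$ that underpins all four assertions.

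I would start by tracking the interleaved orbit $\tilde{x}\mapsto \widehat{g}(\tilde{x})\mapsto \widehat{f_t}\widehat{g}(\tilde{x})\mapsto \widehat{g}\widehat{f_t}\widehat{g}(\tilde{x})\mapsto\cdots$, in which $\widehat{g}$ is applied $q(g)$ times and $\widehat{f_t}$ is applied $q(g)$ times, acting on the successive points $\widehat{g}^k(\tilde{x})$ perturbed by earlier bumps. If every iterate $g^k(x)$ for $k=1,\ldots,q(g)$ lies in $\partial\Per(f)$, each $\widehat{f_t}$ acts as the identity and $\delta_{f,g}(x,t)=\widehat{g}^{q(g)}(\tilde{x})-\tilde{x}-p(g)$ is $t$-independent; this constant vanishes iff $x\in\Per(g)$, placing $x$ in $P$ or in $N$. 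Otherwise, at the first step where $\widehat{f_t}$ is applied to a point outside $\partial\Per(\widetilde{f})$ the partial orbit strictly increases in $t$, and since $\widehat{g}$ is an orientation-preserving homeomorphism and the group law $\widehat{f_{t+\epsilon}}=\widehat{f_\epsilon}\widehat{f_t}$ still pushes each intermediate point weakly forward, strict monotonicity propagates to the endpoint, so $\delta_{f,g}(x,\cdot)$ is strictly increasing and has at most one zero. This yields the partition (2) as well as the explicit description of $P$ in (1), since membership in $P$ requires $x\in\Per(g)$ together with the whole $g$-orbit lying in $\partial\Per(f)$; closedness of $P$ follows from closedness of $\Per(g)$ and $\partial\Per(f)$. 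When $\rot(f)=0$, $\partial\Per(f)\subset\Fix(f)$, so the finite $g$-orbit of any $x\in P$ is fixed pointwise by $f$, hence a finite $\langle f,g\rangle$-orbit.

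For (3), the joint continuity of $\delta_{f,g}$ combined with strict monotonicity on $U$ handles both statements at once: given $x\in U$ with $T_{f,g}(x)=t_0$, for any $\epsilon>0$ we have $\delta_{f,g}(x,t_0-\epsilon)<0<\delta_{f,g}(x,t_0+\epsilon)$, and these strict inequalities persist on a neighborhood of $x$. The intermediate value theorem then produces a zero of $\delta_{f,g}(x',\cdot)$ in $(t_0-\epsilon,t_0+\epsilon)$, while non-constancy of $\delta_{f,g}(x',\cdot)$ rules out $x'\in P$, so $x'\in U$ and $|T_{f,g}(x')-t_0|<\epsilon$. For (4), I argue by contradiction: if there exist $\epsilon>0$, $t_n\to\infty$ and $x_n\in\Per(g_{t_n})$ with $d(x_n,P\cup\partial N)\geq\epsilon$, extract a convergent subsequence $x_n\to x$. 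The constancy of $\rot(g_t)$ identifies $\Per(g_{t_n})$ with the zero set of $\delta_{f,g}(\cdot,t_n)$, namely $P\cup\{y\in U:T_{f,g}(y)=t_n\}$; since $x_n$ avoids a neighborhood of $P$, we must have $x_n\in U$ with $T_{f,g}(x_n)=t_n\to\infty$. Continuity of $T_{f,g}$ rules out $x\in U$ and the distance from $P$ rules out $x\in P$, leaving $x\in N$; but $d(x,\partial N)\geq\epsilon$ places $x$ in the interior of $N$, so $x_n\in N$ eventually, contradicting $x_n\in U$.

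The main technical hurdle is the dichotomy in the first paragraph: one must verify that strict monotonicity of the partial orbit, once triggered at the first non-fixed step, truly propagates through the remaining factors of $(\widehat{f_t}\widehat{g})^{q(g)}$, including exceptional times $t$ at which a perturbed intermediate image happens to hit $\partial\Per(\widetilde{f})$. This careful bookkeeping with the one-parameter group law and orientation-preservation of $\widehat{g}$ is what drives all four conclusions.
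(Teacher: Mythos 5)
Your proof is correct and follows essentially the same route as the paper: the paper likewise reduces everything to the observation that $\delta_{f,g}(x,\cdot)$ (phrased there via the multivariable $\Delta_{f,g}$, which is separately constant or strictly increasing in each $t_j$ according to whether the relevant $g$-iterate lies in $\partial\Per(f)$) is either constant or strictly increasing, and proves (3) by the same sign-persistence and intermediate value argument. Your sequential-compactness contradiction for (4) is just a rephrasing of the paper's direct argument that the set of points of $U(f,g)$ at distance at least $\varepsilon$ from $P\cup\partial N$ is a compact subset of $U(f,g)$ on which $T_{f,g}$ is bounded.
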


\begin{proof}
  By construction, the map $\Delta_{f,g}(x,\cdot)$ is
  (separately, in each variable $t_j$) constant if
  $\widetilde{g_{t_{j-1}}}\circ\cdots\circ\widetilde{g_{t_1}}(\tilde{x})
  \in\partial\Per(f)$, and strictly increasing otherwise. Monotonicity
  implies that the subsets $\Delta_{f,g}(x,\R^{q(g)})$ and
  $\delta_{f,g}(x,\R)$ of $\R$ coincide.
  The affirmations $(1)$ and $(2)$ are easy
  consequences of these observations. Let us prove $(3)$.
  Let $x_0\in U(f,g)$, and write $t_0=T(x_0)$, so $\delta(x_0,t_0)=0$.
  Fix $\varepsilon>0$. Since $x_0\in U(f,g)$, we have
  $\delta(x_0,t_0+\varepsilon)>0$, and $\delta(x_0,t_0-\varepsilon)<0$.
  Since the maps $x\mapsto\delta(x,t_0+\varepsilon)$ and $x\mapsto\delta(x,t_0-\varepsilon)$
  are continuous, there exists $\eta>0$ such that, for all
  $x\in(x_0-\eta,x_0+\eta)$ we have $\delta(x,t_0+\varepsilon)>0$ and
  $\delta(x,t_0-\varepsilon)<0$. Thus, for each $x\in(x_0-\eta,x_0+\eta)$,
  the map $t\mapsto\delta(x,t)$ takes positive and negative values,
  hence has a (unique) zero in the interval $(t_0-\varepsilon,t_0+\varepsilon)$.
  In other words, $(x_0-\eta,x_0+\eta)\subset U(f,g)$, and for all
  $x\in(x_0-\eta,x_0+\eta)$, we have $|T(x)-T(x_0)|<\varepsilon$.
  
  For statement (4), fix $\varepsilon >0$. Let $I_1$, \ldots, $I_n$ denote
  the (finitely many) connected components of $U(f,g)$ of length
  $>\varepsilon$. Let $K \subset U(f,g)$ be the set of points of $U(f,g)$
  that are distance at least $\varepsilon$ from $P\cup\partial N$.
  Then, $K\subset \bigcup_i I_i$, and it follows that $K$ is compact.
  Since $T$ is continuous,  its restriction to $K$ takes values in some
  segment $[-t_0,t_0]$, this gives the $t_0$ from the statement.
\end{proof}

The next proposition describes the topology of the sets $P(f,g), N(f,g)$ and $U(f,g)$ in more detail.  

\begin{proposition} \label{prop:dNinP}
  Suppose $g_t$ has constant rotation number.
  Then all accumulation points of $\partial N(f,g)$ lie in $P(f,g)$.
\end{proposition}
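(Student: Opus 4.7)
The plan is to argue by contradiction. Suppose $x_0$ is an accumulation point of $\partial N$ with $x_0 \notin P$, and derive a contradiction by analyzing the asymptotic behavior of $\widetilde{g_t}$ as $t \to -\infty$.

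First I would reduce to a controlled situation. Since $U$ is open and disjoint from $N$, we have $U \cap \overline N = \emptyset$, so $\partial N \subset P \cup N$. Take $x_n \in \partial N \setminus \{x_0\}$ distinct with $x_n \to x_0$. If infinitely many $x_n$ lie in $P$, then $x_0 \in \overline P = P$ and we are done; otherwise $x_n \in N$ eventually. Fix $t_0$ with $\delta(x_0, t_0) \neq 0$ (WLOG $>0$); by continuity of $\delta(\cdot, t_0)$ and closedness of $P$, a neighborhood $V$ of $x_0$ satisfies $\delta(\cdot, t_0) > 0$ and $V \cap P = \emptyset$, so $x_n \in N_+ \cap \partial N$ for $n$ large, where $N_+ := \{y : \delta(y, t) > 0 \text{ for all } t\}$.

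Next, for each $x_n \in N_+ \cap \partial N$, the points of $N^c$ accumulating at $x_n$ must lie in $U$ (not $P$, since $x_n \notin \overline P$). Monotonicity of $\delta(z, \cdot)$ together with $\delta(z, t^*) \to \delta(x_n, t^*) > 0$ for fixed $t^*$ forces $T(z) < t^*$ for $z \in U$ close to $x_n$; hence $T(z) \to -\infty$ as $z \to x_n$ through $U$. A diagonal extraction then produces $z_n \in U$ with $z_n \to x_0$ and $t_n := T(z_n) \to -\infty$, satisfying $(\widetilde{g_{t_n}})^{q(g)}(\tilde z_n) = \tilde z_n + p(g)$.

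The central step uses the pointwise limit of $\widehat{f_t}$ as $t \to -\infty$: by Lemma~\ref{lem:PosDeform}, $\widehat{f_t}(y)$ converges to $L(y)$, the left endpoint of the component of $\R \setminus \partial \Per(\widehat f)$ containing $y$ (with $L$ fixing $\partial \Per(\widehat f)$), uniformly on compact subsets of each component. A careful iteration yields the ``limit-periodicity'' equation $(L \widehat g)^{q(g)}(\tilde x_0) = \tilde x_0 + p(g)$; since $L$ takes values in the $\Z$-invariant set $\partial \Per(\widehat f)$, this forces $\tilde x_0 \in \partial \Per(\widehat f)$, and the same argument applied to each $x_n$ gives $\tilde x_n \in \partial \Per(\widehat f)$ with $(L \widehat g)^{q(g)}(\tilde x_n) = \tilde x_n + p(g)$.

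I expect the main technical obstacle to be the final step: showing that only finitely many $\tilde y \in \partial \Per(\widehat f)$ near $\tilde x_0$ satisfy $(L \widehat g)^{q(g)}(\tilde y) = \tilde y + p(g)$, which then contradicts the accumulation of distinct $\tilde x_n$. In the easy case $\widehat g(\tilde x_0) \notin \partial \Per(\widehat f)$, a neighborhood of $\widehat g(\tilde x_0)$ lies in a single component of $\R \setminus \partial \Per(\widehat f)$, so $L \widehat g$ is literally constant in a neighborhood of $\tilde x_0$ and iteration gives $(L \widehat g)^{q(g)}(\tilde y) = \tilde x_0 + p(g)$, forcing $\tilde y = \tilde x_0$. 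When $\widehat g(\tilde x_0) \in \partial \Per(\widehat f)$, one iterates to the smallest $k_0 \leq q(g)$ with $\widehat g^{k_0}(\tilde x_0) \notin \partial \Per(\widehat f)$ — such a $k_0$ exists because the accumulation hypothesis prevents $\delta(x_0, \cdot)$ from being constant on $\R$ — and handles subcases according to which component-side of each intermediate $\widehat g^k(\tilde x_0)$ the iterate $(L \widehat g)^k(\tilde y)$ falls in. In each subcase, the iterates either synchronize with those of $\tilde x_0$ (forcing $\tilde y = \tilde x_0$) or pin $\tilde y$ to a single exceptional value, yielding the required finiteness and hence the contradiction.
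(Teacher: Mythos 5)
Your setup (the reduction to $x_n\in N_+\cap\partial N$, the fact that $T(z)\to-\infty$ as $z\to x_n$ through $U$, and the diagonal extraction) is correct. The genuine gap is in the ``central step''. You need $(\widetilde{g_{t_n}})^{q(g)}(\tilde z_n)\to (L\widehat g)^{q(g)}(\tilde x_0)$, but this is a double limit ($z_n\to x_0$ and $t_n\to-\infty$ simultaneously) and it does not commute with the iteration. The pointwise convergence $\widehat{f_t}(y)\to L(y)$ is \emph{not} locally uniform at points of $\partial\Per(\widehat f)$: if $y^*$ is the right endpoint of a component $(a,y^*)$ of $\R\smallsetminus\partial\Per(\widehat f)$ and $y_n\to y^*$ from inside that component while $t_n\to-\infty$, then $\widehat{f_{t_n}}(y_n)$ can accumulate anywhere in $[a,y^*]$ depending on the relative rates, whereas $L(y^*)=y^*$. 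Since you have no control over which side the intermediate iterates $\widehat g\bigl((\widetilde{g_{t_n}})^{j}(\tilde z_n)\bigr)$ approach their limits from, neither the limit-periodicity equation nor even the conclusion $\tilde x_0\in\partial\Per(\widehat f)$ follows as stated. (One can check that everything works when all approaches are from the right, using monotonicity of $\widehat{f_t}$ in $t$; but the left-approach case is exactly where the argument breaks, and it cannot be excluded.) On top of this, the finiteness step that you yourself flag as the main obstacle is only a sketch, and the claim that ``the accumulation hypothesis prevents $\delta(x_0,\cdot)$ from being constant'' is asserted without proof.

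For comparison, the paper's proof avoids this analysis entirely by splitting on whether $x_0\in\Per(g)$. If $x_0\in\Per(g)\smallsetminus P(f,g)$, then $\delta(x_0,0)=0$ with nonconstant $\delta(x_0,\cdot)$, so $x_0\in U(f,g)$, which is open and disjoint from $\overline{N(f,g)}$ --- an immediate contradiction with accumulation by $\partial N(f,g)$. If $x_0\notin\Per(g)$, Lemma~\ref{lem:PasDAcc} (whose interval-chasing proof is where the real work happens) shows that any side of $x_0$ meeting $U(f,g)$ in a sequence contains a whole one-sided neighborhood in $U(f,g)$; since $P$, $N$, $U$ partition the circle, a side accumulated by $\partial N$ must then be accumulated by $P$, forcing $x_0\in P(f,g)\subset\Per(g)$, a contradiction. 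I would recommend abandoning the $t\to-\infty$ limit analysis and proving (or invoking) a statement of the type of Lemma~\ref{lem:PasDAcc} instead.
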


The bulk of the proof of this is accomplished by the following lemma.  

\begin{lemma} \label{lem:PasDAcc}
  Let $x_0 \in S^1 \smallsetminus\Per(g)$ and let $I$ be a small interval
  containing $x_0$. Suppose there exists $u_k \in U(f,g) \cap I$ converging
  to $x_0$ from the right.
  Then there exists $\varepsilon > 0 $ such that
  $(x_0, x_0+\varepsilon) \subset U(f,g)$.
\end{lemma}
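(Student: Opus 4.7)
The plan is by contradiction. Writing $\delta = \delta_{f,g}$ and assuming without loss of generality that $\delta(x_0, 0) > 0$ (the opposite sign being handled symmetrically, e.g.\ by replacing the one-parameter family $f_t$ with $f_{-t}$), I would first shrink $I$ so that $\delta(\cdot, 0) > 0$ throughout $I$ and $I \cap \Per(g) = \emptyset$. Since $P(f,g) \subseteq \Per(g)$, this gives $P(f,g) \cap I = \emptyset$, so every $y \in I$ lies in $U(f,g)$ or $N(f,g)$; moreover, any $y \in N(f,g) \cap I$ satisfies $\delta(y, \cdot) > 0$ identically, by the monotonicity of $\delta(y, \cdot)$ in $t$ established in the proof of Lemma~\ref{lem:ProprietesPNU}.

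A short compactness argument then shows $t_k := T_{f,g}(u_k) \to -\infty$: a bounded subsequence of $t_k$ would, after extraction, force $\delta(x_0, t_*) = \lim \delta(u_k, t_k) = 0$, contradicting $\delta(x_0, \cdot) > 0$; and $t_k \to +\infty$ is precluded by $\delta(u_k, 0) \to \delta(x_0, 0) > 0$ together with the monotonicity in $t$.

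The core step is to suppose for contradiction that there exist $y_n \in N(f,g) \cap I$ with $y_n \to x_0^+$, and derive a contradiction. I would examine the connected components $(a_k, b_k)$ of $U(f,g) \cap I$ containing each $u_k$. If some $a_k = x_0$ we are done; otherwise each $a_k > x_0$, and $a_k \in N(f,g) \cap I$ by the partition result. The squeeze $x_0 < a_k < u_k$ forces $a_k \to x_0^+$. On each $(a_k, b_k)$, continuity of $T_{f,g}$ combined with $\delta(a_k, \cdot) > 0$ identically and $\delta(\cdot, 0) > 0$ on $I$ forces $T_{f,g}(y) \to -\infty$ as $y \to a_k^+$ (a finite limit would contradict continuity of $\delta$, and divergence to $+\infty$ would contradict $\delta(\cdot, 0) > 0$ near $a_k$).

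The main obstacle is converting this structural picture into a contradiction. The plan is to invoke Lemma~\ref{lem:ProprietesPNU}(4); inspection of its proof shows that $|T_{f,g}|$ is in fact bounded on the complement of any $\varepsilon$-neighborhood of $P(f,g) \cup \partial N(f,g)$, so it applies symmetrically as $t_k \to -\infty$. Since $d(u_k, P(f,g))$ is bounded below ($P(f,g)$ is closed and $x_0 \notin P(f,g)$), this forces $d(u_k, \partial N(f,g)) \to 0$. Combined with the ``volcanic'' behavior of $T_{f,g}$ on each $(a_k, b_k)$ and the uniform positivity of $\delta(\cdot, 0)$ in a neighborhood of $x_0$, careful bookkeeping on the component endpoints $a_k, b_k \in \partial N(f,g)$ and on the maximum values of $T_{f,g}$ achieved on each $(a_k, b_k)$ should then yield the desired contradiction.
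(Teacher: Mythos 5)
Your preliminary reductions (shrinking $I$ so that $\delta_{f,g}(\cdot,0)>0$ and $P(f,g)\cap I=\emptyset$, the monotonicity of $\delta_{f,g}(y,\cdot)$ forcing $\delta_{f,g}(y,\cdot)>0$ for $y\in N(f,g)\cap I$, and $T_{f,g}(u_k)\to-\infty$) are all correct, but the proof has a genuine gap exactly where you write that ``careful bookkeeping \ldots should then yield the desired contradiction.'' That sentence is the entire content of the lemma, and the soft facts you have assembled cannot deliver it. Everything you use --- the partition $P\cup N\cup U$, continuity of $T_{f,g}$ on $U$, its divergence to $-\infty$ at the endpoints $a_k\in\partial N$, and the boundedness of $|T_{f,g}|$ away from $P\cup\partial N$ extracted from Lemma~\ref{lem:ProprietesPNU}(4) --- is perfectly consistent with a configuration in which components of $U$ and points of $N$ interleave in a Cantor-like pattern accumulating at $x_0$ from the right, with $T_{f,g}$ blowing up to $-\infty$ at each component endpoint. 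Indeed, the deduction ``$T_{f,g}(u_k)\to-\infty$ forces $d(u_k,\partial N(f,g))\to 0$'' is not in tension with your contradiction hypothesis; it is exactly what that hypothesis predicts. To rule the configuration out you must use the specific dynamics of the deformation, which your argument never touches.

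The paper's proof is direct rather than by contradiction, and its engine is the observation that the points of $\partial\Per(f)$ are fixed by every $f_t$ and therefore act as barriers for the maps $(f_tg)^j$. One asks, for each $j=1,\ldots,q(g)$, whether the orbit point $g^j(x_0)$ is accumulated from the right by $\partial\Per(f)$. If this holds for every $j$, the barriers trap the forward images of an entire right-neighborhood of $x_0$ for all $t$, so $\delta_{f,g}(\cdot,t)$ can never vanish there --- contradicting the existence of the $u_k$ in $U(f,g)$. Hence some minimal $j$ admits a gap $(g^j(x_0),y]$ disjoint from $\partial\Per(f)$; on that gap $f_t$ moves points arbitrarily far in either direction, and sandwiching an arbitrary $z\in(x_0,u_k)$ between two of the given points $u_{k'}<z<u_k$ of $U(f,g)$ shows $\delta_{f,g}(z,T)>0$ for $T\gg T(u_k)$ and $\delta_{f,g}(z,T')<0$ for $T'\ll T(u_{k'})$, so $z\in U(f,g)$. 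The two ingredients you are missing are precisely this analysis of $\partial\Per(f)$ along the $g$-orbit of $x_0$, and the comparison of a candidate point $z$ with $u_k$'s on \emph{both} sides of it. (A small additional remark: your claim that a bounded subsequence of $T(u_k)$ would contradict ``$\delta(x_0,\cdot)>0$'' implicitly assumes $x_0\in N(f,g)$; you should first dispose of the case $x_0\in U(f,g)$, which is immediate since $U(f,g)$ is open.)
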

Of course the symmetric statement, with sequences converging to $x_0$ at
the left, holds as well, with a symmetric proof.
\begin{proof}
Let $x_0 \notin \Per(g)$, so we have $d:=d(x_0, g^{q(g)}(x_0))>0$.
First, suppose for contradiction that for all $j\in\{1,\ldots,q(g)\}$,
$g^j(x_0)$ is accumulated on the right by points of $\partial\Per(f)$.
Choose
$z_{q(g)}\in\partial\Per(f)\cap(g^{q(g)}(x_0),g^{q(g)}(x_0)+\frac{d}{2})$,
and, inductively for $j = q(g)-1, q(g)-2, ... , 1$ define
$z_j\in\partial\Per(f)\cap(g^j(x_0),g^{-1}(z_{j+1}))$ for
$j\in\{1,\ldots,q(g)-1\}$, and set $\delta=g^{-1}(z_1)-x_0$.
Then, for all $t>0$ we have
$(f_tg)^j(x_0,x_0+\delta)\subset(g^j(x_0),z_j)$, hence
$(f_{t} g)^{q(g)}(x_0,x_0+\delta) \subset
(g^{q(g)}(x_0),g^{q(g)}(x_0)+\frac{d}{2})$.
Now let $k\geq 0$ be such that $u_k\in(x_0,x_0+\delta)$,
and choose inductively $y_1\in (g(x_0),g(u_k))\cap\partial\Per(f)$,
and $y_j\in (g^j(x_0),g(y_{j-1}))\cap\partial(f)$, for $j\geq 2$.
Then, for all $t\in\R$ we have
$(f_tg)^{q(g)}(u_k)\in(y_{q(g)},z_{q(g)})$, hence
$(f_t g)^{q(g)}(u_k)\in(g^{q(g)}(x_0),g^{q(g)}(x_0)+\frac{d}{2})$;
this contradicts that $u_k\in U(f,g)$.

Thus, if a sequence $u_k \in U(f,g)$ converges to $x_0$ from the right,
then there exists some $j\in\{1,\ldots,q(g)\}$
such that $g^j(x_0)$ is not accumulated on the
right by points of $\partial\Per(f)$.
Let $j$ be the minimum such index, and let $y$ be such that
$(g^j(x_0), y] \subset S^1 \smallsetminus \partial\Per(f)$.
Let $k$ be large enough so that
$g \circ (f_t \circ g)^{j-1}(u_k) \subset (g^j(x_0), y]$ holds for all
$t\in\R$.
(Such $k$ exists using the argument above, since $g^i(x_0)$ is accumulated
on the right by $\partial\Per(f)$ for all $i<j$.)
Let $z \in (x_0, u_k)$. We will now show that $z \in U(f,g)$.

Since $f_t$ acts transitively on $(g^j(x_0), y]$, for $T$ sufficiently large we have $f_T \circ g \circ (f_T \circ g)^{j-1}(z) > g \circ (f_T \circ g)^{j-1}(u_k)$.  If $T > T(u_k)$, this guarantees that $\delta_{f,g}(z, T) > 0$.  
Similarly, if $T'$ is small enough, we will have $f_{T'} \circ g \circ (f_{T'} \circ g)^{j-1}(z) < g \circ (f_T \circ g)^{j-1}(u_{k'})$ for any given $u_{k'} \in (x_0, z)$, and choosing $T' < T(u_k')$ ensures that $\delta_{f,g}(z, T') < 0$.   This shows that $z \in U(f,g)$, as desired.
\end{proof}

\begin{proof}[Proof of Proposition \ref{prop:dNinP}]
  Let $x_0$ be an accumulation point of $\partial N(f,g)$. 
  If $x_0 \notin \Per(g)$, then by Lemma \ref{lem:PasDAcc}, on any side of
  $x_0$ containing a sequence of points in $\partial N(f,g)$, there is a
  neighborhood of $x_0$ containing no points of $U(f,g)$.
  Since $P(f,g), N(f,g)$ and $U(f,g)$ partition $S^1$, it follows that there
  is also a sequence of points in $P(f,g)$ approaching $x_0$ from this side.
  Since $P(f,g)$ is closed, $x_0 \in P(f,g) \subset \Per(g)$, a contradiction.
  
  It follows that $x_0 \in \Per(g)$.
  If also $x_0 \notin P(f,g)$, then $x_0 \in U(f,g)$ since $x_0$ is a
  periodic point for $f_0 \circ g = g$. But $U(f,g)$ is open, a contradiction.
\end{proof}
All the discussion above describes the variation of $\Per(g)$ upon deforming
$g$ by composition with $f_t$ on the left.  However, one may equally well replace $g$ by $g f_t$ and define sets $P$, $N$, and $U$ with the same properties (indeed, replacing  $g$ by $g f_t$ is equivalent to replacing $g^{-1}$ by $f_{-t} g^{-1}$).
There is no reason to privilege left-side deformations in the definition of bending, and we will occasionally make use of deformations on the right.


\subsection{The character space for $\mathbf \HOS$}  \label{ssec:CharSpace}

Recall that, for any group $\Gamma$, two homomorphisms $\rho_1$ and $\rho_2 \in \Hom(\Gamma, \HOZ)$ are said to be \emph{semi-conjugate}
\footnote{Note that this definition is \emph{not} the usual notion of semi-conjugacy from topological dynamical systems (eg. as in \cite{KatokHasselblatt}), which is not a symmetric relation.}
 if there exists a monotone (possibly non-continuous or non-injective) map
$h\colon\R\to\R$ such that $h(x+1)=h(x)+1$ for all $x\in\R$, and 
$h\circ\rho_1(\gamma)=\rho_2(\gamma)\circ h$ for all $\gamma\in\Gamma$.
Similarly, $\rho_1$ and $\rho_2 \in \Hom(\Gamma, \HOS)$
are semi-conjugate if there is such a map $h\colon\R\to\R$
such that for all $\gamma$, there are lifts $\widetilde{\rho_1(\gamma)}$
and $\widetilde{\rho_2(\gamma) \in \HOZ}$ which are semi-congugate by this map $h$.
Ghys \cite{Ghys87} proved that, under this definition, semi-conjugacy is an equivalence relation 
.  

In Section~1 of \cite{CalegariWalker}, Calegari and Walker describe an analogy between 
rotation numbers of elements of $\HOS$ and characters of linear representations. 
Much as characters capture the dynamics of a linear representation; rotation numbers capture representations up to semi-conjugacy:

\begin{theorem}[Ghys \cite{Ghys87}, Matsumoto \cite{Matsumoto86}]\label{thm:GhysRot}
Let $\Gamma$ be any group, and let $S$ be a generating set for $\Gamma$.  For $f, g \in \HOS$, define $\tau(f,g):= \rotild(\tilde{f} \tilde{g}) - \rotild(\tilde{f}) -\rotild(\tilde{g})$ for any lifts $\tilde{f}$ and $\tilde{g} \in \HOZ$.  
With this notation, two representations $\rho_1$ and $\rho_2$ in $\Hom(\Gamma, \HOS)$ are semi-conjugate if and only if the following two conditions hold:
\begin{enumerate}[i)]
\item $\rot(\rho_1(s)) = \rot(\rho_2(s))$ for each $s \in S$,
\item $\tau(\rho_1(a), \rho_1(b)) = \tau (\rho_2(a), \rho_2(b))$ for all $a$ and $b$ in $\Gamma$. 
\end{enumerate}
\end{theorem}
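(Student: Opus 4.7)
The plan splits into the easy and hard directions. For the forward implication, if $h \colon \R \to \R$ is a degree-one monotone map semi-conjugating (compatible lifts of) $\rho_1$ to $\rho_2$, then the relation $h \circ \tilde\rho_1(\gamma) = \tilde\rho_2(\gamma) \circ h$, iterated $n$ times and evaluated at a point, combined with the fact that $|h(y) - y|$ is bounded on a fundamental domain for the $\Z$-action, forces $\rotild(\tilde\rho_1(\gamma)) = \rotild(\tilde\rho_2(\gamma))$ upon dividing by $n$ and letting $n \to \infty$. Applying this to generators $s \in S$ gives (i); applying it to products $\tilde\rho_i(a)\tilde\rho_i(b)$ (which are semi-conjugated to $\tilde\rho_2(a)\tilde\rho_2(b)$ by the same $h$) gives (ii).

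For the reverse direction, the plan is first to upgrade (i) and (ii) into equality of the full translation number function on all of $\Gamma$, and then to invoke the classical orbit-map construction of a semi-conjugacy. Concretely, pick lifts of the generators of $\rho_1$ and $\rho_2$ with matching translation numbers (possible by (i)), and propagate these choices to all of $\Gamma$ via the word structure. The defining identity
\[ \rotild(\tilde f \tilde g) - \rotild(\tilde f) - \rotild(\tilde g) = \tau(f,g), \]
together with (ii), then yields by induction on word length that $\rotild(\tilde\rho_1(\gamma)) = \rotild(\tilde\rho_2(\gamma))$ for every $\gamma \in \Gamma$.

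The second step, and the main obstacle, is the classical construction: fix a basepoint $x_0 \in \R$ and define $h$ on the orbit $\tilde\rho_1(\Gamma)(x_0) + \Z$ by
\[ h\bigl(\tilde\rho_1(\gamma)(x_0) + n\bigr) := \tilde\rho_2(\gamma)(x_0) + n, \]
then extend monotonically to all of $\R$ by left/right limits. The hard point is to verify that this $h$ is well-defined and monotone on the orbit from the equality of translation numbers alone; this reduces to showing that, for any $\delta = \gamma'\gamma^{-1} \in \Gamma$, whether $\tilde\rho_i(\delta)$ moves $x_0$ to the right, to the left, or fixes it, can be read off from iterates of $\tilde\rho_i(\delta)$ and is therefore controlled by $\rotild(\tilde\rho_i(\delta))$. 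This is essentially Poincaré's classification argument promoted to non-cyclic groups, and is carried out in detail in Ghys~\cite{Ghys87} and Matsumoto~\cite{Matsumoto86}, whose results then complete the proof.
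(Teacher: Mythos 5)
The paper does not actually prove this statement: it is quoted as a classical theorem of Ghys and Matsumoto, and the only ``proof'' in the paper is the citation to \cite{Ghys87} and \cite{Matsumoto86}. Your forward direction (iterate the relation $h\circ\tilde\rho_1(\gamma)=\tilde\rho_2(\gamma)\circ h$, use that $h-\mathrm{id}$ is bounded, divide by $n$; apply the same to products to get $\tau$) is correct, and so is your reduction of (i) and (ii) to the equality $\rotild(\tilde\rho_1(\gamma))=\rotild(\tilde\rho_2(\gamma))$ for all $\gamma$ with word-consistent lifts. Since you ultimately defer the construction of the semi-conjugacy to the same two references the paper cites, your write-up is at the same level of rigor as the paper's treatment.

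However, the mechanism you sketch for the deferred step is not correct, and would fail if carried out as written. You claim that well-definedness and monotonicity of the orbit map reduce to deciding whether $\tilde\rho_i(\delta)$ moves $x_0$ left or right, and that this is ``controlled by $\rotild(\tilde\rho_i(\delta))$''. First, the relevant comparison of two orbit points $\tilde\rho_1(\gamma)(x_0)+n$ and $\tilde\rho_1(\gamma')(x_0)+n'$ is governed by how $\tilde\rho_1(\delta)$ moves the point $y=\tilde\rho_1(\gamma)(x_0)$, not $x_0$, so one needs control over the whole orbit. Second, and fatally, when $\rotild(\tilde\rho_1(\delta))=0$ the translation number says nothing about the direction in which a given point moves: two homeomorphisms, each with a single fixed point but pushing all other points in opposite directions, have identical translation-number data for all powers and are semi-conjugate (both to the identity), yet their orbit maps at a non-fixed basepoint are order-reversing relative to one another. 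So the naive orbit map need not be monotone even between semi-conjugate actions; the genuine construction must be allowed to collapse intervals, and the ordering it does preserve is recovered from the translation numbers of \emph{products} --- i.e.\ from $\tau$ on all pairs --- not from element-wise rotation numbers. (Compare Lemma~\ref{lem:PerCommuns} in the paper, where even detecting a common periodic point of two elements requires $\tau$ of arbitrarily long words.) This is exactly where the content of Ghys's and Matsumoto's theorems lies; your citation covers it, but your sketch of how it works should not be taken at face value.
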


We observe here that one can recover Calegari and Walker's analogy from our more general definition of 
character spaces for arbitrary groups.  For a topological group $G$, recall that $X(\Gamma,G)$ denotes the largest Hausdorff quotient of $\Hom(\Gamma,G)/G$.  
Let $G/\!/G$ denote the space $X(\Z,G)$; then there is, for each $\gamma\in\Gamma$
a natural, continuous map $X(\Gamma,G)\to G/\!/G$, which sends
the class of a representation $\rho$ to the class of $\rho(\gamma)$.
For example, when $G=\mathrm{SL}(2,\C)$, these are precisely the trace functions.
The next proposition says that when $G=\HOS$, these are the \emph{rotation numbers}, 
and the space $X(\Gamma,G)$ is,
as a set, exactly the set of semi-conjugacy classes of representations.

\begin{proposition}\label{prop:ChiSemiConj}
  Let $\Gamma$ be any group. Two representations $\rho_1$ and $\rho_2$
  in $\Hom(\Gamma,\HOS)$ are semi-conjugate if and only if they are equivalent in $X(\Gamma, \HOS)$. 
\end{proposition}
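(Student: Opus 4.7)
The plan is to prove the two implications separately, packaging semi-conjugacy through Theorem~\ref{thm:GhysRot} as equality of a collection of continuous, conjugation-invariant numerical invariants.

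For the direction $\chi$-equivalent $\Rightarrow$ semi-conjugate, I would observe that for each $\gamma \in \Gamma$ the evaluation $\rho \mapsto \rot(\rho(\gamma))$ is a continuous map from $\Hom(\Gamma, \HOS)$ to the Hausdorff space $\R/\Z$, and that for each ordered pair $a, b \in \Gamma$ the map $\rho \mapsto \tau(\rho(a), \rho(b))$ is continuous into $\R$ (note that $\tau$ is well defined on $\HOS$ since changing a lift shifts $\rotild$ by an integer that is absorbed in the defining difference). Both are conjugation-invariant, so each factors through $\Hom(\Gamma, \HOS)/\HOS$, and since the targets are Hausdorff each further factors through the largest Hausdorff quotient $X(\Gamma, \HOS)$. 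Hence two $\chi$-equivalent representations agree under all such maps, and Theorem~\ref{thm:GhysRot} then gives semi-conjugacy.

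For the reverse implication, the strategy is to exhibit, for each $\rho$, a canonical minimal model $\rho^*$ of its semi-conjugacy class, well defined up to conjugacy, that lies in the closure of the conjugation orbit of $\rho$ inside $\Hom(\Gamma, \HOS)$. Letting $\Lambda \subset S^1$ denote the minimal invariant closed set of $\rho$ and $\{I_\alpha\}$ the components of $S^1 \smallsetminus \Lambda$, I would build $\rho^*$ by collapsing each $I_\alpha$ to a point and identifying the quotient with $S^1$. To realize this as an honest limit of conjugations, I would choose orientation-preserving homeomorphisms $h_n$ of $S^1$ that shrink each $I_\alpha$ to an interval of summable length tending to zero with $n$, while converging pointwise to the collapsing map; the conjugates $h_n \rho h_n^{-1}$ then converge in the compact-open topology to $\rho^*$. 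Granted this, any continuous conjugation-invariant map $f \colon \Hom(\Gamma, \HOS) \to Y$ to a Hausdorff space satisfies $f(\rho) = f(h_n \rho h_n^{-1}) \to f(\rho^*)$, forcing $f(\rho) = f(\rho^*)$ by uniqueness of limits; applied to the projection to $X(\Gamma, \HOS)$, this shows that $\rho$ is $\chi$-equivalent to $\rho^*$. Combined with the uniqueness up to conjugacy of the minimal model within a semi-conjugacy class (which follows, for instance, from Ghys's rigidity for minimal actions together with the direction already proven), this yields that semi-conjugate representations are $\chi$-equivalent.

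The main obstacle I expect is the explicit construction of the sequence $h_n$ and the verification that $h_n \rho h_n^{-1}$ converges to $\rho^*$ in the compact-open topology. The case $\Lambda = S^1$ is trivial, and when $\Lambda$ is a finite orbit the gaps are finite in number and the model $\rho^*$ is conjugate to a rotation, so the construction reduces to rescaling finitely many arcs. The delicate case is that of a Cantor minimal set, where there are countably many gaps that must be shrunk compatibly so that their total length goes to zero and the limiting monotone degree-one map collapsing them is continuous; once the $h_n$ are in place, convergence of $h_n \rho(\gamma) h_n^{-1}$ at each point of $S^1$ follows from continuity of $\rho(\gamma)$ together with the fact that $h_n$ and $h_n^{-1}$ converge uniformly outside arbitrarily small neighborhoods of a finite union of gaps.
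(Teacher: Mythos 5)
Your first direction is exactly the paper's argument: $\rot$ and $\tau$ are continuous, conjugation-invariant, Hausdorff-valued, hence factor through the largest Hausdorff quotient $X(\Gamma,\HOS)$, and by the Ghys--Matsumoto theorem they separate semi-conjugacy classes. Your second direction, in the minimal and exceptional-minimal cases, is also the paper's argument in different clothing: conjugating by homeomorphisms $h_n$ that shrink the gaps and converge to the collapsing map is the same as the paper's device of approximating the continuous semi-conjugacy $h$ onto the minimal model by homeomorphisms $h'$ and estimating $\|h'\rho(s)h'^{-1}-\rho'(s)\|$ by the triangle inequality. The construction you flag as delicate is not: lift the collapsing map $h$ to a monotone map $\tilde h\colon\R\to\R$ commuting with $x\mapsto x+1$ and set $\tilde h_n=(1-\tfrac1n)\tilde h+\tfrac1n\,\mathrm{id}$; these are homeomorphism lifts converging uniformly to $\tilde h$, and then $h_n\rho(\gamma)h_n^{-1}\to\rho^*(\gamma)$ uniformly follows from $\|h_n-h\|_\infty\to 0$ together with $h\circ h_n^{-1}\to\mathrm{id}$ uniformly (no need to worry about behaviour near individual gaps).

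The one genuine gap is the finite-orbit case. If $\Lambda$ is a finite orbit, the target $\rho^*$ is the action by rigid rotations, and elements of $\Gamma$ fixing $\Lambda$ pointwise must become the identity in the limit. Conjugations that merely rescale the finitely many complementary arcs do not achieve this: they carry the (possibly highly nontrivial) action of the arc-stabilizers on each arc to a conjugate action on a rescaled arc, which stays a definite distance from the identity. What is needed is the line argument of Proposition~\ref{prop:ChiPourR}: inside each arc one builds the conjugacy $h$ adaptively on a grid, via $h(n\varepsilon/2)=\max_{s\in S}s(h((n-1)\varepsilon/2))$, so that every generator displaces every point by less than $\varepsilon$. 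This is a real idea, not a routine rescaling, and your sketch omits it; with it inserted, your proof matches the paper's.
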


Following this analogy, the ``character variety'' for $\HOS$ not only comes
with its ``ring of functions'' (the rotation number functions), but
with an underlying topological space as well. This gives
the most natural setting to speak of rigidity, or to pose Question~\ref{q:CC}.

We defer the proof of Proposition \ref{prop:ChiSemiConj} in order to make some preliminary observations.  
The first is the important remark that Proposition \ref{prop:ChiSemiConj} has no analog in $\Homeo^+(\R)$ -- a group may have many dynamically distinct actions on the line, but the character space is a single point:  

\begin{proposition}\label{prop:ChiPourR}
  For any discrete group $\Gamma$, the space $X(\Gamma, \Homeo^+(\R))$ consists of a single point.
\end{proposition}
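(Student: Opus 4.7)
The plan is to prove that the trivial representation $\rho_0\colon\Gamma\to\Homeo^+(\R)$, $\gamma\mapsto\id$, lies in the closure in $\Hom(\Gamma,\Homeo^+(\R))$ of the conjugation orbit of every representation $\rho$. This reduction suffices: denoting by $q$ the quotient map to the Hausdorff space $X(\Gamma,\Homeo^+(\R))$, the map $q$ is continuous and singletons are closed in its target, so $q(\overline{\mathrm{Orb}(\rho)})\subset\overline{\{[\rho]\}}=\{[\rho]\}$, forcing $[\rho_0]=[\rho]$ for every $\rho$.

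Concretely, given $\rho$, a finite $F=\{\gamma_1,\dots,\gamma_k\}\subset\Gamma$, a compact $K=[-M,M]\subset\R$, and $\epsilon>0$, I will produce $\varphi\in\Homeo^+(\R)$ satisfying $|\varphi\,\rho(\gamma_i)\,\varphi^{-1}(y)-y|<\epsilon$ for all $y\in K$ and all $i$. The key observation is that $\Homeo^+(\R)$ acts transitively on bi-infinite strictly increasing real sequences tending to $\pm\infty$ (by piecewise affine interpolation), so I only need to pair two such sequences $(y_j)_{j\in\Z}$ and $(x_j)_{j\in\Z}$: the $y_j$'s will encode the dynamics of $F$ on $\R$, while the $x_j$'s are packed densely inside $K$.

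I build $(y_j)$ recursively from $y_0=0$ by choosing $y_{j+1}$ larger than $\max_i\max\bigl\{\rho(\gamma_i)(y_j),\,\rho(\gamma_i)^{-1}(y_j),\,y_j\bigr\}$, and $y_{j-1}$ smaller than the analogous minimum. Each $\rho(\gamma_i)$ being a homeomorphism of $\R$ makes these bounds finite, so the construction never halts and $y_j\to\pm\infty$. By design, $\rho(\gamma_i)(y_j)<y_{j+1}$ (from the first term of the max) and $\rho(\gamma_i)(y_{j+1})>y_j$ (from the second, using order-preservation), while symmetrically $y_{j-1}<\rho(\gamma_i)(y_j)$, so that $\rho(\gamma_i)(y_j)\in(y_{j-1},y_{j+1})$ for all $i,j$. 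I then fix $(x_j)_{j\in\Z}$ tending to $\pm\infty$, with the terms of index $|j|\le N$ equally spaced in $[-M-1,M+1]$ at spacing $\le\epsilon/3$ (with $N$ of order $M/\epsilon$), and let $\varphi$ be the piecewise affine homeomorphism sending $y_j\mapsto x_j$. For $y\in K$ lying in a window $(x_j,x_{j+1})$, the image $\varphi\rho(\gamma_i)\varphi^{-1}(y)$ is forced into $(x_{j-1},x_{j+2})$, an interval of length at most $\epsilon$ that also contains $y$, yielding the desired estimate.

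The main obstacle I anticipate is boundary bookkeeping: when $y\in K$ is near $\pm M$, the four indices $j-1,j,j+1,j+2$ must all remain in the finely spaced zone of $(x_j)$, which is why I pack the $x_j$'s over a slightly larger interval than $K$ with one extra step of buffer. Beyond this, no analytic subtlety is required, since the construction uses neither any regularity of $\rho$ nor any fine dynamical structure of its image.
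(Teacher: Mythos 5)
Your proof is correct and follows essentially the same route as the paper's: both conjugate $\rho$ by a piecewise-affine homeomorphism that matches a dynamically-defined increasing sequence (each generator sends each term into the window spanned by its neighbours) to an $\varepsilon$-fine grid, so that the conjugated generators move points by less than $\varepsilon$; the paper takes the tight choice $y_{j+1}=\max_s\rho(s)(y_j)$ and works on all of $\R$, which forces it to treat the global-fixed-point case separately, whereas your strict inequalities and compact window sidestep that case split. One small repair: choosing $y_{j+1}$ merely ``larger than the max'' does not by itself guarantee $y_j\to\pm\infty$ (a bounded monotone choice is possible, and then $\varphi$ would fail to be a homeomorphism of $\R$), so you should additionally impose, say, $y_{j+1}\ge y_j+1$.
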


\begin{proof}
  Let $\rho \in \Hom(\Gamma, \Homeo^+(\R))$.
  Let $S$ be a finite, symmetric subset of $\Gamma$.
  Given $\varepsilon > 0$, we will conjugate $\rho$ so that
  $|\rho(s)(x) - x| < \epsilon$ holds for all $s \in S$ and $x \in \R$, hence show that conjugates of $\rho$ approach the trivial representation in the compact-open topology.  
   
  As a first case, assume also that the subgroup generated by $S$ has no
  global fixed points in $\R$. Then define $h(0) = 0$, and iteratively,
  for $n \in \Z$ define
  $h(n \varepsilon/2) = \max_{s \in S} s (h((n-1) \varepsilon/2)))$ if $n>0$,
  and 
  $h(n \varepsilon/2) = \min_{s \in S} s (h((n+1) \varepsilon/2)))$
  if $n<0$.
  Extend $h$ over the interior of each interval
  $[n\varepsilon/2,(n+1)\varepsilon/2]$ as an affine map.  Since $S$ has no global fixed point, this map $h$ is surjective, 
  hence it is an orientation-preserving homeomorphism.  
  Furthermore, we have
  $h s h^{-1}(n\varepsilon/2) \in [(n-1)\varepsilon/2,(n+1)\varepsilon/2]$
  for all $s \in S$.
  Thus, $|h s h^{-1}(x) - x| < \varepsilon$ holds for all $x \in \R$.
  
  If instead the subgroup generated by $S$ does have a global fixed point,
  we may define $h$ to be the identity on the set $F$ of global fixed points,
  and define it as above on each connected component of $\R\smallsetminus F$.
\end{proof}

%

Recall that the action of any group on $S^1$ is either minimal, or has a finite orbit, or has a closed, invariant set (called the \emph{exceptional minimal set}), homeomorphic to a Cantor set, on which the restriction of the action is minimal.  
The following is an easy consequence of the definition of semi-conjugacy, which we will use in the proof of Proposition \ref{prop:ChiSemiConj}.

\begin{observation} \label{obs:SCMinimal}
Every action $\rho_1$ with an exceptional minimal set is semi-conjugate to a minimal action $\rho_2$, by a \emph{continuous} semi-conjugacy $h$ satisfying $h\circ\rho_1(\gamma)=\rho_2(\gamma)\circ h$.   Furthermore, if $\rho_2$ is minimal, and $\rho_1$ arbitrary, then any $h$ satisfying this equation is necessarily continuous.  In particular, a semi-conjugacy $h$ between two minimal actions is invertible, and hence a \emph{conjugacy}.
\end{observation}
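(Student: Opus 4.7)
The plan is to handle the three assertions in turn: construction of the minimal model $\rho_2$, continuity of any $h$ when $\rho_2$ is minimal, and finally injectivity when both actions are minimal. For the first, given $\rho_1$ with exceptional minimal set $K$, the complement $S^1 \smallsetminus K$ is a countable collection of open ``gap'' intervals permuted by $\rho_1$. I would define $h\colon S^1 \to S^1$ as the quotient map collapsing each closed gap to a point. Since $K$ is a Cantor set, this quotient is again topologically a circle, and $h$ is a continuous, monotone, degree-one surjection. The equivalence relation is $\rho_1$-invariant by construction, so $\rho_1$ descends to an action $\rho_2$ on the quotient, making $h$ an equivariant continuous semi-conjugacy. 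Minimality of $\rho_2$ follows because any closed $\rho_2$-invariant subset of the quotient pulls back to a closed $\rho_1$-invariant subset of $S^1$ that must contain $K$, hence projects onto the target.

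For the continuity statement---which I expect to be the main step---I would argue by contradiction. Suppose $h\colon \R \to \R$ is a monotone $\Z$-equivariant map (as in the definition of semi-conjugacy) with a jump at some $x_0$, and let $J = (h(x_0^-),h(x_0^+))$. By monotonicity, $J$ meets $h(\R)$ in at most the single point $h(x_0)$. The essential observation is that $h(\R)$ is invariant under the lifted $\rho_2$-action: equivariance gives $\widetilde{\rho_2(\gamma)}(h(\R)) = h(\widetilde{\rho_1(\gamma)}(\R)) = h(\R)$ for every $\gamma$. Projecting to $S^1$, the closure of the image $\bar h(S^1)$ is a closed $\rho_2$-invariant set, so by minimality of $\rho_2$ it is all of $S^1$. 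But the projection of $J$ is a nondegenerate open arc missed, up to at most one point, by $\bar h(S^1)$, contradicting density. Hence $h$ must be continuous. This is the trickiest part because one has to identify precisely which topological property of $h(\R)$ is obstructed by minimality of the target.

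Finally, when both $\rho_1$ and $\rho_2$ are minimal, continuity of $h$ is already given and it remains to establish injectivity. Let $Z$ be the union of the open intervals in $S^1$ on which the induced map $\bar h$ is constant. Equivariance shows $Z$ is $\rho_1$-invariant, and it is open by definition. By minimality of $\rho_1$ either $Z = \emptyset$ or $Z = S^1$; the latter would force $\bar h$ to be locally constant everywhere and hence of degree zero, contradicting $h(x+1) = h(x) + 1$. So $Z = \emptyset$, and $\bar h$ is strictly monotone. Combined with continuity and the surjectivity already established from minimality of $\rho_2$, this makes $\bar h$ a homeomorphism of $S^1$ conjugating $\rho_1$ to $\rho_2$, as required.
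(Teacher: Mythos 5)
Your proof is correct, and it is worth noting that the paper itself offers no argument for this Observation at all -- it is asserted as ``an easy consequence of the definition of semi-conjugacy'' -- so you have supplied the standard folklore proof that the authors leave implicit. All three steps are sound: the collapse of the gaps of the exceptional minimal set $K$ is the usual monotone-quotient (Cantor function) construction; the continuity argument correctly isolates the key point, namely that $\overline{h(S^1)}$ is a nonempty closed $\rho_2$-invariant set which a jump of $h$ would force to be proper; and the injectivity argument via the open invariant set $Z$ of constancy intervals is the right dichotomy (its complement is closed and invariant, so minimality of $\rho_1$ forces $Z=\emptyset$ unless $Z=S^1$, which degree one rules out).

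One small point you use implicitly: in the minimality-of-$\rho_2$ step you assert that the preimage of any nonempty closed $\rho_2$-invariant set ``must contain $K$.'' This is exactly the statement that $K$ is the \emph{unique} minimal set of $\rho_1$ (equivalently, that $K$ is contained in the closure of every orbit), which is part of the trichotomy the paper quotes just before the Observation; it would be worth one clause to say you are invoking that uniqueness, since a nonempty closed invariant set only automatically contains \emph{some} minimal set. With that citation made explicit, the proof is complete.
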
 

\begin{proof}[Proof of Proposition \ref{prop:ChiSemiConj}]
  For one direction, it suffices to prove that the quotient of the space
  $\Hom(\Gamma,\HOS)$ by semi-conjugacy is Hausdorff. This follows
  from Theorem \ref{thm:GhysRot}, since the maps $\rot$ and $\tau$ in the theorem are continuous,
  well defined on semi-conjugacy classes, take values in the (Hausdorff)
  spaces $S^1$ and $\R$, and distinguish semi-conjugacy classes. 
  
  For the converse, if $\rho$ has a finite orbit, then we can employ a similar strategy to the
  proof of Proposition~\ref{prop:ChiPourR} to conjugate it arbitrarily close
  to an action on the circle by rigid rotations.  
  Hence, there is a
  unique element of the character space corresponding to the semi-conjugacy
  class of~$\rho$.
 
  Now suppose instead that $\rho$ has an exceptional minimal set.   By Observation~\ref{obs:SCMinimal} there is a minimal action $\rho'$ and continuous map $h$ such that each $\gamma \in \Gamma$ has lifts satisfying  
$\widetilde{\rho'(\gamma)} \circ h = h \circ \widetilde{\rho(\gamma)}$ as in the definition of semi-conjugacy. 
  Let $S$ be a finite subset of $\Gamma$, and fix $\varepsilon>0$.  
  Let $\delta\in(0,\varepsilon)$ be small enough so that for all
$s\in S$ and all $x,y\in S^1$, $|x-y|<\delta$ implies
$|\rho'(s)(x)-\rho'(s)(y)|<\varepsilon$.

Since $h$ is continuous and commutes with $x \mapsto x+1$, we can approximate it by a homeomorphism $h' \in \HOZ$ at $C^0$ distance at most $\delta$ from $h$.  
Let $s \in S$ and $x \in \R$, and take the lifts $\widetilde{\rho'(s)}$ and $\widetilde{\rho(s)}$ as above.   Then we have
\[ | \widetilde{\rho'(s)}(x) -
  \widetilde{\rho'(s)}\circ (h \circ h'^{-1})(x)| < \varepsilon \]
and 
\[ | h\circ \widetilde{\rho(s)} \circ h'^{-1}(x) -
  h'\circ \widetilde{\rho(s)} \circ h'^{-1}(x) |< \varepsilon, \]
hence the definition of semi-conjugacy and the triangle inequality gives
\[ | \widetilde{\rho'(s)}(x) - h' \circ \widetilde{\rho(s)} \circ h'^{-1}(x)|
< 2\varepsilon. \]
This proves that every representation without finite orbit is
$\chi$-equivalent to the minimal representation in its
semi-conjugacy class.
\end{proof}

We conclude this section with two observations and a short lemma that will be useful later on.  The observations are simple consequences of Observation~\ref{obs:SCMinimal}.

 \begin{observation} \label{obs:CardinalityPer}
Let $\rho_2 \in \Hom(\Gamma, \HOS)$ be minimal, and let $\rho_1$ be any action semi-conjugate to $\rho_2$, as in Observation \ref{obs:SCMinimal}..  Then for any $\gamma \in \Gamma$, 
$\Per(\rho_2(\gamma)) = h \Per(\rho_1(\gamma))$, hence
$|\Per( \rho_2(\gamma)) | \leq | \Per(\rho_1(\gamma))|$.
\end{observation}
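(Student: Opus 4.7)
The plan is to establish the set equality $\Per(\rho_2(\gamma)) = h(\Per(\rho_1(\gamma)))$; the cardinality inequality $|\Per(\rho_2(\gamma))| \leq |\Per(\rho_1(\gamma))|$ then follows immediately, since the image of any set under a function has cardinality at most that of the set.

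For the inclusion $h(\Per(\rho_1(\gamma))) \subseteq \Per(\rho_2(\gamma))$, I will iterate the intertwining relation $h \circ \rho_1(\gamma) = \rho_2(\gamma) \circ h$ furnished by Observation~\ref{obs:SCMinimal} to obtain $h \circ \rho_1(\gamma)^n = \rho_2(\gamma)^n \circ h$ for every $n \geq 1$. Applying this to any $x \in \Per(\rho_1(\gamma))$ with $\rho_1(\gamma)^n(x) = x$ yields $\rho_2(\gamma)^n(h(x)) = h(x)$, so $h(x) \in \Per(\rho_2(\gamma))$.

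For the reverse inclusion, let $y \in \Per(\rho_2(\gamma))$ with $\rho_2(\gamma)^n(y) = y$. Because $h$ is continuous, monotone, and of degree one, it is surjective, and the preimage $h^{-1}(y) \subseteq S^1$ is a nonempty closed connected subset---either a single point or a closed arc. The same intertwining relation gives
\[
\rho_1(\gamma)^n(h^{-1}(y)) = h^{-1}(\rho_2(\gamma)^n(y)) = h^{-1}(y),
\]
so $\rho_1(\gamma)^n$ restricts to an orientation-preserving self-homeomorphism of the arc $h^{-1}(y)$. Any continuous self-map of a closed arc has a fixed point, providing $x \in h^{-1}(y)$ with $\rho_1(\gamma)^n(x) = x$; i.e.\ $x \in \Per(\rho_1(\gamma))$ and $h(x) = y$, as required.

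No serious obstacle is expected; the one point deserving care is simply ensuring that preimages of points under $h$ are connected, which follows from the fact that $h$ is monotone of degree one.
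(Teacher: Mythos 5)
Your proof is correct; the paper gives no argument for this observation, stating it only as a ``simple consequence'' of Observation~\ref{obs:SCMinimal}, and your reasoning (iterating the intertwining relation for one inclusion, and using that point-preimages under the continuous monotone degree-one map $h$ are points or closed arcs, invariant under $\rho_1(\gamma)^n$ and hence containing a fixed point by the intermediate value theorem, for the other) is exactly the intended one.
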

 
\begin{observation}\label{obs:DeformationsConj} 
  Suppose that $\rho$ is minimal and path-rigid, and let $a,b$ have
  $i(a,b)=-1$. Since $b^{q(b)}$ lies in a 1-parameter family, there is a
  bending deformation replacing $\rho(a)$ with $\rho(b^{N q(b)}a)$ for any
  $N \in \Z$, which is realized by precomposition with a Dehn twist (see Section \ref{sss:bending}).  Thus,
  the new representation has the same image as $\rho$; in particular
  it is minimal, hence \emph{conjugate} to $\rho$.
\end{observation}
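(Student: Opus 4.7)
The plan is to build the bending deformation explicitly, identify its integer-time values as precompositions of $\rho$ with powers of a Dehn twist, and then combine path-rigidity with the fact that any semi-conjugacy between two minimal actions of $\Gamma_g$ on $S^1$ is a conjugacy (Observation~\ref{obs:SCMinimal}).

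First I would verify the parenthetical claim that ``$b^{q(b)}$ lies in a one-parameter family''. By the definition of $q(b)$, the element $\rho(b)^{q(b)}$ has rotation number zero and hence a nonempty fixed set. Parameterizing each cyclic $\rho(b)$-orbit of connected components of $S^1\setminus\Fix(\rho(b)^{q(b)})$ so that $\rho(b)$ acts as a rigid translation on the union, one obtains a one-parameter subgroup $(\gamma_t)_{t\in\R}\subset\HOS$ that commutes with $\rho(b)$ and satisfies $\gamma_N = \rho(b)^{N q(b)}$ for every $N\in\Z$. This construction is the only mildly delicate point in the argument.

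Next, since $i(a,b)=-1$, the pair $(b,a)$ extends (after fixing orientations) to a standard system of loops with $b=a_1$ and $a=b_1$. Bending $\rho$ along $b$ by $(\gamma_t)$, as prescribed in Section~\ref{sss:bending}, yields a continuous path $(\rho_t)\subset\Hom(\Gamma_g,\HOS)$ with $\rho_t(a)=\gamma_t\rho(a)$ and $\rho_t$ agreeing with $\rho$ on every other standard generator. Evaluating at $t=N\in\Z$ gives $\rho_N(a) = \rho(b^{N q(b)} a)$, so $\rho_N$ coincides with $\rho\circ(\tau_b^{N q(b)})_{\ast}$, where $\tau_b$ is the Dehn twist along $b$ with the convention fixed in Section~\ref{sss:bending}; this is the ``precomposition with a Dehn twist'' mentioned in the statement.

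To conclude: because $(\tau_b^{N q(b)})_{\ast}$ is an automorphism of $\Gamma_g$, the images $\rho_N(\Gamma_g)$ and $\rho(\Gamma_g)$ coincide as subgroups of $\HOS$, so $\rho_N$ is minimal. Path-rigidity applied to the continuous path $(\rho_t)$ places $\rho$ and $\rho_N$ in the same $\chi$-equivalence class, hence by Proposition~\ref{prop:ChiSemiConj} they are semi-conjugate; Observation~\ref{obs:SCMinimal} then upgrades this to an honest conjugacy between the two minimal actions, finishing the proof.
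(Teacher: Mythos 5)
Your proposal is correct and follows the same route as the paper: the paper's justification is exactly the inline argument of the Observation (embed $\rho(b)^{q(b)}$ in a one-parameter group commuting with $\rho(b)$, bend along $b$, note the integer-time representations are precompositions with powers of the Dehn twist and hence have the same image, then combine path-rigidity, Proposition~\ref{prop:ChiSemiConj} and Observation~\ref{obs:SCMinimal} to upgrade semi-conjugacy of minimal actions to conjugacy). The only detail you add beyond the paper is the explicit equivariant construction of the one-parameter group with $\gamma_N=\rho(b)^{Nq(b)}$, which is carried out correctly.
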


\begin{lemma}\label{lem:PerCommuns}
  Let $f,g\in\HOS$ be two homeomorphisms with rational rotation number.
  The property that $f$ and $g$ share a periodic point depends only on the semi-conjugacy class of $\langle f, g \rangle$.  
\end{lemma}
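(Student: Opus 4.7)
The statement asserts that ``$f$ and $g$ share a periodic point'' is an invariant of the semi-conjugacy class of the representation $\rho\colon\langle a,b\rangle\to\HOS$ of the rank-two free group with $\rho(a)=f$ and $\rho(b)=g$. My plan starts with a reduction to rotation number zero: since rotation numbers (and hence the integers $q(f), q(g)$ of Section~\ref{sssec:NotationPer}) are semi-conjugacy invariants, and since any monotone map semi-conjugating $\rho_1$ to $\rho_2$ automatically semi-conjugates all powers of $\rho_i(a), \rho_i(b)$, I may replace $f,g$ by $f^{q(f)}, g^{q(g)}$. The question then becomes: if $\rho_1, \rho_2$ are semi-conjugate with $\rot(\rho_i(a))=\rot(\rho_i(b))=0$, do $\rho_1(a), \rho_1(b)$ share a fixed point iff $\rho_2(a), \rho_2(b)$ do?

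Unwinding the definition of semi-conjugacy and using that translation numbers are all zero, I pick the canonical lifts $\widehat{f_i}, \widehat{g_i} \in \HOZ$ and a monotone $h\colon \R \to \R$ with $h(x+1)=h(x)+1$ satisfying $h \widehat{f_1} = \widehat{f_2} h$ and $h \widehat{g_1} = \widehat{g_2} h$. The forward implication is immediate: any common fixed point $x\in\R$ of $\widehat{f_1}, \widehat{g_1}$ maps under $h$ to a common fixed point of $\widehat{f_2}, \widehat{g_2}$, which projects to a common periodic point of $\rho_2(a), \rho_2(b)$.

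For the converse, let $y\in\R$ be a common fixed point of $\widehat{f_2}, \widehat{g_2}$, and distinguish two cases. If $y\in\mathrm{im}(h)$, set $I=\overline{h^{-1}(y)}=[\alpha,\beta]$, a nonempty closed interval. The relation $h\widehat{f_1}=\widehat{f_2}h$ together with $\widehat{f_2}(y)=y$ gives $\widehat{f_1}(h^{-1}(y))\subset h^{-1}(y)$; by continuity of $\widehat{f_1}$ we obtain $\widehat{f_1}(I)\subset I$, and applying the same argument to $\widehat{f_1}^{-1}$ (which also satisfies $h\widehat{f_1}^{-1}=\widehat{f_2}^{-1}h$) yields $\widehat{f_1}(I)=I$, so both endpoints of $I$ are fixed by $\widehat{f_1}$. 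The identical reasoning for $\widehat{g_1}$ shows that $\alpha$ is a common fixed point of $\widehat{f_1}, \widehat{g_1}$, as desired. If instead $y\notin\mathrm{im}(h)$, then $y$ lies in some connected component $(L,R)$ of the open set $\R\smallsetminus\mathrm{im}(h)$, a ``gap'' created by a jump of $h$ at some point $x_0\in\R$, and the value $h(x_0)$ lies at one endpoint of $(L,R)$. Since $\widehat{f_2}h(\R)=h\widehat{f_1}(\R)=h(\R)$ and similarly for $\widehat{g_2}$, the image of $h$ is $\langle\widehat{f_2},\widehat{g_2}\rangle$-invariant, so both homeomorphisms permute the connected components of its complement; fixing $y\in(L,R)$ forces them to preserve $(L,R)$ setwise and hence fix both endpoints. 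In particular $h(x_0)\in\mathrm{im}(h)$ is a common fixed point of $\widehat{f_2}, \widehat{g_2}$ lying in the image of $h$, reducing us to the first case.

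The main technical subtlety is the gap case, where the possibly discontinuous semi-conjugacy $h$ need not be surjective; the key observation is that $\mathrm{im}(h)$ is $\langle\widehat{f_2},\widehat{g_2}\rangle$-invariant, so that a common fixed point lying in a gap of $\mathrm{im}(h)$ is flanked by common fixed points at the gap's endpoints, at least one of which automatically lies in $\mathrm{im}(h)$, reducing everything to the straightforward interval argument.
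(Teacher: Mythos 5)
Your proof is correct, but it takes a genuinely different route from the paper's. The paper argues through the Ghys--Matsumoto characterization of semi-conjugacy by rotation numbers (Theorem~\ref{thm:GhysRot}): it shows that $f$ and $g$ share a periodic point if and only if $\tau(f^{n_1 q(f)},g^{m_1 q(g)},\dots,f^{n_\ell q(f)},g^{m_\ell q(g)})=0$ for all choices of exponents, the nontrivial direction being a ping-pong argument which, when $\Per(f)\cap\Per(g)=\emptyset$, pushes a point all the way around the circle and forces one of these $\tau$'s to be $\geq 1$. You instead work directly with the intertwining map $h$, which is more elementary and self-contained (it does not need Theorem~\ref{thm:GhysRot} at all); what you forgo is the quantitative by-product of the paper's method recorded in Remark~\ref{rem:PerNonCommun} --- the number $2\ell$ of alternating complementary intervals is itself a semi-conjugacy invariant --- which the paper reuses in Section~\ref{sec:Per} to see that $m(\rho)$ depends only on the semi-conjugacy class. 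Your reduction to canonical lifts is justified (the $h$-related lifts of $f_i^{q(f)}$ differ from the canonical ones by the same integer on both sides, since $\rotild$ is a semi-conjugacy invariant), and both cases of the converse are sound. One small imprecision: $\R\smallsetminus\mathrm{im}(h)$ need not be open, since $\mathrm{im}(h)$ need not be closed when a one-sided limit of $h$ at a jump point is not attained; its components may be half-open intervals. This does not affect your argument: the components are still bounded intervals permuted by $\widehat{f_2}$ and $\widehat{g_2}$, the one containing the fixed point $y$ is preserved, its closure has both endpoints fixed, and the endpoint $h(x_0)$ does lie in $\mathrm{im}(h)$ as you claim, so the reduction to your first case goes through. (One could also shorten the converse by invoking the symmetry of semi-conjugacy and applying your forward implication to a map in the other direction, but your direct treatment of a single $h$ avoids relying on that fact.)
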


\begin{proof}
For $f_1,\ldots,f_n\in\HOS$, let
$\tau(f_1,\ldots,f_n) =
\rotild(\widetilde{f_n}\circ\cdots\circ\widetilde{f_1})
-\sum_i\rotild(\widetilde{f_i})$
(which obviously does not depend on the choices of lifts).
Note that 
\[ \tau(f_1,\ldots,f_n)=\tau(f_1,f_n\circ\cdots\circ f_2)
-\sum_{j=2}^{n-1}\tau(f_j,f_n\circ\cdots\circ f_{j+1}), \]
so this function can be recovered from the two-variable $\tau$ of Theorem \ref{thm:GhysRot}.

To prove the lemma, we prove the stronger statement that $f$ and $g$ sharing a periodic point is equivalent to the following assertion: 
\begin{quote}{\em For any $\ell\geq 1$ and any integers
    $n_1,m_1,\ldots,n_\ell,m_\ell$, we have \\ $\tau(f^{n_1 q(f)},g^{m_1 q(g)},\cdots,
    f^{n_\ell q(f)},g^{m_\ell q(g)})=0.$}
    \end{quote}
Applying Theorem~\ref{thm:GhysRot} gives the desired conclusion.  

The assertion is clearly true if $f$ and $g$ share a periodic point.  
  For the converse, suppose $\Per(f) \cap \Per(g) = \emptyset$, so $S^1\smallsetminus(\Per(f)\cup\Per(g))$ is a union of intervals.
  As $\Per(f)$ and $\Per(g)$ are closed, disjoint sets,
  only finitely many of these complementary intervals have one boundary
  point in each of $\Per(f)$ and $\Per(g)$. Those bounded on the 
  right by a point of $\Per(f)$ and at their left by a point of $\Per(g)$
  alternate with the others (with the roles of right and left reversed), in particular there are an even number of
  such complementary
  intervals. 
  Let $I_1,\ldots,I_{2\ell}$ denote these intervals, in their cyclic order on
  the circle, and let $I_j = (x_j, y_j)$.
  Up to
  shifting the indices cyclically, we have
  $x_i,y_{i+1}\in\Per(g)$ and $x_{i+1},y_i\in\Per(f)$ for all $i$
  even.
  
  Choose a point $x$ in $I_1$.
  Since the interval $(x_1,y_2)$ contains only points of $\Per(g)$,
  there exists $n_1$  such that $f^{n_1 q(f)}(x)\in I_2$. Similarly, there exists
  a power $n_2$ of $g^{q(g)}$ which maps $f^{n_1 q(f)}(x)$ into $I_3$, and so on for $n_i$, $i>2$. 
  The last operation can be done so that the image of $x$, under
  a suitable word
  $g^{n_\ell q(g)}f^{n_\ell q(f)}\cdots g^{n_2 q(g)}f^{n_1 q(f)}$,
  lies to the right of $x$ in $I_1$. Then, choosing the canonical lifts of
  $f^{n_i q(f)}$ and $g^{m_i q(g)}$, we observe that
  $\tau(f^{n_1 q(f)},g^{m_1 q(g)},\cdots,
  f^{n_\ell q(f)},g^{m_\ell q(g)}) \geq 1$.
\end{proof}

\begin{remark} \label{rem:PerNonCommun} 
In the case $\Per(f) \cap \Per(g) = \emptyset$, the integer $\ell$ in the proof above also only depends on $\tau$ -- in fact, it is the \emph{minimal} integer such that there exist $m_i, n_i \in \Z$ with 
$\tau(f^{n_1 q(f)},g^{m_1 q(g)},\cdots,
  f^{n_\ell q(f)},g^{m_\ell q(g)}) \geq 1$.

\end{remark}


\subsection{The Euler class}\label{ssec:Euler}

Recall that the \emph{(integer) Euler class} for circle bundles is the generator $e$ of $H^2(\HOS; \Z) \cong \Z$; and the \emph{Euler number} of a representation $\rho: \Gamma_g \to \HOS$ is the integer $\langle \rho^\ast(e), [\Gamma_g]  \rangle$, where $[\Gamma_g]$ denotes the fundamental class, i.e. a generator of $H_2(\Gamma_g, \Z)$.
Although this definition only makes sense for fundamental groups of closed surfaces, (a surface with boundary has free fundamental group, and $H_2(F_n; \Z) = 0$) there is a \emph{relative} Euler number for surfaces with boundary, which is additive when such subsurfaces are glued together.
This can be made precise in the language of bounded cohomology as explained in \cite[\S~4.3]{BIW}.
(Compare also Goldman \cite{Goldman84} and Matsumoto \cite{Matsumoto87}.)
Following \cite{BIW}, we make the following definition. 

\begin{definition}[Euler number for pants.]\label{def:Euler}
Let $P\subset\Sigma_g$ be a subsurface homeomorphic to a pair of pants; equip it with three based curves $a$, $b$, $c$ as in
Figure~\ref{figure:pantalon}.  (If $P$ does not contain the basepoint, choose a path in $\Sigma_g$ from the base point to a chosen point in $P$, and use it to define the curves $a$, $b$ and $c$.)
Let $\rho\colon\pi_1\Sigma_g\to\HOS$, and let $\widetilde{\rho(a)}$,
$\widetilde{\rho(b)}$ be any lifts of $\rho(a)$ and $\rho(b)$ to
$\HOZ$, and let
$\widetilde{\rho(c)}=\left(\widetilde{\rho(b)}\widetilde{\rho(a)}\right)^{-1}$.
Then the contribution of $P$ to the Euler number of $\rho$ is
\[
\eu_P(\rho):=\rotild\left(\widetilde{\rho(a)}\right)+
\rotild\left(\widetilde{\rho(b)}\right)+\rotild\left(\widetilde{\rho(c)}\right).
\]
\end{definition}
\begin{figure}[!h]
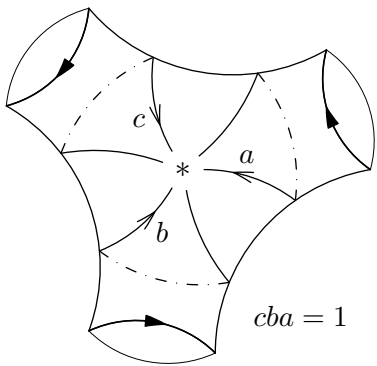

\begin{asy}
  import geometry;
  
  real long=70, ouv=40, corr1=10, corr2=35;
  
  pair p0=(long,0), p1=p0*dir(ouv), p2=p0*dir(120);
  
  path chemin1=p1{dir(ouv+180-corr1)}..{dir(120+corr1)}p2;
  path chemin2=rotate(120)*chemin1;
  path chemin3=rotate(120)*chemin2;
  
  path bordgros1=p0{dir(90+0.5*ouv+corr2)}..p1;
  path bordfin1=p0{dir(90+0.5*ouv-corr2)}..p1;
  
  path bordgros2=rotate(120)*bordgros1;
  path bordfin2=rotate(120)*bordfin1;
  
  path bordgros3=rotate(120)*bordgros2;
  path bordfin3=rotate(120)*bordfin2;
  
  pair pa1=intersectionpoint(chemin1,(0.4*long,0)--(0.4*long,50));
  pair pa2=intersectionpoint(chemin3,(0.6*long,0)--(0.6*long,-50));
  
  path chema1=8*dir(ouv){dir(ouv)}..pa1;
  path chema2=pa1{dir(-50)}..pa2;
  path chema3=pa2..{dir(180)}(8,0);
  
  draw(chemin1);
  draw(chemin2);
  draw(chemin3);
  
  draw(bordgros1,MidArrow);
  draw(bordgros1,black+0.6);
  draw(bordfin1,black+0.25);
  
  draw(bordgros2,MidArrow);
  draw(bordgros2,black+0.6);
  draw(bordfin2,black+0.25);
  
  draw(bordgros3,MidArrow);
  draw(bordgros3,black+0.6);
  draw(bordfin3,black+0.25);
  
  label("$*$",(0,0));
  
  draw(chema1);
  draw(chema2,dashdotted);
  draw(chema3,Arrow(SimpleHead,Relative(0.7)));
  label("\small $a$\normalsize",(24,5));
  
  draw(rotate(120)*chema1);
  draw(rotate(120)*chema2,dashdotted);
  draw(rotate(120)*chema3,Arrow(SimpleHead,Relative(0.7)));
  label("\small $c$\normalsize",(24,5)*dir(120));
  
  draw(rotate(240)*chema1);
  draw(rotate(240)*chema2,dashdotted);
  draw(rotate(240)*chema3,Arrow(SimpleHead,Relative(0.7)));
  label("\small $b$\normalsize",(24,5)*dir(240));
  
  label("\small $cba=1$\normalsize",(44,-55));
\end{asy}
\caption{A pair of pants with standard generators of its fundamental group}
\label{figure:pantalon}
\end{figure}
If the surface $\Sigma_g$ is cut into pairs of pants, the Euler class of
$\rho$ is the sum of the contributions of these pants. See
See \cite[\S~4.3]{BIW} for a detailed discussion, and
\cite{AuMoinsG} for a short exposition and proof that this does not depend
on the decomposition.
Definition~\ref{def:Euler} extends naturally to one-holed tori: if
$T=T(a,b)\subset\Sigma_g$ is a one-holed torus, cutting $T$ along a
simple closed curve (say, freely homotopic to $a$ or $b$) yields the formula
$\eu_T(\rho)= \rotild\left(\widetilde{\rho(b)}^{-1}\widetilde{\rho(a)}^{-1} \widetilde{\rho(b)} \widetilde{\rho(a)}\right)$,
which, in turn, gives Milnor's classical formula~\cite{Milnor},
$\eu(\rho)=\prod_{i=1}^g[\widetilde{\rho(a_i)},\widetilde{\rho(b_i)}]$,
where $(a_1,\ldots,b_g)$ is a standard system of curves, and where
the lifts are taken arbitrarily.

\section{A first statement}\label{sec:Exercices}

This section proves the main theorem under a strong additional hypothesis.
We will show that  if $\rho$ is path-rigid and if for every
$a, b \in \Gamma_g$ with $i(a,b) = \pm 1$,
$\rho(a)$ and $\rho(b)$ resemble, dynamically, a geometric representation,
then $\rho$ is in fact geometric.
In other words, the local condition that $\rho$ ``looks geometric'' on pairs
$a, b$ with $i(a,b) = \pm 1$ implies global geometricity.
To formalize this, we introduce some definitions.

\begin{definition}
  Say that an element $f \in \PSLk$ is \emph{hyperbolic} if its projection
  to $\PSL$ is hyperbolic. Equivalently, all its periodic points are
  hyperbolic in the sense of classical smooth dynamics.
\end{definition}  

\begin{definition} \label{def:Sk}
Let $a,b \in \Gamma_g$ and $\rho: \Gamma_g \to \HOS$.   Denote by $S_k(a,b)$ (the notation $\rho$ is suppressed) the property that 
\begin{enumerate}[i.]
\item $i(a,b) = \pm1$ and $\rho(a)$ and $\rho(b)$ are each separately conjugate to a hyperbolic element of $\PSLk$, and
\item their periodic points \emph{alternate} around the circle, meaning that each pair of points of $\Per(a)$ are separated by $\Per(b)$, and vice versa.  
\end{enumerate}
If all pairs $a,b$ with $i(a,b) = \pm1$ have $S_k(a,b)$, then we say that $\rho$ has property $S_k$. 
\end{definition}

With this notation we can state the main result of this section.

\begin{theorem}\label{thm:SkImpliqueGeom}
  Let $\rho$ be a path-rigid, minimal representation, and suppose $\rho$
  satisfies $S_k$ for some $k$. Then $\rho$ is geometric.
\end{theorem}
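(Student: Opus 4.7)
The plan is to show that under these hypotheses, $\rho$ is semi-conjugate to the lift in $\PSLk$ of a discrete, faithful Fuchsian representation $\Gamma_g\to\PSL$. Since $\rho$ is minimal and such a lift is also minimal, the semi-conjugacy will automatically be a conjugacy by Observation~\ref{obs:SCMinimal}, yielding geometricity.

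The first step is to unpack $S_k$ at the level of standard generators. For any pair $(a,b)$ with $i(a,b)=\pm 1$, $\rho(a)$ and $\rho(b)$ are each conjugate to a hyperbolic element of $\PSLk$, and their $2k$ periodic points interleave---exactly the cyclic combinatorics of a Fuchsian generator pair lifted to $\PSLk$. In particular $q(\rho(a))=q(\rho(b))=k$, so Lemma~\ref{lem:PosDeform} furnishes positive one-parameter families commuting with $\rho(a)^k$ and $\rho(b)^k$, and the machinery of Section~\ref{sssec:Twist} governs how periodic sets of other curves move under bending along $a$ or $b$.

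Next, I would propagate this local model along completable chains. By Lemma~\ref{lem:BallonsConnexe2}, the graph $G$ of completable $3$-chains is connected, so it suffices to control each elementary move $(a,b)\mapsto(b,c)$. A bending along $\rho(b)^k$ using a positive one-parameter family gives a continuous deformation of $\rho$; by path-rigidity combined with Observation~\ref{obs:DeformationsConj}, every deformed representation is conjugate to $\rho$. The conjugating homeomorphism must carry the (finite, hyperbolic) periodic set of $\rho(c)$ to that of its deformed image, and combining this with the alternation forced by $S_k(a,b)$ and $S_k(b,c)$ pins down the cyclic arrangement of the periodic points of $\rho(a)$, $\rho(b)$, $\rho(c)$ to the one dictated by a $\PSLk$-geometric representation. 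Iterating across $G$ and Lemma~\ref{lem:BallonsConnexe}, I obtain a coherent cyclic ordering of the periodic points of every standard generator, and of sufficiently many words in them.

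The last step is to feed this combinatorial data into Matsumoto's theory of basic partitions. The periodic points of $\rho(a_1),\rho(b_1),\ldots,\rho(a_g),\rho(b_g)$, together with their $\Gamma_g$-action, assemble into a basic partition of $S^1$ adapted to the $k$-fold cover: a decomposition into arcs satisfying the combinatorial axioms that Matsumoto uses to characterize lifts of maximal $\PSL$-representations. The corresponding uniqueness statement then produces a semi-conjugacy from $\rho$ to the geometric representation realizing this combinatorial model, and minimality promotes the semi-conjugacy to a conjugacy, proving that $\rho$ is geometric.

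The main obstacle is the propagation step. A priori, $S_k$ is a condition only on pairs of intersection number $\pm 1$, whereas a $\PSLk$-representation has much richer rigidity: products such as $\rho(ab)$ must themselves be hyperbolic elements of $\PSLk$ fitting into a prescribed cyclic pattern of periodic points on $S^1$. Extracting these stronger constraints from path-rigidity and a careful choice of bending deformations along well-chosen chains, and verifying that the resulting combinatorial skeleton is exactly a $k$-fold basic partition in Matsumoto's sense, is where the bulk of the technical work must go.
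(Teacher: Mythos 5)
Your proposal follows essentially the same route as the paper: bending deformations plus path-rigidity to pin down the cyclic order of periodic points along completable chains (propagated via the connectedness of the chain graph of Lemma~\ref{lem:BallonsConnexe2}), followed by Matsumoto's basic partitions and configurations to produce a semi-conjugacy to a geometric model, upgraded to a conjugacy by minimality. The one ingredient your sketch leaves implicit is the computation of the relative Euler class ($-1/k$ on each pair of pants), which the paper needs both to guarantee that $k$ divides $2g-2$ so that a Fuchsian representation lifts to $\PSLk$, and to construct the candidate geometric representation $\rho_0$ with matching rotation numbers along the pants-decomposition curves and edge generators.
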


Before embarking on the proof, we discuss some other variations on hyperbolicity to be used later in the section.  

Let  $f\in\HOS$. We say that an open interval $I\subset S^1$
is {\em attracting for $f$} if $f(\overline{I})\subset I$. We say that $I$ is {\em repelling for $f$} if it is attracting for $f^{-1}$.
Matsumoto \cite{Matsumoto87} calls homeomorphisms that do not admit attracting intervals \emph{tame}.  In line with his terminology, we call those homeomorphisms which do \emph{savage}.  More specifically, we have:

\begin{definition}\label{def:Intervalles}
A homeomorphism $f\in\HOS$ is {\em $n$-savage} if there exist $2n$ open intervals with pairwise disjoint closures, indexed in cyclic order by $I_1^-$, $I_1^+$, \ldots, $I_n^-$, $I_n^+$ 
such that 
\[
f(S^1\smallsetminus(\cup_{j=1}^n \overline{I_j^-}))=\cup_{j=1}^n I_j^+.
\]
\end{definition}
\noindent In this sense, savage means $1$-savage.

The next observation is an immediate consequence of the definition, we leave the proof to the reader.
\begin{observation}\label{obs:Sauvagerie} 
If $f$ is $n$-savage, then $f^k$ is also $n$-savage for any $k \in \Z \smallsetminus \{0\}$.  Furthermore, $\rot(f^n)=0$ and $f$ has least one periodic point in each interval
$I_j^+$ and $I_j^-$.
\end{observation}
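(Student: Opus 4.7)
The plan is to analyze the combinatorics of how $f$ permutes the $2n$ intervals, then invoke Brouwer's fixed point theorem. Let $A_j$ denote the connected component of $S^1\smallsetminus\bigcup_j\overline{I_j^-}$ containing $I_j^+$; by the cyclic ordering in Definition~\ref{def:Intervalles}, there are exactly $n$ such components, arranged cyclically. The savage hypothesis forces $f(A_j)=I_{\sigma(j)}^+$ for some permutation $\sigma\in S_n$, and since $f$ preserves the cyclic order on $S^1$ and each $I_j^+\subset A_j$, $\sigma$ must be a cyclic rotation; in particular $\sigma^n=\id$. The assumption that the $2n$ intervals have pairwise disjoint closures means $\overline{I_{\sigma(j)}^+}\subset A_{\sigma(j)}$, hence $\overline{f(A_j)}\subset A_{\sigma(j)}$, and iterating yields $\overline{f^n(A_j)}\subset A_j$.

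Brouwer applied to the closed interval $\overline{A_j}$ then gives a fixed point of $f^n$ in $A_j$, which proves $\rot(f^n)=0$. To refine the location of a fixed point to $I_j^+$, note that $f|_{A_j}\colon A_j\to I_{\sigma(j)}^+$ is a homeomorphism and the endpoints of $\overline{I_j^+}$ are interior points of $A_j$ (again by disjoint closures), so $\overline{f(I_j^+)}\subset I_{\sigma(j)}^+$; iterating yields $\overline{f^n(I_j^+)}\subset I_j^+$ and Brouwer then delivers a periodic point of $f$ in $I_j^+$. For periodic points in $I_j^-$, observe that $f^{-1}$ is itself $n$-savage with the roles of the $I_j^+$'s and $I_j^-$'s interchanged: applying $f^{-1}$ to the identity $f(\bigcup A_j)=\bigcup I_j^+$ gives $f^{-1}(\bigcup I_j^+)=\bigcup A_j$, hence $f^{-1}(S^1\smallsetminus\bigcup\overline{I_j^+})=S^1\smallsetminus\bigcup\overline{A_j}=\bigcup I_j^-$. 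The argument of the previous sentence applied to $f^{-1}$ then provides fixed points of $f^{-n}$ (equivalently, periodic points of $f$) inside each $I_j^-$.

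Finally, for the $n$-savagery of $f^k$ with $k>0$ (the case $k<0$ is obtained by combining this with the swap observation that $f^{-1}$ is $n$-savage), set $J_j^-:=I_j^-$ and $J_j^+:=f^k(A_{\sigma^{-k}(j)})$. The iterated closure containment $\overline{f^k(A_m)}\subset A_{\sigma^k(m)}$ gives $\overline{J_j^+}\subset A_j$, so $J_j^+$ has closure disjoint from every $I_i^-$, and distinct $J_j^+$'s live in distinct $A_j$'s so have pairwise disjoint closures as well; the cyclic ordering $J_1^-,J_1^+,\ldots,J_n^-,J_n^+$ is inherited from that of the $I_j^-$'s and the $A_j$'s. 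Then $f^k\bigl(S^1\smallsetminus\bigcup\overline{J_j^-}\bigr)=f^k(\bigcup A_j)=\bigcup J_j^+$ by construction. The only technical point, which is not really an obstacle, is to observe that disjoint closures of the original $2n$ intervals produce \emph{strict} containment $\overline{f(A_j)}\subset A_{\sigma(j)}$ — this single fact drives both the Brouwer argument and the verification that the $J_j^\pm$ constructed for $f^k$ themselves have pairwise disjoint closures.
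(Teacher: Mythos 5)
Your proof is correct. The paper gives no argument here (it explicitly leaves the observation to the reader as ``an immediate consequence of the definition''), and your write-up --- identifying the components $A_j$, noting that orientation-preservation forces the induced permutation $\sigma$ to be a cyclic rotation so $\sigma^n=\id$, using the strict containments $f(\overline{A_j})=\overline{I_{\sigma(j)}^+}\subset A_{\sigma(j)}$ to trap fixed points of $f^{\pm n}$, and pushing the $A_j$'s forward by $f^k$ to exhibit savage intervals for $f^k$ --- is exactly the intended elementary argument, carried out completely.
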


As a concrete example, note that if $f$ is conjugate to a hyperbolic
element in $\PSLk$, then $f$ is $n$-savage if and only if $n\leq k$.  

The intervals $I_j^+$ and $I_j^-$ in the definition of savage are by no way unique, 
but it will be convenient to use the notation $I^+(f):=\cup_{j=1}^n I_j^+$ and
$I^-(f):=\cup_{j=1}^n I_j^-$, even if this these sets depend on choices.
We also set $I(f):=I^+(f)\cup I^-(f)$.

\begin{definition}
Two $n$-savage homeomorphisms $f,g\in\HOS$ are {\em in $n$-Schottky
position} if their respective attracting and repelling intervals $I_j^\pm$ can be chosen so that
$I(f)$ and $I(g)$ have disjoint closures.
\end{definition}

Note that, if $f$ and $g$ are $n$-Schottky, then $f^{-1}$ and $g$ are
$n$-Schottky as well.  
Note also that the condition $S_k(a,b)$ is not equivalent to $k$-Schottky,
although $S_k(a,b)$ does imply that $a^N$ and $b^N$ are $k$-Schottky for
sufficiently large $N$.
We will prove however that hypothesis $S_k$ on a path-rigid representation
$\rho$ implies that $a$ and $b$ are indeed $k$-Schottky whenever
$i(a,b)=\pm 1$.


\subsection{Outline of Proof of Theorem \ref{thm:SkImpliqueGeom}}

We start in Section \ref{ssec:ExoMax} with a series of lemmas that use
rigidity and property $S_k$ to show the cyclic order of periodic points of
various non-separating curves agrees with that of a geometric representation,
and that certain pairs of curves are $k$-Schottky.
Following this, we show in Section \ref{subsec:Euler} that the Euler number
of a path-rigid, minimal, $S_k$ representation agrees with a geometric one,
i.e. is equal to $\pm\frac{2g-2}{k}$.
From there, we need to improve this essentially combinatorial result to the
fact that the representation is actually geometric.
Our main tool is existing work of Matsumoto on \emph{Basic Partitions}.

We are now ready to embark on the proof.
Throughout, we make the following assumption.

\begin{assumption}
For the rest of this section, $\rho$ denotes a path-rigid minimal representation of $\Gamma_g$ that satisfies $S_k$.   To simplify notation, we often omit $\rho$, identifying $a \in \Gamma_g$ with $\rho(a) \in \HOS$.  Thus, we will speak of $\Per(a)$, denote an attracting point of $\rho(a)$ by $a^+$, etc.
\end{assumption}


\subsection{Order of periodic points}\label{ssec:ExoMax}

Property $S_k$ makes it much easier to understand periodic points under deformations.   We start with several lemmas to this effect.  

\begin{lemma}\label{lem:PerLabiles}
  Let $i(a,b)=1$, let $F \subset S^1$ be a countable set, and let $b_t$
  be a positive one-parameter family commuting with $b = \rho(b)$.  
  Then for some $t\in\R$, we have $\Per(b_t\rho(a))\cap F=\emptyset$.
\end{lemma}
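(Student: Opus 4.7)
The plan is to exploit Property $S_k$ to show that the set $P(b,a)$ of persistent periodic points from Section~\ref{sssec:Twist} is empty, after which the conclusion follows immediately by a countability argument.

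First, I would reduce to the case where $\rot(b_t\rho(a))$ is constant in $t$. Since $b_0=\mathrm{id}$ and $t\mapsto b_t\rho(a)$ is continuous, so is the composition $t\mapsto\rot(b_t\rho(a))\in S^1$. If this map were non-constant, its image would be a non-degenerate connected arc of $S^1$, hence would contain an irrational value. For any $t$ realizing such a value we would have $\Per(b_t\rho(a))=\emptyset$ and the lemma would be immediate. Hence I may assume $\rot(b_t\rho(a))\equiv\rot(\rho(a))=p(\rho(a))/q(\rho(a))$; with this constant-rotation hypothesis, $x\in\Per(b_t\rho(a))$ if and only if $\delta_{b,a}(x,t)=0$, and Lemma~\ref{lem:ProprietesPNU} applies, yielding the partition $S^1=P(b,a)\sqcup N(b,a)\sqcup U(b,a)$.

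The heart of the argument is that $P(b,a)=\emptyset$. By the formula in Lemma~\ref{lem:ProprietesPNU}(1), $P(b,a)\subset\Per(\rho(a))\cap\partial\Per(\rho(b))$. Hypothesis $S_k$ says $\rho(a)$ and $\rho(b)$ are each conjugate to a hyperbolic element of $\PSLk$, so their periodic sets each consist of $2k$ isolated hyperbolic points; in particular $\partial\Per(\rho(b))=\Per(\rho(b))$. The alternation part of $S_k$ then forces $\Per(\rho(a))\cap\Per(\rho(b))=\emptyset$, whence $P(b,a)=\emptyset$.

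With $P(b,a)$ empty, every $x\in F$ lies in $N(b,a)\cup U(b,a)$: points of $N(b,a)$ are never periodic for any $b_t\rho(a)$, while each $x\in U(b,a)$ contributes the single ``bad'' parameter $T_{b,a}(x)$. The bad set $\{T_{b,a}(x):x\in F\cap U(b,a)\}$ is thus a countable subset of $\R$, so any $t\in\R$ outside it satisfies $\Per(b_t\rho(a))\cap F=\emptyset$. I do not anticipate a serious obstacle here: neither path-rigidity nor minimality is invoked; all the real work is packaged into Section~\ref{sssec:Twist}, and the $S_k$ hypothesis does the rest.
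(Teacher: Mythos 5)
Your proof is correct and follows essentially the same route as the paper's: property $S_k$ forces $P(b,a)=\emptyset$, so $F$ meets only $N(b,a)\cup U(b,a)$ and the countable set $T_{b,a}(F\cap U(b,a))$ of bad parameters can be avoided. The only (harmless) difference is that the paper deduces constancy of $t\mapsto\rot(b_t\rho(a))$ from path-rigidity, whereas you dispose of the non-constant case directly by locating an irrational rotation number, which makes this step slightly more self-contained.
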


\begin{proof}
We use the notation from Section \ref{sssec:Twist}.
Path-rigidity of $\rho$ implies that $\rot(b_ta)$ is constant, and
Property $S_k$ implies that $P(b,a) = \emptyset$, so we need only worry
about points in $U = U(b,a)$.
Thus, provided $t\not\in T_{b,a}(F)$, we have $\Per(b_ta)\cap F=\emptyset$.
\end{proof} 
\begin{lemma}[Disjoint curves have disjoint $\Per$]\label{lem:PerDisjoints}
  Let $(a,b,c)$ be a completable directed $3$-chain.
  Then $\Per(a)\cap\Per(c)=\emptyset$.
  In fact, $\Per(c)\cap b^n(\Per(a))=\emptyset$ for all~$n\in\Z$.
\end{lemma}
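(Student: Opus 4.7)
The plan is to argue by contradiction. Suppose there exist $n\in\Z$ and $x\in\Per(c)\cap b^n(\Per(a))$; setting $y:=b^{-n}(x)$, the point $y$ is a common periodic point of $\rho(a)$ and $\rho(b^{-n}cb^n)$. I will produce a bending deformation along $b$ and use path-rigidity together with Lemma~\ref{lem:PerCommuns} and Lemma~\ref{lem:PerLabiles} to derive a contradiction.

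By property $S_k$, $\rho(b)$ has rational rotation number and $\Per(\rho(b))\neq S^1$, so Lemma~\ref{lem:PosDeform} provides a positive one-parameter family $b_t$ commuting with $\rho(b)$. The Dehn twist convention for the directed $3$-chain $(a,b,c)$ yields a continuous family $\rho_t$ with $\rho_t(a)=b_{-t}\rho(a)$ and $\rho_t(c)=\rho(c)b_t$, and other standard generators unchanged. A direct computation, using that $b_t$ commutes with $\rho(b)$, gives $\rho_t(b^{-n}cb^n)=\rho(b^{-n}cb^n)\,b_t$. By path-rigidity, all $\rho_t$ lie in the $\chi$-equivalence class of $\rho$, so by Lemma~\ref{lem:PerCommuns} the sets $\Per(\rho_t(a))$ and $\Per(\rho_t(b^{-n}cb^n))$ intersect for every $t\in\R$.

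The aim is now to exhibit $t_0\in\R$ for which $\Per(\rho_{t_0}(a))\cap\Per(\rho_{t_0}(b^{-n}cb^n))=\emptyset$, contradicting the preservation established above. By property $S_k$, the sets $\Per(\rho(a))$ and $\Per(\rho(c))$ are each finite ($2k$ points) with alternating configurations relative to $\Per(\rho(b))$, so $P(b,a)=\emptyset$ and (using that $\Per(\rho(b^{-n}cb^n))=b^{-n}\Per(\rho(c))$ inherits the alternation with $\Per(\rho(b))$) the right-multiplication analog $P'(b,b^{-n}cb^n)=\emptyset$. Thus by the analysis of Section~\ref{sssec:Twist}, the periodic sets of $\rho_t(a)$ and $\rho_t(b^{-n}cb^n)$ flow along continuous curves in $U(b,a)$ and $U'(b,b^{-n}cb^n)$, parametrized by the time functions $T_{b,a}$ and $T'_{b,b^{-n}cb^n}$; a common periodic point $z$ at time $t$ therefore satisfies $T_{b,a}(z)+T'_{b,b^{-n}cb^n}(z)=0$.

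The main technical step, which I expect to be the hardest part of the proof, is to rule out the existence of such a $z$ for every $t\in\R$. The approach is asymptotic: by Lemma~\ref{lem:ProprietesPNU}(4) combined with Proposition~\ref{prop:dNinP}, the boundary sets $\partial N(b,a)$ and $\partial N'(b,b^{-n}cb^n)$ are finite and capture all accumulation points of $\Per(\rho_t(a))$ and $\Per(\rho_t(b^{-n}cb^n))$ as $|t|\to\infty$. A continuous family of common periodic points would have to limit simultaneously into both finite sets; combining this with the precise alternating configurations of $\Per(\rho(a))$, $\Per(\rho(b))$, $\Per(\rho(c))$ provided by property $S_k$, together with a final application of Lemma~\ref{lem:PerLabiles} to a finite target built from $\partial N(b,a)$ and $\partial N'(b,b^{-n}cb^n)$, I expect this joint constraint to be incompatible, yielding the contradiction and completing the proof.
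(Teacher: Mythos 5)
Your proof has a genuine gap at exactly the point you flag as ``the main technical step.'' The last paragraph is not an argument: you assert that the asymptotic constraints ``should'' be incompatible, but the separation you need does not follow from the tools you cite. Concretely, bending along $b$ couples the two deformations through a \emph{single} parameter $t$: as $t\to\infty$, Lemma~\ref{lem:ProprietesPNU}(4) only tells you that $\Per(b_{-t}\rho(a))$ accumulates on the finite set $\partial N(b,a)$ and that $\Per(\rho(b^{-n}cb^n)b_t)$ accumulates on the analogous finite set for right-deformations. Both of these sets sit inside $\partial\Per(b)\cup(\text{translates of }\partial\Per(b))$, so there is no reason for them to be disjoint, and if they intersect your asymptotic argument yields no conclusion. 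You would have to rule out that the continuous functions $T_{b,a}$ and $-T'$ agree somewhere for every $t$, which is precisely the hard point, and nothing in your sketch does this. Note also that you never use the hypothesis that the $3$-chain is \emph{completable}, which should be a warning sign: the paper lists $(a_1^{-1}b_1a_1,a_1,b_1^{-1})$ as a non-completable $3$-chain, and completability is what makes the statement accessible.

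The paper's proof avoids all of this by using completability to extend $(a,b,c)$ to a directed $4$-chain $(a,b,c,d)$ and bending along $d$ instead of along $b$. That deformation replaces $\rho(c)$ by $d_t\rho(c)$ while leaving $\rho(a)$ and $\rho(b)$ \emph{untouched}, so $b^n\Per(a)$ is a fixed finite set and Lemma~\ref{lem:PerLabiles} (applied to the pair $(c,d)$) immediately produces a $t$ with $\Per(d_t c)\cap b^n\Per(a)=\emptyset$; path-rigidity and Lemma~\ref{lem:PerCommuns} then finish the argument in one line. If you want to salvage your route, the fix is to decouple the two deformations — deform only one of the two curves at a time using an auxiliary curve disjoint from the other, which is exactly what the fourth curve $d$ provides.
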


\begin{proof}
  Fix $n \in \N$. Complete $(a,b,c)$ to a directed $4$-chain $(a,b,c,d)$,
  and apply a bending deformation replacing $c$ with $d_tc$
  (leaving the action of $a$ and $b$ unchanged, hence $b^n \Per(a)$
  unchanged), for a positive family $d_t$.
  By Lemma \ref{lem:PerLabiles}, there is some $t$ such that
  $\Per(d_t c) \cap b^n \Per(a) = \emptyset$.
  Now the conclusion follows from path-rigidity of $\rho$, together with
  Lemma~\ref{lem:PerCommuns}.
\end{proof}

Note that, if $i(a,b) = \pm1$, then for any $n \in \Z$ we also have
$i(b^n a, b) =\pm1$, hence $S_k(b^n a, b)$ holds. The next lemma describes
the position of the periodic points of $S_k(b^n a, b)$ for large $n$.
This is particularly useful since there exist bending deformations
replacing the pair $a, b$ with $b^n a, b$ provided that $q(b)$ divides $n$
(see Observation \ref{obs:DeformationsConj}).

\begin{lemma}[Movement of $\Per$ by bending] \label{lem:ConvergencePer}  
  Suppose $i(a,b) =\pm1$.
  Then as $N \to +\infty$, the points of
  $\Per^+(b^{N}a)$ approach $\Per^+(b)$, and $\Per^-(b^{N}a)$ approaches
  $a^{-1} \Per^-(b)$; similarly, as $N \to -\infty$, $\Per^+(b^{N}a)$
  approaches $\Per^-{b}$ and $\Per^-(b^{N}a)$ approaches
  $a^{-1} \Per^+(b)$.
\end{lemma}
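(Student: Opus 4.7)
The plan is to combine $S_k$ with the asymptotic behavior of $b^N$, focusing on $N\to+\infty$ (the $N\to-\infty$ case is symmetric under $b\leftrightarrow b^{-1}$). Since $i(b^Na,b)=i(a,b)=\pm1$, our standing hypothesis gives $S_k(b^Na,b)$; in particular $b^Na$ has exactly $2k$ periodic points ($k$ attracting, $k$ repelling, alternating around $S^1$), all disjoint from $\Per(b)$. The dynamical engine driving the proof is that $b^N$ collapses any compact subset of $S^1\smallsetminus\Per^-(b)$ into an arbitrarily small neighborhood of $\Per^+(b)$ as $N\to+\infty$: writing $N=mk+r$ with $0\leq r<k$, the iterates $b^{mk}$ contract each basin of $b^k$ onto its attractor in $\Per^+(b)$, and then $b^r$ permutes $\Per^+(b)$ among itself.

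I would first show that every accumulation point of $\bigcup_N\Per(b^Na)$ lies in $\Per^+(b)\cup a^{-1}\Per^-(b)$. Take $x_n\in\Per(b^{N_n}a)$ with $N_n\to+\infty$ and $x_n\to x_\infty$, and trace the length-$k$ orbit $x_n=y_0^n,y_1^n,\ldots,y_{k-1}^n,y_k^n=x_n$ under $b^{N_n}a$ (passing to a subsequence so that each $y_i^n\to y_i^\infty$ and $N_n\bmod k$ is constant). The alternation in $S_k$ forces the orbit to be disjoint from $\Per(b)$, so $a(y_i^n)\neq b^-$ for any $b^-\in\Per^-(b)$. For each $i$, either $a(y_i^\infty)\notin\Per^-(b)$, in which case $a(y_i^n)$ eventually lies in a compact subset of a single basin of $b^k$ and the collapsing property forces $y_{i+1}^\infty\in\Per^+(b)$; or $a(y_i^\infty)\in\Per^-(b)$, in which case $y_i^\infty\in a^{-1}\Per^-(b)$. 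It follows that each $y_i^\infty$, and in particular $x_\infty=y_0^\infty$, lies in $\Per^+(b)\cup a^{-1}\Per^-(b)$.

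To obtain the correct attracting/repelling dichotomy I would analyze the local dynamics of $(b^Na)^k$. Near a point $b^+\in\Per^+(b)$ with $a(b^+)\notin\Per^-(b)$ (the generic configuration), a small neighborhood $V$ of $b^+$ is mapped by $a$ into a compact subset of one basin of $b^k$, then strongly contracted by $b^N$ onto its attractor; iterating $k$ times, $(b^Na)^k(V)$ is contained in an arbitrarily small neighborhood of a point of $\Per^+(b)$, giving an attracting fixed point of $(b^Na)^k$. Near a point $a^{-1}(b^-)$ with $b^-\in\Per^-(b)$ (generic meaning $a^{-1}(b^-)\notin\Per^+(b)$), the map $b^Na$ sends $a^{-1}(b^-)$ to the remote point $b^-$ while sending nearby points on either side into neighborhoods of $\Per^+(b)$; the graph of $(b^Na)^k$ therefore crosses the diagonal $y=x$ steeply, yielding a repelling fixed point. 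A cardinality count ($k$ attracting plus $k$ repelling gives $2k$, matching $|\Per(b^Na)|$) then shows that $\Per^+(b^Na)$ is eventually contained in any prescribed neighborhood of $\Per^+(b)$, and $\Per^-(b^Na)$ in any prescribed neighborhood of $a^{-1}\Per^-(b)$. The main obstacle is the treatment of degenerate configurations with $\Per^+(b)\cap a^{-1}\Per^-(b)\neq\emptyset$ (i.e.\ where $a$ sends some attracting point of $b$ onto a repelling one): there the local contraction argument breaks, and the sign of the crossing near the coincident point must be extracted from a finer analysis. This can be carried out using the $P,N,U$ framework of Section~\ref{sssec:Twist} with $f=b$, $g=a$: the $S_k$ alternation yields $P(b,a)=\Per(a)\cap\bigcap_j a^j(\Per(b))=\emptyset$, Proposition~\ref{prop:dNinP} makes $\partial N(b,a)$ finite, and Lemma~\ref{lem:ProprietesPNU}(4) confines the periodic points of the corresponding deformations to an arbitrarily small neighborhood of $\partial N(b,a)\subseteq\Per^+(b)\cup a^{-1}\Per^-(b)$.
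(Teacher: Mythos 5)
Your first two paragraphs correctly carry out what the paper treats as ``an easy exercise'': once one knows that $a^{-1}\Per(b)$ and $\Per(b)$ are disjoint, the contraction of $b^{N}$ onto $\Per^+(b)$ forces the stated convergence, and your accumulation-point and local-dynamics analysis is a reasonable way to write that out. You also correctly isolate the genuine difficulty, namely the degenerate configuration where $a$ carries a periodic point of $b$ onto another one. The problem is that your proposed resolution of that case does not work. The $P,N,U$ machinery of Section~\ref{sssec:Twist} is built for the continuous family $b_t\circ a$ with $b_t$ a \emph{positive} one-parameter family; $b^{Nq(b)}$ need not lie in such a family (on a complementary interval of $\partial\Per(b)$ where $b^{q(b)}$ moves points to the left, $b^{Nq(b)}$ pushes toward the \emph{left} endpoint as $N\to+\infty$, the opposite of what $b_t$, $t\to+\infty$, does), so Lemma~\ref{lem:ProprietesPNU}(4) does not apply to $b^{N}a$ as stated. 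More seriously, even granting a confinement of $\Per(b^{N}a)$ near $\partial N(b,a)$, that framework sees only the set of periodic points and cannot separate $\Per^+(b^{N}a)$ from $\Per^-(b^{N}a)$ --- which is exactly the information the degenerate case is missing. Finally, the inclusion $\partial N(b,a)\subseteq\Per^+(b)\cup a^{-1}\Per^-(b)$ is asserted without proof; the argument available in the paper (the analogue of Lemma~\ref{lem:SiPerFini1}) only yields $\partial N(b,a)\subseteq\Per(b)\cup a^{-1}\Per(b)$, with no control on the signs.

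The paper's proof sidesteps the degenerate case entirely rather than analyzing it: taking a completable directed $3$-chain $(c,a,b)$, Lemma~\ref{lem:PerDisjoints} gives $\Per(c)\cap\Per(b)=\emptyset$, so a bending deformation replacing $a$ by $ac_t$ (which by path-rigidity does not change the semi-conjugacy class, hence not the combinatorial conclusion) achieves $(ac_t)^{-1}\Per(b)\cap\Per(b)=\emptyset$, after which only your ``generic'' analysis is needed. This is the step your proposal is missing; without it, or without a genuinely finer local analysis at a coincidence point of $\Per^+(b)$ and $a^{-1}\Per^-(b)$, the proof is incomplete.
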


\begin{proof} 
  The conclusion of the lemma is an easy exercise, provided that
  $a^{-1} \Per(b) \cap \Per(b) = \emptyset$.
  We claim that path-rigidity of $\rho$ implies this extra provision.
  To see this, suppose for example that $i(a,b)=1$, and let
  $(c, a, b)$ be a completable directed 3-chain.
  By Lemma \ref{lem:PerDisjoints},
  $\Per(c) \cap \Per(b) = \emptyset$. Thus, we can make a positive bending
  deformation replacing $a$ with $ac_t$, until
  $(ac_t)^{-1} \Per(b) \cap \Per(b) = \emptyset$.
\end{proof}

\begin{notation}\label{not:OrdrePer}
Let $f$ and $g$ be homeomorphisms of $S^1$. When talking about cyclic order
of periodic points, we use the notation $\bbrac{f^+, g^+, g^-, f^-}_k$ to mean
that, in cyclic order, there is one attracting point for $f$, followed by an
attracting point for $g$, followed by a repelling point for $g$, followed by
an attracting point for $f$, with this pattern repeating $k$ times.
The notation $f^{\pm}$ means any point from $\Per(f)$.
We also use other obvious variations, such as
$\bbrac{f^\pm, g^-, f^\pm, g^+}_k$, and extend this naturally to periodic
points of three or more homeomorphisms.

When such a cyclic order is given, we call an interval $I \subset S^1$ of
\emph{type $(f^+, g^-)$} if it is bounded on the left (proceeding
anti-clockwise, using the natural orientation of $S^1$) by a point of
$\Per^+(f)$ and on the right by a point of $\Per^-(g)$, and if it does not
contain a proper subinterval with this property.
We also use other obvious variations.
\end{notation}

\begin{lemma}[Periodic points of 3-chains] \label{lem:OrdrePer}
  Let $(a, b, c)$ be a completable directed $3$-chain. Then, up to reversing
  the orientation of the circle, the periodic points of $a$, $b$ and $c$
  come in the following cyclic order:
  \[  \bbrac{a^-,b^-,a^+,c^\pm,b^+,c^\pm}_k,  \]
\end{lemma}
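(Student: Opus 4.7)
The strategy is to fix the cyclic order of $\Per(a) \cup \Per(b)$ using $S_k(a,b)$, then insert $\Per(c)$ via $S_k(b,c)$, and pin down the exact sub-arc location of each $c$-point using bending deformations combined with path-rigidity.

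Since $\rho(a)$ and $\rho(b)$ are each conjugate to hyperbolic elements of $\PSLk$, each has $2k$ periodic points whose attracting/repelling signs alternate around the circle; the alternation condition in $S_k(a,b)$ places exactly one $b$-point in each arc of $S^1 \smallsetminus \Per(a)$, and a short case analysis then forces the cyclic order of $\Per(a) \cup \Per(b)$ to be $\bbrac{a^-, b^-, a^+, b^+}_k$ up to reversing the orientation of $S^1$. Fixing this orientation, $S_k(b,c)$ likewise places one $c$-point in each arc of $S^1 \smallsetminus \Per(b)$. These arcs fall into two types: ``upper'' arcs $(b^-_i, b^+_i)$ each containing a single $a^+$-point, and ``lower'' arcs $(b^+_i, b^-_{i+1})$ each containing a single $a^-$-point. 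What remains is to show that in each arc, the $c$-point lies on the $b^+$-side of the enclosed $a$-point, i.e., in the sub-arcs $(a^+_i, b^+_i)$ and $(b^+_i, a^-_{i+1})$.

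To establish this sub-arc placement I would exploit bending deformations along $b$, both in the original 3-chain $(a,b,c)$ (which sends $\rho(a) \mapsto b_t^{-1}\rho(a)$, $\rho(c) \mapsto \rho(c) b_t$) and in the reverse chain $(c,b,a)$ (sending $\rho(a) \mapsto \rho(a) b_t$, $\rho(c) \mapsto b_t^{-1}\rho(c)$), where $b_t$ is a positive one-parameter family commuting with $\rho(b)$. Path-rigidity guarantees that each resulting continuous family $\rho_t$ is semi-conjugate to $\rho$, and minimality upgrades this to a genuine conjugacy (Observation~\ref{obs:SCMinimal}, together with Observation~\ref{obs:DeformationsConj} at integer multiples $t = Nq(b)$). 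Hence the cyclic order of $\Per(\rho_t(a))\cup\Per(b)\cup\Per(\rho_t(c))$ is preserved for all $t$, and each periodic point of $\rho_t(a)$ or $\rho_t(c)$ remains in a fixed arc of $S^1\smallsetminus\Per(b)$ throughout the deformation. Lemma~\ref{lem:ConvergencePer} then tracks the asymptotic behavior as $t \to +\infty$: for instance in the reverse-chain bending, $\Per^+(\rho_t(c)) \to \Per^-(b)$ and $\Per^-(\rho_t(a))\to\Per^-(b)$, while $\Per^+(\rho_t(a))$ and $\Per^-(\rho_t(c))$ migrate to translates of $\Per^+(b)$ by $\rho(a)$ and $\rho(c)^{-1}$ respectively. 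Combining the constraints from both bendings with Lemma~\ref{lem:ProprietesPNU}(4) (noting that $P(b,\rho(c))=\emptyset$ by $S_k(b,c)$ and Lemma~\ref{lem:PerCommuns}, and similarly $P(b,\rho(a)) = \emptyset$), Proposition~\ref{prop:dNinP}, and Lemma~\ref{lem:PerDisjoints} applied to a completion of $(a,b,c)$ to a directed $4$-chain, the original position of each $c$-point is forced into the required sub-arc, giving the claimed cyclic order.

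The main obstacle is the delicate bookkeeping in this last step: the bending in the original chain replaces $\rho(c)$ by $\rho(c)b_t$, so that $\Per(\rho_t(c))=b_t^{-1}\Per(b_t\rho(c))$ is related to Lemma~\ref{lem:ConvergencePer} only indirectly, with the conjugation by $b^{-t}$ failing to commute with the limit $t\to+\infty$. A single bending can fail to distinguish the two possible sub-arc placements of a $c$-point, since two periodic points can converge to the same $b$-endpoint while remaining on the same sides of each other; both the original and reversed chain bendings, together with the finiteness information in Proposition~\ref{prop:dNinP}, are genuinely needed.
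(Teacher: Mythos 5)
Your reduction is correct and matches the paper: after fixing the orientation so that $\Per(a)\cup\Per(b)$ is in the order $\bbrac{a^-,b^-,a^+,b^+}_k$, property $S_k(b,c)$ puts exactly one point of $\Per(c)$ in each arc of $S^1\smallsetminus\Per(b)$, and the whole content of the lemma is to decide on which side of the enclosed $a$-point that $c$-point sits. But that is precisely the step you do not carry out, and your own closing paragraph concedes the difficulty. The mechanism you propose --- bend along $b$, so that \emph{both} $a$ and $c$ are deformed, and read off the answer from the limits in Lemma~\ref{lem:ConvergencePer} --- has two concrete problems. First, since the cyclic order of all the deformed periodic points is a conjugacy invariant, you only learn something if the deformed $a$-point and the deformed $c$-point in a given arc converge to \emph{different} endpoints; when they converge to the same point of $\Per(b)$ their relative order is not determined by the limit, as you note. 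Second, the limits in Lemma~\ref{lem:ConvergencePer} are translates such as $a^{-1}\Per^{\pm}(b)$ and $c\,\Per^{\pm}(b)$, and the location of these translates relative to $\Per(a)\cup\Per(b)$ is exactly what Proposition~\ref{prop:OrdrePer} establishes \emph{using} the present lemma; relying on it here risks circularity. Asserting that ``both bendings together, with Proposition~\ref{prop:dNinP}, force the required sub-arc'' is not a proof.

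The paper's argument uses a different and much more economical deformation: it fixes $a$, deforms only $b$ into $c_t b$ for a positive one-parameter family $c_t$ commuting with $c$, and derives a contradiction with Lemma~\ref{lem:PerCommuns}. Concretely, in the arc $(b^-,b^+)$ containing $a^+$, the points $a^+$ and $b^{q(b)}(a^+)$ are distinct from the unique $c$-point there (this is where Lemma~\ref{lem:PerDisjoints} enters, via $\Per(c)\cap b^n\Per(a)=\emptyset$). If the $c$-point lay in $(b^-,a^+)$ or in $(b^{q(b)}(a^+),b^+)$, the closed segment $[a^+,b^{q(b)}(a^+)]$ would contain no point of $\partial\Per(c)$, so $c_t$ acts transitively near $b^{q(b)}(a^+)$ and one gets $\delta_{c,b}(a^+,0)>0$ while $\Delta_{c,b}(a^+,0,\dots,0,t)<0$ for $t\ll 0$; the intermediate value theorem then produces $t_0$ with $a^+\in\Per(c_{t_0}b)\cap\Per(a)$, i.e.\ a path-deformation in which $a$ and $b$ acquire a common periodic point, contradicting path-rigidity. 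If you want to salvage your write-up, this is the discriminating step you need to supply; as it stands the proposal establishes only the coarse placement of $\Per(c)$ (one point per $b$-arc), not the asserted cyclic order.
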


\begin{proof}
Up to reversing orientation of $S^1$, we may suppose that the cyclic order of points in $\Per(a) \cup \Per(b)$ is $\bbrac{a^-,b^-,a^+,b^+}_k$.
Choose two consecutive points of $\Per(b)$ (in cyclic order), and denote these by $b^-$ and $b^+$.  
Let $a^+$ be the point of $\Per(a)$ between $b^-$ and $b^+$, and let
$c^\pm$ be the periodic point
of $c$ in this interval (there is exactly one by hypothesis $S_k$).
We know that the points of $\Per(a)$ in the interval $(b^-,b^+)$ are in
cyclic order $(b^-,a^+,b^{q(b)}(a^+),b^+)$.

By Lemma~\ref{lem:PerDisjoints}, $c^\pm$ cannot
be equal to $a^+$ or $b^{q(b)}(a^+)$. Suppose for contradiction that
$c^\pm$ lies in the interval $(b^-,a^+)$, or in the interval
$(b^{q(b)}(a^+),b^+)$.
Then the closed segment $[a^+,b^{q(b)}(a^+)]$ does not contain any
periodic point of $c$. Let $(c_t)_{t\in\R}$ be a positive 1-parameter family
commuting with $c$, and use this to perform a bending along $c$ as in
Section~\ref{sssec:Twist}. Using the notation from this section, we have
$\delta_{c,b}(a^+,0)>0$, but for $t$ sufficiently negative, we have
$\Delta_{c,b}(a^+,0,\ldots,0,t)<0$.
Thus, for some $t_0 < 0$, we have $\delta_{c,b}(a^+,t_0)=0$,
i.e. $a^+\in\Per(c_{t_0} b)\cap\Per(a)$. This, together with
Lemma~\ref{lem:PerCommuns} and the
path-rigidity of $\rho$, yields a contradiction.

The same argument applies to an interval of the form $(b^+,b^-)$, where $b^+$ and $b^-$ denote two other consecutive points of $\Per(b)$.  In that case, the argument shows that the (unique) periodic point
of $c$ in this interval lies between points of the form $b^{q(b)}(a^-)$ and $a^-$, proving the lemma.
\end{proof}

In particular, for all pairs $a,c\in\Gamma_g$ such that there exists a
completable $3$-chain $(a,b,c)$, Lemma~\ref{lem:OrdrePer} provides information
about the periodic sets of $a$ and $c$.

\begin{corollary}
  Let $a$ and $c$ be two non separating curves with $i(a,c)=0$, and suppose
  $c$ is not conjugate to $a$ or $a^{-1}$.
  Then their periodic points are in cyclic order
  $\bbrac{a^\pm,a^\pm,c^\pm,c^\pm}_k$.
\end{corollary}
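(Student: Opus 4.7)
The plan is to deduce this corollary from Lemma \ref{lem:OrdrePer} applied to a completable directed 3-chain $(a, b, c)$; the claim then follows simply by erasing the $b$-entries from the resulting cyclic order.

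First, since $a$ and $c$ are nonseparating with $i(a,c) = 0$, the definition of $i$ from Section \ref{ssec:Marquage} produces $b \in \Gamma_g$ such that $(a,b,c)$ is a 3-chain. After possibly replacing $a$, $b$, or $c$ with its inverse --- which leaves the corresponding periodic sets unchanged --- we may assume this chain is directed.

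Next, I would arrange for the chain to be completable. By the criterion recorded in the proof of Lemma \ref{lem:BallonsConnexe2}, completability of the directed 3-chain $(a, b, c)$ is equivalent to $ca$ being nonseparating, i.e.\ to $[a] + [c] \neq 0$ in $H_1(\Sigma_g; \Z)$. When this holds, we proceed directly. Otherwise $[a] = -[c]$, so the disjoint simple closed curves $a$ and $c$ cobound a subsurface $S \subset \Sigma_g$; if $S$ were an annulus, then $a$ and $c$ would be parallel, forcing $c$ to be conjugate to $a^{-1}$ and contradicting the hypothesis. Thus $S$ has positive genus, and one can pick a nonseparating simple based loop $d$ inside $S$ yielding completable directed 3-chains $(a, b_1, d)$ and $(d, b_2, c)$ (now with nonzero class sums), then combine the two cyclic orders output by Lemma \ref{lem:OrdrePer} to read off the cyclic order of $\Per(a) \cup \Per(c)$.

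Once completability is secured, Lemma \ref{lem:OrdrePer} provides, up to reversing the orientation of $S^1$, the cyclic order
\[
  \bbrac{a^-, b^-, a^+, c^\pm, b^+, c^\pm}_k
\]
on $\Per(a) \cup \Per(b) \cup \Per(c)$, and erasing the $b$-entries yields $\bbrac{a^-, a^+, c^\pm, c^\pm}_k$, which is precisely the content of the notation $\bbrac{a^\pm, a^\pm, c^\pm, c^\pm}_k$. The main obstacle I expect is the topological step of ensuring completability in the exceptional case $[a]+[c]=0$, in particular verifying that $d$ can be chosen so that both auxiliary chains are indeed directed and completable; once this is handled, the rest is a direct invocation of Lemma \ref{lem:OrdrePer} followed by bookkeeping.
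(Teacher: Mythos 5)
Your main-case argument is exactly the paper's: the corollary is stated there as an immediate consequence of Lemma~\ref{lem:OrdrePer} (``for all pairs $a,c$ such that there exists a completable $3$-chain $(a,b,c)$\dots''), obtained by erasing the $b$-entries from $\bbrac{a^-,b^-,a^+,c^\pm,b^+,c^\pm}_k$. You are also right to flag that completability is a genuine hypothesis which the stated assumptions do not obviously guarantee: by the criterion in the proof of Lemma~\ref{lem:BallonsConnexe2}, the directed chain $(a,b,c)$ is completable iff $ca$ is nonseparating, which fails precisely when the relevant homology classes satisfy $[a]+[c]=0$, and this can happen even when $c$ is not conjugate to $a^{\pm 1}$. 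The paper glosses over this case, so your attention to it is a genuine improvement in care.

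However, your patch for the exceptional case has a gap at the final step. From the two completable chains $(a,b_1,d)$ and $(d,b_2,c)$, Lemma~\ref{lem:OrdrePer} gives you the cyclic orders $\bbrac{a^\pm,a^\pm,d^\pm,d^\pm}_k$ and $\bbrac{d^\pm,d^\pm,c^\pm,c^\pm}_k$, but these two pieces of data do \emph{not} determine the cyclic order of $\Per(a)\cup\Per(c)$. Already for $k=1$: the two points of $\Per(a)$ lie in one complementary arc of $\Per(d)$, and the two points of $\Per(c)$ lie in one complementary arc of $\Per(d)$; if these happen to be the \emph{same} arc, the four points could a priori interleave as $a,c,a,c$, which is consistent with both hypotheses but contradicts the conclusion. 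For general $k$ the same ambiguity arises inside each complementary interval of $\Per(d)$ (and, worse, the pairing of $\Per(d)$ into adjacent pairs induced by $a$ need not agree with the one induced by $c$). So ``combine the two cyclic orders'' needs to be replaced by an actual argument that $\Per(a)$ and $\Per(c)$ do not interleave --- for instance a bending/deformation argument in the style of Lemma~\ref{lem:OrdrePer} producing a common periodic point if they did, or a reduction showing that the exceptional configuration can be avoided altogether. As written, the exceptional case is not closed.
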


\begin{proposition}
\label{prop:OrdrePer}
  Let $(a, b, c)$ be a completable directed $3$-chain. Then, up to
  reversing the orientation of the circle, the
periodic points of $a, b$ and $c$
  and the $b$-preimages of $\Per(c)$
  are in cyclic order
  \[ \bbrac{a^-,b^{-1}(c^\pm),b^-,b^{-1}(c^\pm),a^+,c^\pm,b^+,c^\pm}_k.\]
\end{proposition}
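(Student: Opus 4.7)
The plan is to apply $b^{-1}$, an orientation-preserving homeomorphism of $S^1$ which preserves cyclic order, to a refinement of the cyclic order given by Lemma~\ref{lem:OrdrePer}, and then interleave the resulting arrangement with the original one.

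First I would revisit the proof of Lemma~\ref{lem:OrdrePer}: its argument in fact establishes a \emph{refined} cyclic order. Within each arc $(b^-_i, b^+_i)$ of $S^1\setminus\Per(b)$, the points $b^-_i, a^+_i, c^\pm_i, b^{q(b)}(a^+_i), b^+_i$ appear in this cyclic order, with $c^\pm_i$ strictly between $a^+_i$ and $b^{q(b)}(a^+_i)$; symmetrically, in the arc $(b^+_i, b^-_{i+1})$ the order is $b^+_i, b^{q(b)}(a^-_{i+1}), c^\pm_{i+1}, a^-_{i+1}, b^-_{i+1}$. Since $b^{-1}$ preserves cyclic order and cyclically shifts the indexing of $\Per(b)$, applying it to this second refined order yields, in the shifted arc $(b^+_{i-1}, b^-_i)$, the cyclic order
\[
b^+_{i-1},\ b^{q(b)-1}(a^-_{i+1}),\ b^{-1}(c^\pm_{i+1}),\ b^{-1}(a^-_{i+1}),\ b^-_i.
\]

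This must be compatible with the original refined order in that same arc, $b^+_{i-1}, b^{q(b)}(a^-_i), c^\pm_i, a^-_i, b^-_i$. Lemma~\ref{lem:PerDisjoints} applied with $n=1$ yields $\Per(a)\cap b^{-1}\Per(c) = \emptyset$, so $b^{-1}(c^\pm)$ cannot coincide with any $a$-periodic point; combined with the dynamical observation that $b^{-1}(a^-_{i+1})$ sits close to $b^-_i$ (it is the $b^{-1}$-image of a point close to $b^-_{i+1}$, hence lies near the corresponding endpoint of the shifted arc), this pins $b^{-1}(c^\pm_{i+1})$ into the interval $(a^-_i, b^-_i)$. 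The analogous argument applied to arcs of type $(b^-_i, b^+_i)$ places the remaining $b^{-1}(c^\pm)$ points into intervals of type $(b^-, a^+)$, together yielding the claimed cyclic order.

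The principal obstacle is controlling the precise interleaving of the two refined orders in the general $k > 1$ case: there $b^{-1}(a^-_{i+1})$ and $a^-_i$ are distinct $a$-iterates lying in the same arc of $S^1\setminus\Per(b)$, and their relative position requires a finer argument than in the $k = 1$ case (where each arc is preserved by $b^{-1}$ and the conclusion follows immediately from the fact that $b^{-1}(a^-)$ lies between $a^-$ and $b^-$). To handle this, I would invoke a bending along the positive 1-parameter family $c_t$ commuting with $c$; this deformation leaves $b^{-1}\Per(c)$ invariant (since $c_t$ fixes $\Per(c)$) while moving $\Per(b)$ to $\Per(c_t^{-1}b)$. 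By path-rigidity together with Lemma~\ref{lem:PerCommuns}, any incorrect position of $b^{-1}(c^\pm_{i+1})$ would force a coincidence of periodic sets for some value of $t$, contradicting the disjointness results established earlier and thereby completing the proof.
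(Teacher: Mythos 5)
Your first half (pushing the refined order of Lemma~\ref{lem:OrdrePer} forward by $b^{-1}$) only locates $b^{-1}(c^\pm)$ relative to $\Per(b)$ and to $b^{-1}\Per(a)$ --- never relative to $\Per(a)$, which is what the proposition asserts. You correctly flag this, but the ``dynamical observation'' you lean on, that $b^{-1}(a^-_{i+1})$ sits near $b^-_i$, is not a valid step: for $k>1$ the points $b^{-1}(a^-_{i+1})$ and $a^-_i$ are simply two unrelated points in the same arc of $S^1\smallsetminus\Per(b)$, and nothing proved so far constrains their relative position. (Your argument does go through when $k=1$ and $q(b)=1$, where each arc is $b$-invariant and $b^{-1}(a^-)$ demonstrably lies between $a^-$ and $b^-$; but that is precisely the case the proposition is not hard in.)

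The repair you sketch is the right deformation --- bending $b$ along $c$ is exactly what the paper does, replacing $b$ by $c^{Nq(c)}b$ while fixing $a$ and $c$ --- but the sentence ``any incorrect position of $b^{-1}(c^\pm)$ would force a coincidence of periodic sets for some $t$'' is an assertion standing in for the whole proof. The missing ingredient is Lemma~\ref{lem:ConvergencePer} applied to the pair $(c,b)$: as $N\to+\infty$, $\Per^-(c^{Nq(c)}b)$ converges to $b^{-1}\Per^-(c)$, and as $N\to-\infty$ to $b^{-1}\Per^+(c)$. Merely noting that the deformation ``leaves $b^{-1}\Per(c)$ invariant'' tells you nothing about where that set sits; the convergence is what transfers order information onto it. Once you have it, the argument closes in two lines: by path-rigidity and Lemma~\ref{lem:OrdrePer}, the repelling points of $c^{Nq(c)}b$ stay in intervals of type $(a^-,a^+)$ for all $N$, so their limits $b^{-1}(c^\pm)$ lie in the closures of these intervals, hence in their interiors by Lemma~\ref{lem:PerDisjoints}; intersecting with the fact that $b^{-1}$ preserves the intervals of type $(b^+,b^-)$ (which contain $\Per^-(c)$) and of type $(b^-,b^+)$ then yields the stated interleaving. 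I recommend you restructure the proof around that convergence statement rather than around the order-pushforward.
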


\begin{proof}[Proof of Proposition~\ref{prop:OrdrePer}]   
Apply a bending deformation of $\rho$ replacing $b$ with $c^{Nq(c)}b$, and
leaving the action of $c$ and $a$ unchanged.
By Lemma~\ref{lem:ConvergencePer}, for $N$ sufficiently large,
$\Per^-(c^{Nq(c)}b)$ approaches $b^{-1}\Per^-(c)$, and $\Per^-(c^{-Nq(c)}b)$
approaches $b^{-1}\Per^+(c)$. Since $\rho$ is path-rigid, the cyclic order
of periodic points is invariant under these deformations, hence the points
$b^{-1}(c^{\pm})$ all must lie in intervals of type $(a^-,a^+)$.

Now up to replacing $c$ with $c^{-1}$ (its orientation is unimportant in this
proof) we may assume that the order of periodic points given by
Lemma~\ref{lem:OrdrePer} is $\bbrac{a^-,b^-,a^+,c^+,b^+,c^-}_k$.
Then $b^{-1}\Per^-(c)$ lies in the intervals of type $(b^+,b^-)$,
as $b$ preserves these intervals. Thus, points of $b^{-1}\Per^-(c)$ are
between consecutive points of $\Per^-(a)$ and $\Per^-(b)$.
Similarly, the points $b^{-1}(c^+)$ are between consecutive points of the
form $b^-$ and $a^+$.
\end{proof}

The following variation is proved using the same style of argument. 

\begin{lemma}\label{lem:Ordreabcd}
  Let $a,b,c\in\Gamma_g$ be three non-separating curves such that $i(a,b)=-1$
  and $c$ is disjoint from $T(a,b)$.
  Up to reversing the orientation of $S^1$, we may suppose that the periodic
  points of $a$ and $b$ are in the order $\bbrac{a^-,b^+,a^+,b^-}_k$.
  Then the periodic points of $c$ all lie in intervals of type $(b^-,a^-)$.
\end{lemma}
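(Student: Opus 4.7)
My plan is to reduce Lemma \ref{lem:Ordreabcd} to two applications of Lemma \ref{lem:OrdrePer}, following the style of Proposition \ref{prop:OrdrePer}. First I would find, by surface topology valid for any $g \geq 2$, auxiliary simple based loops $d, e \in \Gamma_g$ such that $(c, d, a)$ is a completable directed $3$-chain with $d$ disjoint from $b$, and $(c, e, b)$ is a completable directed $3$-chain with $e$ disjoint from $a$. In the standard model $(a, b, c) = (a_1, b_1, a_2)$ I would exhibit $d$ as a simple loop in the homology class $[a_1] + [a_2]$ and $e$ in the class $[a_1] + [b_2]$, completing the $3$-chains into the $5$-chains $(b_2, a_2, d, a_1, b_1)$ and $(b_2, a_2, e, b_1, a_1)$ respectively.

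Next I would apply Lemma \ref{lem:OrdrePer} to each of these $3$-chains, obtaining cyclic orders for the triples $\Per(c) \cup \Per(d) \cup \Per(a)$ and $\Per(c) \cup \Per(e) \cup \Per(b)$, each determined up to reversing the orientation of $S^1$. Since the orientation is already fixed by the hypothesis that $\Per(a) \cup \Per(b)$ has cyclic order $\bbrac{a^-, b^+, a^+, b^-}_k$, each application selects one of two specific configurations. After ignoring $\Per(d)$ (resp.\ $\Per(e)$), in each configuration all of $\Per(a)$ (resp.\ $\Per(b)$) lie either in the $(c^+, c^-)$-intervals of $S^1$ or in the $(c^-, c^+)$-intervals.

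The key observation is that $\Per(a)$ and $\Per(b)$ strictly interleave in the fixed order $\bbrac{a^-, b^+, a^+, b^-}_k$. If they lay in different types of $c$-intervals, the combined cyclic order would feature consecutive blocks of $a$-points and $b$-points rather than interleaving them, a contradiction. Hence both applications must place $a$- and $b$-points in the same type of $c$-interval; each such $c$-interval then contains exactly one full period $a^-, b^+, a^+, b^-$ (together with the auxiliary $d^+$ and $e^+$). The complementary $c$-intervals contain no points of $\Per(a) \cup \Per(b)$; each begins just after a point $b^-$ and ends just before the next $a^-$, so each is contained in an interval of type $(b^-, a^-)$. Since $\Per(c)$ is exactly the set of endpoints of these complementary intervals, I conclude that $\Per(c)$ is contained in the union of $(b^-, a^-)$-type intervals, as claimed.

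The main obstacle I anticipate is the construction in Step 1, namely verifying that curves $d$ and $e$ with the required disjointness exist and that the associated $3$-chains are completable. For $g \geq 3$ this follows routinely from general position, but for $g = 2$ the surface is tight and it requires the explicit $5$-chain completions above, whose directedness and non-consecutive disjointness must be checked directly from the homology classes and the standard intersection pairing on $\Sigma_2$.
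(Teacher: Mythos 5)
Your reduction to Lemma~\ref{lem:OrdrePer} via auxiliary chains $(c,d,a)$ and $(c,e,b)$ does correctly locate $\Per(a)$ and $\Per(b)$ relative to $\Per(c)$: combined with the interleaving hypothesis it shows that each point of $\Per(c)$ sits in one of the $2k$ gaps between consecutive blocks of four points of $\Per(a)\cup\Per(b)$. But this is strictly weaker than the statement to be proved, and the final step of your argument is not justified. Lemma~\ref{lem:OrdrePer} specifies attracting/repelling labels only for the first two curves of the chain; the third curve's points appear as ``$a^\pm$'' (resp.\ ``$b^\pm$''), i.e.\ with undetermined sign. So after your two applications you know that each $(c^+,c^-)$-type interval contains a block of four points alternating $a,b,a,b$, but you have no control over which cyclic rotation of $(a^-,b^+,a^+,b^-)$ that block is. The gap containing a point of $\Per(c)$ could therefore be of type $(b^-,a^-)$, $(a^-,b^+)$, $(b^+,a^+)$, or $(a^+,b^-)$, and nothing in your argument distinguishes these four possibilities --- yet the lemma asserts precisely that only the first occurs.

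The paper's proof is a direct bending argument that exists exactly to break this symmetry. One replaces $b$ by $a^{Nq(a)}b$ (leaving $a$ and $c$ unchanged); by Lemma~\ref{lem:ConvergencePer} --- which, unlike Lemma~\ref{lem:OrdrePer}, does track attracting versus repelling points --- the points of $\Per^+(a^{Nq(a)}b)$ are pushed arbitrarily close to $\Per^\pm(a)$ as $N\to\pm\infty$, so the intervals of types $(a^-,b^+)$ and $(b^+,a^+)$ collapse while $\Per(c)$ stays put; path-rigidity then forbids $\Per(c)$ from lying in them. The symmetric deformation replacing $a$ by $b^{Nq(b)}a$ eliminates type $(a^+,b^-)$, leaving only $(b^-,a^-)$. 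To salvage your approach you would need an input that pins down the signs of the outer curve's periodic points relative to the inner pair --- e.g.\ Proposition~\ref{prop:OrdrePer2} rather than Lemma~\ref{lem:OrdrePer} --- together with a careful resolution of the independent orientation-reversal ambiguities in the two applications; as written, the proposal does not close this gap.
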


Note that the order in which we prefer to take the periodic points of
$a$ and $b$ is different here than in the two preceding statements,
because here $i(a,b)=-1$.

\begin{proof}
Similar to the proof of Proposition~\ref{prop:OrdrePer}, we perform bending
deformations.  Since $\rho$ is path-rigid, the cyclic order of periodic
points does not change after the bending deformation replacing $b$ with
$a^{Nq(a)}b$ (leaving $a$ and $c$ unchanged). 
The effect of these deformations is to push $\Per^+(b)$ as close as we want
to either $\Per^+(a)$ or $\Per^{-1}(a)$.
Applying Lemma \ref{lem:ConvergencePer} as in the proof of
Proposition~\ref{prop:OrdrePer} shows that periodic points of $c$
cannot be in the intervals of type $(a^-,b^+)$ or $(b^+,a^+)$ --
as the argument is entirely analogous, we omit the details.
The same argument again using the deformation replacing $a$ by $b^{Nq(b)}a$
shows that the periodic points of $c$ cannot be in the intervals of type
$(a^+,b^-)$, either.
\end{proof}

\begin{proposition}\label{prop:Schottky}
  Let $a$, $c$ be two non separating curves with $i(a,c)=0$, and suppose
  $c$ is not conjugate to $a$ or $a^{-1}$.
  Then $\rho(a)$ and $\rho(c)$ are in $k$-Schottky position.
\end{proposition}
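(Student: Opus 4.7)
The plan is to exhibit the $k$-Schottky intervals by combining the cyclic-order information established in the preceding results with a bending deformation along a suitable auxiliary curve $b$.

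First I would choose $b \in \Gamma_g$ such that $(a,b,c)$ forms a completable directed $3$-chain; this is possible because $i(a,c)=0$, both curves are nonseparating, and $c$ is not conjugate to $a^{\pm 1}$. Applying the corollary of Lemma~\ref{lem:OrdrePer} (together with Proposition~\ref{prop:OrdrePer} for the refined information) yields that the periodic points of $\rho(a)$ and $\rho(c)$ alternate in the cyclic order $\bbrac{a^\pm,a^\pm,c^\pm,c^\pm}_k$, so $S^1$ decomposes into $2k$ arcs alternating between ``$a$-arcs'' (each containing exactly one $a^+$ and one $a^-$) and ``$c$-arcs''.

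Next I would transfer $k$-Schottky from the pair $(\rho(a),\rho(b))$ to $(\rho(a),\rho(c))$ via bending. The text asserts that under $S_k$ and path-rigidity the pair $(\rho(a),\rho(b))$ is $k$-Schottky (a fact proved elsewhere for pairs with $i=\pm 1$), so we have intervals $I_j^\pm(\rho(a))$ and $I_j^\pm(\rho(b))$ with disjoint closures. Now apply the bending deformation along $b$ replacing $\rho(c)$ with $\rho(b)^{Nq(b)}\rho(c)$ for a large integer $N$; by Lemma~\ref{lem:ConvergencePer}, the periodic points of $\rho(b)^{Nq(b)}\rho(c)$ converge to $\Per(\rho(b))$ and $\rho(c)^{-1}\Per(\rho(b))$ as $N\to\infty$, and by Observation~\ref{obs:DeformationsConj} the deformed representation $\rho_N$ is conjugate to $\rho$ by some $\psi\in\HOS$.

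In $\rho_N$, the element $\rho_N(a)=\rho(a)$ is unchanged, while the periodic set of $\rho_N(c)$ clusters arbitrarily close to $\overline{I(\rho(b))}$ for $N$ large. This allows me to choose $k$-savage intervals for $\rho_N(c)$ as small neighborhoods nested inside $I_j^\pm(\rho(b))$, which remain disjoint from $I_j^\pm(\rho(a))$ by $(a,b)$-Schottky. Thus $(\rho_N(a),\rho_N(c))$ is $k$-Schottky. Since $k$-Schottky is preserved under conjugation by elements of $\HOS$, pulling back by $\psi$ gives the desired $k$-Schottky structure for the original pair $(\rho(a),\rho(c))$, with intervals $\psi^{-1}(I_j^\pm(\rho(a)))$ and $\psi^{-1}(I_j^\pm(\rho_N(c)))$.

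The most delicate step will be verifying that the savage condition for $\rho_N(c)$ can genuinely be achieved by small neighborhoods of its periodic points nested inside $I(\rho(b))$; this requires understanding the dynamics of $\rho_N(c)$ near $\Per(\rho(b))$ for large $N$, and in particular using that the invariant arc decomposition associated to the hyperbolic $\PSLk$ structure on $\rho(b)$ constrains where points of $\rho(c)^{-1}\Per(\rho(b))$ may lie relative to $\Per(\rho(a))$ — information that Proposition~\ref{prop:OrdrePer} provides precisely. Once this placement is checked, the conjugation argument closes the proof.
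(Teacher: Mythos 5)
There is a genuine gap, and it occurs at exactly the step you flag as ``delicate.'' Your plan keeps $\rho(a)$ fixed and deforms only $c$ into $b^{Nq(b)}c$, relying on (i) the pair $(\rho(a),\rho(b))$ already being $k$-Schottky and (ii) the limit sets of the deformed $c$ avoiding $\overline{I(\rho(a))}$. Point (i) is circular in the paper's architecture: the statement that $S_k(a,b)$ with $i(a,b)=\pm1$ forces $\rho(a),\rho(b)$ themselves (not just large powers) to be $k$-Schottky is Proposition~\ref{prop:Schottky2}, which is proved \emph{after} and \emph{by reduction to} the present proposition; at this stage only the weaker fact that $a^N,b^N$ are $k$-Schottky for large $N$ is available, and that does not give you intervals for $a$ itself. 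Point (ii) fails even granting (i): by Lemma~\ref{lem:ConvergencePer} the repelling periodic points of $b^{Nq(b)}c$ accumulate on $c^{-1}\Per^-(b)$, and nothing in the $(a,b)$-Schottky structure controls where $c^{-1}\Per^-(b)$ sits relative to $\overline{I(\rho(a))}$. The reference you propose, Proposition~\ref{prop:OrdrePer}, locates $b^{-1}\Per(c)$ relative to $\Per(a)$, which is a different set from $c^{-1}\Per(b)$; moreover, since the savage condition forces $\overline{I^-(a)}\cup a^{-1}\bigl(\overline{I^+(a)}\bigr)=S^1$, the intervals $I^\pm(a)$ of the \emph{undeformed} $a$ cannot be shrunk at will around $\Per(a)$, so cyclic-order information about periodic points alone would not suffice anyway.

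The paper's proof avoids both problems by deforming \emph{both} curves, using two disjoint auxiliary curves from a $4$-chain $(a,b,c,d)$: replacing $a$ by $b^{-nq(b)}a$ and $c$ by $d^{-nq(d)}c$ makes each deformed element $k$-savage with all $2k$ of its intervals contained in arbitrarily small neighborhoods of the four finite sets $\Per^-(b)$, $a^{-1}\Per^+(b)$, $\Per^-(d)$, $c^{-1}\Per^+(d)$, and the only thing left to check is that these four sets are pairwise disjoint -- which follows from Lemma~\ref{lem:PerDisjoints} and path-rigidity, with no Schottky input needed at all. Observation~\ref{obs:DeformationsConj} then transports the conclusion back to the original pair, as in your final step. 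If you want to repair your argument, the essential change is to give up on keeping $a$ fixed and introduce a second auxiliary curve disjoint from $b$.
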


\begin{proof}
Up to changing the orientation of $c$, we may
choose non separating curves $b$ and $d$ such that $(a,b,c,d)$
is the beginning of a standard basis of $\pi_1\Sigma_g$. 

Using a deformation as in Lemma \ref{lem:PerDisjoints}, path-rigidity of $\rho$ implies that the points of 
$\Per^-(d)$, $c^{-1} \Per^+(d)$, $\Per^-(b)$, and $a^{-1} \Per^+(b)$ are all distinct.  
Fix small disjoint neighborhoods $U^+$ of $\Per^-(d)$, $U^-$ of $c^{-1} \Per^+(d)$, and also $V^+$ of $\Per^-(b)$, and $V^-$ of $a^{-1} \Per^+(b)$.  

By Lemma \ref{lem:ConvergencePer}, if $n$ is large enough, $d^{-nq(d)}c(S^1 \smallsetminus U^-) \subset U^+$ and $b^{-nq(b)}a(S^1 \smallsetminus V^-) \subset V^+$, so we may find $2k$ disjoint attracting and repelling intervals for $d^{-nq(d)}c$ and $b^{-nq(b)}a$ as in the definition of $k$-Schottky.   
Now there exists a bending deformation that replaces $c$ with $d^{-nq(d)}c$ and $a$ with 
$b^{-nq(b)}a$, and it follows from Observation~\ref{obs:DeformationsConj} that this deformation is conjugate to the original action.  Thus, $a$ and $c$ are $k$-Schottky.
\end{proof}

\begin{proposition}\label{prop:Schottky2}
  Let $a$, $c$ be two non separating curves with $i(a,c)=\pm 1$.
  Then $\rho(a)$ and $\rho(c)$ are in $k$-Schottky position.
\end{proposition}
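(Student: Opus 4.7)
The plan is to adapt the strategy of Proposition~\ref{prop:Schottky}: use two commuting bending deformations to replace $\rho(a)$ and $\rho(c)$ by elements whose dynamics are so concentrated that their attracting and repelling intervals can be placed disjointly on $S^1$. The essential difference from that proof is that, since $i(a,c)=\pm 1$, the curves $a$ and $c$ cannot be placed at positions $1$ and $3$ of a $4$-chain as before; I will instead put them at positions $2$ and $3$ of a $4$-chain and bend along the two outer curves.

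Assume $i(a,c)=1$, the other case being symmetric. First I would complete $(a,c)$ to a completable directed $4$-chain $(e,a,c,d)$. By the bending convention of Section~\ref{sss:bending}, bending along $e$ (position $1$) modifies only $\rho(a)$, sending it to $\rho(a)\, e_t$, while bending along $d$ (position $4$) modifies only $\rho(c)$, sending it to $d_{-s}\,\rho(c)$; since $|1-4|\geq 2$, the two bendings commute. Taking $t=Nq(e)$ and $s=Nq(d)$ with $N$ a large positive integer, Observation~\ref{obs:DeformationsConj} shows that the deformed representation is conjugate to $\rho$ in $\HOS$, so it suffices to prove that
\[ a':=\rho(a)\rho(e)^{Nq(e)} \quad\text{and}\quad c':=\rho(d)^{-Nq(d)}\rho(c) \]
are in $k$-Schottky position for all $N$ sufficiently large.

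Straightforward variants of Lemma~\ref{lem:ConvergencePer}, applied to elements of the forms $ae^N$ and $d^{-N}c$, yield as $N\to+\infty$
\[ \Per^+(a')\to \rho(a)\,\Per^+(\rho(e)),\quad \Per^-(a')\to\Per^-(\rho(e)), \]
\[ \Per^+(c')\to\Per^-(\rho(d)),\quad \Per^-(c')\to\rho(c)^{-1}\Per^+(\rho(d)). \]
Granting pairwise disjointness of these four limit sets, small disjoint open neighborhoods of them, cut into their $k$ connected components, furnish $k$-savage decompositions of $a'$ and $c'$ with $I(a')$ and $I(c')$ having disjoint closures, exactly as at the end of the proof of Proposition~\ref{prop:Schottky}; conjugating back via Observation~\ref{obs:DeformationsConj} transfers the $k$-Schottky property to the pair $(\rho(a),\rho(c))$.

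The hard part will be establishing the pairwise disjointness of the four limit sets. Contrary to the situation of Proposition~\ref{prop:Schottky}, where at least one of the required disjointness conditions follows directly from Lemma~\ref{lem:PerDisjoints} applied to a $3$-chain sitting at the end of the $4$-chain, here $a$ and $c$ lie \emph{inside} the $4$-chain and none of the six required conditions seems to be a direct consequence of Lemma~\ref{lem:PerDisjoints}. My plan for each of them is to imitate the ``deformation as in Lemma~\ref{lem:PerDisjoints}'' device used in Proposition~\ref{prop:Schottky}: extend the chain further if necessary, bend along a well-chosen auxiliary curve so as to move one of the two finite sets in question continuously while fixing the other, observe that the intersection becomes empty for a generic value of the deformation parameter, and then invoke Lemma~\ref{lem:PerCommuns} together with path-rigidity of $\rho$ to conclude that the intersection was empty to begin with.
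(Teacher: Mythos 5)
Your proposal is essentially the paper's proof: the paper simply chooses $b$ and $d$ so that $(b,a,c,d)$ is a $4$-chain and says to follow the proof of Proposition~\ref{prop:Schottky}, which is exactly your construction of placing $a$ and $c$ in the middle positions of a $4$-chain and bending along the two outer curves, with the same limit sets from (variants of) Lemma~\ref{lem:ConvergencePer}. The pairwise disjointness of the four limit sets, which you flag as the hard part, is handled there by the same one-line device you describe --- a deformation as in Lemma~\ref{lem:PerDisjoints} combined with path-rigidity and Lemma~\ref{lem:PerCommuns}.
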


\begin{proof} Choose $b$ and $d$ so that $(b,a,c,d)$ is a 4-chain.  Now follow the proof above.
\end{proof}

From Proposition \ref{prop:Schottky} we deduce an enhanced version of
Lemma~\ref{lem:OrdrePer}.

\begin{proposition}\label{prop:OrdrePer2}
  Let $(a, b, c)$ be a completable directed $3$-chain.
  Then, up to reversing the orientation of the circle, the periodic points of
  $a$, $b$ and $c$ are in cyclic order $\bbrac{a^-,b^-,a^+,c^-,b^+,c^+}_k$.
\end{proposition}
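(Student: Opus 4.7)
My plan is to refine the sign ambiguity in Lemma~\ref{lem:OrdrePer} using bending deformations.
Write the cyclic order from that lemma as
\[
\bbrac{a^-,b^-,a^+,c^{(1)},b^+,c^{(2)}}_k,
\]
where $\{c^{(1)},c^{(2)}\}=\{c^+,c^-\}$ remains to be identified; I aim to show $c^{(1)}=c^-$ and $c^{(2)}=c^+$.

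First I would use the integer bending deformation replacing $\rho(b)$ by $\rho(c^{Nq(c)}b)$ for $N\in\Z$.
By Observation~\ref{obs:DeformationsConj} this is conjugate to $\rho$, so path-rigidity ensures the cyclic order of periodic points is preserved throughout the deformation; in particular, $\Per^+$ of the deformed $b$ stays inside the slot $(c^{(1)},c^{(2)})$ for all $N$.
Lemma~\ref{lem:ConvergencePer} says this attracting point converges to $\Per^+(\rho(c))=c^+$ as $N\to+\infty$, so $c^+$ lies in the closure of $(c^{(1)},c^{(2)})$, and is therefore one of $c^{(1)},c^{(2)}$.

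To decide which endpoint it is, I would compare the integer deformation with a continuous deformation through a positive one-parameter family.
Let $c_t$ be the positive $1$-parameter family commuting with $\rho(c)$ provided by Lemma~\ref{lem:PosDeform}, so $\widehat{c_t}(x)>x$ for all $t>0$ and $x\notin\partial\Per(\tilde c)$, and consider the bending $\rho_t(b)=c_t\rho(b)$.
Using the monotonicity structure of Section~\ref{sssec:Twist} (the same analysis underlying Lemma~\ref{lem:ProprietesPNU}), the attracting fixed point of $\rho_t(b)$ shifts continuously and \emph{monotonically to the right} as $t$ grows: at the original $b^+$ one has $c_t\rho(b)(b^+)=c_t(b^+)>b^+$, so the new fixed point is forced to lie to the right of $b^+$, and this motion is monotone in $t$.
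Path-rigidity again confines $\Per^+(\rho_t(b))$ to $(c^{(1)},c^{(2)})$ for all $t>0$, so as $t\to+\infty$ the limit is necessarily the right endpoint~$c^{(2)}$.

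The two analyses then combine: $c^+$ is the common limit of $\Per^+$ under the integer and the continuous deformations, provided these motions share the same direction on the slot $(c^{(1)},c^{(2)})$.
Equivalently, $c^+=c^{(2)}$ precisely when $c^{q(c)}$ acts on $(c^{(1)},c^{(2)})$ in the positive (rightward) direction of $S^1$, i.e.\ when this slot is of attracting type for $\rho(c)$.
To verify this is the case, I would use that $(a,b,c)$ is completable and extend it to a directed $4$-chain $(a,b,c,d)$: applying Lemma~\ref{lem:OrdrePer} to the sub-chain $(b,c,d)$ gives the ``$i=+1$'' pattern $(b^-,c^-,b^+,c^+)$ for the $(b,c)$-sub-order in the $S^1$-orientation canonically associated with that chain.
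The main obstacle, which will require some care, is showing this orientation agrees with the one fixed for $(a,b,c)$; both orientations are determined by the $i=+1$ condition on consecutive pairs of the same directed $4$-chain, so this is essentially a bookkeeping argument about how the cyclic order of tangent vectors at the base point propagates across overlapping sub-chains.
Once the orientation agreement is in hand, the two orderings combine (using also the refinement from the proof of Lemma~\ref{lem:OrdrePer} that $c^{(1)}\in(a^+,b^{q(b)}(a^+))$) to pin down $c^{(1)}=c^-$ and $c^{(2)}=c^+$, yielding the claimed cyclic order.
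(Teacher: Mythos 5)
Your reduction to the sign ambiguity in Lemma~\ref{lem:OrdrePer} is the right starting point, and the first bending argument (that $c^+$ lies in the closure of the slot containing $b^+$) is fine but adds nothing beyond what Lemma~\ref{lem:OrdrePer} already gives. The genuine gap is in the final step, which you dismiss as ``essentially a bookkeeping argument about how the cyclic order of tangent vectors at the base point propagates across overlapping sub-chains.'' There is no such bookkeeping to be done: the orientation of $S^1$ in which a pair $(a,b)$ exhibits the pattern $\bbrac{a^-,b^-,a^+,b^+}_k$ is a dynamical feature of $\rho$, with no a priori relation to the cyclic order of tangent vectors on the surface. The statement that the orientations determined by $(a,b)$ and by $(b,c)$ agree is exactly the statement that these two pairs have the same ``type'' in the sense of Section~\ref{subsec:Euler} --- and the paper \emph{deduces} that type-agreement from Proposition~\ref{prop:OrdrePer2}, not the other way around. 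So your argument is circular at precisely the point where all the difficulty is concentrated. The intermediate comparison of the integer deformation $c^{Nq(c)}b$ with the continuous deformation $c_tb$ does not help either: the two families are genuinely different (the positive family $c_t$ moves every non-fixed point rightward, while $c^{q(c)}$ moves points of the slot toward $c^+$, whichever endpoint that is), so the assertion that their limits coincide is equivalent to the claim $c^+=c^{(2)}$ being proved.

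For comparison, the paper breaks the symmetry by a contradiction argument of a different flavor: assuming the wrong order $\bbrac{a^-,b^-,a^+,c^+,b^+,c^-}_k$, it invokes Proposition~\ref{prop:Schottky} to put $a$ and $c$ in $k$-Schottky position, observes that the product $ca$ (nonseparating, since the chain is completable) is then $k$-savage with $I^-(ca)\subset I^-(a)$ and $I^+(ca)\subset I^+(c)$, and shows --- after a further bending to push $\Per(b)$ off $I^-(a)\cup I^+(c)$ --- that the periodic points of $ca$ and $b$ would alternate. Since $i(ca,b)=0$, this contradicts Lemma~\ref{lem:OrdrePer}. Some input of this kind, relating the dynamics of a product $ca$ to the topological fact that $ca$ is disjoint from $b$, seems unavoidable; your proposal as written does not supply it.
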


\begin{proof}
  Following Proposition~\ref{prop:OrdrePer}, we need only discard the
  possibility that the order is
  $\bbrac{a^-,b^-,a^+,c^+,b^+,c^-}_k$. Suppose for contradiction that this
  order does hold.
  By Proposition~\ref{prop:Schottky}, we know that $a$ and $c$
  each have $2k$ intervals as in Definition~\ref{def:Intervalles}, with
  pairwise disjoint  closures. As $| \Per(a) | = | \Per(c) | = 2k$, each
  of these intervals contains exactly one periodic point, so their cyclic
  order is specified by the order of periodic points given above.
  
  Note that $ca$ is non-separating, as the $3$-chain $(a,b,c)$ is
  completable.  Also, 
  $\rho(ca)$ is $k$-savage,
  and we may take $I^-(ca)\subset I^-(a)$
  and $I^+(ca)\subset I^+(c)$.  With the same argument as above,
  $\rho(ca)$ has exactly one repelling periodic point in each interval of
  $I^-(ca)$, and one attracting periodic point in each interval of $I^+(ca)$.
  
  If $\Per(b)$ is disjoint from $I^-(a) \cup  I^+(c)$, then this is enough to
  imply that the periodic points of $ca$ and $b$ alternate, contradicting
  Lemma~\ref{lem:OrdrePer}, since $i(ca,b)=0$.
  Thus, it only remains to prove that $\Per(b)$ can be made disjoint
  from $I^-(a) \cup  I^+(c)$ to finish the proof.
  This can be done in the same manner as that of Proposition~\ref{prop:Schottky}.
  First, complete $(a,b,c)$ into a directed $5$-chain $(\alpha,a,b,c,\gamma)$.
  Then,
  consider a bending deformation of $\rho$, where $b$ is unchanged but
  the action of $a$ is replaced by that of $a\alpha^{N q(\alpha)}$
  and the action of $c$ by $\gamma^{N q(\gamma)} c$ for $N$ large.
  By Observation~\ref{obs:DeformationsConj} this new action is conjugate to
  $\rho$.
  Now, provided $N$ is large enough, we can choose our Schottky intervals
  to be as narrow as we want, around the points $\alpha^-$, $a(\alpha^+)$,
  $\gamma^+$ and $c^{-1}(\gamma^-)$ which, using Lemma~\ref{lem:PerDisjoints},
  are disjoint from $\Per(b)$.
\end{proof}


\subsection{Euler number}\label{subsec:Euler}

As a consequence of the work in the previous section, we show that the Euler number of $\rho$ agrees with a geometric representation.   

\begin{theorem}\label{theo:ExoMaxime}
Let $\rho$ be path-rigid, minimal and satisfy $S_k$. Then
$|\eu(\rho)|=\frac{2g-2}{k}$.  
\end{theorem}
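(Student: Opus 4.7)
The plan is to use Milnor's formula
\[
\eu(\rho) = \rotild\!\left(\prod_{i=1}^g [\widetilde{\rho(a_i)}, \widetilde{\rho(b_i)}]\right)
\]
for a standard generating system $(a_1, b_1, \ldots, a_g, b_g)$, and compute the right-hand side explicitly using the dynamical structure developed in Section~\ref{ssec:ExoMax}. The companion article~\cite{AuMoinsG} carries out this computation in the case $k=1$; for general $k$, property $S_k$ combined with the $k$-Schottky property (Proposition~\ref{prop:Schottky2}) should provide the $k$-fold symmetric analogue of that picture.

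The first step is to analyze a single commutator. For each pair $(a_i, b_i)$, Propositions~\ref{prop:OrdrePer2} and~\ref{prop:Schottky2} give us Schottky intervals $I^\pm(a_i), I^\pm(b_i)$ with pairwise disjoint closures, interleaved around $S^1$ in the pattern $\bbrac{I^-(a_i), I^+(b_i), I^+(a_i), I^-(b_i)}_k$. A ping-pong argument on these intervals should show that $[\rho(a_i), \rho(b_i)]$ has exactly $2k$ periodic points, behaves like a hyperbolic element of $\PSLk$, and satisfies $\rotild([\widetilde{\rho(a_i)}, \widetilde{\rho(b_i)}]) = \epsilon_i / k$ for some sign $\epsilon_i \in \{\pm 1\}$. (Recall the commutator's translation number does not depend on the choice of lifts, since the center is killed.)

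Next, I would verify that the signs $\epsilon_i$ all agree. This can be accessed through the directed 3-chains $(a_i, \delta_i, a_{i+1})$ with $\delta_i = a_{i+1}^{-1} b_{i+1} a_{i+1} b_i^{-1}$: Proposition~\ref{prop:OrdrePer2} applied to this chain locates $\Per(\rho(a_i))$ relative to $\Per(\rho(a_{i+1}))$, and path-rigidity then fixes the relative orientation of the $k$-Schottky configurations of $(a_i, b_i)$ and $(a_{i+1}, b_{i+1})$, forcing $\epsilon_i = \epsilon_{i+1}$.

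The main obstacle is the final step: computing $\rotild$ of the product of commutators. Since $\rotild$ is only a quasi-morphism, naive summation of contributions $\pm 1/k$ gives $\pm g/k$ rather than the desired $\pm(2g-2)/k$, and the missing $\pm(g-2)/k$ must come from the quasi-morphism defects $\rotild(fg) - \rotild(f) - \rotild(g)$ accrued when multiplying successive commutators. To compute these defects, I would use the joint $k$-Schottky structure, made coherent across all pairs $(a_i, b_i)$ by the chain arguments and path-rigidity, to track the action of the partial products $[\widetilde{\rho(a_1)}, \widetilde{\rho(b_1)}] \cdots [\widetilde{\rho(a_j)}, \widetilde{\rho(b_j)}]$ on a carefully chosen point of $\R$ under canonical lifts in $\HOZ$. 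The global combinatorial structure of fixed points should exactly mimic that of a $\PSLk$-Fuchsian representation, producing the precise value $\eu(\rho) = \pm(2g-2)/k$. Making this combinatorial bookkeeping precise—equivalently, matching the global configuration of the $4gk$ points in $\bigcup_i(\Per(\rho(a_i)) \cup \Per(\rho(b_i)))$ to the configuration arising from a $\PSLk$-lift of a Fuchsian representation—is the heart of the argument.
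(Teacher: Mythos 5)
Your first two steps are sound and consistent with what the paper establishes: each commutator $[\widetilde{\rho(a_i)},\widetilde{\rho(b_i)}]$ does have translation number $\epsilon/k$ with a sign independent of $i$ (in the paper this is the statement that every one-holed torus has relative Euler number $-1/k$ and that all tori have the same ``type''). But the step you yourself flag as the heart of the argument is a genuine gap, not a deferral of routine bookkeeping. Computing the quasi-morphism defects in Milnor's product formula amounts to computing the relative Euler number of the complementary $g$-holed sphere, and your proposed method for doing this --- matching the global configuration of all $4gk$ periodic points to that of a $\PSLk$-lift of a Fuchsian representation --- is essentially the conclusion of Theorem~\ref{thm:SkImpliqueGeom}. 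In the paper that global matching is obtained only \emph{after} the Euler number computation, via Matsumoto's basic partitions, precisely because the Euler number is needed to know that $\frac{2g-2}{k}\in\Z$ and to pin down the candidate geometric model $\rho_0$. So as written your plan is circular, or at best leaves the entire difficulty unaddressed.

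The paper avoids any global combinatorics by a different decomposition: it proves the sharper Theorem~\ref{thm:Pants}, that \emph{every} pair of pants contributes exactly $-1/k$, and then invokes additivity of the relative Euler number over a pants decomposition of $\Sigma_g$ into $2g-2$ pants with nonseparating boundaries. Each local computation involves only two curves $a,c$ with $i(a,c)=0$, whose Schottky intervals sit in the consecutive pattern $\bbrac{a^-,a^+,c^-,c^+}_k$ (Propositions~\ref{prop:Schottky} and~\ref{prop:OrdrePer2}); lifting these $4k$ intervals to $\R$ and tracking how $\widehat{a}$, $\widehat{c}$ and $\widehat{a}\widehat{c}$ shift the indexing gives $k\bigl(\rotild(\widehat{a}\widehat{c})-\rotild(\widehat{a})-\rotild(\widehat{c})\bigr)=-1$ in a few lines. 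If you want to salvage your route, you would need to replace the appeal to a Fuchsian model by an explicit interval-tracking argument for the full product of commutators; the pants decomposition is exactly the device that makes such tracking local and hence tractable.
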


In fact, we will show the following stronger statement, which implies
Theorem~\ref{theo:ExoMaxime} by additivity of the Euler number on
subsurfaces.

\begin{theorem}\label{thm:Pants}
  Up to changing the orientation of the circle, for every pair-of-pants
  subsurface $P \subset \Sigma_g$, the relative Euler class of $\rho$ on
  $P$ is $\frac{-1}{k}$.
\end{theorem}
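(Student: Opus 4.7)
The plan is to reduce Theorem~\ref{thm:Pants} to an explicit combinatorial evaluation of a commutator's translation number, using Matsumoto's theory of Basic Partitions (to be developed in Section~\ref{subsec:Matsumoto}) together with the cyclic-order data of Section~\ref{ssec:ExoMax}.

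\textbf{Step 1: reduction to a model pants.} The relative Euler class is additive under gluing of subsurfaces along curves, and any two pants decompositions of $\Sigma_g$ are connected by elementary moves. It therefore suffices to compute $\eu_P$ for a single model pants, which I take to be the pair of pants $P \subset T(a,b)$ obtained by cutting a one-holed torus $T(a,b)\subset\Sigma_g$ (with $i(a,b)=\pm 1$) along a curve parallel to $a$. A direct manipulation of lifts from Definition~\ref{def:Euler} shows that for such a pants $\eu_P=\eu_{T(a,b)}=\rotild\!\left([\widetilde{\rho(a)},\widetilde{\rho(b)}]\right)$, up to a sign depending on orientation conventions.

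\textbf{Step 2: construction of a Basic Partition.} Property $S_k$, together with Propositions~\ref{prop:OrdrePer2} and \ref{prop:Schottky2}, implies that the $4k$ periodic points of $\rho(a)$ and $\rho(b)$ alternate around $S^1$ in a $k$-fold symmetric pattern, and that $\rho(a),\rho(b)$ are in $k$-Schottky position. Using these $4k$ points (refined as needed by further preimages under $\rho(a)$ and $\rho(b)$), I would construct a basic partition of $S^1$ adapted to $\langle\rho(a),\rho(b)\rangle$ in the sense of Matsumoto. The Schottky hypothesis ensures that the basic-partition axioms are satisfied: each atom has a well-defined image under the generators, and the cyclic pattern of atoms is preserved up to a controlled reshuffling.

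\textbf{Step 3: evaluation via Matsumoto's formula.} Matsumoto's combinatorial formula then computes $\rotild\!\left([\widetilde{\rho(a)},\widetilde{\rho(b)}]\right)$ directly from the partition. The $k$-fold symmetry essentially reduces the count to the $k=1$ case, where a classical Fuchsian computation yields the integer $\pm 2$; the symmetry then divides this by $k$, giving $\rotild\!\left([\widetilde{\rho(a)},\widetilde{\rho(b)}]\right)=\pm 2/k$ and hence $\eu_P=\pm 1/k$ with the appropriate (uniform) sign.

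\textbf{Main obstacle.} The chief difficulty lies in Step~3: verifying that the periodic-point partition is genuinely a basic partition in Matsumoto's sense, tracking honestly how $\rho(a)$ and $\rho(b)$ permute its atoms, and extracting the exact value $\pm 2/k$ with no residual ambiguity coming from the quasimorphism defect of $\rotild$. A secondary, orientational task is to fix a consistent orientation convention on $S^1$ so that the sign in ``$\eu_P=-1/k$ up to changing orientation'' is the same for every pair of pants inside $\Sigma_g$; this is where additivity and the connectedness results of Section~\ref{ssec:Marquage} come back into play.
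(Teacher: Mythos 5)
Your proposal diverges from the paper's argument and contains two substantive gaps. The first is the reduction in Step~1: additivity of the relative Euler number constrains only the \emph{sum} of the contributions over a pants decomposition, so knowing the value on one model pants, plus the fact that decompositions are related by elementary moves, does not determine the value on an arbitrary pair of pants $P\subset\Sigma_g$. The paper instead computes the contribution \emph{directly} for every pants having at least two nonseparating boundary components (writing the boundary as $a^{-1}$, $c^{-1}$, $ac$ and choosing $b$ so that $(a,b,c)$ is a completable directed $3$-chain), deduces $|\eu(\rho)|=\frac{2g-2}{k}$ by cutting $\Sigma_g$ into such pants, and only then treats a pants with more separating boundary components by decomposing its \emph{complement} into pants of the first kind and invoking additivity. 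The uniformity of the sign across all pants is likewise not a ``secondary task'': it is exactly what Proposition~\ref{prop:OrdrePer2} (consecutive pairs in a completable $3$-chain have the same type) combined with the connectedness of the graph of Lemma~\ref{lem:BallonsConnexe2} delivers.

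The second gap is Step~3. The paper does not use Basic Partitions to prove Theorem~\ref{thm:Pants}; those enter only in the following subsection, to promote the rotation-number equalities to a semi-conjugacy with $\rho_0$. The actual computation is elementary interval tracking: by Propositions~\ref{prop:OrdrePer2} and~\ref{prop:Schottky} the periodic points of $a$ and $c$ are in cyclic order $\bbrac{a^-,a^+,c^-,c^+}_k$ and $a,c$ are in $k$-Schottky position; lifting the Schottky intervals to $\R$ and recording how $\widehat{a}$ and $\widehat{c}$ shift their indices (by $\ell$ and $m$, with $\rotild(\widehat{a})=\ell/k$ and $\rotild(\widehat{c})=m/k$) gives $k\,\rotild(\widehat{a}\widehat{c})=m+\ell-1$, i.e.\ $\eu_P(\rho)=-1/k$. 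Moreover your numerics are off: for a Fuchsian representation a one-holed torus is a \emph{single} pair of pants contributing $\pm 1$ to the maximal Euler number $2g-2$, so $\rotild\bigl([\widetilde{\rho(a)},\widetilde{\rho(b)}]\bigr)=\pm 1$, not $\pm 2$; and your own Step~1 asserts $\eu_P=\rotild\bigl([\widetilde{\rho(a)},\widetilde{\rho(b)}]\bigr)$, which is inconsistent with concluding $\rotild\bigl([\widetilde{\rho(a)},\widetilde{\rho(b)}]\bigr)=\pm 2/k$ yet $\eu_P=\pm 1/k$.
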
 

\begin{definition}
  Let $i(a,b)=1$. We say that the ordered pair $(a,b)$ is of type~$+$
  if the periodic points of $a$ and $b$ are in the cyclic order
  $\bbrac{a^-,b^-,a^+,b^+}_k$. Otherwise, we say that $(a,b)$ is of type~$-$.
\end{definition}

As a consequence of Proposition~\ref{prop:OrdrePer2}, for every oriented,
completable directed $3$-chain $(a,b,c)$, the pairs $(a,b)$ and $(b,c)$
have the same type.
Thus, Lemma~\ref{lem:BallonsConnexe2} implies that
all one-holed tori have the same type. Thus, up to conjugating $\rho$ by an
orientation-reversing homeomorphism, we may suppose the type is always~$+$.

\begin{proof}[Proof of Theorem \ref{thm:Pants}]
  We begin by proving the claim for a pair of pants $P$, such that
  at least two boundary components of $P$ are non-separating.
  Denote by $a^{-1}$, $c^{-1}$, and $ac$ the three boundary components of $P$,
  with the convention of Figure~\ref{figure:pantalon}, and suppose that
  $a$ and $c$ are non-separating. With these choices of orientations,
  the Euler number of $\rho$ on $P$ will be equal to
  $\rotild(\widehat{a}\widehat{c})-\rotild(\widehat{a})-\rotild(\widehat{c})$,
  and there exists a curve $b$ such that $(a,b,c)$ is an oriented, completable,
  directed $3$-chain (the end of the proof of Observation~\ref{obs:Ballons1}
  justifies the existence such a curve $b$).

  Since $(a,b)$ is of type~$+$, it follows from
  Proposition~\ref{prop:OrdrePer2} that the periodic points of $a$ and $c$
  are in cyclic order $\bbrac{a^-,a^+,c^-,c^+}_k$; and by
  Proposition~\ref{prop:Schottky}, they are in $k$-Schottky position, with
  Schottky intervals $I_j^\pm(a)$ and $I_j^\pm(c)$. Lift these to intervals
  $\tilde{I}_j^\pm(a)$ and $\tilde{I}_j^\pm(c) \subset \R$, indexed by
  integers, and in order 
  \[ ... \tilde{I}_j^-(a), \tilde{I}_j^+(a), \tilde{I}_j^-(c),
  \tilde{I}_j^+(c), \tilde{I}_{j+1}^-(a), ... \]
  such that the projection to $S^1$ is given by taking indices mod $k$.
  It follows easily from the definition of Savage
  (see also Observation~\ref{obs:Sauvagerie}) that
  $\widehat{a}(\tilde{I}_j^+(a)) \subset \tilde{I}_{j+\ell}^+(a)$ for some
  $\ell$
  (which depends on $a$)
  and in this case
  $\ell/k = \rotild(\widehat{a})$. An analogous statement holds also for $c$;
  let $m/k$ denote its translation number.

  Since $a$ and $c$ are in $k$-Schottky position, their product $ac$ is
  $k$-savage, and we can take $I^-(ac)=I^-(c)$ and $I^+(ac)\subset I^+(a)$.
  Note that each of the $k$ intervals of $I^+(ac)$ is contained in a
  different interval of $I^+(a)$. We now track images of intervals to
  compare translation numbers.
  Set the indexing of the intervals $\tilde{I}^\pm(ac)$ so that
  $\tilde{I}_1^+(a) = \tilde{I}_1^+(ac)$.
  This lies between $\tilde{I}_{0}^+(c)$ and $\tilde{I}_1^-(c)$, so we have
  \[ c(\tilde{I}_1^+(ac)) \subset \tilde{I}_m^+(c) \]
  and similarly, since $\tilde{I}_m^+(c)$ lies between $\tilde{I}_m^+(a)$
  and $\tilde{I}_{m+1}^-(a)$, we have
  \[ ac(\tilde{I}_1^+(ac)) \subset a(\tilde{I}_m^+(c)) \subset
  \tilde{I}^+_{m+\ell}(a)= \tilde{I}^+_{m+\ell}(ac). \]
  Thus, $k\cdot \rotild(\widehat{a}\widehat{c}) =
  m + \ell -1 = k\cdot \rotild(\widehat{a}) + k\cdot \rotild(\widehat{c}) -1$ and hence
  $k(\rotild(\widehat{a} \widehat{c})-\rotild(\widehat{a})-
  \rotild(\widehat{c})) = -1$,
  as desired.
  
  This implies Theorem~\ref{theo:ExoMaxime}, as we can cut the surface
  $\Sigma_g$ into pairs of pants whose boundary components are all
  non-separating.
  
  Now, if $P$ is a pair of pants with possibly more than one separating
  boundary component, then $\Sigma_g \smallsetminus P$ admits a pants
  decomposition whose pants all have at most one separating
  boundary component.
  The fact that the contribution of $P$ to the Euler class of $\rho$ is
  $\frac{-1}{k}$ is then a consequence of Theorem~\ref{theo:ExoMaxime}
  and the additivity of the Euler class.
\end{proof}

\subsection{Basic partitions and combinations}\label{subsec:Matsumoto}

Fix disjoint, nonseparating curves $C_1$, \ldots, $C_{3g-3}$ so that
$\Sigma_g \smallsetminus \left( \bigcup_i C_i \right)$ is a 
disjoint union of pairs of pants $P_1$, \ldots, $P_{2g-2}$. 
For concreteness, the reader may use the decomposition suggested in
Figure~\ref{fig:GrapheDeGroupes}.

We briefly part from the convention for the presentation of $\pi_1\Sigma_g$
that was given in paragraph~\ref{ssec:Marquage}, and instead present
$\pi_1\Sigma_g$ as the fundamental group of a graph of groups.
Choose base points $x_i \in P_i$ and $y_j \in C_j$, identifying $x_1$ with
the base point of $\pi_1\Sigma_g$.
Also, choose paths in $P_i$ from $x_i$ to each basepoint of each boundary
component of $P_i$.
This collects all the base points of the pants and curves as the vertices
of a graph $G$ embedded in $\Sigma_g$; fix an orientation for each of its
edges, and a spanning tree $T \subset G$.
\begin{figure}[htb]
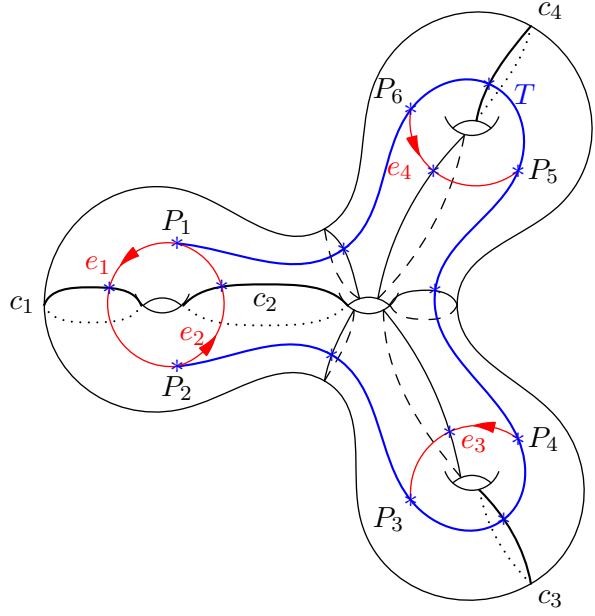

\begin{asy}
  import geometry;
  unitsize(1pt);
  
  picture anse1, anse2, anse3, letrou;
  
  real r = 30, R = 110, ct = -70; //
  point p1 = 74*dir(180+29), p2 = 74*dir(180-32);
  path GC = r*dir(-120){dir(150)}..p1..(-R,0){up}..p2..r*dir(120){dir(30)}; //
  path BT = (-10,4)..(0,-3)..(0+10,4); //
  path HT = relpoint(BT,0.2)..(0,2.6)..relpoint(BT,0.8);
  
  path c11 = (-R,0){dir(70)}::(-95,6){right}::(ct,0)+relpoint(HT,0){dir(-70)}; //
  path c12 = reflect((0,0),(1,0))*c11; //
  path c21 = (ct,0)+relpoint(HT,2){dir(50)}::(-35,7)::relpoint(HT,0){dir(-50)};
  path c22 = reflect((0,0),(1,0))*c21;
  path c71 = relpoint(HT,2){dir(70)}::(20,5){right}::(r,0){dir(-70)};
  path c72 = reflect((0,0),(1,0))*c71;
  
  path c61 = relpoint(BT,0.7){dir(-50)}..relpoint(HT,0.25)+70*dir(-60){dir(-80)};
  path c62 = relpoint(BT,0.7){dir(-90)}..relpoint(HT,0.25)+70*dir(-60){dir(-50)};
  path c51 = relpoint(HT,0.7){dir(80)}..relpoint(BT,0.4)+70*dir(60){dir(45)};
  path c52 = relpoint(HT,0.7){dir(45)}..relpoint(BT,0.4)+70*dir(60){dir(80)};
  
  path c31 = relpoint(BT,0.6)+70*dir(-60){dir(-45)}..R*dir(-60){dir(-80)};
  path c32 = relpoint(BT,0.6)+70*dir(-60){dir(-80)}..R*dir(-60){dir(-45)};
  path c41 = relpoint(HT,0.6)+70*dir(60){dir(80)}..R*dir(60){dir(50)};
  path c42 = relpoint(HT,0.6)+70*dir(60){dir(55)}..R*dir(60){dir(80)};
  
  path c81 = relpoint(HT,0.3){dir(105)}..relpoint(GC,1){dir(150)};
  path c82 = relpoint(HT,0.3){dir(150)}..relpoint(GC,1){dir(100)};
  path c91 = relpoint(GC,0){dir(80)}..relpoint(BT,0.3){dir(60)};
  path c92 = relpoint(GC,0){dir(60)}..relpoint(BT,0.3){dir(90)};
  
  point bP1=(-65,21), bP2=(-65,-21), bP3=dir(120)*bP1, bP4=dir(120)*bP2;
  point bP5=dir(-120)*bP1, bP6=dir(-120)*bP2;
  point bC1=relpoint(c11,0.65), bC2=relpoint(c21,0.25), bC3=relpoint(c31,0.35);
  point bC4=relpoint(c41,0.35), bC5=relpoint(c51,0.75), bC6=relpoint(c61,0.75);
  point bC7=relpoint(c71,0.65), bC8=relpoint(c81,0.65), bC9=relpoint(c91,0.35);
  
  path T= bP1{dir(-5)}..bC8..bP6..bC4..bP5..bC7..bP4..bC3..bP3..bC9..bP2{dir(185)};
  path e1=bP1..bC1..bP2, e2=bP2..bC2..bP1, e3=bP4..bC6..bP3, e4=bP6..bC5..bP5;
  
  //
  draw(anse1, GC);
  draw(letrou, BT); draw(letrou, HT);
  add(anse1,letrou,(ct,0));
  
  add(anse2, anse1);
  draw(anse1, c11, black+0.8pt); draw(anse1, c12, 0.8pt+black+dotted);
  draw(anse1, c71); draw(anse1, c72, black+dashed);
  draw(anse1, c21, black+0.8pt); draw(anse1, c22, 0.8pt+black+dotted);
  
  draw(anse1, c61); draw(anse1, c62, black+dashed);
  draw(anse1, c51); draw(anse1, c52, black+dashed);
  draw(anse1, c31, black+0.8pt); draw(anse1, c32, 0.8pt+black+dotted);
  draw(anse1, c41, black+0.8pt); draw(anse1, c42, 0.8pt+black+dotted);

  draw(anse1, c81); draw(anse1, c82, black+dashed);
  draw(anse1, c91); draw(anse1, c92, black+dashed);
  
  add(anse3, anse2);
  
  draw(anse1, T, blue+0.8pt);
  
  label(anse1, "\small $*$", bP1, blue); label(anse1, "\small $*$", bP2, blue);
  label(anse1, "\small $*$", bP3, blue); label(anse1, "\small $*$", bP4, blue);
  label(anse1, "\small $*$", bP5, blue); label(anse1, "\small $*$", bP6, blue);
  label(anse1, "\small $*$", bC1, blue); label(anse1, "\small $*$", bC2, blue);
  label(anse1, "\small $*$", bC3, blue); label(anse1, "\small $*$", bC4, blue);
  label(anse1, "\small $*$", bC5, blue); label(anse1, "\small $*$", bC6, blue);
  label(anse1, "\small $*$", bC7, blue); label(anse1, "\small $*$", bC8, blue);
  label(anse1, "\small $*$", bC9, blue);
  
  draw(anse1, e1, red, Arrow(Relative(0.3)));
  draw(anse1, e2, red, Arrow(Relative(0.3)));
  draw(anse1, e3, red, Arrow(Relative(0.3)));
  draw(anse1, e4, red, Arrow(Relative(0.3)));
  
  label(anse1, "\small $P_1$", bP1,N);
  label(anse1, "\small $P_2$", bP2,S);
  label(anse1, "\small $P_3$", bP3,SW);
  label(anse1, "\small $P_4$", bP4,E);
  label(anse1, "\small $P_5$", bP5,E);
  label(anse1, "\small $P_6$", bP6,NW);
  label(anse1, "\small $c_1$", (-R,0),W);
  label(anse1, "\small $c_2$", relpoint(c21,0.5),S);
  label(anse1, "\small $c_3$", R*dir(-60),SE);
  label(anse1, "\small $c_4$", R*dir(60),NE);
  label(anse1, "\small $e_1$", relpoint(e1,0.3),W,red);
  label(anse1, "\small $e_2$", relpoint(e2,0.3),W,red);
  label(anse1, "\small $e_3$", relpoint(e3,0.3),S,red);
  label(anse1, "\small $e_4$", relpoint(e4,0.3),SW,red);
  label(anse1, "\small $T$", relpoint(T,0.32),E,blue);
  
  add(scale(1.1)*anse1,(0,0));
  add(scale(1.1)*letrou,(0,0));
  add(scale(1.1)*rotate(120)*anse2,(0,0));
  add(scale(1.1)*rotate(-120)*anse3,(0,0));
\end{asy}
\caption{A decomposition of $\pi_1\Sigma_4$ into a graph of groups}
\label{fig:GrapheDeGroupes}
\end{figure}
This data gives a {\em graph of groups}:
the vertex (resp. edge) groups are the fundamental groups of the based
pairs of pants (resp. curves), and for each edge $C_j$,
the chosen paths define monomorphisms $\phi_j$ and $\psi_j$ from
$\pi_1 C_j\simeq\Z$ to the fundamental groups of the two adjacent
(initial and final endpoints of the edge, respectively) pairs of pants.
The Seifert-Van Kampen theorem then identifies $\pi_1\Sigma_g$ with the
fundamental group of this graph of groups; this is the group
generated by the union of the $\pi_1 P_i$, as well as one extra generator
$e_j$ for each edge that is not in $T$, subject to the relations that
for each edge $C_j$ (in $T$ or not), and each $\gamma\in\pi_1C_j$,
we have $\phi_j(\gamma)=e_j^{-1}\psi_j(\gamma)e_j$
(taking $e_j=1$ for the edges in $T$).

Our representation $\rho$ gives rise to a representation of each
$\pi_1(P_i)$, by using the spanning tree $T$ to identify based curves in
$P_i$ with based curves in $\Sigma_g$. Similarly, each additional edge
generator $e_j$ can be identified with a closed, based loop in $\Sigma_g$,
hence to an element $\rho(e_j)$.

We now define a geometric representation that will be our candidate for a
representation semi-conjugate to $\rho$.
As a consequence of Theorem~\ref{thm:Pants}, $\frac{2g-2}{k}$ is an
integer, hence a Fuchsian representation of Euler class $2g-2$ can be lifted
to $\PSLk$. The choice of such a lift amounts to the
choice of rotation numbers (in $\frac{1}{k}\Z~\mathrm{mod}~\Z$) for the
elements of a homology basis of $\pi_1\Sigma_g$.
Hence, there exists a geometric representation $\rho_0$ with the same
Euler class as $\rho$, and with $\rot(\rho_0(\gamma))=\rot(\rho(\gamma))$ for
each $\gamma$ in $\{c_1,\ldots,c_g,e_1,\ldots,e_g\}$ (with the notation of
Figure~\ref{fig:GrapheDeGroupes}).
This also holds for each $\gamma\in\{c_{g+1},\ldots,c_{3g-3}\}$. Indeed,
the contribution of the Euler class of $\rho$ and $\rho_0$ on each
pairs of pants are equal, and they are sums of rotation numbers, so we
can propagate these equalities to the whole family of cutting curves.

To show $\rho$ and $\rho_0$ are semi-conjugate, we use (an adaptation of)
Matsumoto's theory of basic partitions and combinations.

\begin{definition}[Matsumoto \cite{Matsumoto16}]
  Let $\Gamma$ be a group generated by a finite symmetric set $S$, and let
  $\rho: \Gamma \to \HOS$.
  A \emph{Basic Partition (BP)} for $\rho(\Gamma)$ is a collection $P$ of
  disjoint closed intervals of $S^1$ satisfying
  \begin{enumerate}[i)]
  \item for each $I \in P$, there is a unique $s_I \in S$ such that
    $\rho(s_I)(I)$ is a union of $m = m(I)$ elements of $P$ and $m -1$
    complementary intervals to $P$,
  \item for any $s \neq s_I$ in $S$, the image $\rho(s)(I)$ is a proper
    subset of an element of $P$, and
  \item for any complementary interval $J$ to $P$ and $s \in S$, either
    $\rho(s)(I)$ is contained in the interior of $P$, or is a complementary
    interval to $P$.
  \end{enumerate}
\end{definition}
Following the last condition, we may put the complementary intervals to $P$
into a directed graph, with an edge from $J_1$ to $J_2$ if
there is a generator sending $J_1$ to $J_2$.   A Basic Partition is
called {\em pure} if this graph consists of disjoint nontrivial cycles.

Applying this to our context, for each pair of pants $P_i$, choose two
``preferred'' boundary components as generators for $\pi_1 P_i$
(identified with a subgroup of $\Gamma_g$ via $T$). Let $a_i^{-1}$ and
$c_i^{-1}$ denote these elements, and consider their images under $\rho$.
The proof of Theorem~\ref{thm:Pants}
shows that the periodic points of $a_i$, $c_i$ and $c_ia_i$ are in the
cyclic order
\[ \bbrac{a_i^-,a_i^+,(a_ic_i)^+,(a_ic_i)^-,c_i^-,c_i^+,(c_ia_i)^+,(c_ia_i)^-}_k \]
and that the $4k$ intervals of types $(a_i^+,(a_ic_i)^+)$, $((a_ic_i)^-,c_i^-)$,
$(c_i^+,(c_ia_i)^+)$ and $((c_ia_i)^-,a_i^-)$ form a
{\em pure basic partition} for the action of $\pi_1 P_i$
on the circle with
respect to the symmetric generating set $(a_i,c_i,a_i^{-1},c_i^{-1})$.
This conclusion rested only upon rigidity and the hypothesis $S_k$, and the combinatorics of the BP (the images of intervals and complementary intervals following conditions i-iii of the definition) depends only on the rotation numbers of the generators.  Thus, $\rho_0$ admits a basic partition with the same combinatorics as $\rho$, i.e. there exists a cyclic order preserving map
sending the basic partition of one to the other, which intertwines the two actions.   In this case,  \cite[Theorem~4.7]{Matsumoto16}  states that the restrictions of $\rho$ and $\rho_0$ to $\pi_1 P_i$ are semi-conjugate. 

It remains to improve this to a global semi-conjugacy between $\rho$ and
$\rho_0$.
With the notation above, in a pair of pants $P_i$, let
$J_a$ (resp, $J_c$, resp. $J_{ac}$) denote the union of all intervals of
type $(a_-,a_+)$ (resp $(c_-,c_+)$, resp. $((ac)_+,(ac)_-)$.
These are called the {\em entries} of the Basic Partition described above;
their stabilizers in $\pi_1 P_i$ are the cyclic groups
generated by $a$, $c$ and $ac$, respectively.

Now consider an edge $e_j$ of $G$ (in $T$ or not). It serves to conjugate
one generator of $\pi_1 P_i$, for some $i$, $a^{-1}$, $c^{-1}$ or $ac$,
into {\em the inverse} of the corresponding generator of this boundary component on the adjacent pair
of pants.
It follows that, if, say, $a_i$ and $a_{i'}$ are the generators of
$\pi_1 P_i$ and $\pi_1 P_{i'}$ on each side of an edge $e_j$, then the images
$J_{a_i}$ and $\rho(e_j)(J_{a_{i'}})$ form a partition of $S^1$,
up to the finitely many periodic points of $a_i$. 
In this situation, Matsumoto says that the two entrances $J_{a_i}$ and
$J_{a_{i'}}$ are {\em combinable}.  
More generally, given a graph of groups decomposition of a group $\Gamma$ as ours, and pure Basic Partitions for each vertex group that have
combinable entrances for every edge, Matsumoto says the collection of all Basic Partitions for the vertex groups form a {\em Basic Configuration} for the action $\rho(\Gamma)$ on the circle.  (Matsumoto works with trees of groups; but this definition generalizes immediately to the graph setting.)  

As we already argued for the $\pi_1 P_i$, the equalities between rotation numbers of $\rho$ and $\rho_0$ on
the curves $C_i$ and on the edge elements $e_j$ imply that they admit Basic
Configurations with the same combinatorics; in other words there exists a
cyclic order preserving bijection which maps the Basic Partitions of $\rho$ to those of  
$\rho_0$, intertwining the actions.

Matsumoto's main result is that a cyclic order preserving bijection
between Basic Configurations can be promoted to a semi-conjugacy 
between $\rho$ and $\rho_0$ \cite[Theorem 6.7]{Matsumoto16}.   We comment briefly on the proof.  
To produce a semi-conjugacy, it suffices to show
that some orbit of $\rho$ and some orbit
of $\rho_0$ are in the same cyclic order.  Matsumoto's proof strategy begins by 
showing this property holds for elements of vertex groups (ie of some $\pi_1 P_i$) -- this is the content of
\cite[Theorem 4.7]{Matsumoto16} cited above.
He then proceeds with elements
of the form $\gamma_i e_j \gamma_{i'}$ (where $\gamma_i\in P_i$
and $\gamma_{i'}\in P_{i'}$ belong to adjacent pairs of pants),
then of the form $\gamma_{i_3}e_{j_2}\gamma_{i_2}e_{j_1}\gamma_{i_1}$,
and so
on, inductively.
While
his
proof is
not
carried out
in
the language of Bass-Serre theory, 
and the context is specialized to a tree of groups
decompositions of $\pi_1\Sigma_g$, the arguments adapt without
modification.





\section{Periodic considerations}\label{sec:Per}

The content of this section is the proof of the following two statements.

\begin{proposition}\label{prop:RatRot}
  If a representation $\pi_1\Sigma_g\rightarrow G$ is path-rigid, then all
  nonseparating simple closed curves have rational rotation number.
\end{proposition}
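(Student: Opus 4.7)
The plan is to derive a contradiction from the assumption that $\rho$ is path-rigid while some nonseparating simple closed curve $a\in\Gamma_g$ has $\rot(\rho(a))=\alpha\notin\Q/\Z$. By Theorem~\ref{thm:GhysRot}, path-rigidity of $\rho$ forces $\rot(\rho_s(\gamma))$ to be constant in $s$ along any continuous path $s\mapsto\rho_s$ starting at $\rho$ and for every $\gamma\in\Gamma_g$. It therefore suffices to produce a bending deformation through $\rho$ along which the rotation number of some element varies.

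The key sub-case is when $\rho(a)$ is conjugate to the irrational rotation $R_\alpha$ (which in particular covers the situation where $\rho(a)$ acts minimally on $S^1$). After conjugating $\rho$, I assume $\rho(a)=R_\alpha$ and complete $a=a_1$ to a standard generating system $(a_1,b_1,\ldots,a_g,b_g)$. The rotation subgroup $\{R_s\}_{s\in\R}$ lies in the centralizer of $\rho(a_1)$, so the bending deformation
\[
\rho_s(b_1) := R_s\cdot\rho(b_1),\qquad \rho_s=\rho\text{ on all other generators},
\]
defines a continuous family in $\Hom(\Gamma_g,\HOS)$; the relation $\prod_i[a_i,b_i]=1$ survives because $R_s$ commutes with $\rho(a_1)$. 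Choosing the lift $\widetilde{R_s}(x)=x+s$ and any lift of $\rho(b_1)$ to $\HOZ$, I find $\rotild(\widetilde{\rho_s(b_1)})=\rotild(\widetilde{\rho(b_1)})+s$, which is non-constant in $s$ and thus contradicts path-rigidity.

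For the reduction: if $\rho$ has a finite orbit then $\rho(a)$ permutes it and $\rot(\rho(a))$ is automatically rational, so I may assume $\rho$ is either minimal or has a global exceptional minimal set. In the latter situation Observation~\ref{obs:SCMinimal} produces a minimal $\rho'$ semi-conjugate to $\rho$, and the proof of Proposition~\ref{prop:ChiSemiConj} shows $\rho'$ lies in the closure of the $\HOS$-conjugacy orbit of $\rho$. Using path-connectedness of $\HOS$ together with local path-connectedness of $\Hom(\Gamma_g,\HOS)$ in the compact-open topology, $\rho'$ belongs to the same path-component of $\Hom$ as $\rho$; hence $\rho'$ is itself path-rigid, and since $\rho'(a)$ is conjugate to $R_\alpha$, applying the previous paragraph to $\rho'$ closes this case.

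The hard part will be the residual sub-case in which $\rho$ is globally minimal but $\rho(a)$ is individually a Denjoy homeomorphism. Here no global semi-conjugacy is available, and the centralizer of $\rho(a)$ in $\HOS$ is discrete ($\cong\Z$), so no one-parameter family commutes with $\rho(a)$ and direct bending along $a$ fails. My plan is to bend instead along a neighbouring curve $c$ with $i(a,c)=\pm 1$ whose image $\rho(c)$ admits a nontrivial commuting one-parameter family --- for instance, $\rho(c)$ has rational rotation number with $\Per(\rho(c))\neq S^1$, so that Lemma~\ref{lem:PosDeform} supplies a positive family $c_s$. In the new standard system with $c=a_1$ and $a=b_1$, this bending replaces $\rho(a)$ by $c_s\rho(a)$; since $\rho(a)$ is fixed-point free and $c_s$ pushes points off $\partial\Per(\rho(c))$ unboundedly in the positive direction, $\rotild(\widetilde{c_s\rho(a)})$ grows without bound in $s$ and again contradicts path-rigidity. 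The main technical work is to produce such a $c$: I expect that among the Dehn-twist translates $\tau_a^n(c_0)$ of a fixed $c_0$ with $i(a,c_0)=\pm 1$, the rotation numbers $\rot(\rho(\tau_a^n(c_0)))$ cannot all remain exotic (Denjoy, or $\Per=S^1$), by a density argument leveraging the irrationality of $\alpha$.
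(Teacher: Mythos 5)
The genuine gap is your last paragraph, the case where $\rho$ is minimal but $\rho(a)$ is a Denjoy homeomorphism; the argument you sketch there would fail on two counts. First, the claim that $\rotild(\widetilde{c_s\rho(a)})$ ``grows without bound in $s$'' is false: a positive one-parameter family $c_s$ as in Lemma~\ref{lem:PosDeform} fixes $\partial\Per(\rho(c))$ pointwise for every $s$, so $\widehat{c_s}(x)$ stays below the next point of $\partial\Per(\widetilde{\rho(c)})$ above $x$, and hence $\rotild(\widehat{c_s}\,\widehat{\rho(a)})\le\rotild(\widehat{\rho(a)})+1$ for all $s$. Worse, it is not even clear that $s\mapsto\rotild(\widehat{c_s}\,\widehat{\rho(a)})$ is non-constant --- if $\partial\Per(\rho(c))$ is a Cantor set the deformed maps could a priori all remain semi-conjugate to $R_\alpha$ --- and that non-constancy is precisely what has to be proved. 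Second, the existence of a transverse simple curve $c$ with rational rotation number and $\Per(\rho(c))\ne S^1$ is essentially an instance of the statement being proved, and the ``density argument'' producing it is not supplied. The paper closes this case by an entirely different mechanism: it shows the exceptional minimal set $K$ of $\rho(a)$ would have to be invariant under every $\rho(b)$ with $i(a,b)=1$, hence under all of $\rho(\Gamma_g)$, contradicting minimality of $\rho$. The invariance is forced by noting that if $\rho(b)(K)\not\subset K$ then some $a^Nb$ has rotation number $0$ while failing to preserve $K$, and a bending of $a$ along $a^Nb$ (using recurrence of $\rho(a)$ on $K$ to close up an orbit) deforms $\rot(\rho(a))$ to a rational value $k/M$. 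Some argument of this kind is needed; your plan as written does not substitute for it.

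Two further points. In your first case the identity $\rotild(\widetilde{R_s\rho(b_1)})=\rotild(\widetilde{\rho(b_1)})+s$ is not valid --- rotation number is not additive under composition with a rotation --- though the conclusion survives because $s\mapsto\rotild(\widetilde{R_s}\,\widetilde{\rho(b_1)})$ is continuous, monotone, and increases by $1$ as $s$ traverses $[0,1]$, hence is non-constant; this is the paper's first case as well. And in your reduction to the minimal case, you invoke local path-connectedness of $\Hom(\Gamma_g,\HOS)$, which is not known (the paper itself records local connectedness of these spaces as an open issue in its final section); membership of $\rho'$ in the closure of the conjugacy orbit of $\rho$ does not by itself yield a path from $\rho$ to $\rho'$, so this step needs a different justification.
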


\begin{theorem}\label{theo:PerDisjointsSk}
  Suppose $\rho$ is path-rigid and minimal. Then, for all $a,b$ with
  $i(a,b)=\pm 1$, we have the implication
  \[ \Per(a)\cap\Per(b)=\emptyset \Rightarrow S_k(a,b)\text{ for some }k. \]
\end{theorem}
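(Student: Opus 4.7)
The plan is first to establish Proposition~\ref{prop:RatRot} by contradiction. Suppose a nonseparating $a$ has $\rot(\rho(a))$ irrational, and complete $a=a_1$ into a standard basis. If $\rho(a)$ is topologically conjugate to an irrational rotation, it commutes with a $1$-parameter family of rotations, so bending along $a$ produces a continuous path $\rho_t$ with $\rho_t(b_1)=R_t\,\rho(b_1)$. Since $t\mapsto\rot(R_t\,\rho(b_1))$ is continuous and not locally constant, $\rho_t$ leaves the semi-conjugacy class of $\rho$ by Theorem~\ref{thm:GhysRot}, contradicting path-rigidity. The exceptional-minimal case reduces to this one by first collapsing the wandering intervals to obtain a semi-conjugate representation on which $\rho(a)$ becomes conjugate to a rotation.

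For Theorem~\ref{theo:PerDisjointsSk}, assume the hypotheses and invoke Proposition~\ref{prop:RatRot} to conclude both rotation numbers are rational. Pick a positive $1$-parameter family $f_t$ commuting with $\rho(a)$ from Lemma~\ref{lem:PosDeform}, and consider the sets $P,N,U$ of Section~\ref{sssec:Twist} associated with the bending $\rho_t(b)=f_t\,\rho(b)$. Lemma~\ref{lem:ProprietesPNU}(1) gives $P\subset\Per(\rho(a))\cap\Per(\rho(b))=\emptyset$, and Proposition~\ref{prop:dNinP} forces $\partial N$ to be discrete, hence finite. Since $P=\emptyset$, we have the clopen partition $S^1=U\sqcup N$, so each is a finite disjoint union of open arcs. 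By monotonicity of $\delta_{a,b}(x,\cdot)$ in $t$, the set $N$ splits further as $N^+\sqcup N^-$ according to the sign of $\delta_{a,b}$.

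Next I would exploit path-rigidity. Since $\rho_t$ is semi-conjugate to $\rho$, the value $\rot(f_t\,\rho(b))$ is constant and rational, so $T_{a,b}^{-1}(t)=\Per(f_t\,\rho(b))$ is nonempty for every $t$. The continuous function $T_{a,b}\colon U\to\R$ tends to $+\infty$ (respectively $-\infty$) at any endpoint of a component of $U$ that lies in $N^-$ (respectively $N^+$); surjectivity on each component thus forces its two endpoints to have opposite signs. Combining Observation~\ref{obs:CardinalityPer} with the conjugation bendings $\rho\mapsto\rho\circ\tau_a^n$ of Observation~\ref{obs:DeformationsConj}, one proves that $T_{a,b}$ is monotone on each component of $U$. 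It follows that $|\Per(f_t\,\rho(b))|$ is constant in $t$ and equal to the number of components of $U$, so $\Per(\rho(b))$ is finite. A symmetric argument bending along $b$ yields that $\Per(\rho(a))$ is also finite and the two cardinalities agree, say $|\Per(\rho(a))|=|\Per(\rho(b))|=2k$.

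Finally, to deduce the structural statement $S_k(a,b)$: Lemma~\ref{lem:ConvergencePer} shows that $\Per^\pm(f_t\,\rho(b))$ moves monotonically along $U$ as $t$ varies, accumulating on $\partial N\subset\Per(\rho(a))$ as $|t|\to\infty$; since path-rigidity preserves the cyclic order of these periodic points, $\Per(\rho(a))$ and $\Per(\rho(b))$ must interleave with $k$ attracting and $k$ repelling periodic points each in the alternating pattern of a hyperbolic element of $\PSLk$. Minimality of $\rho$ then rules out any intermediate periodic points, forcing the dynamics on each complementary arc to be north--south and allowing an explicit topological conjugacy of $\rho(a)$ and $\rho(b)$ to hyperbolic elements of $\PSLk$. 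The main obstacle is the monotonicity of $T_{a,b}$ on each component of $U$: a non-monotone $T_{a,b}$ would produce extra periodic points of $\rho_t(b)$ that would have to be collapsed by the semi-conjugacy to the minimal $\rho$, and ruling this out by propagating minimality through the deformation is the delicate step of the proof.
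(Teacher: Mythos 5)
Your setup is on the right track: reducing to rational rotation numbers via Proposition~\ref{prop:RatRot}, observing $P(a,b)=\emptyset$ from the hypothesis and Lemma~\ref{lem:ProprietesPNU}(1), and deducing from Proposition~\ref{prop:dNinP} that $\partial N(a,b)$ is finite is exactly how one should begin. But the heart of the proof --- producing a bending deformation for which $\Per(a_tb)$ is \emph{finite}, of the right cardinality, with alternating attracting/repelling dynamics --- is not actually carried out, and you say so yourself in your last sentence. There are three concrete problems.

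First, the monotonicity of $T_{a,b}$ on each component of $U(a,b)$, which your count of periodic points rests on, is neither proved nor true in general, and it is not what the paper uses. The paper's mechanism is Lemma~\ref{lem:EHN}: writing the return map of $a_tb$ through an orbit of intervals of $X_b$ as $F_t=\tau_t\circ f_n\circ\cdots\circ\tau_t\circ f_1$, one shows by a measure-theoretic argument (the auxiliary functions $T$ and $H$ and the countable family of segments $N_w$) that $F_t$ has a \emph{unique} fixed point for all $t$ outside a set of finite Lebesgue measure. This is much weaker than monotonicity of $T_{a,b}$ and is exactly what is needed; nothing in the paper propagates minimality ``through the deformation'' to rule out non-monotone behaviour. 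Relatedly, your claim that $T_{a,b}$ must be surjective \emph{on each component} of $U$ is unjustified --- path-rigidity only forces $T_{a,b}$ to be surjective on all of $U$, so you cannot conclude that the two endpoints of a component carry opposite signs this way.

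Second, you are missing the interval combinatorics that the paper develops before invoking Lemma~\ref{lem:EHN}: the sets $X_a$, $X_b$ of complementary intervals meeting the other curve's periodic set, their $\rho(a)$-invariance (Lemma~\ref{lem:aPreserveSa}), their stability under bending (Lemma~\ref{lem:SbStable}), the alternation of signs of consecutive intervals (Lemma~\ref{lem:DynAlt}, which is what produces the even number $2k$ and the alternating attracting/repelling pattern), and Lemma~\ref{lem:Preserv2}, which is what lets one set up the hypotheses of Lemma~\ref{lem:EHN} on each $a$-orbit of intervals of $X_b$. Without an analogue of Lemma~\ref{lem:DynAlt}, even a finite periodic set would not give you property $S_k$. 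Moreover, your appeal to Lemma~\ref{lem:ConvergencePer} in the final paragraph is circular: that lemma is proved under the standing assumption of Section~\ref{sec:Exercices} that $\rho$ already satisfies $S_k$.

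Third, a smaller point on your preliminary: in the exceptional-minimal case of Proposition~\ref{prop:RatRot}, collapsing the gaps of the minimal set of $\rho(a)$ alone does not produce a representation semi-conjugate to $\rho$, since that Cantor set need not be invariant under the other generators. The paper's argument is precisely to show that path-rigidity would force it to be $\rho(b)$-invariant for every $b$ with $i(a,b)=1$, contradicting minimality of $\rho$.
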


\begin{proof}[Proof of Proposition~\ref{prop:RatRot}]
  Suppose for contraction that there exists a non-separating simple
  curve $a$ with $\rho(a) \notin \Q$.   After semi-conjugacy, we may assume that $\rho$ is minimal.  If $\rho(a)$ is conjugate into $\SO(2)$, then it lies in a 1-parameter subgroup $a_t$ of rotations, and for any $b$ with $i(a,b)=1$, the bending deformation $a_t\rho(b)$ has nonconstant rotation number, contradicting rigidity.  
  Thus, $\rho(a)$ has an invariant minimal cantor set, which we denote by $K$.  We next show that $K$ is $\rho(b)$-invariant, for any curve $b$ with $i(a,b)=1$.  This suffices to prove the Proposition since $\Gamma_g$ is generated by $\{a\} \cup \{b \mid i(a,b)=1\}$, whence $K$ is $\rho(\Gamma_g)$-invariant, contradicting minimality of $\rho$.  
 
 To show invariance, suppose for contradiction that $\rho(b)(K)\not\subset K$ (the case $\rho(b^{-1})(K)\not\subset K$ is analogous).  
 Let $K' \subset K$ be the set of two-sided accumulation points of $K$.  
  Since $\overline{K'}=K$, there exists $x \in K'$ such that
  $\rho(b)(x)\not\in K$.
  Let $I$ be the connected component of $S^1\smallsetminus K$
  containing $\rho(b)(x)$.  Minimality of the action of $\rho(a)$ on $K$ implies there exists $N\in\Z$ such that $\rho(a)^N(I)\subset\rho(b)^{-1}(I)$, and in particular $\rot(\rho(a^Nb)) = 0$.  We work now with the pair $(a, a^Nb)$ with intersection number $\pm1$.  
  Let $\beta_t$ be a positive one-parameter family commuting with $\rho(a^N b)$. 
  Since $\rho(a^N b)$ does not preserve $K$, we can find a  
  connected component $J$ of $S^1 \smallsetminus \Fix(\rho(a^N b))$ such that $J \cap K' \neq \emptyset$ and then
  find $M \in \Z$ such that $\rho(a)^M(J) \cap J \neq \emptyset$. 
  
  Let $\tilde{x} \in \R$ be a lift of a point in $\rho(a)^M(J) \cap J$.  
  Adapting the notation from Section~\ref{sssec:Twist}
  set
  \[ \Delta(\tilde{x},t_1,\ldots,t_M)=\widehat{\beta_{t_M}}\widehat{\rho(a)}
  \circ\cdots\circ\widehat{\beta_{t_1}}\widehat{\rho(a)}(\tilde{x})-\tilde{x}
  -k, \] 
   where $k$ is chosen so that $\widehat{\rho(a)}^M(\widetilde{J}) \cap (\widetilde{J}+k) \neq \emptyset$ for any lift of $J$,
    and we set
  $\delta(\tilde{x},t)=\Delta(\tilde{x},t,\ldots,t)$.
  Up to reversing orientation, we can suppose that
  $\delta(\tilde{x},0)>0$.
  Since $\tilde{J}$ contains both $\tilde{x}$ and
  $\widehat{\rho(a)}^M(\tilde{x})$, there exists
  $t<0$ such that $\Delta(\tilde{x},0,\ldots,0,t)<0$, hence
  $\delta(\tilde{x},t)<0$. Thus, there exists $t_0$ such that
  $\delta(\tilde{x},t_0)=0$, hence $\rot(\rho_{t_0}(a))=\frac{k}{M} \in \Q$, contradicting rigidity.   \end{proof}


\subsection{Proof of Theorem~\ref{theo:PerDisjointsSk}}

For this subsection, we assume $\rho$ is path-rigid, $i(a,b) = \pm 1$, and $\Per(a) \cap \Per(b) = \emptyset$.  Recall from Proposition \ref{prop:RatRot} that $\Per(a)$ and $\Per(b)$ are nonempty. 
We will first establish some properties that do not use minimality, so
are robust under deformations of $\rho$.  We add the hypothesis that $\rho$ is
minimal only at the end of the proof.   


Borrowing notation from the previous section, say that 
a connected component of $S^1 \smallsetminus (\Per(a) \cup \Per(b))$
is of type $(x,y)$ if
it is bounded to the left by a point of $\Per(x)$ and to the right by a point of
$\Per(y)$, for $x, y \in \{a, b\}$.  

\begin{definition}
Let $X_a$ denote the set of connected components of $S^1 \smallsetminus \Per(a)$ that contain points of $\Per(b)$.  
We say an element $I$ of $X_a$ is \emph{positive} if $a^{q(a)}$ is
increasing on the interval $I$, and \emph{negative} otherwise.  
 \end{definition} 
 
The set $X_b$ and its positive and negative elements are defined by reversing the roles of $a$ and $b$ above.  
Since each $(a, b)$ interval in $S^1 \smallsetminus (\Per(a) \cap \Per(b))$ is followed by a collection -- perhaps empty --
of $(b, b)$ intervals, and then a $(b, a)$ interval, and $\Per(a)$ and $\Per(b)$ disjoint closed sets,
there exists an integer $m  = m(\rho) \geq 1$ such that $S^1$ contains exactly $m$ intervals of type
$(a, b)$ and $m$ intervals of type $(b, a)$, alternating around the circle, and thus $| X_a | = |X_b| = m(\rho)$.  
By Remark~\ref{rem:PerNonCommun}, $m$ depends only on the
semi-conjugacy class of~$\rho$.

\begin{lemma}\label{lem:aPreserveSa}
  The set $X_a$ is $\rho(a)$-invariant, and the subset of positive
  (respectively, negative) intervals in $X_a$ is also $\rho(a)$-invariant.   
\end{lemma}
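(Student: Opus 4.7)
The plan has two parts corresponding to the two assertions. As a starting observation that serves both, $\rho(a)$ is an orientation-preserving homeomorphism preserving $\Per(a)$ setwise (since $\rho(a)$ commutes with itself), so it permutes the connected components of $S^1 \smallsetminus \Per(a)$; the orbit of each component under $\rho(a)$ has size $q(a)$, since $\rho(a)^{q(a)}$ fixes $\Per(a)$ pointwise and hence fixes each component setwise.

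For the positive/negative part, I would argue directly using monotonicity. Suppose $I \in X_a$ is positive, so $\rho(a)^{q(a)}(x) > x$ for all $x \in I$. For any $y \in \rho(a)(I)$, write $y = \rho(a)(x)$ with $x \in I$; then
\[
\rho(a)^{q(a)}(y) = \rho(a)\bigl(\rho(a)^{q(a)}(x)\bigr) > \rho(a)(x) = y,
\]
using that $\rho(a)$ is order-preserving on the relevant lifts. Hence $\rho(a)(I)$ is positive; the symmetric computation with reversed inequality handles negative intervals.

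For the $\rho(a)$-invariance of $X_a$ itself, I first note that $\rho(a)(\Per(b)) = \Per(aba^{-1})$, and $\rho(a)$-equivariance of the disjointness $\Per(a) \cap \Per(b) = \emptyset$ gives $\Per(a) \cap \Per(aba^{-1}) = \emptyset$. Thus $\rho(a)(X_a)$ is exactly the set $X_a^{aba^{-1}}$ of components of $S^1 \smallsetminus \Per(a)$ meeting $\Per(aba^{-1})$; both sets have cardinality $m$ (the latter via the cyclic-order-preserving bijection $\rho(a)^{-1}$ between $\Per(a) \cup \Per(aba^{-1})$ and $\Per(a) \cup \Per(b)$). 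The claim thus reduces to showing $X_a^b = X_a^{aba^{-1}}$ as subsets of components. I would establish this via path-rigidity: choose a positive one-parameter family $b_t$ commuting with $\rho(b)$ and consider the bending deformation $\rho_s$ defined by $\rho_s(a) = b_s \rho(a)$, $\rho_s(b) = \rho(b)$ (extended to the rest of a standard basis via the HNN-decomposition along $b$). Path-rigidity forces $\rho_s$ to stay in the semi-conjugacy class of $\rho$, so all $\tau$-values on $\langle a, b \rangle$ (Theorem~\ref{thm:GhysRot}) and the invariant $m(\rho_s) = m$ (Remark~\ref{rem:PerNonCommun}) are constant along the path. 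Tracking how the distribution of $\Per(b)$-points among components of $S^1 \smallsetminus \Per(\rho_s(a))$ evolves, and using that $\rho(a)$-conjugation acts as a specific cyclic shift on this distribution, one promotes the bijection $\rho(a)\colon X_a^b \to X_a^{aba^{-1}}$ to the set equality $X_a^b = X_a^{aba^{-1}}$.

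The main obstacle is the last step of that argument: while the cardinality equality $|X_a^b| = |X_a^{aba^{-1}}| = m$ is immediate from semi-conjugacy invariance, extracting the stronger statement that the two sets coincide as subsets of the fixed collection of components of $S^1 \smallsetminus \Per(a)$ requires the full strength of path-rigidity (via finer $\tau$-invariants), rather than just invariance of the single integer $m$.
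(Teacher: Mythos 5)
Your first part (preservation of sign) is fine and matches what the paper treats as immediate: $a(I)$ is again a component of $S^1\smallsetminus\Per(a)$ on which $a^{q(a)}$ is increasing, so the only real content of the lemma is that $a(I)$ still meets $\Per(b)$.

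That is exactly where your proposal has a genuine gap, and you acknowledge it yourself: after correctly reformulating the claim as the set equality $X_a^b = X_a^{aba^{-1}}$, you only gesture at ``tracking the distribution of $\Per(b)$-points'' along the bending $\rho_s(a)=b_s\rho(a)$ and ``promoting the bijection to a set equality,'' and then name that very promotion as the main obstacle. Cardinality invariance of $m$ indeed gives nothing here, and it is not clear how your proposed tracking argument would close the gap (the relevant statement about stability of which complementary intervals are hit under such bendings is Lemma~\ref{lem:SbStable}, which concerns $X_b$, not $X_a$, and is proved \emph{after} this lemma). The paper's actual mechanism is more direct and you should be able to reconstruct it: suppose $I\in X_a$ is positive but $a(I)\cap\Per(b)=\emptyset$. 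Since $\partial a(I)\subset\Per(a)$ is disjoint from $\Per(b)$, the whole of $a(\overline{I})$ lies in a single component $J$ of $S^1\smallsetminus\Per(b)$. Pick $x\in I\cap\Per(b)$; positivity of $I$ gives $\delta_{b,a}(x,0)>0$. Taking $t<0$ so negative that $b_t$ pushes $a(\overline{I})$ entirely off itself (possible because $a(\overline{I})$ is a compact subset of $J$, on which $b_t$ acts with no fixed points), one gets $\delta_{b,a}(x,t)<0$, so by the intermediate value theorem some $t_0$ yields $x\in\Per(b_{t_0}a)\cap\Per(b)$. Since $b_{t_0}a$ is reached from $a$ by a bending path and $\Per(a)\cap\Per(b)=\emptyset$, this contradicts path-rigidity via Lemma~\ref{lem:PerCommuns}. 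Note that this argument uses the \emph{sign} of $I$ to pin down the sign of $\delta_{b,a}(x,0)$ and the direction in which to push; the two halves of the lemma are not independent in the way your two-part plan suggests.
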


\begin{proof}
  Let $I \in X_a$ be a positive interval; we show that its image under
  $a$ is another positive interval in $X_a$.  The negative case is analogous.  
  Since $a(I)$ is an interval between two consecutive points of $\Per(a)$ on which
  $a^{q(a)}$ is increasing, we need only show that $a(I) \cap \Per(b) \neq \emptyset$.   
  
  Suppose for contradiction that $a(I) \cap \Per(b) = \emptyset$.  
  Then $a(\overline{I})\subset J$ for some $J\in X_b$.   Let $b_t$ be a positive one-parameter family commuting with $b$, 
  let $x\in I \cap \Per(b)$, and take lifts $\tilde{x}\in\tilde{I}$ of $x$ and $I$ to $\R$.
  Positivity implies $\delta_{b,a}(\tilde{x},0)>0$.  
  If $t<0$ is negative enough that
  $b_t(a(I)) \cap a(I) = \emptyset$, then we have
  $\widehat{b_t}(\widehat{a}(\tilde{x})) < \widehat{a}(\tilde{I})$;
  it follows that $\delta_{b,a}(x,t)<0$.
  Therefore, there exists $t_0\in\R$ such
  that $\delta_{b,a}(x,t_0)=0$, i.e. $x\in\Per(b_{t_0}a)\cap\Per(b)$.  This contradicts path-rigidity via
  Lemma~\ref{lem:PerCommuns}.
\end{proof}

Obviously, reversing the roles of $a$ and $b$ above shows the positive and negative intervals of $X_b$ are $b$-invariant.
The next lemma shows $X_a$ and $X_b$ are invariant under particular bending deformations.  

\begin{lemma}\label{lem:SbStable}
  Let $b_t$ be a positive one-parameter family commuting with $b$. For $t\in\R$,
  let $X_b(t)$ denote the set of connected components $I$ of
  $S^1\smallsetminus\Per(b)$ such that $I\cap\Per(b_ta)\neq\emptyset$.
  Then $X_b(t)=X_b(0)$ for all $t$.
\end{lemma}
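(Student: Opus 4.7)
The plan is to show that $X_b(\cdot) \colon \R \to 2^{\mathcal I}$ is locally constant, where $\mathcal I$ denotes the set of complementary intervals of $\Per(b)$. The key is to exploit the semi-conjugacy invariants preserved along continuous paths of path-rigid representations.

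Write $g_t = b_t \rho(a)$ and let $\rho_t$ denote the representation obtained by bending along $b$: so $\rho_t(a) = g_t$, $\rho_t(b) = \rho(b)$, and $\rho_t = \rho$ on the other standard generators. Since $t \mapsto \rho_t$ is continuous, path-rigidity implies that every $\rho_t$ is semi-conjugate to $\rho$. This yields three facts: (i) $\rot(g_t)$ is constant, so $q(g_t)$ equals the constant $q := q(\rho(a))$ and the map $(x,t) \mapsto \widehat{g_t}^{q}(\tilde x) - \tilde x$ is jointly continuous; (ii) by Lemma \ref{lem:PerCommuns}, $\Per(g_t) \cap \Per(b) = \emptyset$ for all $t$, since this holds at $t=0$ by hypothesis; (iii) by Remark \ref{rem:PerNonCommun}, $|X_b(t)| = m(\rho_t) = m(\rho)$ for all $t$.

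Next, for each $I \in \mathcal I$, I would consider $T_I := \{t \in \R : I \in X_b(t)\}$ and show it is clopen. Closedness follows by Bolzano--Weierstrass: given $t_n \to t$ and $x_n \in I \cap \Per(g_{t_n})$, pass to a subsequence $x_n \to x \in \overline I$; by (i), $x \in \Per(g_t)$, and by (ii), $x \notin \Per(b)$, so $x \in I$. For openness, I argue by contradiction: if $t_* \in T_I$ and $t_n \to t_*$ with $t_n \notin T_I$, then (iii) forces the existence of some $J_n \in X_b(t_n) \setminus X_b(t_*)$. Choose $x_n \in J_n \cap \Per(g_{t_n})$ and pass to a subsequence $x_n \to x_*$; facts (i) and (ii) place $x_*$ in some complementary interval $K$ of $\Per(b)$, and $K \in X_b(t_*)$ since $x_* \in K \cap \Per(g_{t_*})$. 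Because $K$ is open and components of $S^1 \smallsetminus \Per(b)$ are pairwise disjoint, $J_n = K$ for large $n$, yielding $K \in X_b(t_n) \setminus X_b(t_*)$, contradicting $K \in X_b(t_*)$.

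Connectedness of $\R$ then gives $T_I \in \{\emptyset, \R\}$, so $X_b(0) \subset X_b(t)$ for all $t$, and (iii) upgrades this inclusion to equality. The main obstacle is isolating the right invariants from path-rigidity: specifically, that $|X_b(t)|$ is pinned down to the exact value $m(\rho)$, rather than merely being bounded. Once this rigidity of the count is in place, the rest reduces to a standard topological application of the continuity of $(x,t) \mapsto \widehat{g_t}^{\,q}(\tilde x)$ combined with the invariance of disjointness of periodic sets.
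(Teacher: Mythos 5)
Your proof is correct and follows essentially the same route as the paper's: both define $T_I=\{t: I\in X_b(t)\}$, show it is clopen using the path-rigidity consequences that $\Per(b_ta)\cap\Per(b)=\emptyset$ and that $|X_b(t)|$ is pinned to the constant $m(\rho)$, and conclude by connectedness of $\R$. The only difference is cosmetic: the paper proves closedness via properness of the projection of the closed set $K_a\cap(\overline{I}\times\R)$ and openness via a distance argument between the compact sets $K_a$ and $K_b$ near $t_0$, whereas you use sequential compactness for both; the underlying mechanism is identical.
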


\begin{proof}
  Let $X_b(t)$ be as in the statement of the lemma, and let $X_a(t)$ denote
  the set of connected components of $S^1 \smallsetminus \Per(b_ta)$
  containing points of $\Per(b)$, for $t \in \R$.   By our discussion above, path-rigidity of $\rho$ implies that the cardinality of $X_b(t)$ is constant.  
  Let $K_a = \{ (x, t) \in S^1 \times \R \mid x \in \Per(b_ta)\}$,
  and $K_b = \Per(b) \times \R$.  These are closed, disjoint sets, and their
  intersections with each horizontal slice $S^1 \times \{t\}$ are the periodic sets of $b_t a$ and $b$, respectively. 
     
  For each connected component $I\subset S^1\smallsetminus\Per(b)$, we set
  \[ T_I=\lbrace t\in\R \mid  I\in X_b(t)\rbrace
    = \lbrace t\in\R \mid I\cap\Per(b_ta)\neq\emptyset\rbrace. \]
  Note that $T_I$ is the projection of $K_a \cap (\overline{I}\times\R)$ onto the $\R$-factor, so in particular is closed.   
  We claim $T_I$ is also open. To see this, let $t_0\in T_I$, and let
  $I_2,\ldots, I_m$ be the other components of $S^1 \smallsetminus \Per(b)$ such that $t_0\in T_{I_j}$.
  If $d>0$ is the distance (for the product metric) between
  the disjoint compact sets $(S^1\times[t_0-1,t_0+1])\cap K_a$ and
  $(S^1\times[t_0-1,t_0+1])\cap K_b$, let $I_{m+1},\ldots, I_N$ be the
  remaining connected components of $S^1\smallsetminus\Per(b)$ of length $\geq d$.
  Any component $J$ of shorter length 
  tautologically satisfies $T_J\cap[t_0-1,t_0+1]=\emptyset$.  Since the sets
  $T_{I_j}$ are closed, there exists $\varepsilon>0$ such that
  $(t_0-\varepsilon, t_0+\varepsilon)\cap T_{I_j}=\emptyset$ for all $j\geq m+1$, hence $(t_0-\varepsilon, t_0+\varepsilon) \subset T_I$, for otherwise $|X_b(t)|$ would fail to be constant.
  This proves that $T_I$ is open, hence equal to $\emptyset$ or $\R$,
  and the intervals in $X_b(t)$ do not depend on~$t$.
\end{proof}

The next two lemmas establish some properties of $a$ and $b$ which are, in particular, held by pairs of homeomorphisms semi-conjugate to hyperbolic elements of $\PSLk$ satisfying $S_k(a,b)$.  Of course, both lemmas also hold with the roles of $a$ and $b$
exchanged.

\begin{lemma}\label{lem:DynAlt}
  Any two consecutive intervals of $X_a$ 
  have opposite sign.
  In particular, $m(\rho)=2k$ for some $k\geq 1$.  
\end{lemma}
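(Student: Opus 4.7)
The plan is to argue by contradiction: suppose two consecutive intervals $I = (a_L, a_R)$ and $I' = (a_L', a_R')$ of $X_a$ share the same sign, say both positive, and exhibit a deformation of $\rho$ that violates an invariant preserved by path-rigidity.

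Take a positive one-parameter family $b_t$ commuting with $\rho(b)$ as in Lemma \ref{lem:PosDeform}, and consider the bending deformation along $b$, which replaces $\rho(a)$ by $b_t\rho(a)$ while fixing $\rho(b)$. Path-rigidity forces $\rot(b_t a)$, and hence the integers $q := q(b_t a)$ and $p := p(b_t a)$, to be independent of $t$. Lemma \ref{lem:SbStable} then ensures that the set $X_b$ of components of $S^1 \setminus \Per(b)$ meeting $\Per(b_t a)$ is constant; and Lemma \ref{lem:PerCommuns} combined with path-rigidity preserves the disjointness $\Per(b) \cap \Per(b_t a) = \emptyset$ throughout the deformation.

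The technical core is the monotonicity of
\[ \delta(x,t) := (\widehat{b_t}\widehat{a})^{q}(\tilde x) - \tilde x - p, \]
which is non-decreasing in $t$ for each fixed $x$ thanks to the positivity of $b_t$, and whose zeros in $x$ are exactly the points of $\Per(b_t a)$. Using the same-sign hypothesis --- so that $a^{q(a)}$ pushes points of $I$ and $I'$ consistently rightward --- I would follow a selected zero of $\delta(\cdot, t)$ located in $I$ and show that, by a suitable (finite) choice of $t$, the forced monotonic shift drives this zero across $a_R$ into the gap between $I$ and $I'$. Since that gap consists of components of $S^1 \setminus \Per(a)$ which contain no points of $\Per(b)$, the corresponding component of $S^1 \setminus \Per(b)$ containing this new zero lies outside $X_b$, contradicting Lemma \ref{lem:SbStable} (or, if the zero crosses $\partial\Per(b)$, contradicting the preserved disjointness via Lemma \ref{lem:PerCommuns}).

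Once alternation of signs around the cyclic order of $X_a$ is established, the ``in particular'' clause is immediate: walking once around $S^1$ through the $m$ elements of $X_a$ returns to the starting sign only after an even number of flips, so $m = 2k$ for some $k \geq 1$. The main obstacle will be the precise bookkeeping in the previous paragraph: ruling out the possibility that as $t$ varies the zero simply re-enters $I$ through the same boundary rather than crossing into the forbidden gap. Handling this likely requires comparing the flow rates of $b_t$ on the distinct components of $S^1 \setminus \partial\Per(b)$ meeting the neighborhoods of $a_R$ and $a_L'$, together with the fact that $a^{q(a)}$ is monotonically increasing on the union $I \cup \{a_R\} \cup \text{(gap)} \cup \{a_L'\} \cup I'$ only once the same-sign hypothesis is imposed.
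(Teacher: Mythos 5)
Your setup is the right one (a positive family $b_t$, the deformation $b_t a$, invariance of $X_b$ via Lemma~\ref{lem:SbStable}), but the mechanism you propose for the contradiction fails. The gap between two consecutive intervals $I,I'$ of $X_a$ contains points of $\Per(a)$ and, by definition of $X_a$, no points of $\Per(b)$; it is therefore contained in a single connected component of $S^1\smallsetminus\Per(b)$, and that component contains points of $\Per(a)$, so it \emph{belongs} to $X_b$ --- it is exactly the interval of $X_b$ that straddles both $I$ and $I'$. Driving a point of $\Per(b_t a)$ into the gap thus places it inside a legitimate interval of $X_b$ and contradicts nothing. (Nor can you literally ``follow a selected zero'' of $\delta(\cdot,t)$: $\delta$ is monotone in $t$ but not in $x$, so individual zeros are not tracked in a controlled direction; the correct statement about where $\Per(b_ta)$ goes for large $t$ is Lemma~\ref{lem:ProprietesPNU}(4).)

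The contradiction has to run in the opposite direction: one \emph{empties} an interval of $X_b$ rather than populating a forbidden region. Let $I_1,I_2$ be the two consecutive positive intervals of $X_a$ and let $J\in X_b$ be the interval meeting both of them (the one containing the gap). Pick $x\in I_1\smallsetminus J$ with $a^{q(a)}(x)\in J$. Since $I_1$ and $I_2$ are both positive and $b_t$ pushes points of $J$ to the right, for $t$ large enough one gets $a^{q(a)}b_ta^{q(a)}(x)\in I_2\smallsetminus J$; hence $(b_ta^{q(a)})^2$ moves every point of $J$ strictly to the right and out of $J$, so $\Delta_{b,a}(y,0,\ldots,0,t)>0$ for all $y\in J$ and $\Per(b_ta)\cap J=\emptyset$. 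This is the genuine contradiction with Lemma~\ref{lem:SbStable}. Your deduction of $m(\rho)=2k$ from alternation is fine once alternation is established.
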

\begin{proof}
  Let $b_t$ be a positive one-parameter family commuting
  with $\rho(b)$.
  Suppose for contradiction that $X_a$ has two successive positive
  intervals $I_1$ and $I_2$ (the negative case is analogous). 
  Let $I\in X_b$ be the interval such that $I_1 \cap I \neq \emptyset$
  and $I_2 \cap I \neq \emptyset$.
  Take $x\in I_1\smallsetminus I$ such that $a^{q(a)}(x)\in I$.
  For $t$ large enough, we have 
  $a^{q(a)} b_t a^{q(a)}(x)\in I_2\smallsetminus I$. Since $b_t$ has
  positive dynamics, it follows that $(b_t a^{q(a)})^2$ moves every point
  of $I$ to the right, thus, $\Delta_{b,a}(y,0,\ldots,0,t)>0$ for all
  $y\in I$, and $\Per(b_ta)\cap I=\emptyset$ for $t$ large
  enough.  But this contradicts Lemma~\ref{lem:SbStable}.
\end{proof}


\begin{lemma}\label{lem:Preserv2}
 Let $I \in X_b$ have left endpoint in a positive interval of $X_a$.  Then $a(I) \subset J$ for some $J \in X_b$.
If instead $I \in X_b$ has left endpoint in a negative interval of $X_a$, then $a^{-1}(I) \subset J$ for some $J \in X_b$.
    \end{lemma}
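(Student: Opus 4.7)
By symmetry (replacing $a$ with $a^{-1}$ interchanges positive and negative intervals of $X_a$, since $\Per(a^{-1}) = \Per(a)$ with reversed attracting/repelling structure), it suffices to treat the first case. So fix $I \in X_b$ with left endpoint $p \in \Per(b)$ lying in a positive interval $K \in X_a$, and assume for contradiction that $a(I) \not\subset J$ for any $J \in X_b$. Since $I$ contains at least one point of $\Per(a)$ and $a$ permutes $\Per(a)$, the image $a(I)$ also contains points of $\Per(a)$; hence $a(I)$ cannot lie in any component of $S^1\smallsetminus\Per(b)$ outside $X_b$. The contradictory hypothesis therefore forces $a(I)$ to contain at least one point $q \in \Per(b)$ in its interior.

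Let $b_t$ be a positive one-parameter family commuting with $\rho(b)$ given by Lemma~\ref{lem:PosDeform}, and consider the left-bending $\rho_t$ replacing $a$ with $b_t a$ while leaving $b$ unchanged. By path-rigidity of $\rho$, the rotation number of $b_t a$ is constant and $\rho_t$ is semi-conjugate to $\rho$ for all $t$; in particular, Lemma~\ref{lem:PerCommuns} gives $\Per(b_t a)\cap \Per(b)=\emptyset$ for every $t$. Using the notation of Section~\ref{sssec:Twist}, set
\[
\delta(t) := \delta_{b,a}(\tilde p, t) = (\widehat{b_t a})^{q(a)}(\tilde p) - \tilde p - p(a)
\]
for a lift $\tilde p \in \R$ of $p$. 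Since $p \in \partial\Per(b)$, the lift $\tilde p$ is fixed by $\widehat{b_t}$ for every $t$, and the monotonicity of $\widehat{b_t}$ in $t$ implies that $\delta$ is monotonically increasing in $t$ away from components where it is locally constant. Positivity of $K$ yields $\delta(0) > 0$, exactly as in the proof of Lemma~\ref{lem:aPreserveSa}.

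The core of the argument is to use the crossing $q \in \Per(b)\cap\mathrm{int}(a(I))$ to produce $t \ll 0$ with $\delta(t) < 0$. As $t \to -\infty$, the homeomorphism $\widehat{b_t}$ contracts each connected component of $\R \smallsetminus \partial\Per(\widehat b)$ toward its left endpoint; when iterated alternately with $\widehat a$, the existence of the crossing point $q$ constrains the orbit of $\tilde p$ so that $(\widehat{b_t a})^{q(a)}(\tilde p)$ is driven strictly below $\tilde p + p(a)$ for $t$ sufficiently negative. Combined with $\delta(0) > 0$, continuity yields some $t_0 \in \R$ with $\delta(t_0) = 0$, i.e., $p \in \Per(b_{t_0}a)$. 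Together with $p \in \Per(b)$, this gives $\Per(b_{t_0}a) \cap \Per(b) \neq \emptyset$, contradicting path-rigidity via Lemma~\ref{lem:PerCommuns} and our hypothesis that $\Per(a) \cap \Per(b) = \emptyset$.

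The main technical obstacle is the claim that $\delta(t) < 0$ for $t \ll 0$ under the contradictory hypothesis: this requires a careful inductive analysis, tracking how the crossing $q$ is transported through the iteration of $\widehat{b_t a}$, to show that the left-endpoint contraction produced by $\widehat{b_t}$ propagates sufficiently through all $q(a)$ factors. One could alternatively run the symmetric argument on the right endpoint $p'$ of $I$, which by Lemma~\ref{lem:DynAlt} lies in a negative interval of $X_a$, using the deformation as $t \to +\infty$; the technical content is the same.
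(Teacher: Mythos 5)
Your overall strategy --- derive a contradiction with path-rigidity via Lemma~\ref{lem:PerCommuns} by forcing a sign change of $\delta_{b,a}(\,\cdot\,,t)$ at a point of $\Per(b)$ --- is the right one, and your preliminary reductions (the symmetry $a\mapsto a^{-1}$, and the observation that the negation of the conclusion places a point $q\in\Per(b)$ in the interior of $a(I)$) are correct. But there is a genuine gap at exactly the step you flag as ``the main technical obstacle'': you never establish that $\delta_{b,a}(p,t)<0$ for $t\ll 0$, and at the point you chose this claim is in fact false. The left endpoint $p$ of $I$ lies in $\partial\Per(b)=\Fix(b_t)$, so $\widehat{b_t}$ is an increasing homeomorphism of $\R$ fixing $\tilde p$ and can never move a point lying to the right of $\tilde p$ to the left of $\tilde p$. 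Already when $q(a)=1$ and $\rot(a)=0$, positivity of $K$ gives $\widehat{a}(\tilde p)>\tilde p$, whence $\delta_{b,a}(p,t)=\widehat{b_t}\widehat{a}(\tilde p)-\tilde p>0$ for \emph{every} $t$, regardless of whether $a(I)$ crosses $\Per(b)$. The endpoints of $I$ act as barriers for the bending family, and the same obstruction defeats your proposed alternative at the right endpoint $p'$ with $t\to+\infty$.

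The paper's proof circumvents this by evaluating the deformation not at an endpoint of $I$ but at the right endpoint $y_5$ of the target interval $J=(y_2,y_5)\in X_b$. Writing $I=(x_2,x_5)$ with $x_2$ in the positive interval $(x_1,x_3)$ and $x_5$ in the consecutive (hence, by Lemma~\ref{lem:DynAlt}, negative) interval $(x_4,x_6)$ of $X_a$, and $y_i=a(x_i)$, the point $y_5$ lies in the negative interval $(y_4,y_6)$, so $\delta_{b,a}(y_5,0)<0$ comes for free. The contradictory hypothesis $a(x_5)>y_5$ is used precisely to place $a^{-1}(y_5)$ in the \emph{interior} of $I$, where $b_t$ has no fixed points and can push it into $(x_1,x_3)\cap I$; since $(y_1,y_3)=a((x_1,x_3))$ is $a^{q(a)}$-invariant and lies entirely to the left of $y_5$, a \emph{single} non-uniform bending parameter already yields $\Delta_{b,a}(y_5,0,\ldots,0,t,0)>0$, and the monotonicity of $\Delta$ from the proof of Lemma~\ref{lem:ProprietesPNU} converts this into a uniform time $t_0$ with $y_5\in\Per(b_{t_0}a)\cap\Per(b)$. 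No induction through all $q(a)$ factors is needed. To repair your argument you would have to relocate the test point in this way; as written, the contradiction cannot be reached.
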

Note that Lemma \ref{lem:DynAlt} implies that, in both cases, 
$J$ is a positive interval of $X_b$ if and only if $I$ is.  

\begin{proof} 
  Let $x_1, x_2, \ldots x_6$ be points in cyclic order such that $(x_1,x_3)$
  and $(x_4,x_6)$ are consecutive (positive and negative, respectively)
  intervals in $X_a$, and $I = (x_2,x_5) \in X_b$.
  Let $y_i = a(x_i)$ for $i = 1, 3, 4, 6$.
  Then $(y_1,y_3)$ and $(y_4,y_6) \in X_a$ and both intersect some
  interval of $X_b$,
  say $(y_2,y_5)$.
  The statement of the lemma is that $a(x_5)\leq y_5$ and $a(x_2)\geq y_2$.
  
  Similar to the Proof of Lemma \ref{lem:aPreserveSa}, we assume the contrary
  and find a deformation with a common periodic point for $a$ and $b$.
  Suppose $a(x_5)>y_5$ (the proof of the
  other inequality is symmetric), and choose a positive one-parameter family
  $b_t$ commuting with $b$.
  Since $a^{-1}(y_5)\in(x_2,x_5)$, there exists $t\in\R$ such that
  $b_ta^{-1}(y_5)\in(x_1,x_3)$. As $(y_1,y_3)$ is
  $a^{q(a)}$-invariant, it follows that $a^{-q(a)+1}b_ta^{-1}(y_5)<y_5$,
  ie, $\Delta_{b,a}(y_5,0,\ldots,0,t,0)>0$. On the other hand, as
  $(y_4,y_6)$ is a negative interval of $X_a$, we have
  $\delta_{b,a}(y_5,0)<0$. Thus, there exists $t_0\in\R$, such that
  $y_5 \in \Per(b_{t_0}a)$.  Since $y_5 \in \Per(b)$, this contradicts
  path-rigidity by Lemma~\ref{lem:PerCommuns}. The statement concerning $\rho(a)^{-1}$
  is symmetric, and proved in the same manner.  
\end{proof}

Now we state a lemma of purely technical nature, that will allow us
to compress the periodic sets in each interval of $X_a$ or of $X_b$ to
singletons. 
In the statement and proof, we let $\tau_t\colon\R\to\R$ denote the
translation $x\mapsto x+t$.

\begin{lemma}\label{lem:EHN}
  Let $n\geq 1$, and for all $i=1,\ldots,n$, let $f_i$ be an increasing
  homeomorphism from $\R$ to some interval $(a_i,b_i)\subset\R$.
  Assume that $a_i>-\infty$ for at least one $i$, and that $b_j<+\infty$
  for at least one $j$.
  For all $t\in\R$, we set
  $F_t=\tau_t\circ f_n\circ\cdots\circ\tau_t\circ f_1$.
  Then, there exists a subset $N\subset\R$ of finite Lebesgue measure and
  consisting of a countable union of segments, such that for
  all $t\not\in N$, the map $F_t$ admits a unique fixed point in $\R$.
\end{lemma}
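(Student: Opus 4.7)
First I would set up the basic structure. Each $f_i$ is an increasing homeomorphism from $\R$ onto $(a_i,b_i)$, and composition with translations $\tau_t$ preserves this structure; hence $F_t$ is an increasing homeomorphism from $\R$ onto an open interval $(A_t,B_t)$, which is bounded because some $a_i > -\infty$ bounds the image below after enough compositions and some $b_j < +\infty$ bounds it above. A short induction on the depth of the composition yields the monotonicity estimate
\[
F_{t+\delta}(x) - F_t(x) \geq \delta \quad \text{for all } x\in\R \text{ and } \delta\geq 0.
\]
Hence for each $x$, the map $t\mapsto F_t(x)-x$ is continuous, strictly increasing with slope at least $1$, and sweeps out $\R$; so there is a unique $t(x)\in\R$ with $F_{t(x)}(x)=x$. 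The function $t\colon\R\to\R$ is continuous, tends to $\pm\infty$ as $x\to\pm\infty$, and $\Fix(F_{t_0})=t^{-1}(t_0)$.

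The lemma then reduces to showing that the bad set $B:=\{t_0 : |t^{-1}(t_0)|\geq 2\}$ has finite Lebesgue measure: once this is known, outer regularity produces an open set $N\supseteq B$ of nearly the same measure, and any open subset of $\R$ is automatically a countable union of open intervals. To bound $\mu(B)$, I apply the Banach indicatrix (coarea) formula to the locally absolutely continuous function $t$: since $t$ goes from $-\infty$ to $+\infty$, the net number of crossings of any value is $+1$, so for every $t_0\in B$ the level set $t^{-1}(t_0)$ contains at least one ``down-crossing''. This yields $\mu(B)\leq V_-(t):=\int_{\{t'(x)<0\}}|t'(x)|\,dx$, the negative variation of $t$.

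To bound $V_-(t)$ I introduce the intermediate iterates $y_j(x):=F^j_{t(x)}(x)$ along the fixed-point curve, so that $y_0=y_n=x$ and $y_j=f_j(y_{j-1})+t(x)$. The central calculation is that \emph{each} $y_j(x)$ is strictly increasing in $x$: differentiating the recursion along the fixed-point curve and using $t'(x)=(1-F'_t(x))/\partial_t F_t(x)$, one decomposes $\partial_t F_t$ as a sum of partial products of $f_i'(y_{i-1})$, and an algebraic manipulation gives $dy_j/dx>0$ for every $j$. In particular both $y_k$ and $f_k\circ y_{k-1}$ are strictly increasing in $x$, and the identity $t(x)=y_k(x)-f_k(y_{k-1}(x))$ gives, for any partition $x_1<x_2<\cdots<x_N$ and any index $k$, the telescoping bound
\[
\sum_i \bigl(t(x_i)-t(x_{i+1})\bigr)^+ \leq f_k(y_{k-1}(x_N))-f_k(y_{k-1}(x_1)).
\]
If some index $k$ has both $a_k>-\infty$ and $b_k<+\infty$ the right side is bounded by $b_k-a_k$, so $V_-(t)\leq b_k-a_k$ directly. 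In the general case we fix any $x^*\in\R$ and apply the telescope separately on each ray, using an index $k_2$ with $a_{k_2}>-\infty$ on $(-\infty,x^*]$ (bounded by $f_{k_2}(y_{k_2-1}(x^*))-a_{k_2}$) and an index $k_1$ with $b_{k_1}<+\infty$ on $[x^*,+\infty)$ (bounded by $b_{k_1}-f_{k_1}(y_{k_1-1}(x^*))$), obtaining a finite bound. The main technical obstacle is the monotonicity of the intermediate iterates $y_j$, which requires the careful algebraic decomposition of the partial derivatives indicated above; once that is established, the telescoping and the coarea step are routine.
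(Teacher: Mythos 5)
Your route is genuinely different from the paper's, and its overall architecture is sound. The paper first permutes the composition cyclically so that the last factor has image bounded above, works with $G(t,x)=F_t(x)-t$, defines the time function $T$ only on $[b_0,+\infty)$, and covers the bad set of times by intervals attached to the plateaus of the running maximum $H(x)=\sup_{x'\le x}T(x')$, bounding their total length by showing that certain image segments $[G(w,a_w),G(w,b_w)]$ are pairwise disjoint inside a bounded interval. You instead define the time function $t(x)$ globally, observe that $\Fix(F_{t_0})=t^{-1}(t_0)$, and reduce the whole lemma to showing that the set of levels with at least two preimages has measure at most the negative variation of $t$, which you then bound by a telescoping identity. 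Both arguments ultimately control the same quantity -- how much $t$ can backtrack -- but your reduction to a negative-variation estimate is conceptually cleaner and dispenses with the paper's initial reduction to $j=n$.

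Two steps of your write-up need repair before the proof is complete. First, and most seriously: the $f_i$ are only homeomorphisms, so the differentiation you lean on for the key monotonicity of the intermediate iterates ($t'(x)=(1-F_t'(x))/\partial_tF_t(x)$, the decomposition of $\partial_tF_t$ into partial products of the $f_i'$) is simply not available; none of these derivatives need exist at any point. The claim itself is true and can be proved without derivatives. The two-sided increment bound $f_1(x_1)-f_1(x_2)<t(x_2)-t(x_1)<x_2-x_1$ for $x_1<x_2$ (the analogue of the paper's inequality (T2), proved by pure monotonicity of $F$ in $x$ and in $t$) gives the case $j=1$ at once, since $y_1=f_1+t$. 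For the inductive step, note that $y_j(x)$ is a fixed point, at the same time $t(x)$, of the cyclically permuted composition $\tau_t\circ f_j\circ\cdots\circ\tau_t\circ f_1\circ\tau_t\circ f_n\circ\cdots\circ\tau_t\circ f_{j+1}$, whose first-applied factor is $f_{j+1}$; the corresponding (T2)-type lower bound, applied at the points $y_j(x)<y_j(x')$, yields exactly $t(x')-t(x)>f_{j+1}(y_j(x))-f_{j+1}(y_j(x'))$, which is what the recursion $y_{j+1}=f_{j+1}(y_j)+t$ needs. Second, $t$ is a difference of two increasing continuous functions, hence of locally bounded variation, but nothing forces it to be locally absolutely continuous, so the identity $V_-(t)=\int_{\{t'<0\}}|t'|\,dx$ is unjustified (a singular part would be invisible to it). This does no real harm: Banach's indicatrix theorem holds for continuous functions of bounded variation without absolute continuity (alternatively, a direct rising-sun decomposition of $\{H>t\}$ gives $\mu(B)\le V_-(t)$), and your telescoping estimate bounds the genuine negative variation $\sup\sum_i\bigl(t(x_i)-t(x_{i+1})\bigr)^+$, which is all that is required. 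With these two repairs the argument goes through.
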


The
statement of this Lemma came from our attempt to better
understand the argument in the first four lines of page~644 in \cite{EHN}.
In particular, the case $n=1$ gives an alternative end to the proof of \cite[Lemma~2.7]{EHN}.
We defer the proof of Lemma \ref{lem:EHN}
to the next paragraph, and use it
now to finish the proof of Theorem~\ref{theo:PerDisjointsSk}.

\begin{proof}[Proof of Theorem~\ref{theo:PerDisjointsSk}]
  Assume now that $\rho$ is minimal.
  Let $b_t$ be a positive one-parameter family commuting with 
  $b$. We will first find $t$ such that $b_ta$ has exactly $2k$ periodic
  points; the conclusion will then follow easily.

  Let $X_a^+$ denote the set of positive intervals of $X_a$.
  As observed in Lemma~\ref{lem:aPreserveSa}, 
  $\rho(a)$ induces a permutation of $X_a^+$; which has 
  say, $\ell$ orbits, all of cardinality $n=k/\ell$.
  Fix an interval $I_0 \in X_b$ whose left endpoint lies in an element of $X_a^+$.  Successive applications of  
  Lemma \ref{lem:Preserv2}, for $m = 1, 2, \ldots n-1$, gives 
  $\rho(a)^m(I_0) \subset I_m$ for some $I_m \in X_b$.  Also, $\rho(a)^n(I_0) \subset I_0$ because $\rho(a)^n$ fixes $X_a^+$.  
  Note that, there exists some $m$ such that
  $\rho(a)(I_{m-1}) \subset I_m$ is a strict inclusion at the left endpoint
  (and similarly, another for the right endpoint) as otherwise some endpoint of $I_0$ would lie in $\Per(a) \cap \Per(b)$.
  
  For each $j$, let $\phi_j\colon I_j\to\R$ be a homeomorphism such that
  $\phi_j\circ b_t\circ\phi_j^{-1} = \tau_t$, and for $j\in\{1,\ldots,n\}$,
  set $f_j=\phi_{j+1}\circ a \circ\phi_j^{-1}$, using cyclic notation for the
  last index.
  Then, Lemma~\ref{lem:EHN} applies, giving a set $N_{I_0}\subset\R$
  of finite Lebesgue measure, such that for all $t\not\in N_{I_0}$,
  $(b_ta)^n=\phi_1^{-1}\circ F_t\circ\phi_1$ has a unique
  fixed point in $I_0$.
  
  We repeat this procedure for each element $I$ of $X_b$, using $a^{-1}$,
  instead of $a$ for the intervals of $X_b$ whose left endpoint lies in an
 element of $X_a^-$.
  The resulting, finitely many, sets $N_I$, each of finite Lebesgue
  measure, cannot cover $\R$, hence there exists $t\in\R$ such that each
  element of $X_b$ intersects $\Per(b_t a)$ as a singleton.  
  By Lemma~\ref{lem:SbStable}, $\Per(b_t a) \subset \cup(X_b)$, hence $b_ta$ has exactly $2k$ periodic points.
  As $b_ta$ is obtained by a bending deformation that does not change $a$,
  by Lemma~\ref{lem:DynAlt} these $2k$ periodic points have alternating
  attracting and repelling dynamics.
  One may now repeat the same procedure reversing the roles of $a$ and $b$,
  to obtain a further deformation where $b$ has exactly $2k$ periodic points,
  alternately attracting and repelling.  Minimality of $\rho$ and
  Observation~\ref{obs:CardinalityPer} implies the original action of $\rho(a)$ and $\rho(b)$ also had this dynamics.  
\end{proof}




\begin{proof}[Proof of Lemma~\ref{lem:EHN}]
We suggest the reader takes $n=1$ at first reading, as the argument is
less technical in that case.
We will show that there exists a countable union of segments,
$N_+\subset\R_+$, of finite Lebesgue measure, such that $F_t$ has a unique fixed point for all
$t\in\R_+\smallsetminus N_+$.
The case for $t<0$ is symmetric and left to the reader.

Let $j$ be an index such that $b_j<+\infty$.
Let $A_t=\tau_t\circ f_j\circ\cdots\circ\tau_t\circ f_1$, and
$B_t=\tau_t\circ f_n\circ\cdots\circ\tau_t\circ f_{j+1}$.
For fixed $t$, both maps $A_t$ and $B_t$ are
homeomorphisms to their images so 
$F_t=B_t\circ A_t$ has a unique fixed point $x$ if and only if $A_t\circ B_t$
has a unique fixed point (namely, $B_t(x)$).  
In other words, we may suppose without loss of generality that $j=n$. 

Let
$G(t,x)
=F_t(x)-t$.
Then $G$ is strictly increasing in $x$, and increasing
(strictly, if $n\geq 2$) in $t$.
The monotonicity of $G$, and the assumptions $\sup(a_j)>-\infty$ and
$b_n<+\infty$, imply that the range of the map
$G\colon\R_{\geq 0}\times\R\to\R$ is a bounded interval, say $(a_0,b_0)$, where $b_0=b_n$.

If $x\geq b_0$, the map $t\mapsto F_t(x)$ is a homeomorphism between
$\R_{\geq 0}$ and $[F_0(x),+\infty)$, and $F_0(x)=G(x,0)<b_0$.
Hence, there is a unique $t = T(x)$ such that $F_t(x) = x$.
This defines a function $T\colon[b_0,+\infty)\to(0,+\infty)$.

\begin{sublemma}\label{lem:PropertyT} The map $T$ satisfies the following
  inequalities.
  \begin{itemize}
  \item[(T1)]For every $x\in[b_0,+\infty)$, we have $a_0 < x-T(x) < b_0$.
  \item[(T2)]For all $x_1,x_2\in[b_0,+\infty)$ such that $x_1<x_2$, we have
    \[ f_1(x_1)-f_1(x_2)<T(x_2)-T(x_1)<x_2-x_1. \]
  \end{itemize}
\end{sublemma}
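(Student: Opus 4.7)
The plan is to prove (T1) as an immediate consequence of the defining identity $F_{T(x)}(x)=x$, and to establish the two inequalities in (T2) by running two coupled iterations of $F_\cdot$ starting from $x_1$ and $x_2$ at slightly different time parameters, with the strict monotonicity of each $f_i$ driving the strict inequalities. For (T1), the identity $F_{T(x)}(x)=x$ rewrites as $G(T(x),x)=x-T(x)$, and since $G$ takes values in $(a_0,b_0)$, the bounds follow.

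For the upper bound $T(x_2)-T(x_1)<x_2-x_1$, I will set $s=x_2-x_1$ and $t=T(x_1)$ and aim to show $F_{t+s}(x_2)>x_2$; this suffices because $F_\cdot(x_2)$ is strictly increasing in its time argument with unique zero at $T(x_2)$, forcing $T(x_2)<t+s$. I introduce coupled sequences $y_0=x_1$, $y_i=\tau_t(f_i(y_{i-1}))$ (so $y_n=F_t(x_1)=x_1$) and $z_0=x_2$, $z_i=\tau_{t+s}(f_i(z_{i-1}))$ (so $z_n=F_{t+s}(x_2)$), and check by induction that $z_i>y_i+s$ for all $i\geq 1$. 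The base case $z_1-(y_1+s)=f_1(x_2)-f_1(x_1)>0$ is strict monotonicity of $f_1$, and the inductive step $z_{i+1}-(y_{i+1}+s)=f_{i+1}(z_i)-f_{i+1}(y_i)>0$ uses strict monotonicity of $f_{i+1}$ applied to $z_i>y_i+s\geq y_i$. Thus $z_n>x_1+s=x_2$ as desired.

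For the lower bound $T(x_2)-T(x_1)>f_1(x_1)-f_1(x_2)$, I will set $s'=f_1(x_2)-f_1(x_1)>0$ and aim to show $F_{t-s'}(x_2)<x_2$, which forces $T(x_2)>t-s'$. The critical observation is that with $t=T(x_1)$, the first application synchronizes the two orbits: $\tau_{t-s'}(f_1(x_2))=f_1(x_1)+t=y_1$. The subsequent iterates $w_i=\tau_{t-s'}(f_i(w_{i-1}))$ starting from $w_0=x_2$ therefore satisfy $w_1=y_1$, and then an analogous induction using monotonicity of each $f_i$ shows $w_i\leq y_i-s'$ for $i\geq 2$, yielding $w_n\leq y_n-s'=x_1-s'<x_2$. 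Both inequalities thus reduce to a coupling argument; I anticipate no serious obstacle beyond the bookkeeping of synchronizing and desynchronizing the two iterations.
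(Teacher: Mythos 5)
Your proof is correct, and it fills in exactly the exercise the paper leaves to the reader: (T1) is the identity $G(T(x),x)=x-T(x)$ combined with the bound on the range of $G$, and both halves of (T2) are obtained by comparing the two iterations of $F_\cdot$ term by term, which is just the monotonicity of $G$ (and of the inner composition) that the paper's hint points to. The only cosmetic remark is that the upper bound also follows in one line from joint monotonicity of $G$: if $T(x_2)>T(x_1)$ then $x_2-T(x_2)=G(T(x_2),x_2)>G(T(x_1),x_1)=x_1-T(x_1)$; your coupling argument is the unwound version of the same fact.
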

In particular, $T$ is continuous, at bounded distance from the identity,
and its rate of increase is bounded above by 1.

The proof of Lemma \ref{lem:PropertyT} is a straightforward consequence of the definition of $T$, the defining identity $F_{T(x)}(x)=x$, and monotonicity of $G$.  We leave it as an exercise, noting for (T2) that the first inequality is trivially satisfied if $T(x_2)\geq T(x_1)$, and the second if $T(x_2)\leq T(x_1)$.

For the next step, define a map $H\colon\R_{\geq b_0}\to [T(b_0),+\infty)$ by
$H(x)=\sup\{T(x'),\, x'\leq x \}$.  The reader may verify (easily) that $H$ is 
%
continuous, surjective,
  and for all $A\geq T(b_0)$, the set $H^{-1}(A)$ is a segment of the form $[a,b]$,
  (possibly $a=b$) with $T(a)=T(b)=A$.

Now let $W = \{w \in[T(b_0),+\infty) \mid H^{-1}(w) \text{ is not a singleton} \}$, and for all
$w\in W$ denote $H^{-1}(w)$ by $[a_w,b_w]$.  Since these segments are disjoint and of positive length, $W$ is countable.  
By definition of $H$, we have $F_w(a_w)=a_w$, i.e. $G(w,a_w)+w=a_w$; and the same holds for $b_w$ in place of $a_w$.  
Thus, the segment
$[G(w,a_w),G(w,b_w)]$ has the same length $b_w-a_w$.  The reader may now easily deduce from monotonicity of $G$ that these segments are disjoint; as they are contained in $[a_0, b_0]$, this implies $\sum_{w\in W}b_w-a_w\leq b_0-a_0$.  

%
%

Finally, for all $w\in W$, define $N_w:=[w-(b_w-a_w),w]$, and define
\[  N_+ = [0,b_0-a_0]\cup \bigcup_{w\in W}N_w.\]
This may not be a disjoint union, but, the remarks above imply 
this countable union of segments has finite Lebesgue measure. 
Hence,
the proof of Lemma~\ref{lem:EHN} amounts to the following sublemma.
\begin{sublemma}
  For all $t\in\R_{\geq 0}\smallsetminus N_+$, the map
  $F_t$ has a {\em unique} fixed point.
\end{sublemma}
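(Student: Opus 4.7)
The plan is to show that for $t \in \R_{\geq 0} \smallsetminus N_+$, the map $F_t$ has exactly one fixed point in $\R$. First I would observe that $[0, b_0 - a_0] \subset N_+$ by the definition of $N_+$, so any $t$ outside $N_+$ automatically satisfies $t > b_0 - a_0$. For such $t$ the image of $F_t$ is contained in $(t + a_0, t + b_0) \subset (b_0, +\infty)$, so every fixed point of $F_t$ must lie in $[b_0, +\infty)$ and is detected by $T$: the equation $F_t(x) = x$ is equivalent to $T(x) = t$. Existence of at least one fixed point then follows from the intermediate value theorem, since $F_t(b_0) > t + a_0 > b_0$ while $F_t(x) - x \to -\infty$ as $x \to +\infty$.

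The key step is to establish the following dichotomy: \emph{if $x_0 \in [b_0,+\infty)$ satisfies $T(x_0) = t$ and $H(x_0) > t$, then $t \in N_w$ where $w := H(x_0)$}. To prove this, I would use the continuity of $T$ to produce the largest $\bar{x} \in [b_0, x_0)$ with $T(\bar{x}) = w$; such $\bar{x}$ exists because the supremum defining $H(x_0)$ is attained on the compact set $[b_0, x_0]$ and $T(x_0) = t < w$. By construction $T < w$ on $(\bar{x}, x_0]$, so $H \equiv w$ on $[\bar{x}, x_0]$ by monotonicity, placing $[\bar{x}, x_0] \subset H^{-1}(w) = [a_w, b_w]$ and hence $w \in W$. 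Applying property (T2) to the pair $x_0 < b_w$ (strict because $T(b_w) = w \neq t$) then yields
\[ w - t \;=\; T(b_w) - T(x_0) \;<\; b_w - x_0 \;\leq\; b_w - a_w, \]
so $t > w - (b_w - a_w)$, placing $t$ in $N_w$ as claimed.

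Uniqueness then follows cleanly. If $x_1 < x_2$ were two distinct fixed points of $F_t$ for some $t \notin N_+$, the contrapositive of the previous step applied to $x_2$ would force $H(x_2) = t$; since $H$ is nondecreasing with $H(x_1) \geq T(x_1) = t$, this forces $H \equiv t$ on $[x_1, x_2]$. Thus $H^{-1}(t)$ contains an interval of positive length, so $t \in W$ and $t \in N_t \subset N_+$, a contradiction.

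The main obstacle in this plan is the middle step, where the inequality $w - t < b_w - a_w$ must be extracted from property (T2) applied to the right endpoint of $H^{-1}(w)$; the rest is essentially bookkeeping. The reduction to $t > b_0 - a_0$ is immediate from the definition of $N_+$, and the passage from a single ``bad'' fixed point to a pair relies only on the monotonicity and continuity of $H$. No further properties of $T$ beyond (T1), (T2), and the structural facts already recorded for $H$ and $W$ enter the argument.
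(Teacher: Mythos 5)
Your proof is correct and follows essentially the same route as the paper: both arguments show that a second fixed point would force $t$ into some $N_w$ by locating the interval $H^{-1}(w)=[a_w,b_w]$ containing the fixed point and applying (T2) at its right endpoint $b_w$. The only differences are organizational — you separate the cases $H(x_2)>t$ and $H(x_2)=t$ (the paper treats them uniformly via $x_0=\min\{x\leq x_2 \mid T(x)=H(x_2)\}$) and you make the existence half explicit via the intermediate value theorem, which the paper leaves to the surrounding discussion of $T$.
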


\begin{proof}
  Let
  $t> b_0-a_0$ be such that $F_t$ has at least two distinct fixed points, say
  $x_1,x_2$ with $x_1<x_2$.  
  By definition, these satisfy $G(t,x_i)+t=x_i$. 
  Since $G(t,x)>a_0$ for all $x$, and
  $t>b_0-a_0$,
  this implies $x_1,x_2\in[b_0,+\infty)$.  
  By definition of $T$, we have $T(x_1)=T(x_2)=t$. Let
  $x_0=\min\{x\leq x_2\,|\, T(x)=H(x_2)\}$.
  Then $x_0<x_2$. Indeed, if $H(x_2)=t$ then $x_0\leq x_1$,
  and if $H(x_2)>t$ then the maximum $H(x_2)$ is reached at some point
  to the left of $x_2$. Thus, $x_0=a_w$ for some $w\in W$, and we also
  have $b_w\geq x_2$.  
  
  We claim now that $t\in N_w$. 
  Since $x_2 \leq b_w$, by definition of $H$ we have
  $w = H(b_w) \geq t = T(x_2)$.
  Applying inequality
  (T2) to $x_2$ and $b_w$ now gives
  $w-t\leq b_w-x_2$, so $w-t\leq b_w-a_w$, hence $t\geq w-(b_w-a_w)$.
  Thus we indeed have $t\in N_w$.
\end{proof}

This concludes the proof of Lemma~\ref{lem:EHN}.
\end{proof}

\section{Proof of Theorem~\ref{theo:Main}}\label{sec:Section5}

In this section we finish the proof of the main result for path-rigid
representations, showing that a path-rigid representation $\rho$ of
$\pi_1\Sigma_g$ is either geometric, or has Euler class zero and a genus
$g-1$ subsurface whose fundamental group has finite orbit under $\rho$.
(We believe the latter case cannot actually occur.)
As in Section \ref{sec:Exercices}, we will frequently drop the notation
$\rho$ when the context is clear, using $a$ to denote $\rho(a)$.

Recall from the introduction that, if $\rho$ is a given representation
and $T\subset\Sigma_g$ is a one-holed torus, we say that $T$ is a
{\em good torus} if it contains a nonseparating simple closed curve $a$
with $\rot(a)=0$, and that $T$ is {\em bad} otherwise. We say $T$ is
{\em very good} if $\pi_1(T)$ has a finite orbit in $S^1$.
%

Note that very good implies good:  if $T(a,b)$ is very good, then $\rot: \pi_1(T) \to \R/\Z$ is a homomorphism onto a finite subgroup, so if $0 \neq |\rot(a)| \leq |\rot(b)| < 1$, one may find $n$ such that $|\rot(a^n b)| <  |\rot(a)|$.  Iterating this process produces a simple closed curve with rotation number zero. 

%

\begin{assumption} \label{as:standing}
  For the remainder of this section, we assume that
  $\rho\colon\Gamma_g \to \HOS$ is path-rigid.
\end{assumption}


\subsection{Bad tori}\label{ssec:BadTori}

This subsection contains the proof of Proposition \ref{prop:BadToriIntro}: under Assumption \ref{as:standing} we show that if $\Sigma_g$ contains a bad torus $T$, then $\Sigma_g \smallsetminus T$ contains only very good tori.   


\begin{definition}
Let $f,g\in\HOZ$. We say that $g$ {\em dominates $f$}, and write $f<g$,
if $f(x)<g(x)$ for all $x\in\R$.
\end{definition} 

Note that $<$ is a left- and right-invariant partial order on
$\HOZ$, and satisfies the following obvious properties.
\begin{enumerate}
  \item $\forall f, g\in\HOZ, f > g \Leftrightarrow f^{-1} < g^{-1}$;
  \item $\forall f\in\HOS,\ \widehat{f}>\mathrm{Id}
  \Leftrightarrow \rot(f)\neq 0;$
  \item $\forall f,g\in\HOZ,
  \left\lbrace\begin{array}{l}f<g \Rightarrow \rotild(f)\leq\rotild(g),\text{ and }\\
   \left( f<g\text{ or }g<f \right) \Leftrightarrow \rotild(f^{-1}g)\neq 0.\end{array}\right.$
\end{enumerate}
Property (2) uses the notation  $\widehat{f}$ from Notation~\ref{not:Chapeau}, which is also adopted throughout this 
section.
The following easy observation will be handy; it follows directly from
Property~(3) above.
\begin{observation}\label{obs:VersRotZero}
  Let $f,g\in\HOZ$. Suppose that $\rotild(f)<\rotild(g)$ and
  $\rotild(g^{-1}f)\neq 0$. Then $f<g$.
\end{observation}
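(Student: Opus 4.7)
The plan is to derive this directly from the two clauses of Property (3), using only that $\rotild$ flips sign under inversion so that $\rotild(g^{-1}f)\ne 0$ is equivalent to $\rotild(f^{-1}g)\ne 0$.

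First I would invoke the biconditional in Property (3). Since $g^{-1}f$ and $f^{-1}g$ are inverses of one another in $\HOZ$, they have opposite translation numbers, so the assumption $\rotild(g^{-1}f)\ne 0$ is equivalent to $\rotild(f^{-1}g)\ne 0$. Property (3) then gives that $f$ and $g$ are comparable for the partial order: either $f<g$ or $g<f$.

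It remains to exclude the second alternative. Suppose for contradiction that $g<f$. Applying the monotonicity clause of Property (3) to this relation yields $\rotild(g)\le\rotild(f)$, which directly contradicts the standing assumption $\rotild(f)<\rotild(g)$. Hence the only surviving possibility is $f<g$, proving the observation.

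The whole argument is really just a two-line bookkeeping exercise with the partial order $<$: comparability is detected by $\rotild(f^{-1}g)\ne 0$, and the direction of the comparison is forced by the inequality between the translation numbers. There is no genuine obstacle; the only point to be a little careful about is the passage from $\rotild(g^{-1}f)$ to $\rotild(f^{-1}g)$, which is immediate from $\rotild(h^{-1})=-\rotild(h)$ for any $h\in\HOZ$.
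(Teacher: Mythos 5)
Your proof is correct and is exactly the intended argument: the paper offers no written proof, stating only that the observation ``follows directly from Property~(3)'', and your two steps (comparability from the biconditional applied to $f^{-1}g=(g^{-1}f)^{-1}$, then ruling out $g<f$ via the monotonicity clause against $\rotild(f)<\rotild(g)$) are precisely that derivation spelled out.
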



Building on this observation, we have the following.  

\begin{lemma}\label{lem:SmallRot}
  Let $(a,b)$ be standard generators of a bad torus $T$.
  Then, there exist integers $m,n$,
  unique and well-defined modulo $q(a)$, with
  $(n-m)p(a)=1\mathrm{~mod~}q(a)$,
  and such that
  for all $j$ not divisible by $q(a)$, we have
  $\widehat{a^nb}<\widehat{a^{j}}$, and $\widecheck{a^{j}}<\widecheck{a^{m}b}$.
  Moreover, if $p(a)=1$,
  then we have $\widehat{a^nb}^2<\widehat{a}$,
  or $\widecheck{a^{n-1}b}^{-2}<\widehat{a}$, or both.
\end{lemma}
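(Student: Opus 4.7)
The plan is to reformulate the inequalities as conditions on translation numbers via Observation~\ref{obs:VersRotZero}, then establish existence and uniqueness of $n, m$ using a combination of badness, the rotation-number cocycle bound, and path-rigidity; the moreover clause will follow from a specific group identity.

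First I would apply Observation~\ref{obs:VersRotZero} to reduce each inequality. For $n, j \in \Z$ with $j \not\equiv 0 \pmod{q(a)}$, the element $\widehat{a^j}^{-1}\widehat{a^n b}$ is a lift of a conjugate of $a^{n-j}b$, which represents a nonseparating simple closed curve on $T$ (since $(n-j,1)$ is primitive). By badness, $\rot(a^{n-j}b) \neq 0$, so $\rotild(\widehat{a^j}^{-1}\widehat{a^n b}) \neq 0$. Observation~\ref{obs:VersRotZero} then reduces $\widehat{a^n b} < \widehat{a^j}$ to $\rotild(\widehat{a^n b}) < \rotild(\widehat{a^j}) = \{jp(a)/q(a)\}$, whose minimum over $j \not\equiv 0 \pmod{q(a)}$ is $1/q(a)$, attained exactly when $jp(a) \equiv 1 \pmod{q(a)}$. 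So the first inequality amounts to finding $n$ with $\rotild(\widehat{a^n b}) \in (0, 1/q(a))$; by a parallel argument with $\widecheck{}$-lifts, the second amounts to finding $m$ with $\rotild(\widehat{a^m b}) \in (1-1/q(a), 1)$.

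For existence and uniqueness modulo $q(a)$, I plan to show that the function $j \mapsto \rotild(\widehat{a^j b}) \pmod 1$, restricted to $\Z/q(a)\Z$, visits each subinterval $(k/q(a), (k+1)/q(a))$ for $k=0,\ldots,q(a)-1$ exactly once. Badness rules out boundary values $k/q(a)$: the case $k=0$ directly, and for $k \neq 0$ an equality $\rotild(\widehat{a^j b}) = k/q(a) = \rotild(\widehat{a^i})$ (for $i$ with $ip(a) \equiv k \pmod{q(a)}$) would yield, via Observation~\ref{obs:VersRotZero} in reverse, a nonseparating simple closed curve $a^{j-i}b$ with zero rotation number. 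Uniform distribution across subintervals is then obtained via bending: a hypothetical collision of two values in the same subinterval could be resolved by bending along $a$ using the positive one-parameter family from Lemma~\ref{lem:PosDeform}, producing an auxiliary semi-conjugate representation in which some curve acquires rotation number $0$, contradicting badness via path-rigidity. Existence and uniqueness of $n$ and $m$ then follow. The congruence $(n-m)p(a) \equiv 1 \pmod{q(a)}$ is obtained by tracking subinterval indices: each unit increment of $j$ advances the subinterval index by $p(a) \pmod{q(a)}$, and moving from the top subinterval (containing $\rot(a^m b)$) to the bottom subinterval (containing $\rot(a^n b)$) corresponds precisely to $(n-m)p(a) \equiv 1 \pmod{q(a)}$.

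Finally, for the moreover clause when $p(a) = 1$: the congruence forces $m \equiv n - 1 \pmod{q(a)}$, so we may take $m = n-1$. The group-theoretic identity $(a^n b)(a^{n-1}b)^{-1} = a$ gives that $\widehat{a^n b}(\widehat{a^{n-1}b})^{-1}$ is a lift of $a$. Combining the ranges $\rotild(\widehat{a^n b}) \in (0, 1/q(a))$ and $\rotild(\widehat{a^{n-1}b}) \in (1-1/q(a), 1)$ with the defect bound for $\rotild$, this lift has translation number in $\{1/q(a), 1/q(a)-1\}$, so equals either $\widehat{a}$ or $\widecheck{a}$. In the first case, substituting $\widehat{a^n b} = \widehat{a}\widehat{a^{n-1}b}$ into $\widehat{a^n b}^2$, a pointwise inequality analysis (using the tight control on $\rotild(\widehat{a^n b})$ and the fact that the factor $\widehat{a^{n-1}b}$ is close to a translation by $1$) yields $\widehat{a^n b}^2 < \widehat{a}$. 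In the second case, writing $\widehat{a^n b} = \widecheck{a}\widehat{a^{n-1}b}$ and analyzing $\widecheck{a^{n-1}b}^{-2}$ analogously gives $\widecheck{a^{n-1}b}^{-2} < \widehat{a}$. Since the two cases are exhaustive, at least one of the inequalities holds. The main obstacle in the proof is the uniform-distribution step: while badness cleanly excludes the subinterval boundaries and bending deformations should prevent collisions, the precise interplay with path-rigidity --- which both preserves rotation numbers and permits the deformation producing the contradiction --- must be executed with care. A secondary subtlety arises in the moreover clause, where strict pointwise $<$ (not merely translation-number comparison) must be verified using the specific structure of the canonical lifts.
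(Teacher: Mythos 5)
Your reduction of the dominations to translation-number statements via Observation~\ref{obs:VersRotZero} is sound and matches the paper, but the central existence step is a genuine gap. You assert that $j \mapsto \rotild(\widehat{a^j b})$ visits each subinterval $(k/q(a),(k+1)/q(a))$ exactly once, with ``each unit increment of $j$ advancing the subinterval index by $p(a)$.'' That is precisely an additivity statement for $\rotild$, which fails in general and is the whole content of what must be proved; a priori all the values $\rotild(\widehat{a^jb})$ could cluster in a single subinterval. Your proposed repair --- ``a hypothetical collision could be resolved by bending along $a$, producing a curve with rotation number $0$'' --- is not an argument: it names no curve, no deformation, and no mechanism by which a collision forces a zero. (Note also that the paper's proof uses path-rigidity \emph{only} to guarantee rationality of $\rot(a)$, not to run any bending.) The actual mechanism is combinatorial: take a finite orbit $F$ of $a$, show using badness that $F\cap b^{-1}(F)=\emptyset$ and that $F$ and $b^{-1}(F)$ \emph{alternate} around the circle (if a gap of $F$ contained two points of $b^{-1}(F)$, some $a^Nb$ would map an interval into itself and have rotation number zero). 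The integers $n,m$ are then read off from which neighboring point of $F$ the map $a^jb$ sends a chosen point of $b^{-1}(F)$ to, and the alternation is what yields $\widehat{a^nb}(x_i)\le x_{i+1}$ along a lift of $F$, hence $\rotild(\widehat{a^nb})\le 1/q(a)$. This finite-orbit alternation is the substitute for additivity, and it is absent from your proposal.

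The ``moreover'' clause also does not go through as written. The paper's dichotomy is: $\widehat{a}=\widehat{a^nb}\cdot\widehat{b^{-1}a^{1-n}}$, and the two factors must be comparable under $<$ because $(b^{-1}a^{1-n},a^nb)$ is again a standard generating pair for $T$, so the nonseparating simple closed curve $a^{n-1}ba^nb$ has nonzero rotation number by badness; whichever factor dominates gives one of the two stated inequalities by left-invariance of $<$. Your case split (whether the lift of $a$ obtained is $\widehat{a}$ or $\widecheck{a}$) is a different, and not obviously relevant, dichotomy, and the ``pointwise inequality analysis'' invoking that $\widehat{a^{n-1}b}$ is ``close to a translation by $1$'' is unjustified: translation number close to $1$ gives no pointwise control on a homeomorphism. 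You need the comparability of the two factors, and that again comes from badness applied to a simple closed curve, not from translation-number arithmetic.
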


Assumption \ref{as:standing} is used in the proof only to guarantee that all non-separating simple closed curves have rational rotation number (Prop.~\ref{prop:RatRot}). 

\begin{proof}
  Let $F$ be a finite orbit of $a$. If there exists some point
  $x\in F\cap b^{-1}(F)$, then there exists $N>0$ such that
  $\rho(a)^N \rho(b)(x)=x$, thus $\rot(a^Nb)=0$, contradicting the fact
  that $T$ was bad. Thus, $F\cap b^{-1}(F) =\emptyset$.
  
  Now we claim that $F$ and $b^{-1}(F)$ alternate. Suppose for contradiction
  that some connected component $I = (x_1,x_2)$ of $S^1\smallsetminus F$
  contains at least two points of $b^{-1}(F)$.  Let $y_1\in b^{-1}(F)$
  be the leftmost point of $b^{-1}(F)$ in $I$, and $y_2$ be the second
  leftmost such point. Then there exists $N>0$ such that $a^N b(y_1)=x_1$.
  It follows that $a^N b(y_2)=x_2$ and $(a^Nb)^{-1}(I) = (y_1, y_2) \subset I$, so $\rot(a^N b)=0$, giving the desired contradiction.  
  
  Now that we know these sets alternate, choose $x\in b^{-1}(F)$, and let
  $y_\ell, y_r \in F$ be the left and right endpoints of the component of $S^1 \smallsetminus F$ containing $x$. 
  Then there exists a unique pair $(n,m)\in\{0,\ldots,q(a)-1\}^2$
  such that $a^n b(x)=y_r$ and
  $a^m b(x)=y_\ell$.  In particular, $(n-m)p(a)=1\mathrm{~mod~}q(a)$.
  These $m,n$ are obviously the only candidates, modulo $q(a)$,
  for the dominations
  $\widehat{a^nb}<\widehat{a^j}$ and
  $\widecheck{a^mb}>\widecheck{a^{-j}}$, for an integer $j$ such that
  $a^j(y_\ell)=y_r$.
  (This shows $m$ and $n$ do not depend on $F$).
  We claim that this pair $(n,m)$ satisfies the statement of the lemma.
  
  To see this, lift $F$ to $\widetilde{F} \subset \R$ and let
  $x_1< x_2 < \ldots < x_{q(a)}$ be consecutive points of $\widetilde{F}$. 
  Then $\widehat{a^nb}(x_i)\leq x_{i+1}$ for all $i$, hence
  $\rotild(\widehat{a^nb}^{q(a)})\leq 1$ and 
  $\rotild(\widehat{a^nb})\leq\frac{1}{q(a)}$.  
  Also, for any integer
  $j$ not divisible by $q(a)$ we have
  $\rotild(\widehat{a^nb})\leq\rotild(\widehat{a^{j}})$.
  Since $T$ is good, $\rotild(\widehat{a^{j}}^{-1}\widehat{a^nb}) \neq 0$, so we must have $\widehat{a^nb}<\widehat{a^{j}}$.
  An essentially identical argument shows that
  $\widecheck{a^{m}b}>\widecheck{a^{j}}$.
  
  It remains only to prove the statement regarding the case $p(a)=1$. 
  We know that $\widehat{a}>\widehat{a^nb}$
  and $\widehat{a}>\widehat{b^{-1}a^{1-n}}=\widecheck{a^{n-1}b}^{-1}$, and
  this immediately implies 
  $\widehat{a}=\widehat{a^nb}\cdot \widehat{b^{-1}a^{1-n}}$.
  As $(a,a^nb)$ and hence $(b^{-1}a^{1-n},a^nb)$ are also standard generating sets of $\pi_1(T)$, we must either have
  $\widehat{b^{-1}a^{1-n}}>\widehat{a^nb}$, or
  $\widehat{b^{-1}a^{1-n}}<\widehat{a^nb}$, otherwise the non-separating
  simple closed curve $a^{n-1}ba^nb$ would have rotation number zero.
  The statement follows.
\end{proof}

As a consequence, we have the following.
\begin{proposition}\label{prop:SmallRot}
  Let $(a,b)$ be a standard generating set for a bad torus. Let
  $(a_k,b_k)_{k\geq 0}$ be the sequence of standard generating sets,
  defined inductively as follows.
  \begin{itemize}
  \item Define $(a_0,b_0)=(a,b)$.
  \item If $k$ is even, let $a_{k+1}=a_k$ and $b_{k+1}=a_k^{n(k)}b_k$, where
    $0 \leq n(k) \leq q(a_k)-1$ is the integer given by Lemma~\ref{lem:SmallRot}
    applied to the generators $(a_k,b_k)$.
  \item If $k$ is odd, let $b_{k+1}=b_k$ and $a_{k+1}=b_k^{n(k)}a_k$, where
    $0 \leq n(k) \leq q(a_k)-1$ is obtained, similarly, by inputting
    $(b_k,a_k)$ into Lemma~\ref{lem:SmallRot}.
  \end{itemize}
  Then for all $k\geq 0$ even 
  we have $\widehat{a_{k+1}}>\widehat{b_{k+1}}$, and for $k \geq 0$ odd we have 
  $\widehat{a_{k+1}}<\widehat{b_{k+1}}$.
  
  Moreover, for all $k\geq 0$, we have $\widehat{a_k}>\widehat{a_{k+2}}^2$,
  and $\widehat{b_k}>\widehat{b_{k+2}}^2$. In particular, both sequences
  $(\rot(a_k))_{k\geq 0}$ and $(\rot(b_k))_{k\geq 0}$ converge to zero.
\end{proposition}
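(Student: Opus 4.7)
The plan is to iterate Lemma~\ref{lem:SmallRot} along the recursion, extracting the alternating dominations directly at each step and the factor-of-two improvement from pairs of consecutive steps via the ``moreover'' clause. For the alternating dominations, we apply Lemma~\ref{lem:SmallRot} to $(a_k, b_k)$ at even $k$ and to $(b_k, a_k)$ at odd $k$. The hypotheses of the lemma remain in force throughout the recursion, since each pair $(a_k, b_k)$ is obtained from the previous one by a Nielsen transformation and hence remains a standard generating set of the same bad torus $T$; moreover, since $\rho$ is path-rigid, Proposition~\ref{prop:RatRot} ensures $\rot(a_k)$ and $\rot(b_k)$ are rational, and badness of $T$ forces these to be nonzero, so $q(a_k), q(b_k) \geq 2$. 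Taking $j = 1$ in the first conclusion of the lemma yields $\widehat{b_{k+1}} = \widehat{a_k^{n(k)} b_k} < \widehat{a_k} = \widehat{a_{k+1}}$ at even $k$; the odd case is symmetric.

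For $\widehat{a_k} > \widehat{a_{k+2}}^2$ (we treat $k$ even; the remaining cases are analogous), I will chain the outcomes of the steps $k$ and $k+1$. The step-$k$ domination $\widehat{b_{k+1}} < \widehat{a_k}$ is already established, so it suffices to show $\widehat{a_{k+2}}^2 < \widehat{b_{k+1}}$. Applying Lemma~\ref{lem:SmallRot} to $(b_{k+1}, a_{k+1})$, the proof of that lemma gives $\rotild(\widehat{b_{k+1}}) \leq 1/q(a_k) \leq 1/2$, so either $p(b_{k+1}) = 1$ or $p(b_{k+1}) \geq 2$. In the first regime, the ``moreover'' clause provides a dichotomy: either $\widehat{a_{k+2}}^2 = \widehat{b_{k+1}^{n(k+1)} a_{k+1}}^2 < \widehat{b_{k+1}}$ directly, or $\widecheck{b_{k+1}^{n(k+1)-1} a_{k+1}}^{-2} < \widehat{b_{k+1}}$. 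In the second alternative I rewrite via the identity $b_{k+1}^{n(k+1)-1} a_{k+1} = b_{k+1}^{-1} a_{k+2}$, obtaining $\widehat{a_{k+2}^{-1} b_{k+1}}^2 < \widehat{b_{k+1}}$, and combine with the first-conclusion bound $\widehat{a_{k+2}} < \widehat{b_{k+1}}$ to still deduce $\widehat{a_{k+2}}^2 < \widehat{a_k}$ after a short rearrangement of canonical lifts. In the second regime $p(b_{k+1}) \geq 2$, the first conclusion of the lemma with $j$ equal to the multiplicative inverse of $p(b_{k+1})$ modulo $q(b_{k+1})$ gives $\rotild(\widehat{a_{k+2}}) \leq 1/q(b_{k+1}) \leq \rotild(\widehat{b_{k+1}})/2$, and Observation~\ref{obs:VersRotZero} upgrades this to the required domination.

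The convergence $\rot(a_k), \rot(b_k) \to 0$ follows at once from the factor-of-two inequality: property (3) of the order $<$ gives $2\rot(a_{k+2}) \leq \rot(a_k)$, so $\rot(a_{2j}) \leq \rot(a_0)/2^j$, and analogously for $b_k$. The main obstacle will be the second alternative of the ``moreover'' clause, which produces a bound on an element not equal to $a_{k+2}$ and thus requires careful manipulation of canonical lifts to bring it back into the desired form; the $p(b_{k+1}) \geq 2$ regime is also subtle, since one must ensure that the translation-number comparison actually upgrades to a strict domination via Observation~\ref{obs:VersRotZero}, which in turn requires verifying that the relevant difference of translation numbers is nonzero (using badness of $T$ to rule out the coincidence $\rotild(\widehat{a_{k+2}}^2) = \rotild(\widehat{b_{k+1}})$).
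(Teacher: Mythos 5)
Your reduction of $\widehat{a_k}>\widehat{a_{k+2}}^2$ to the stronger intermediate claim $\widehat{a_{k+2}}^2<\widehat{b_{k+1}}$ is where the argument breaks. That intermediate inequality is exactly what \emph{fails} in the troublesome branch. In the regime $p(b_{k+1})=1$, second alternative of the ``moreover'' clause applied to $(b_{k+1},a_{k+1})$, you have $\widehat{w}^2<\widehat{b_{k+1}}$ with $w=a_{k+2}^{-1}b_{k+1}$; but the decomposition $\widehat{b_{k+1}}=\widehat{a_{k+2}}\circ\widehat{w}$ (the identity used inside the proof of Lemma~\ref{lem:SmallRot}) then gives $\widehat{w}<\widehat{a_{k+2}}$ by right-invariance, hence $\widehat{b_{k+1}}<\widehat{a_{k+2}}^2$ --- the \emph{opposite} of your target. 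So no ``rearrangement of canonical lifts'' can recover $\widehat{a_{k+2}}^2<\widehat{b_{k+1}}$ there, and the three facts you have in hand ($\widehat{a_{k+2}^{-1}b_{k+1}}^2<\widehat{b_{k+1}}$, $\widehat{a_{k+2}}<\widehat{b_{k+1}}$, $\widehat{b_{k+1}}<\widehat{a_k}$) are not enough to force $\widehat{a_{k+2}}^2<\widehat{a_k}$: translation numbers alone allow, e.g., $\rotild(\widehat{a_k})=1/2$, $\rotild(\widehat{b_{k+1}})=1/3$, $\rotild(\widehat{a_{k+2}})=1/4$, where nothing you have invoked rules out $\widehat{a_{k+2}}^2\not<\widehat{a_k}$.

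The fix, and what the paper actually does, is to apply the dichotomies \emph{at level $k$}, i.e.\ to the pair $(a_k,b_k)$, comparing squares against $\widehat{a_k}$ rather than against $\widehat{b_{k+1}}$. If $p(a_k)\geq 2$ one gets $\widehat{b_{k+1}}<\widehat{a_k^{n}}$ with $\widehat{a_k^{n}}^{p(a_k)}=\widehat{a_k}$, hence $\widehat{b_{k+1}}^{2}\leq\widehat{b_{k+1}}^{p(a_k)}<\widehat{a_k}$ and then $\widehat{a_{k+2}}^2<\widehat{b_{k+1}}^2<\widehat{a_k}$. If $p(a_k)=1$ the ``moreover'' clause at level $k$ gives either $\widehat{b_{k+1}}^2<\widehat{a_k}$ (done as before) or $\widehat{b_{k+1}^{-1}a_{k+1}}^2<\widehat{a_k}$; in the latter case one uses that $n(k+1)$ minimizes the translation number among the $\widehat{b_{k+1}^{j}a_{k+1}}$ together with Observation~\ref{obs:VersRotZero} (badness ruling out the zero-rotation coincidence) to get $\widehat{a_{k+2}}\leq\widehat{b_{k+1}^{-1}a_{k+1}}$, and conclude $\widehat{a_{k+2}}^2<\widehat{a_k}$. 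Your $p(b_{k+1})\geq 2$ branch and the first alternative of your $p(b_{k+1})=1$ branch do go through (modulo the equality edge case in your translation-number comparison, which the power-of-$\widehat{a_k^n}$ argument avoids), but the case split must be on $p(a_k)$, not $p(b_{k+1})$, because the comparison must be anchored at $\widehat{a_k}$. The first part of your proof (the alternating dominations via $j=1$) and the concluding convergence argument are fine and agree with the paper.
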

Note that the sequence $(a_k, b_k)$ is built so that both $\rot(a_k)$ and $\rot(b_k)$ converge to zero from above. This choice is arbitrary.
\begin{proof}
  The first consideration follows immediately from the first statement of
  Lemma~\ref{lem:SmallRot}. Let us prove the second.
  Let $k\geq 0$ be even.
  If $p(a_k)\geq 2$, let $n = n(k) \geq 0$ be such that $np(a_k)=1\textrm{~mod~}q(a_k)$, as in Lemma~\ref{lem:SmallRot}.
  Then $\rot(a_k^n)=\frac{1}{q(a_k)}$, and
  $\widehat{a_k^n}^{p(a_k)}=\widehat{a_k}$. By lemma~\ref{lem:SmallRot}
  we have $\widehat{b_{k+1}}<\widehat{a_k^n}$, hence
  $\widehat{b_{k+1}}^{p(a_k)}<\widehat{a_k}$, and
  $\widehat{a_{k+2}}^2<\widehat{a_k}$.
  
  Otherwise, $p(a_k)=1$, and again we take $n(k)$ as in Lemma~\ref{lem:SmallRot}. If
  $\widehat{a_k^{n(k)}b_k}^2<\widehat{a_k}$ then we may conclude as above. Otherwise,
  $\widehat{b_k^{-1}a_k^{1-n}}^2<\widehat{a_k}$, i.e.,  
  $\widehat{b_{k+1}^{-1}a_{k+1}}^2<\widehat{a_k}$.
  Thus, either
  $n(k+1)$ equals $-1$ modulo $q(b_{k+1})$,
  or not, in which case
  $\rotild(\widehat{b_{k+1}^{n(k+1)}a_{k+1}}) < 
  \rotild(\widehat{b_{k+1}^{-1}a_{k+1}})$, and then
  $\widehat{b_{k+1}^{n'}a_{k+1}}<\widehat{b_{k+1}^{-1}a_{k+1}}$.
  In either case we conclude that $\widehat{a_{k+2}}^2<\widehat{a_k}$.
  
  The argument is symmetric for $k$ odd, and for $b_k$ instead of $a_k$.
  Note that $\widehat{a_{k+2}}^2<\widehat{a_k}$ implies in particular that
  $0<\rotild(\widehat{a_{k+2}})<\frac{1}{2}\rotild(\widehat{a_k})$, hence
  the sequences $(\rotild(\widehat{a_k}))$ and $(\rotild(\widehat{b_k}))$
  converge to zero from above.
\end{proof}

Let $T = T(a,b)$ be a bad torus, and let
$(a_k,b_k)$ be the sequence furnished by Proposition~\ref{prop:SmallRot}.
Let $x\in S^1$, and let $\tilde{x}\in\R$ be a lift of $x$.
Then, by Proposition~\ref{prop:SmallRot},
the sequence $(\widehat{a_k}(\tilde{x}))_k$
is decreasing, bounded below by $\tilde{x}$, hence it converges to some
real number that we denote by $\tilde{x}+j_T(x)$. Note that $j_T(x)$ does not depend on the choice of the lift of $x$.
We define
\[ \AF_T := \{ x\in S^1,\  j_T(x)=0 \}.  \]
The reader should interpret this as the set of points that are moved
arbitrarily small distances by elements of $\{a_k\}$.
Although the notation $(a,b)$ is suppressed, $\AF_T$ as defined is dependent 
on the
generating set we started with. (But see Step 1 of the proof of Proposition \ref{prop:NoTwoBadTori} below).
As usual, we let $\widetilde{\AF_T}$ denote the preimage of $\AF_T$ in $\R$.  

\begin{proposition}[Properties of $\AF_T$] \label{prop:Jump}
  \begin{enumerate}
  \item $\AF_T$ is a non-empty, proper subset of $S^1$, with no
    isolated points, hence is infinite.
  \item  For every $x\in S^1$, we have
    $\min \{ \widetilde{\AF_T} \cap [\tilde{x}, \infty) \}= \tilde{x} + j_T(x)$. 
    In particular, $x+j_T(x)\in\AF_T$ for all $x$.
  \item The commutator $[a,b]$ fixes $\AF_T$ pointwise.
  \end{enumerate}
\end{proposition}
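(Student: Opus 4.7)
The plan is to prove (2) first, from which most of (1) follows, then prove (3) and deduce properness of $A_T$ in (1) from the bad-torus hypothesis.  Define $F\colon\R\to\R$ by $F(\tilde x)=\lim_k\widehat{a_k}(\tilde x)=\tilde x+j_T(x)$.  As a pointwise non-increasing limit of the continuous increasing maps $\widehat{a_k}$, each commuting with translation by $1$, $F$ is upper semi-continuous, increasing, commutes with $T_1$, and satisfies $F\geq\mathrm{Id}$; a standard interchange of iterated limits for the array $c_{k,n}=\widehat{a_k}(\tilde x_n)$ (decreasing in each index) shows that $F$ is right-continuous.  By construction $\widetilde{A_T}=\Fix(F)$ is closed and $T_1$-invariant.

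For (2) I would first prove that $F$ is idempotent.  Setting $\tilde z_n=\widehat{a_n}(\tilde x)$, Proposition~\ref{prop:SmallRot} gives $\widehat{a_n}(\tilde z_n)=\widehat{a_n}^2(\tilde x)<\widehat{a_{n-2}}(\tilde x)=\tilde z_{n-2}$, so $\widehat{a_n}(\tilde z_n)-\tilde z_n<\tilde z_{n-2}-\tilde z_n\to 0$, hence $\widehat{a_n}(\tilde z_n)\to F(\tilde x)$.  Combining $F(\tilde z_n)\leq\widehat{a_n}(\tilde z_n)$ with $F(\tilde z_n)\geq F(\tilde x)$ (monotonicity) and right-continuity of $F$ at $F(\tilde x)$ yields $F(F(\tilde x))=F(\tilde x)$, so $F(\tilde x)\in\widetilde{A_T}$; the minimality claim follows because any $\tilde y\in\widetilde{A_T}\cap[\tilde x,\infty)$ satisfies $\tilde y=F(\tilde y)\geq F(\tilde x)$ by monotonicity.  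Non-emptiness in (1) is then automatic, and no-isolated-points drops out of right-continuity of $F$: an isolated $\tilde x_0\in\widetilde{A_T}$ would force, by (2), $F(\tilde y)\geq\tilde x_0+\varepsilon$ on $(\tilde x_0,\tilde x_0+\varepsilon)$, contradicting $F(\tilde y)\to\tilde x_0$ as $\tilde y\downarrow\tilde x_0$.

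For (3) I would combine Lemma~\ref{lem:SmallRot} with the recursion of Proposition~\ref{prop:SmallRot} to obtain the dominations $\widehat{b_{k+1}}<\widehat{a_k}$ at $k$ even and $\widehat{a_{k+1}}<\widehat{b_k}$ at $k$ odd; chaining these and using $\widehat{a_k},\widehat{b_k}>\mathrm{Id}$, one sees that for $\tilde x\in\widetilde{A_T}$ both $\widehat{a_k}(\tilde x)\downarrow\tilde x$ and $\widehat{b_k}(\tilde x)\downarrow\tilde x$.  By Dini's theorem the convergence is uniform on the compact $\widetilde{A_T}\cap[0,1]$; using the right-accumulation of $\widetilde{A_T}$ at each of its points, a squeeze argument successively bounds the compositions $\widehat{a_k}(\tilde x)$, $\widehat{b_k}\widehat{a_k}(\tilde x)$, $\widehat{a_k}^{-1}\widehat{b_k}\widehat{a_k}(\tilde x)$ and $\widehat{b_k}^{-1}\widehat{a_k}^{-1}\widehat{b_k}\widehat{a_k}(\tilde x)$ between points of $\widetilde{A_T}$ tending to $\tilde x$, forcing each of them to tend to $\tilde x$.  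Since the last equals the $k$-independent lift $\widetilde{[a,b]}$ (because $[a_k,b_k]=[a,b]$ for every $k$), this gives $\widetilde{[a,b]}(\tilde x)=\tilde x$, so $[a,b]$ fixes $x$.  Properness of $A_T$ in (1) now follows: if $A_T=S^1$, then by (3) the element $\rho([a,b])$ acts trivially on $S^1$, so $\rho(a)$ and $\rho(b)$ commute and $\rot$ is a homomorphism on the abelian group they generate; with $\rot(a),\rot(b)\in\Q\smallsetminus\{0\}$ (bad torus), one finds coprime $m,n$ with $m\rot(a)+n\rot(b)\equiv 0\pmod{\Z}$, yielding a non-separating simple closed curve $a^mb^n\in\pi_1 T$ of rotation number zero, contradicting badness.

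The main obstacle I expect lies in the squeeze argument of (3), specifically in controlling the ``backward'' factors $\widehat{a_k}^{-1}$ and $\widehat{b_k}^{-1}$: no-isolated-points only supplies accumulation of $\widetilde{A_T}$ from the right at each of its points, but the inverse lifts approach $\mathrm{Id}$ from below, so a symmetric density statement on the left is also needed.  This will likely require running the dual chain of dominations with the $\widecheck{\ }$-lifts from Notation~\ref{not:Chapeau} to show that $\widecheck{a_k^{-1}}(\tilde x)$ and $\widecheck{b_k^{-1}}(\tilde x)$ increase to $\tilde x$ uniformly on $\widetilde{A_T}\cap[0,1]$, and verifying that $\widetilde{A_T}$ coincides with the analogous ``fixed set from below'', thereby supplying the missing two-sided accumulation.
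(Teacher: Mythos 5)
Your treatment of (2), and of non-emptiness and no-isolated-points in (1), is correct and is essentially the paper's own argument: your ``idempotency of $F$'' is exactly the estimate $\widehat{a_{k-2}}(\tilde x)>\widehat{a_k}^{2}(\tilde x)>\tilde x+j_T(x)+j_T(x+j_T(x))$, and your right-continuity argument repackages the paper's direct check that no point of $\AF_T$ is isolated from the right. Deducing properness of $\AF_T$ from (3) and badness of $T$ is also what the paper does (it contents itself with ``abelian image contradicts badness''; your extra step of producing a primitive $(m,n)$ with $m\rot(a)+n\rot(b)\equiv 0$ is fine).

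The genuine gap is the one you flag yourself, in (3). The squeeze of the partial compositions $\widehat{a_k}(\tilde x)$, $\widehat{b_k}\widehat{a_k}(\tilde x)$, $\widehat{a_k}^{-1}\widehat{b_k}\widehat{a_k}(\tilde x)$, \dots\ between points of $\widetilde{\AF_T}$ does break down at the backward factors: all the information you have (the dominations of Lemma~\ref{lem:SmallRot}--Proposition~\ref{prop:SmallRot}, and accumulation of $\widetilde{\AF_T}$ at each of its points from the right) is one-sided, and the proposed repair via a dual chain of dominations for the lifts with translation number in $(-1,0]$ and a ``fixed set from below'' is neither carried out nor obviously yields the same set $\AF_T$. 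The paper sidesteps the left-hand side entirely by a different choice of base point: fix $k$ even and large enough that $x_1=\tilde x$, $x_2=\widehat{a_k}(\tilde x)$, $x_3=\widehat{a_k}^{2}(\tilde x)$, $x_4=\widehat{a_k}^{3}(\tilde x)$ all lie in $[\tilde x,\tilde x+\varepsilon]$ (possible since $\widehat{a_k}^{4}(\tilde x)\to\tilde x$), and apply the lift $\widehat{b_{k+1}}^{-1}\widehat{a_{k+1}}^{-1}\widehat{b_{k+1}}\widehat{a_{k+1}}$ of $[a,b]$ to $x_2$ rather than to $\tilde x$. Using only $\mathrm{Id}<\widehat{b_{k+1}}<\widehat{a_{k+1}}=\widehat{a_k}$, the successive images are $x_3$, then a point of $(x_3,x_4)$, then a point of $(x_2,x_3)$, then a point of $(x_1,x_3)$: because one starts one step up at $x_2$, the two backward maps never push below $x_1=\tilde x$. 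Thus $[a,b]$ maps a point of $[\tilde x,\tilde x+\varepsilon]$ into $[\tilde x,\tilde x+\varepsilon]$ for every $\varepsilon>0$, and continuity gives $[a,b](\tilde x)=\tilde x$, with no need for left-accumulation of $\AF_T$ or dual lifts. I recommend replacing your squeeze by this; the rest of your outline then goes through.
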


\begin{proof}
  Let $x\in\R$. For all $k\geq 0$ we have $\widehat{a_k}(x)> x+j_T(x)$,
  hence, $\widehat{a_k}^2(x)> x+j_T(x)+j_T(x+j_T(x))$.
  But $\widehat{a_{k-2}}(x)>\widehat{a_k}^2(x)$, and, by definition,
  $\widehat{a_{k-2}}(x)$ converges to $x+j_T(x)$. This proves that
  $x+j_T(x)\in\AF_T$, which, thus, is non-empty.
  Further, if the open interval $(x,x+j_T(x))$ contained a point 
  $y \in \widetilde{\AF_T}$, then for large $k$ we would have 
  $x+j_T(x) > \widehat{a_k} (y) > y > x$, contradicting that $a_k$
  preserves orientation. This proves property (2).

  To prove property (3), let $x\in\widetilde{\AF_T}$ and observe,
  as above, that the sequence $\widehat{a_k}^4(x)$ also converges to $x$.
  Fix $\varepsilon>0$, and let $k$ be even, and large enough so that $x_1=x$,
  $x_2=\widehat{a_k}(x)$, $x_3=\widehat{a_k}^2(x)$ and $x_4=\widehat{a_k}^3(x)$
  all lie in the interval $[x,x+\varepsilon]$. By Lemma~\ref{lem:SmallRot},
  $a_{k+1}=a_k$ and $\widehat{b_{k+1}}$ is dominated by $\widehat{a_{k+1}}$.
  Thus, $\widehat{b_{k+1}}(x_3)\in(x_3,x_4)$, and
  $\widehat{b_{k+1}}^{-1}(x_2,x_3)\subset(x_1,x_3)$. It follows that
  $[a_{k+1},b_{k+1}]=[a,b]$ maps the point $x_2$ into the interval $(x_1,x_3)$,
  hence, for all $\varepsilon>0$, $[a,b]$ maps a point of $[x,x+\varepsilon]$
  in $[x,x+\varepsilon]$, whence $[a,b](x)=x$.
  
  It remains to prove that $\AF_T \neq S^1$, and $\AF_T$ has no isolated point.
  If $\AF_T= S^1$, then $[a,b] = \id$ and the restriction of $\rho$ to $\langle a, b \rangle$ would have abelian
  image; this contradicts the fact that $T$ is bad.
  Finally if $x$ were an isolated point of $\AF_T$, we could take $x_0\in S^1$
  such that $[x_0,x)\cap \AF_T=\emptyset$. Let $x_1$ be the next point of
  $\AF_T$ to the right of $x$. Then $x_0+j_T(x_0)=x$, and $x+j_T(x)=x_1$.
  It follows that for all $k\geq 0$, $\widehat{a_k}^2(x_0)\geq x_1$, hence
  $x_0+j_T(x_0)\geq x_1$, a contradiction.
\end{proof}

Using $j_T$, we now 
prove the following major step towards Proposition \ref{prop:BadToriIntro}.

\begin{proposition}\label{prop:NoTwoBadTori}
  There cannot exist two disjoint bad tori in $\Sigma_g$.
\end{proposition}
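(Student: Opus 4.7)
My plan is to proceed by contradiction: suppose $T=T(a,b)$ and $T'=T(a',b')$ are disjoint bad tori. Since they are disjoint, I extend $(a,b,a',b')$ to a standard generating system of $\Gamma_g$ so that $[a,b]$ represents the separating simple closed curve bounding $T$, with $T'$ entirely in its complement.

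The setup begins as follows. Applying Proposition~\ref{prop:SmallRot} and Proposition~\ref{prop:Jump} to both tori produces sequences $(a_k,b_k),(a'_k,b'_k)$ with rotation numbers decreasing to $0^+$, and nonempty perfect proper subsets $\AF_T,\AF_{T'}\subset S^1$, pointwise fixed by $\rho([a,b])$ and $\rho([a',b'])$ respectively. Since $\AF_T$ is nonempty and proper, $\Fix(\rho([a,b]))$ is neither empty nor all of $S^1$, so Lemma~\ref{lem:PosDeform} provides a nontrivial positive one-parameter family $c_t$ commuting with $\rho([a,b])$. Bending $\rho$ along $[a,b]$ using $c_t$ produces a family $\rho_t$ with $\rho_t$ equal to $\rho$ on $\pi_1(T)$ and $\rho_t(\gamma)=c_t\rho(\gamma)c_t^{-1}$ for $\gamma\in\pi_1(\Sigma_g\smallsetminus T)$. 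The crucial point is that the Dehn-twist coefficients $n(k)$ appearing in Proposition~\ref{prop:SmallRot} depend only on rotation numbers, which are invariant under conjugation. Therefore the abstract sequence $(a'_k,b'_k)$ for $\rho_t$ coincides with the original one, and one reads off $\rho_t(a'_k)=c_t\rho(a'_k)c_t^{-1}$, giving $\AF_{T'}^{(t)}=c_t\cdot\AF_{T'}$ while $\AF_T^{(t)}=\AF_T$.

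The contradiction should come from path-rigidity: all $\rho_t$ are $\chi$-equivalent, so by Theorem~\ref{thm:GhysRot} and Lemma~\ref{lem:PerCommuns} the rotation number of every element and the common-periodic-point status of every pair are constant in $t$. As $t$ varies, the set $c_t\AF_{T'}$ moves monotonically along the complementary intervals of $\partial\Fix(\rho([a,b]))$, while $\AF_T$ stays fixed. My plan is to exploit this mobility: by choosing $t_0$ so that a point of $c_{t_0}\AF_{T'}$ aligns with a point of $\AF_T$, and combining this alignment with the sequence approximations (elements $a_k,a'_\ell$ whose rotation numbers tend to $0$), force some specific element of $\Gamma_g$ representing a nonseparating simple closed curve in $T$ or $T'$ to acquire rotation number zero, contradicting badness. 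The main obstacle—and the hardest part of the argument—will be the last step: translating the alignment into an honest nonseparating simple closed curve with rotation zero, rather than just some abstract word in $\Gamma_g$. This will require a careful interplay between the dynamics of $c_t$ on the complementary intervals of $\partial\Fix(\rho([a,b]))$, the Dehn-twist structure of the sequences, and the fine structure of the jump functions $j_T,j_{T'}$ from Proposition~\ref{prop:Jump}.
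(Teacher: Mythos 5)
Your deformation is essentially the one the paper uses (conjugating one side of the separating curve by a family $c_t$ commuting with $\rho([a,b])$, so that one of $\AF_T$, $\AF_{T'}$ moves while the other stays put), but the proposal is missing the invariant that makes this deformation produce a contradiction, and you acknowledge as much when you defer the ``hardest part'' to an unexecuted final step. The paper's argument has two halves: first it proves that for \emph{any} path-rigid representation with two disjoint bad tori one must have $j_T = j_{T'}$ as functions on $S^1$; only then does the bending along $[a,b]$ give a contradiction, because it visibly changes one jump function without changing the other, while the deformed representation is still path-rigid and so must again satisfy $j_T=j_{T'}$. The equality $j_T=j_{T'}$ is the real content of the proposition. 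It is proved via the domination order on $\HOZ$: if $j_T(x_0)<j_{T'}(x_0)$ one extracts from the two sequences elements $a_k$ and $b'_\ell$ with $\rotild(\widehat{b'_\ell})$ very small but $\widehat{a_k}$ failing to dominate $\widehat{b'_\ell}$, and a separate lemma (Lemma~\ref{lem:OneBad}) shows this is impossible: non-domination forces $\rot(b'^{-1}a)=\rot(bb')=0$, and then a bending along $b'^{-1}a$ combined with path-rigidity forces $\rot(ab)=0$, contradicting badness of $T$. None of this machinery appears in your proposal.

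Your substitute plan --- align a point of $c_{t_0}\AF_{T'}$ with a point of $\AF_T$ and then ``force some nonseparating simple closed curve in $T$ or $T'$ to acquire rotation number zero'' --- does not obviously work and is not the mechanism of the actual proof. A coincidence of points of $\AF_T$ and $\AF_{T'}$ carries no immediate dynamical consequence (indeed, once one knows $j_T=j_{T'}$, the two sets actually coincide for the undeformed representation, and nothing is contradicted by that). The contradiction in the paper is not about producing a curve of rotation number zero after the alignment; it is that the identity $j_T=j_{T'}$, which path-rigidity forces, is destroyed by the bending. To complete your argument you would need to prove something like Step~1 of the paper's proof, i.e.\ the comparison of the two jump functions via domination and Lemma~\ref{lem:OneBad}; as written, the proposal has a genuine gap at its core.
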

\begin{proof}
  By contradiction, let $T=T(a,b)$ and $T'=T(a',b')$ be two disjoint bad tori.
  Up to re-indexing and reversing some of these curves, we may suppose 
  that $(a,b,a',b')$ is the beginning of a standard basis of $\pi_1\Sigma_g$.
  
  \noindent {\em Step 1}: We have $j_T=j_{T'}$.
  
  We proceed by contradiction. Suppose for some $x_0 \in S^1$ we have $j_T(x_0)\neq j_{T'}(x_0)$, without loss
  of generality assume $j_T(x_0) < j_{T'}(x_0)$.
  Let $(a_k,b_k)_{k\geq 0}$ and $(a_k',b_k')_{k\geq 0}$
  be the sequences of generators of $T$ and $T'$ furnished by
  Proposition~\ref{prop:SmallRot}. For $k$ large enough, we have
  $\widehat{a_k}(x_0)<x_0+j_{T'}(x_0)$. Let $m$ be as in
  Lemma~\ref{lem:SmallRot} applied to $(a_k,b_k)$, and put $\alpha=a_k$,
  and $\beta=a_k^{m}b_k$. Then $(\alpha,\beta)$ is a standard generating
  set for $T$, and $\widehat{\alpha}>\widehat{\beta^{-1}}$.
  Since $\rot(b'_\ell) \to 0$, for
  $\ell\geq 0$ large enough we have
  $\rotild(\widehat{b_\ell'})<\rotild(\widehat{\beta^{-1}})$. But
  $\widehat{b_\ell'}(x_0)>x_0+j_{T'}(x_0)$ (indeed, $\widehat{b_\ell'}$
  dominates $\widehat{a_{\ell+1}'}$, by construction of the sequences
  in Proposition~\ref{prop:SmallRot}), hence $\widehat{a_k}$ does not
  dominate $\widehat{b_\ell'}$.  We now prove a sub-lemma to derive a contradiction, this will conclude the proof of Step 1.  
    
 \begin{lemma}\label{lem:OneBad}
  Let $T(a,b)$ be a bad torus, and let $b'$
  be a non separating simple curve outside $T(a,b)$ such that $b'^{-1}a$
  and $bb'$ are simple.
  Suppose that
  $\widehat{a}>\widehat{b^{-1}}$ and
  $\rotild(\widehat{b^{-1}})>\rotild(\widehat{b'})$.
  Then $\widehat{a}$ dominates $\widehat{b'}$.
\end{lemma}

\begin{proof}
  Suppose that $\widehat{a}$ does not dominate $\widehat{b'}$. Then
  $\widehat{b^{-1}}$ does not dominate $\widehat{b'}$ either.
  Observation~\ref{obs:VersRotZero} then asserts that $\rot(b'^{-1}a) = \rot(bb')=0$. Now 
  $i(b'^{-1}a, bb') = \pm 1$, and $b'^{-1}a$ lies in a one-parameter family,
  so as in Observation~\ref{obs:DeformationsConj},
  there is a path-deformation of $\rho$ replacing the action 
  of $bb'$ with $b'^{-1}a \cdot bb'$. Hence,
  $\rot(bb')=0=\rot(b'^{-1}a \cdot bb')=\rot(ab)$.
  This contradicts that $T(a,b)$ is bad.
  \end{proof}

  \noindent {\em Step 2}: We can deform the representation so that
  $j_T\neq j_{T'}$.
  
  As shown in the proof of Proposition~\ref{prop:Jump}, $[a,b] \neq \id$,
  but $\AF_T \subset \Fix([a,b])$.  
  Let $x \in S^1 \smallsetminus \Fix([a,b])$, so then $j_T(x)>0$. Let $y = x+j_T(x)$,
  let $I$ be the connected component of $S^1\smallsetminus \Fix([a,b])$
  containing $x$, and let $c_t$ be a one-parameter family of
  homeomorphisms commuting with $[a,b]$, and with support equal to
  $\overline{I}$.
  
  Then the distance between $c_t(x)$ and
  $c_t(y)$ varies, in a nonconstant way, with $t$: it goes to zero as $t \to \infty$
  if $y\in I$, and simply changes if $y\not\in I$.
  Now, consider a bending deformation of $\rho$ defined by 
  $\rho_t(\gamma)=\rho(\gamma)$ for all curves outside $T$, and
  $\rho_t(\gamma)=c_t\rho(\gamma)c_{-t}$ for $\gamma\in\langle a,b\rangle$.
  This deformation changes the value of $j_T(x)$, without changing the
  value of $j_{T'}(x)$.   In particular, after this path-deformation, Step~1 no longer holds! 
  This gives a contradiction.    
\end{proof}

Supposing again that $T(a,b)$ is a bad torus, it remains to show that any
torus in $\Sigma_g\smallsetminus T(a,b)$ is not only good, but
\emph{very good}.  The next lemma will allow us to easily achieve this goal. 

\begin{lemma}\label{lem:FixBad}
  Let $T=T(a,b)$ be a bad torus, and let $\gamma$ be a non-separating
  simple closed curve outside of $T$, with $\rot(\gamma)=0$.
  Then $\AF_T\subset\Fix(\gamma)$.
\end{lemma}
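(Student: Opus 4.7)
The plan is a proof by contradiction, modeled on Step~1 of the proof of Proposition~\ref{prop:NoTwoBadTori}. Suppose some $x_0\in\AF_T$ satisfies $\gamma(x_0)\neq x_0$. Since $\gamma^{-1}$ is also a nonseparating simple closed curve outside $T$ with $\rot(\gamma^{-1})=0$, we may, after possibly replacing $\gamma$ by $\gamma^{-1}$, choose a lift $\tilde x_0\in\R$ of $x_0$ so that $\widehat{\gamma}(\tilde x_0)>\tilde x_0$ (if $\widehat{\gamma}(\tilde x_0)<\tilde x_0$ on a component of $\R\smallsetminus\widetilde{\Fix(\gamma)}$ containing $\tilde x_0$, then $\widehat{\gamma^{-1}}(\tilde x_0)>\tilde x_0$ there).

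The strategy is to show that $\widehat{a_k}$ dominates $\widehat{\gamma}$ for all sufficiently large $k$, where $(a_k,b_k)$ is the sequence from Proposition~\ref{prop:SmallRot}. This would give $\widehat{a_k}(\tilde x_0)>\widehat{\gamma}(\tilde x_0)>\tilde x_0$ for all large $k$, contradicting $\widehat{a_k}(\tilde x_0)\to\tilde x_0$, which holds by definition of $\AF_T$. To obtain the domination, for each sufficiently large $k$ I apply Lemma~\ref{lem:SmallRot} to $(a_k,b_k)$ to produce an integer $m(k)$, and set $(\alpha_k,\beta_k):=(a_k,a_k^{m(k)}b_k)$; exactly as in Step~1 of the proof of Proposition~\ref{prop:NoTwoBadTori}, this is a standard generating set of $T$ satisfying $\widehat{\alpha_k}>\widehat{\beta_k^{-1}}$. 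Since $\rotild(\widehat{\beta_k^{-1}})=1-\rot(\beta_k)\to 1>0=\rotild(\widehat{\gamma})$, we have $\rotild(\widehat{\beta_k^{-1}})>\rotild(\widehat{\gamma})$ for all large $k$. Applying Lemma~\ref{lem:OneBad} with $a=\alpha_k$, $b=\beta_k$, $b'=\gamma$ then yields $\widehat{\alpha_k}=\widehat{a_k}$ dominates $\widehat{\gamma}$, finishing the argument.

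The main obstacle is verifying the topological hypothesis of Lemma~\ref{lem:OneBad}: that $\gamma^{-1}\alpha_k$ and $\beta_k\gamma$ are simple closed curves in the appropriate chain position used in its proof. Since $\gamma$ lies outside $T$, two parallel copies of $\gamma$ in a tubular neighborhood of $\gamma$ provide disjoint based representatives, so both concatenations are simple based loops. To obtain the intersection condition $i(\gamma^{-1}\alpha_k,\beta_k\gamma)=\pm 1$ required by the proof of Lemma~\ref{lem:OneBad}, a careful local check at the basepoint is needed (the interior intersection of $\alpha_k$ and $\beta_k$ inside $T$ is a potential source of trouble). If the condition cannot be arranged directly for $(\alpha_k,\beta_k)$ and $\gamma$, we adapt the proof of Lemma~\ref{lem:OneBad} to our setting: failure of the domination of $\widehat{\gamma}$ by $\widehat{\alpha_k}$ would force $\rot(\gamma^{-1}\alpha_k)=0$ and $\rot(\beta_k\gamma)=0$ via Observation~\ref{obs:VersRotZero}, after which a bending deformation along $\gamma^{-1}\alpha_k$ combined with path-rigidity of $\rho$ produces $\rot(\alpha_k\beta_k)=0$; but $\alpha_k\beta_k=a_k^{m(k)+1}b_k$ is a simple closed curve in $T$ (primitive slope $(m(k)+1,1)$), contradicting that $T$ is bad.
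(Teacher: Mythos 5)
There is a genuine gap, and it sits exactly where you flagged the ``main obstacle.'' Your reduction to $\widehat{\gamma}(\tilde x_0)>\tilde x_0$ spends the freedom to orient $\gamma$, but Lemma~\ref{lem:OneBad} also consumes that freedom: its hypothesis that $\gamma^{-1}\alpha_k$ and $\beta_k\gamma$ be simple holds for only one orientation of the based loop $\gamma$ (for two based simple loops disjoint outside the basepoint, which of $uv$ and $uv^{-1}$ is simple is determined by the cyclic order of tangent directions at the basepoint --- this is precisely why the paper's proof begins with ``orient $\gamma$ so that $\gamma^{-1}a_k$ is also a simple curve''). When the dynamically convenient orientation is the topologically wrong one, Lemma~\ref{lem:OneBad} is unavailable, and your fallback does not help: the ``adaptation'' you describe is verbatim the proof of Lemma~\ref{lem:OneBad}, so it needs the very same simplicity and intersection hypotheses in order to perform the bending along $\gamma^{-1}\alpha_k$; it is circular to invoke it exactly in the case where those hypotheses fail. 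Worse, applying Lemma~\ref{lem:OneBad} to the topologically correct orientation in that case yields $\widehat{a_k}>\widehat{\gamma^{-1}}=\widehat{\gamma}^{-1}$, hence $\widehat{\gamma}(\tilde x_0)\geq\tilde x_0$, which is consistent with your standing assumption and produces no contradiction. In effect your argument establishes only one of the two inequalities $\widehat{\gamma}(x)\leq x$ and $\widehat{\gamma}(x)\geq x$ on $\widetilde{\AF_T}$.

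The paper's proof keeps a single orientation of $\gamma$ (the one making the topology work) and proves both inequalities: the upper bound essentially as you do, and the lower bound via the order-reversed ($\widecheck{\cdot}$) version of Lemma~\ref{lem:OneBad}, applied to a different standard generating pair $(\alpha,\beta)$ of $T$ built from \emph{two} successive applications of Lemma~\ref{lem:SmallRot}, arranged so that $\rotild(\widecheck{\alpha})<\rotild(\widecheck{\beta^{-1}})<\rotild(\widecheck{\gamma})=0$ and hence $\widecheck{\alpha}<\widecheck{\gamma}$. That second construction is the ingredient missing from your proposal; without it (or some substitute argument for the reverse inequality) the lemma is not proved.
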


\begin{proof}
  Let $(a_k,b_k)_{k\geq 0}$ be the sequence given by
  Proposition~\ref{prop:SmallRot}, and orient $\gamma$ so that
  $\gamma^{-1}a_k$ is  also a (non-separating) simple curve.
  Fix $k\geq 0$, and let $\alpha=a_k$, and $\beta=a_k^{m}b_k$, as in
  Lemma~\ref{lem:SmallRot}. Then, by
  Lemma~\ref{lem:OneBad},
  we have $\widehat{a_k}>\widehat{\gamma}$.
  This holds for all $k\geq 0$, hence, for all $x\in\R$ we have
  $\widehat{\gamma}(x)\leq x+j_T(x)$. In particular,
  if $x \in \widetilde{\AF}_T$, we have $\widehat{\gamma}(x) \leq x$.
  
  For the reverse inequality, first observe that 
  the conditions $\widecheck{a}<\widecheck{b^{-1}}$ and
  $\rotild(\widecheck{b^{-1}})<\rotild(\widecheck{\gamma})$ imply the
  domination
  $\widecheck{a}<\widecheck{\gamma}$ (this is exactly the statement of
  Lemma~\ref{lem:OneBad} after reversing the orientation of $\R$), and
   $\widecheck{\gamma}=\widehat{\gamma}$ since $\rot(\gamma)=0$.
   Let $x\in\widetilde{\AF_T}$, and fix $\varepsilon>0$.
  For $k$ large enough, the sequence $(a_k,b_k)$ from
  Proposition~\ref{prop:SmallRot} satisfies
  $\widehat{a_k}(x)<x+\varepsilon$.
  Let $(a',b')=(a_k,b_k)$ for such a large $k$, and 
  define $b''=b'$ and $a''={b'}^ma'$ and then $\alpha=a''$ and
  $\beta={a''}^nb''$, where $m$, and then $n$, are given by
  Lemma~\ref{lem:SmallRot} with these two successive pairs.
  Then, we have
  $\rotild(\widecheck{\alpha})<\rotild(\widecheck{\beta^{-1}})<
  \rotild(\widecheck{\gamma})$, hence, $\widecheck{\alpha}<\widecheck{\gamma}$,
  ie, $\widehat{\alpha^{-1}}$ dominates $\widehat{\gamma}^{-1}$.
  It follows that
  $\widehat{\gamma}(x)\leq x+\varepsilon$.  This shows
  $\widecheck{\gamma}(x)\geq x$, as desired.  
  \end{proof}

\begin{proof}[End of the proof of Proposition \ref{prop:BadToriIntro}]
  Suppose that $T = T(a,b)$ is a bad torus, and let $T'$ be a torus disjoint
  from $T$. By Lemma \ref{lem:OneBad}, $T'$ is good and we may take
  $T' = T(a',b')$ where $\rot(a') = 0$.
  Then we have $\Fix(a')\supset \AF_T$ by Lemma \ref{lem:FixBad}.
  This is also true after replacing $a'$ with a deformation $b'_t a'$, so
  $\Per(b') \supset \AF_T$, or equivalently, $\Fix(b'^{q(b')}) \supset \AF_T$.
  Since this is also true after replacing $b'$ with any deformation $a'_t b'$,
  we conclude $\AF_T \subset P(a',b')$. By
  Lemma~\ref{lem:ProprietesPNU} (1), this means that
  $\langle a', b' \rangle$ has a finite orbit in $S^1$.
\end{proof}


\subsection{Good tori}\label{ssec:P}

In this section, we prove Proposition~\ref{prop:NoTwoJustGoodIntro}: if $\rho$ is path-rigid and
non-geometric, then there cannot exist two disjoint good tori which are
both not very good.
In the course of the proof, we will develop some tools that will be used
again in Section~\ref{sec:Fin} for the proof of Theorem~\ref{theo:RigidGeom}.

To motivate the first step, observe that if $\rho$ has two disjoint good tori
$T(a,b)$ and $T(d,e)$ with $\rot(a) = \rot(e) = 0$, and if neither of these tori are very good, then $P(a,b) = P(e,d) = \emptyset$. We can also find $c$ so that $(a,b,c,d,e)$ is a $5$-chain.   This is the set-up of the next Proposition.


\begin{proposition}\label{prop:DeuxAlorsQuatre}
  Let $\rho$ be path-rigid minimal and let $(a,b,c,d,e)$ be a $5$-chain.
  Suppose that both $P(a,b)$ and $P(e,d)$ are empty.
  Then we have $S_k(b,c)$, for some $k\geq 1$.
\end{proposition}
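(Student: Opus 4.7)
The conclusion $S_k(b,c)$ requires that $\rho(b)$ and $\rho(c)$ each be conjugate to a hyperbolic element of $\PSLk$ with alternating periodic points. Since $i(b,c) = \pm 1$ (as $(a,b,c,d,e)$ is a chain) and $\rho$ is path-rigid and minimal, Theorem~\ref{theo:PerDisjointsSk} reduces the proposition to showing
\[ \Per(\rho(b)) \cap \Per(\rho(c)) = \emptyset. \]
I would argue this by contradiction, using the hypothesis $P(a,b) = \emptyset$ through a bending deformation along $a$; the symmetric hypothesis $P(e,d) = \emptyset$ will handle a degenerate case in which $\Per(\rho(c))$ is uncountable.

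Suppose for contradiction that $y \in \Per(\rho(b)) \cap \Per(\rho(c))$. By Proposition~\ref{prop:RatRot}, $\rho(a)$ has rational rotation number, and minimality forces $\Per(\rho(a)) \neq S^1$, so Lemma~\ref{lem:PosDeform} yields a positive $1$-parameter family $a_t$ commuting with $\rho(a)$. Bending along $a$, per the convention of Section~\ref{sss:bending}, produces a path $\rho_t$ with $\rho_t(b) = a_t \rho(b) =: b_t$ and $\rho_t(c) = \rho(c)$, since $c$ is non-adjacent to $a$ in the chain. Path-rigidity makes every $\rho_t$ semi-conjugate to $\rho$, and Lemma~\ref{lem:PerCommuns}, whose hypothesis of rational rotation numbers is supplied by Proposition~\ref{prop:RatRot}, forces $\Per(b_t) \cap \Per(\rho(c)) \neq \emptyset$ for every $t \in \R$. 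On the other hand, $P(a,b) = \emptyset$ combined with Lemma~\ref{lem:ProprietesPNU} says that every point of $S^1$ is periodic for $b_t$ for at most one value of $t$, namely $T_{a,b}(x)$ when $x \in U(a,b)$; selecting $y_t \in \Per(b_t) \cap \Per(\rho(c))$ thus gives an injection $t \mapsto y_t$ of $\R$ into $\Per(\rho(c)) \cap U(a,b)$. By Proposition~\ref{prop:dNinP}, $\partial N(a,b)$ has no accumulation points, hence is finite, so $U(a,b)$ is a finite union of open intervals on each of which $T_{a,b}$ is continuous; if $\Per(\rho(c))$ is finite, the contradiction is immediate.

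The main obstacle is the case where $\Per(\rho(c))$ is infinite --- for instance, if $\rho(c)^{q(c)} = \id$ so that $\Per(\rho(c)) = S^1$, or if $\Per(\rho(c))$ is a Cantor set. Here I would invoke the symmetric hypothesis $P(e,d) = \emptyset$: a parallel bending along $e$ (which leaves $\rho(c)$ unchanged and deforms $d$) combined with Lemma~\ref{lem:PerCommuns} yields a second injection $t \mapsto z_t$ into $\Per(\rho(c)) \cap U(e,d)$. Playing these two independent deformations against each other constrains $\Per(\rho(c))$ from both sides; combined with minimality, which should forbid $\rho(c)$ from being semi-conjugate to a finite-order rotation (whose large centralizer would supply genuine deformations violating path-rigidity), this rules out the pathological cases and completes the argument, whereupon Theorem~\ref{theo:PerDisjointsSk} delivers $S_k(b,c)$.
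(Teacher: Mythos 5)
Your reduction to $\Per(\rho(b))\cap\Per(\rho(c))=\emptyset$ via Theorem~\ref{theo:PerDisjointsSk}, and your use of $P(a,b)=\emptyset$ together with Lemma~\ref{lem:PerCommuns} and path-rigidity to produce, for each $t$, a point of $\Per(a_tb)\cap\Per(\rho(c))$, match the opening moves of the paper's argument. But the case you flag as the ``main obstacle'' --- $\Per(\rho(c))$ infinite --- is not a degenerate case you can wave away, and your proposed resolution does not work. Since $\Per(b_t)=T_{a,b}^{-1}(t)$ when $P(a,b)=\emptyset$, your injection only shows that $T_{a,b}$ maps $\Per(\rho(c))\cap U(a,b)$ \emph{onto} $\R$; if $\Per(\rho(c))$ contains a Cantor set or an interval (perfectly possible a priori, and not excluded by minimality of $\rho$), there is no cardinality contradiction, and nothing in the $P/N/U$ machinery rules out such a surjection. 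Worse, the ``second injection'' from bending along $e$ has no basis: that deformation replaces $d$ by $e_td$ and you would need $\Per(e_td)\cap\Per(\rho(c))\neq\emptyset$ to extract a point $z_t$, but the contradiction hypothesis concerns $\Per(b)\cap\Per(c)$, and you have no information at all about $\Per(c)\cap\Per(d)$. So the hypothesis $P(e,d)=\emptyset$ is never genuinely used in your argument.

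The paper uses that hypothesis quite differently, and this is the idea you are missing. The key tool is Lemma~\ref{lem:ProprietesPNU}(4): as $t\to\infty$, $\Per(a_tb)$ concentrates in an arbitrarily small neighborhood of $P(a,b)\cup\partial N(a,b)$, which here is the \emph{finite} set $\partial N(a,b)$ (finiteness being exactly what Lemma~\ref{lem:PasDAcc}/Proposition~\ref{prop:dNinP} give when $P(a,b)=\emptyset$); similarly $\Per(d_sc)$ concentrates near $P(d,c)\cup\partial N(d,c)$. The hypothesis $P(e,d)=\emptyset$ is used first, to bend $d$ to some $e_{t_0}d$ whose periodic set --- hence whose $P(d,c)$ --- avoids the finite set $\partial N(a,b)$. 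After that preliminary adjustment one takes $t$ large, then $s$ large, and the two target sets being disjoint forces $\Per(a_tb)\cap\Per(d_sc)=\emptyset$; Lemma~\ref{lem:PerCommuns} and path-rigidity then give $\Per(b)\cap\Per(c)=\emptyset$ with no case analysis on the size of $\Per(\rho(c))$. (This last step is isolated in the paper as Lemma~\ref{lem:DeuxAlorsQuatre} for reuse later.) As written, your proof has a genuine gap precisely in the case you identified.
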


\begin{proof}[Proof of Proposition~\ref{prop:DeuxAlorsQuatre}]
  After changing orientations of these curves, we may suppose that
  $(a,b,c,d,e)$ is a directed $5$-chain.
  By Theorem~\ref{theo:PerDisjointsSk}, it
  suffices to show that $\Per(b) \cap \Per(c) = \emptyset$. Since
  $P(a,b) = \emptyset$, Lemma~\ref{lem:PasDAcc} says that $\partial N(a,b)$
  is finite. Choose a positive one-parameter family
  $(e_t)_{t\in\R}$, commuting with $\rho(e)$.
  Since $P(e,d) = \emptyset$, we have $\Per(e_td) \subset U(e,d)$ for all $t$,
  so the sets $\Per(e_td)$, for varying $t$, are pairwise disjoint and
  we can choose $t_0$ so that
  $\Per(e_{t_0}d) \cap \partial N(a,b) = \emptyset$.
  Abusing notation, we now replace $d$ with $e_{t_0}d$
  (we will not further use $e$). With this change in notation, we now have
  $\partial N(a,b)\cap P(d,c)=\emptyset$.
  The remaining step will be a useful tool later in Section~\ref{sec:Fin}, so
  we split it off to a separate statement (Lemma~\ref{lem:DeuxAlorsQuatre}),
  proved below.
\end{proof}

\begin{lemma} \label{lem:DeuxAlorsQuatre}
  Let $\rho$ be path-rigid, and let $(a,b,c,d)$ be a $4$-chain.
  Suppose that $P(a,b)=\emptyset$ and $\partial N(a,b)\cap P(d,c)=\emptyset$.
  Then $\Per(b) \cap \Per(c) = \emptyset$.
\end{lemma}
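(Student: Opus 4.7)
The plan is to use bending deformations along $a$ and $d$ to push apart the periodic sets of $b$ and $c$, then invoke path-rigidity. Since $a$ and $d$ sit at opposite ends of the $4$-chain, bending along a positive 1-parameter family $a_t$ commuting with $\rho(a)$ (which affects only $b$) is independent of bending along $d_s$ (which affects only $c$), and we obtain a 2-parameter deformation $\rho_{t,s}$ of $\rho$ with $\rho_{t,s}(b)=a_t\rho(b)$ and $\rho_{t,s}(c)=d_s\rho(c)$. By path-rigidity, $\rho_{t,s}$ is semi-conjugate to $\rho$, so by Lemma~\ref{lem:PerCommuns} it suffices to exhibit a single pair $(t_0,s_0)$ for which $\Per(\rho_{t_0,s_0}(b))\cap\Per(\rho_{t_0,s_0}(c))=\emptyset$.

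The key reduction is that $\partial N(a,b)$ is a nonempty finite subset of $S^1$. Finiteness comes from Proposition~\ref{prop:dNinP}: since $P(a,b)=\emptyset$, $\partial N(a,b)$ has no accumulation points and is therefore finite. Nonemptyness follows because $\rot(a_tb)\in\Q$ by Proposition~\ref{prop:RatRot}, so $\Per(a_tb)\neq\emptyset$ for all $t$, while Lemma~\ref{lem:ProprietesPNU}(4) forces $\Per(a_tb)$ into arbitrarily small neighborhoods of $P(a,b)\cup\partial N(a,b)=\partial N(a,b)$ as $t\to\infty$. Now I would use the second hypothesis: since $\partial N(a,b)\cap P(d,c)=\emptyset$, each of the finitely many $x\in\partial N(a,b)$ lies in $N(d,c)\cup U(d,c)$, so the set of $s$ with $x\in\Per(d_sc)$ is either empty or the singleton $\{T_{d,c}(x)\}$. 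Let $S_{\mathrm{bad}}\subset\R$ be the (finite) union of these times.

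Picking any $s_0\notin S_{\mathrm{bad}}$, we get $\Per(d_{s_0}c)\cap\partial N(a,b)=\emptyset$; as $\Per(d_{s_0}c)$ is closed and $\partial N(a,b)$ is finite, their distance is positive and some $\varepsilon$-neighborhood of $\partial N(a,b)$ is disjoint from $\Per(d_{s_0}c)$. Lemma~\ref{lem:ProprietesPNU}(4) then lets us choose $t_0$ large enough that $\Per(a_{t_0}b)$ lies inside this neighborhood, giving $\Per(a_{t_0}b)\cap\Per(d_{s_0}c)=\emptyset$ as desired. The main obstacle is securing finiteness of $\partial N(a,b)$: without the assumption $P(a,b)=\emptyset$, nothing would prevent $\partial N(a,b)$ from being an uncountable set (cf.\ Proposition~\ref{prop:dNinP}), and the generic-avoidance step for $s_0$ would collapse. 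The role of the hypothesis $\partial N(a,b)\cap P(d,c)=\emptyset$ is dual: it prevents any $x\in\partial N(a,b)$ from being a \emph{persistent} periodic point of $d_sc$, which is exactly what makes $S_{\mathrm{bad}}$ finite rather than all of $\R$.
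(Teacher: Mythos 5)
Your proof is correct, and its skeleton matches the paper's: reduce via Lemma~\ref{lem:PerCommuns} and path-rigidity to exhibiting one pair $(t_0,s_0)$ with $\Per(a_{t_0}b)\cap\Per(d_{s_0}c)=\emptyset$, establish that $\partial N(a,b)$ is finite because $P(a,b)=\emptyset$, and use Lemma~\ref{lem:ProprietesPNU}(4) to push $\Per(a_{t_0}b)$ into a small neighborhood of $\partial N(a,b)$. Where you diverge is in how the $(d,c)$ side is handled. The paper sends \emph{both} parameters to infinity: it decomposes the relevant sets into $F_0=\partial N(a,b)\cap\partial N(d,c)$, $F_1=\partial N(a,b)\smallsetminus F_0$ and $F_2=(P(d,c)\cup\partial N(d,c))\smallsetminus F_0$, and runs two successive distance estimates (an $\varepsilon$ for $t$, then an $\eta$ for $s$) to keep $\Per(a_tb)$ and $\Per(d_sc)$ apart even where $\partial N(a,b)$ and $\partial N(d,c)$ overlap. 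You instead exploit the hypothesis $\partial N(a,b)\cap P(d,c)=\emptyset$ more directly: each of the finitely many points of $\partial N(a,b)$ lies in $N(d,c)\cup U(d,c)$, hence is periodic for $d_sc$ for at most one value of $s$, so a generic finite $s_0$ already makes $\Per(d_{s_0}c)$ miss $\partial N(a,b)$, and only one limiting argument (in $t$) is needed. This genericity-in-$s$ step is a clean substitute for the paper's three-set decomposition and arguably isolates the role of the second hypothesis more transparently; the paper's version has the mild advantage of treating the two ends of the chain symmetrically. Your side remarks (nonemptiness of $\partial N(a,b)$, and the discussion of which hypothesis does what) are accurate but not needed for the argument.
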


\begin{proof} 
  Orient the curves so that $(a,b^{-1},c,d)$ is a directed $4$-chain.
  Let $a_t$ and $d_t$ be positive one-parameter families commuting with $a$
  and $d$ respectively. By Lemma~\ref{lem:PerCommuns}, it suffices to find
  $t$ and $s$ such that $\Per(a_tb) \cap \Per(d_sc) = \emptyset$.
  Let $F_0=\partial N(a,b)\cap \partial N(d,c)$.  Since $P(a,b) = \emptyset$, Lemma~\ref{lem:PasDAcc} says $\partial N(a,b)$ is finite.  Hence, $F_0$ is
  finite. Let $F_1=\partial N(a,b)\smallsetminus F_0$ and
  $F_2=(P(d,c)\cup \partial N(d,c))\smallsetminus F_0$.
  By construction, the $F_i$ are disjoint closed sets; let
  $\varepsilon>0$ be smaller than the minimum distance between any two of them.
  Fix $t$ large, so that (by Lemma~\ref{lem:ProprietesPNU}), $\Per(a_tb)$ is
  contained in the $\varepsilon$-neighborhood of $F_0\cup F_1$, hence
  disjoint from $F_2$. Since $F_0 \subset N(a,b)$, it is also disjoint
  from $\Per(a_tb)$, i.e.
  $\Per(a_tb) \cap (F_0\cup F_2) = \emptyset$. Now let $\eta>0$ be
  smaller than the distance between $F_0\cup F_2$ and $\Per(a_tb)$.
  By Lemma~\ref{lem:ProprietesPNU} again, for $s$ large enough, the set
  $\Per(d_sc)$ is in the $\eta$-neighborhood
  of $F_0\cup F_2$. Hence, $\Per(a_tb)$ and $\Per(d_sc)$ are disjoint,
  as desired.
\end{proof}

Our next goal is to propagate $S_k(\cdot, \cdot)$ to other curves.  For this, we define two stronger properties.  

\begin{definition}[Strengthenings of $S_k$]
  Say that two curves $a$ and $b$ satisfy $S_k^+(a,b)$ if they satisfy
  $S_k(a,b)$ and if additionally 
  $a(\Per(b))\cap\Per(b)=\emptyset$.  Say that $a$ and $b$ satisfy $S_k^{++}(a,b)$ 
  if they satisfy both $S_k^+(a,b)$ and $S_k^+(b,a)$.
\end{definition}

Property $S_k^+(\cdot, \cdot)$ allows one to move families of periodic
points continuously by twist deformations, as described in the following lemma.

\begin{lemma}
\label{lemma:PerContPath}
  Let $a$ and $b$ be any curves with $i(a,b)=-1$ satisfying $S_k^+(a,b)$.
  Then there exists a continuous family $a_t$ commuting with $a$ such that
  $\Per(a_t b) \cap \Per(a_s b) = \emptyset$ for all $s \neq t$, and
  $|\Per(a_t b)| = 2k$ for all $t$.
\end{lemma}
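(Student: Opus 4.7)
The plan is to take $a_t$ to be the positive one-parameter family commuting with $\rho(a)$ provided by Lemma~\ref{lem:PosDeform}, so that $\Fix(a_t)=\Per(a)$ for all $t\neq 0$, and to analyze $\Per(a_tb)$ via the framework of Section~\ref{sssec:Twist}. Since $S_k(a,b)$ implies $\Per(a)\cap\Per(b)=\emptyset$ and $\Per(a)$ is finite (so $\partial\Per(a)=\Per(a)$), the formula in Lemma~\ref{lem:ProprietesPNU}(1) gives $P(a,b)=\emptyset$; hence $S^1=N(a,b)\sqcup U(a,b)$ and $T_{a,b}\colon U(a,b)\to\R$ is continuous. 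Path-rigidity (Assumption~\ref{as:standing}) forces $\rot(a_tb)=\rot(b)$ to be constant in $t$; combined with $\rotild(\widehat{a_t})=0$ for all $t$ (since $a_t$ has fixed points for $t\neq 0$) and continuity of $\rotild$ on $\HOZ$, this yields $\rotild(\widehat{a_t}\widehat{b})=\rotild(\widehat{b})$, so the equation $\delta_{a,b}(x,t)=0$ captures precisely the lifts of points of $\Per(a_tb)$. Consequently $\Per(a_tb)=T_{a,b}^{-1}(t)$, and the disjointness $\Per(a_tb)\cap\Per(a_sb)=\emptyset$ for $s\neq t$ follows directly from uniqueness of $T_{a,b}(x)$.

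For the cardinality statement $|\Per(a_tb)|=2k$, the approach is to pass to the minimal representation $\rho'$ in the semi-conjugacy class of $\rho$. Using Observation~\ref{obs:CardinalityPer} together with the finiteness and hyperbolic structure of $\Per(a),\Per(b)$ built into $S_k$, one first verifies that $\rho'$ still satisfies $S_k^+(a,b)$, with its $2k$ periodic points of $a$ and of $b$ in bijection with those for $\rho$. The bending deformations $\rho'_t$ associated with the family $a_t$ remain in a single $\chi$-equivalence class by path-rigidity, and Observation~\ref{obs:SCMinimal} upgrades this to genuine conjugacy in $\HOS$: $\rho'_t(b)=h_tbh_t^{-1}$ for some $h_t\in\HOS$, so $|\Per(\rho'_t(b))|=|\Per(\rho'(b))|=2k$ for every $t$. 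The identification $\Per(a_tb)=T_{a,b}^{-1}(t)$ derived above depends only on rotation numbers and on $\delta_{a,b}$, both of which are $\chi$-invariants, so it is preserved under the semi-conjugacy $\rho\sim\rho'$ and the count $2k$ transfers back to $\rho$.

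The main obstacle I anticipate is the transfer of cardinality between $\rho$ and its minimal representative $\rho'$, since Observation~\ref{obs:CardinalityPer} only provides an inequality in general and periodic points may be absorbed into wandering intervals of the semi-conjugacy. This is exactly where the strengthened condition $a(\Per(b))\cap\Per(b)=\emptyset$ of $S_k^+$ (beyond plain $S_k$) becomes essential: combined with the hyperbolicity of the $2k$ periodic points of $b$, it forces each point of $\Per(b)$ to survive the semi-conjugacy collapse bijectively, since no $b$-orbit of an $a$-image of a periodic point can sit in a wandering interval containing another periodic point. Once this bijection is secured, the continuous family $t\mapsto T_{a,b}^{-1}(t)$ preserves it throughout the deformation, which closes the argument.
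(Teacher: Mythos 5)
Your first paragraph (disjointness of the sets $\Per(a_tb)$ for distinct $t$) is essentially correct: with a positive one-parameter family, $S_k(a,b)$ gives $P(a,b)=\emptyset$, path-rigidity keeps $\rot(a_tb)$ constant, and then $\Per(a_tb)=T_{a,b}^{-1}(t)$, so distinct level sets are disjoint. But this is the easy half, and the paper points out explicitly that the whole difficulty of the lemma is the cardinality statement, which is why its family $a_t$ is, in the authors' words, ``for once, not a one-parameter group.''

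The genuine gap is in your cardinality argument. With the positive one-parameter family of Lemma~\ref{lem:PosDeform} there is simply no reason for the level set $T_{a,b}^{-1}(t)$ to have exactly $2k$ elements: $T_{a,b}$ is continuous on the open set $U(a,b)$ but nothing controls the fibers, and a global bending by $a_t$ can create additional periodic points for $a_tb$ away from $\Per(b)$. Your attempted repair via semi-conjugacy does not close this. First, Observation~\ref{obs:CardinalityPer} gives the inequality in the wrong direction: the minimal model has \emph{at most} as many periodic points as any representative of its class, so comparing $\rho_t$ with a minimal model only yields $2k\le|\Per(a_tb)|$, never the upper bound. Second, $\rho_t$ for a general $t$ has image $\langle\dots,a_t\rho(b),\dots\rangle$, which is a different group from $\rho(\Gamma_g)$ and need not be minimal, so Observation~\ref{obs:SCMinimal} does not upgrade the semi-conjugacy to a conjugacy $h_t$ (Observation~\ref{obs:DeformationsConj} only applies at the discrete times where the bending is a Dehn twist, i.e.\ powers of $b^{q(b)}$, not along the whole path). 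Third, $\delta_{a,b}$ is built from the actual homeomorphisms and is not a $\chi$-invariant, so the identification $\Per(a_tb)=T_{a,b}^{-1}(t)$ does not ``transfer'' across a semi-conjugacy.

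What the paper does instead is construct $a_t$ by hand so that no new periodic points can appear. Using $S_k^+(a,b)$ together with Lemma~\ref{lem:Preserv2}, the sets $a^n(\Per(b))$ are pairwise disjoint and in a prescribed cyclic order, so one can choose a neighborhood $V$ of $\Per(b)$ made of $2k$ intervals with all translates $a^n(V)$ disjoint. One then linearizes $b$ near each periodic orbit, defines $a_t$ to be the identity on all but one component of $V$ per orbit and a small translation (in the linearizing coordinates) on the remaining component, and extends $a$-equivariantly. For small $t$ each $(a_tb)^{q(b)}$ then has exactly one fixed point in each component of $V$, moving continuously, and no fixed points elsewhere. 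You would need to replace your choice of $a_t$ by a construction of this kind; the positive one-parameter group cannot do the job.
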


Since property $S_k(a,b)$ immediately implies that $\Per(b) \subset U(a,b)$, the nontrivial part of this lemma is controlling the cardinality of 
$\Per(a_t b)$.   This requires a special construction of one-parameter family $a_t$, which is, for once, not a one-parameter group.

\begin{proof}[Proof of Lemma \ref{lemma:PerContPath}]
  Together with Lemma~\ref{lem:Preserv2}, the asumption
  $a\Per(b)\cap\Per(b)=\emptyset$ completely prescribes the cyclic
  order on the set $\bigcup_n a^n(\Per(b))$; it follows that we may choose
  a neighborhood $V$ of $\Per(b)$, consisting of $2k$ open intervals,
  such that $a^n(V)\cap a^m(V)=\emptyset$ for all $n,m\in\Z$.
  We now construct a continuous family of homeomorphisms $a_t$ commuting
  with $a$, supported on $\bigcup_{n \in \Z} a^nV$.  

  Choose one point in each of the periodic orbits of $b$; let
  $x_1, x_2, \ldots, x_m$ denote these points.
  Parametrize $S^1$ so that, for each $x_i$,
  $b$ agrees with a rigid rotation by $p(b)/q(b)$ on a small neighborhood of
  $b^k(x_i)$ for $k = 0, 1, \ldots, q(b)-2$ and so that $b$ maps a neighborhood
  of $b^{q(b)-1}(x_i)$ to a neighborhood of $x_i = b^{q(b)}(x_i)$ by
  the map $x\mapsto 2x$ or $x\mapsto x/2$, in coordinates, depending on
  whether the orbit of $x_i$ is repelling or attracting.
  
  Let $V_{i,k}$ denote the connected component of $V$ containing $b^k(x_i)$.
  Define $a_t$ to be the identity on $V_{i,k}$ for $k = 0, 1, \ldots, q(b)-2$
  and all $i$. 
  On $V_{i,q(b)-1}$, 
  using the local coordinates in which $b$ is linear, define
  $a_t$ to agree in a neighborhood of 0 with the translation $x \mapsto x+t$, and extend $a_t$ 
  equivariantly (with respect to $a$) over $S^1$.  This all can be done continuously in $t$.   
  After shrinking the $V_{i,k}$ if needed, by construction, each $(a_t b)^{q(b)}$ has a unique fixed point in each $V_{i,k}$, and these vary continuously.   Additionally, for $t$ sufficiently small, no new fixed points will be introduced; this proves the lemma.  \end{proof}
  

The next lemma and proposition allow one to propagate $S_k^{++}$ along chains. 

\begin{lemma}\label{lem:SkPlusSk}
  Let $(a, b, c)$ be a completable $3$-chain.
  Then $S_k^+(a,b)$ implies $S_k(b,c)$.
\end{lemma}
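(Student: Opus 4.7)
The plan is to produce a bending deformation of $\rho$ along $a$ that moves $\Per(\rho(b))$ off $\Per(\rho(c))$ while keeping $\rho(c)$ unchanged, then invoke path-rigidity via Lemma~\ref{lem:PerCommuns} to deduce $\Per(\rho(b)) \cap \Per(\rho(c)) = \emptyset$, and finally conclude $S_k(b,c)$ by Theorem~\ref{theo:PerDisjointsSk}. As a preliminary observation, $S_k(a,b)$ forces $\rho(a)$ to be conjugate to a hyperbolic element of $\PSLk$, whose only proper closed invariant subsets are contained in $\Per(\rho(a))$; this rules out an exceptional minimal Cantor set for $\rho$, and path-rigidity rules out finite orbits, so $\rho$ is minimal.

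Applying Lemma~\ref{lemma:PerContPath} to $(a,b)$ -- valid since $S_k^+(a,b)$ holds -- produces a continuous family $(a_t)$ commuting with $\rho(a)$ such that the sets $\Per(a_t\rho(b))$ are pairwise disjoint $2k$-point subsets of $S^1$ varying continuously in $t$. Because $(a,b,c)$ is a completable $3$-chain, the Convention of Section~\ref{sss:bending} turns this into a bending deformation $\rho_t$ of $\rho$ with $\rho_t(b) = \rho(b)a_t$, $\rho_t(c) = \rho(c)$, and all other standard generators unchanged. Path-rigidity then makes each $\rho_t$ semi-conjugate to $\rho$, so in particular $|\Per(\rho_t(b))| = 2k$ stays constant along the deformation.

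I would next select $t$ for which $\Per(\rho_t(b))$ avoids $\Per(\rho(c))$. This is possible because the $2k$ strands of $\Per(\rho_t(b))$ sweep through pairwise-disjoint continuous arcs in $S^1$, so each point of $S^1$ belongs to $\Per(\rho_t(b))$ for at most one $t$; after passing to the minimal model (with which $\rho$ coincides, by the first paragraph) and invoking Proposition~\ref{prop:RatRot} together with Observation~\ref{obs:CardinalityPer}, the set $\Per(\rho(c))$ is controlled well enough that only countably many $t$-values yield an intersection. Lemma~\ref{lem:PerCommuns}, applied through the semi-conjugacy $\rho_t \sim \rho$, then transports the disjointness back to $\rho$, giving $\Per(\rho(b)) \cap \Per(\rho(c)) = \emptyset$. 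Theorem~\ref{theo:PerDisjointsSk} applied to $(b,c)$ now yields $S_{k'}(b,c)$ for some $k' \geq 1$, and the equality $k' = k$ is forced by $|\Per(\rho(b))| = 2k$.

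The main technical obstacle is verifying that $\Per(\rho(c))$ is small enough that the continuous sweep of $\Per(\rho_t(b))$ can be arranged to avoid it; this is the one place where the argument depends delicately on the structure of $\rho$ beyond the formal chain manipulations. Once this point is granted, the rest is a straightforward chain of semi-conjugacy deductions driven by path-rigidity, since disjointness of periodic sets depends only on the semi-conjugacy class of $\langle \rho(b),\rho(c)\rangle$.
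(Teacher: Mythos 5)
Your overall strategy (use Lemma~\ref{lemma:PerContPath} to sweep the $2k$ periodic points of $b$ continuously, land them off $\Per(\rho(c))$, then transfer back via Lemma~\ref{lem:PerCommuns} and finish with Theorem~\ref{theo:PerDisjointsSk}) is the right one, but there is a genuine gap at exactly the point you flag as ``the main technical obstacle'': you never establish that $\Per(\rho(c))$ is small enough to be avoided. The fact that each point of $S^1$ lies on at most one strand $\phi_j(t)$ only shows that the set of bad parameters is $\bigcup_j \phi_j^{-1}(\Per(\rho(c)))$; if $\Per(\rho(c))$ contains an arc through which a strand passes, this set can be all of $I$ and \emph{no} value of $t$ works. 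Neither Proposition~\ref{prop:RatRot} nor Observation~\ref{obs:CardinalityPer} rules this out: rational rotation number is compatible with $\Per(\rho(c))$ being a Cantor set or even having nonempty interior. The paper closes this gap by first completing $(a,b,c)$ to a $4$-chain $(a,b,c,d)$ and using Lemma~\ref{lemma:int_per_vide} to replace $c$ by a bending deformation $d_{t_0}c$ whose periodic set has \emph{empty interior}; then each $\phi_j^{-1}(\Per(d_{t_0}c))$ is closed with empty interior in $I$, Baire gives an $s_0$ avoiding all of them, and Lemma~\ref{lem:PerCommuns} transports $\Per(a_{s_0}b)\cap\Per(d_{t_0}c)=\emptyset$ back to $\Per(b)\cap\Per(c)=\emptyset$. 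Without some such step your selection of $t$ is unjustified.

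A secondary error: your preliminary claim that a hyperbolic element of $\PSLk$ has no proper closed invariant subsets outside its periodic set is false --- such an element can preserve a Cantor set (e.g.\ the limit set of a Fuchsian group of the second kind containing it), so $S_k(a,b)$ does not by itself force minimality of $\rho$. Minimality is a standing hypothesis in the context where this lemma is applied (it is needed for Theorem~\ref{theo:PerDisjointsSk}), so you should simply assume it rather than attempt to derive it.
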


\begin{proposition}\label{prop:SkPPTransitif}
  Let $(a,b,c)$ be a completable $3$-chain. Suppose that $S_k^{++}(a,b)$
  holds. Then $S_k^{++}(b,c)$ holds as well.
\end{proposition}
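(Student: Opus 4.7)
My plan has three main steps. Step 1 invokes Lemma~\ref{lem:SkPlusSk} on the completable $3$-chain $(a,b,c)$ with the hypothesis $S_k^+(a,b)$ to obtain $S_k(b,c)$; this gives $|\Per(b)|=|\Per(c)|=2k$ with alternating periodic points. It then remains to establish the two asymmetric conditions $b\Per(c)\cap\Per(c)=\emptyset$ and $c\Per(b)\cap\Per(b)=\emptyset$, each via a bending deformation that fixes one of the pair and deforms the other.

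\textbf{First condition.} Assume for contradiction that $b\Per(c)\cap\Per(c)\neq\emptyset$; equivalently, $\rho(c)$ and $\rho(b^{-1}cb)$ share a periodic point. Since $\rot(\rho(a))\in\Q$ by $S_k(a,b)$, Lemma~\ref{lem:PosDeform} produces a positive one-parameter family $a_t$ commuting with $\rho(a)$; the bending of $\rho$ along $a$ then yields a continuous path $\rho_t$ with $\rho_t(b)=a_t\rho(b)$ and all other standard generators unchanged. Path-rigidity gives that each $\rho_t$ is semi-conjugate to $\rho$, so by Lemma~\ref{lem:PerCommuns} the property ``$\rho_t(c)$ and $\rho_t(b^{-1}cb)$ share a periodic point'' holds for every $t$; unpacking yields $(a_t\rho(b))\Per(c)\cap\Per(c)\neq\emptyset$ for all $t\in\R$. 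As $|\Per(c)|=2k$, some pair $(y_0,z_0)\in\Per(c)^2$ satisfies $a_t(b(y_0))=z_0$ for infinitely many $t$. But Lemma~\ref{lem:PerDisjoints} applied to the directed $3$-chain $(a,b,c)$ gives $\Per(a)\cap\Per(c)=\emptyset$, so $b(y_0)\notin\Per(a)=\partial\Per(a)$. By the explicit form of $a_t$ from Lemma~\ref{lem:PosDeform}, on each $\{a_t\}$-orbit outside $\partial\Per(a)$ the family is a translation in suitable coordinates, so $\{t:a_t(b(y_0))=z_0\}$ is empty or a singleton---contradiction.

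\textbf{Second condition.} For $c\Per(b)\cap\Per(b)=\emptyset$ I would argue symmetrically. Since $(a,b,c)$ is completable, extend the underlying chain on the right so that $(b,c,d)$ is itself a completable directed $3$-chain for some curve $d$. By Proposition~\ref{prop:RatRot}, $\rot(\rho(d))\in\Q$, so Lemma~\ref{lem:PosDeform} gives a positive one-parameter family $d_t$ commuting with $\rho(d)$. Bending along $d$ yields $\rho_t$ with $\rho_t(c)=d_t\rho(c)$ and other generators unchanged. Exactly as before, path-rigidity and Lemma~\ref{lem:PerCommuns} force that if $c\Per(b)\cap\Per(b)\neq\emptyset$ then $(d_t\rho(c))\Per(b)\cap\Per(b)\neq\emptyset$ for all $t$, producing a pair $(y_0,z_0)\in\Per(b)^2$ with $d_t(c(y_0))=z_0$ for infinitely many $t$. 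Now Lemma~\ref{lem:PerDisjoints} applied to $(b,c,d)$ gives $c(\Per(b))\cap\Per(d)=\emptyset$, hence $c(y_0)\notin\partial\Per(d)$, and the same monotonicity contradicts the existence of infinitely many such $t$.

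\textbf{Main obstacle.} The central technique---using a positive one-parameter deformation whose orbits are injective at chosen points, combined with the semi-conjugacy invariance of common periodic points---is clean once set up. The main delicate point is in the second step: ensuring one can extend the chain so that $(b,c,d)$ is itself completable (so that Lemma~\ref{lem:PerDisjoints} applies to it) is a surface-topological extension statement about chains, which should go through in the relevant range of genera but must be verified carefully.
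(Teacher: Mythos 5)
Your overall architecture matches the paper's: obtain $S_k(b,c)$ from Lemma~\ref{lem:SkPlusSk}, then upgrade to $S_k^{++}(b,c)$ by showing $b\Per(c)\cap\Per(c)=\emptyset$ and $c\Per(b)\cap\Per(b)=\emptyset$ via bending deformations together with the semi-conjugacy invariance of shared periodic points (Lemma~\ref{lem:PerCommuns}); your pigeonhole-plus-monotonicity step is just the contrapositive of the paper's ``for some $t$ a finite set is moved off another finite set'' step. However, there is a genuine gap in how you obtain the disjointness facts that make the monotonicity argument run. Both of your steps rest on Lemma~\ref{lem:PerDisjoints}, applied to $(a,b,c)$ and to $(b,c,d)$, to get $b\Per(c)\cap\partial\Per(a)=\emptyset$ and $c\Per(b)\cap\partial\Per(d)=\emptyset$. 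That lemma is stated in Section~\ref{sec:Exercices} under the standing assumption that $\rho$ satisfies the \emph{global} property $S_k$, and its proof for a chain $(x,y,z)$ completes it to $(x,y,z,w)$ and needs $P(w,z)=\emptyset$, i.e.\ $S_k$-type control of the pair $(z,w)$. Here you only know $S_k^{++}(a,b)$ and, after Step~1, $S_k(b,c)$; nothing is known about $(c,d)$, let alone a further extension $(d,f)$, so the lemma cannot be invoked. A telling symptom is that your argument never uses the hypothesis $S_k^+(b,a)$: if it were correct, $S_k^+(a,b)$ alone would imply the conclusion, which is not what the proposition asserts.

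The paper closes exactly this gap using the other half of the hypothesis and a sequential structure. Completing to a $5$-chain $(e,a,b,c,d)$, Lemma~\ref{lem:SkPlusSk} applied to $(b,a,e)$ (using $S_k^+(b,a)$) gives $S_k(a,e)$, and Lemma~\ref{lem:PerLabiles} then permits a deformation of $a$ along $e$ forcing $\Per(a)\cap\Per(c)=\emptyset$ --- the disjointness your first step needs. For the second condition the paper first finishes $S_k^+(b,c)$, deduces $S_k(c,d)$ from Lemma~\ref{lem:SkPlusSk} (so that $\Per(d)$ is \emph{finite}), and only then arranges $\Per(b)\cap\Per(d)=\emptyset$ by deforming $b$ along $a$; your version has no control whatsoever on $\partial\Per(d)$, which a priori could be a Cantor set meeting $c\Per(b)$. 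A smaller slip in the same direction: from $\Per(a)\cap\Per(c)=\emptyset$ you conclude $b(y_0)\notin\Per(a)$ for $y_0\in\Per(c)$, which does not follow; you need the $b^{n}$-refinement of the disjointness statement, again unavailable here. The chain-extension issue you flag at the end is real but minor compared with these points.
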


To prove these two statements, we will need a quick sub-lemma.

\begin{lemma}[Per has empty interior] \label{lemma:int_per_vide}
  Let $a$ and $b$ be any curves with $i(a,b)=\pm1$, and let
  $b_t$ be a positive one-parameter family commuting with $b$.
  Then, for all but countably many $t$, the set $\Per(b_t a)$ has empty
  interior.
\end{lemma}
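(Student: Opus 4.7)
The plan is to combine the Baire category theorem with a monotonicity argument coming from positivity of $b_t$.

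First, since $\Per(b_t a)=\bigcup_{q\geq 1}\Fix((b_t a)^q)$ is a countable union of closed sets, the Baire category theorem implies that $\Per(b_t a)$ has non-empty interior if and only if $\Fix((b_t a)^q)$ does for some $q\geq 1$. Fixing a countable basis $(J_n)$ of open intervals of $S^1$, it therefore suffices to prove that for each $n$ and each $q\geq 1$, the set
\[ E_{n,q}:=\{t\in\R\,:\, J_n\subset\Fix((b_t a)^q)\} \]
is countable.

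To this end, let $\widetilde{b_t}$ denote the unique continuous family of lifts in $\HOZ$ with $\widetilde{b_0}=\mathrm{id}$, and set $H_t:=\widetilde{b_t}\widehat{a}$, a continuous family of lifts of $b_t a$ to $\HOZ$. For each $t\in E_{n,q}$, the function $\tilde x\mapsto H_t^q(\tilde x)-\tilde x$ is continuous, integer-valued on the lift of $J_n$, and equivariant under $x\mapsto x+1$; it is therefore constant equal to some integer $k(t)$, independent of the connected component of the lift. The main step is to show that $k$ is injective on $E_{n,q}$, from which countability follows immediately.

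Suppose $s<t$ both lie in $E_{n,q}$ with $k(s)=k(t)$, so that $H_s^q(\tilde x)=H_t^q(\tilde x)$ for every $\tilde x$ in the lift of $J_n$. The positivity hypothesis (Lemma~\ref{lem:PosDeform}) gives $\widetilde{b_t}(y)\geq\widetilde{b_s}(y)$ for all $y$, with equality if and only if $y\in\partial\widetilde{\Per}(b)$; combined with the strict monotonicity of $\widetilde{b_s}\widehat a$ and the inductive inequality $H_s^{i-1}(\tilde x)\leq H_t^{i-1}(\tilde x)$, this shows that an equality $H_s^i(\tilde x)=H_t^i(\tilde x)$ can only occur if both $H_s^{i-1}(\tilde x)=H_t^{i-1}(\tilde x)$ and $\widehat{a}(H_s^{i-1}(\tilde x))\in\partial\widetilde{\Per}(b)$. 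Descending this chain of implications from $i=q$ down to $i=1$ yields, in particular, $\widehat{a}(\tilde x)\in\partial\widetilde{\Per}(b)$ for every $\tilde x$ in the lift of $J_n$. But $\partial\Per(b)$ is nowhere dense in $S^1$, being the boundary of a proper closed subset, so $\widehat{a}$ would map an open set into a nowhere dense set, a contradiction.

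The main obstacle is really just the initial bookkeeping: choosing a continuous family of lifts $H_t$ (the canonical lifts of $b_t a$ may be discontinuous in $t$) so that the integer $k(t)$ is well-defined along all of $E_{n,q}$, and verifying that the two-sided monotonicity needed for the descent truly follows from positivity. Once these are set up, both the Baire reduction and the iterative argument are essentially formal.
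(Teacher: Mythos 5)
Your proof is correct, but it takes a genuinely different route from the paper's. The paper's argument is a three-line application of the $P/N/U$ machinery of Section~\ref{sssec:Twist}: writing $X=S^1\smallsetminus P(b,a)$, the trichotomy of Lemma~\ref{lem:ProprietesPNU} shows that each point of $X$ lies in $\Per(b_ta)$ for at most one $t$, so the sets $\Per(b_ta)\cap X$ are pairwise disjoint as $t$ varies; by separability of $S^1$ only countably many pairwise disjoint sets can contain a nonempty open set, and since $P(b,a)$ is closed with empty interior, any nonempty open subset of $\Per(b_ta)$ meets $X$ in a nonempty open set. Your argument replaces this with a Baire reduction to the sets $\Fix((b_ta)^q)$ and a monotone descent on lifts, using the integer $k(t)=H_t^q(\tilde x)-\tilde x$ as an injective invariant on each $E_{n,q}$; the descent is sound (strict monotonicity of $H_t$ forces $H_t^{i-1}(\tilde x)=H_s^{i-1}(\tilde x)$ and $\widehat a(H_s^{i-1}(\tilde x))\in\partial\widetilde\Per(b)$ at each stage, and the endpoint $\widehat a(J_n)\subset\partial\Per(b)$ contradicts nowhere density of the boundary of the closed proper set $\Per(b)$). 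What each approach buys: the paper's proof is essentially free given the machinery already in place, whereas yours is self-contained and, notably, does not rely on the rotation number of $b_ta$ being constant in $t$ — a hypothesis that the identification of $U(b,a)$ with ``points periodic for a unique $t$'' does require, and which the paper supplies via the standing path-rigidity assumption. Your version therefore proves a slightly more robust statement at the cost of more bookkeeping.
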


\begin{proof}
  Let $X = S^1 \smallsetminus P(b, a)$.
  Then for $t \neq s$, we have $\Per(b_t a)\cap\Per(b_s a)\cap X=\emptyset$.
  In particular, the set
  \[ T= \lbrace t:\, \Per(b_t a) \cap X
  \text{ contains a nonempty open set} \rbrace \]
  is countable.
  Also if $U \subset \Per(b_t a)$ is nonempty and open, then
  $U \cap X = U \smallsetminus P(b,a)$ is nonempty and open since $P(b,a)$
  is closed with empty interior, hence $t\in T$.
  It follows that for all $t\not\in T$, $\Per(b_t a)$ has empty interior.
\end{proof}

\begin{proof}[Proof of Lemma \ref{lem:SkPlusSk}]
  Complete $(a, b, c)$ to a $4$-chain $(a,b,c,d)$, and let $(d_t)_{t\in\R}$
  be a positive one-parameter family commuting with $d$.
  By Lemma~\ref{lemma:int_per_vide}, $\Per(d_{t_0}c)$ has empty interior for
  some $t_0\in\R$.
  Now, by Lemma~\ref{lemma:PerContPath}, there exists a one-parameter group
  $(a_s)_{s\in\R}$, an interval $I\subset\R$ and $2k$ maps,
  $\phi_j\colon I\to S^1$,
  each a homeomorphism to its image, such that the $2k$ periodic points of
  $\Per(a_sb)$ are precisely $\phi_1(s),\ldots,\phi_{2k}(s)$, for all
  $s\in I$. The set $\bigcap\phi_j^{-1}(\Per(d_{t_0}c))$ then has empty interior
  in $I$, hence there exists $s_0\in I$ such that
  $\Per(a_{s_0}b)\cap\Per(d_{t_0}c)=\emptyset$,
  and $\Per(b)\cap\Per(c)=\emptyset$ by
  Lemma~\ref{lem:PerCommuns}.
  We conclude by using Theorem~\ref{theo:PerDisjointsSk}.
\end{proof}

\begin{proof}[Proof of Proposition \ref{prop:SkPPTransitif}]
  Complete the $3$-chain into a $5$-chain, $(e,a,b,c,d)$, and apply
  Lemma~\ref{lem:SkPlusSk} to the $3$-chains $(a, b, c)$ and $(e, a, b)$
  to conclude $S_k(b,c)$ and $S_k(a,e)$.
  By Lemma~\ref{lem:PerLabiles}, we may then use a bending deformation of
  $a$ along $e$ to move the periodic set
  of $a$ disjoint from any finite set, so in particular
  $\Per(a) \cap \Per(c) = \emptyset$.
  Now take a positive one-parameter family $a_t$ commuting with $a$.
  Since $\Per(a) \cap \Per(c) = \emptyset$ the points $a_{-t}\Per(c)$ move
  continuously in $t$, so there is some $t$ such that 
  $b\Per(c)\cap a_{-t}\Per(c)=\emptyset$.
  Thus, $a_tb\Per(c)\cap\Per(c) = \emptyset$ hence by
  Lemma~\ref{lem:PerCommuns} $b\Per(c)\cap\Per(c) = \emptyset$.
  Thus, we conclude that $S_k^{+}(b,c)$ holds.
  By Lemma~\ref{lem:SkPlusSk}, this implies that $S_k(c,d)$ holds as well.
  In particular, $\Per(d)$ is finite. We can now apply
  Lemma~\ref{lem:PerLabiles} and use a bending deformation so that
  $\Per(a_t b)\cap\Per(d)=\emptyset$, which implies that
  $\Per(b)\cap\Per(d)=\emptyset$, and repeat the argument above
  (with $d$ and $c$ playing the roles of $a$ and $b$) to conclude
  $S_k^+(c,b)$ holds as well.
\end{proof}

Proposition~ \ref{prop:SkPPTransitif}, Theorem~\ref{thm:SkImpliqueGeom},
and the connectedness of the graph in Lemma~\ref{lem:BallonsConnexe2}
immediately gives the following.

\begin{corollary}\label{cor:TwoCurvesSuffice}
  Let $\rho$ be a path-rigid, minimal representation, and suppose there
  exists $(a,b)$ such that $S_k^{++}(a,b)$ holds. Then $\rho$ is geometric.
\end{corollary}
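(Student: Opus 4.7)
The plan is to combine the three ingredients exactly as the paper suggests. Let $(a_0,b_0)$ be the pair furnished by hypothesis, so $S_k^{++}(a_0,b_0)$ holds for some fixed $k \geq 1$. I want to propagate this property to every pair $(a,b) \in \Gamma_g^2$ with $i(a,b) = \pm 1$, and then invoke Theorem~\ref{thm:SkImpliqueGeom}.

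First I would argue that $S_k^{++}$ passes along an edge of the graph $G$ of Lemma~\ref{lem:BallonsConnexe2}. By definition of $G$, such an edge corresponds to a completable $3$-chain $(x,y,z)$, connecting the vertex $(x,y)$ to the vertex $(y,z)$. Proposition~\ref{prop:SkPPTransitif} states precisely that $S_k^{++}(x,y)$ implies $S_k^{++}(y,z)$ in this situation, with the \emph{same} integer $k$. Hence $S_k^{++}$ is an invariant property of connected components of $G$, and Lemma~\ref{lem:BallonsConnexe2} asserts that $G$ has only one such component. Starting from $(a_0,b_0)$ and walking along a path in $G$, I conclude that $S_k^{++}(a,b)$ (and in particular $S_k(a,b)$) holds for every pair $(a,b)$ with $i(a,b) = \pm 1$.

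By Definition~\ref{def:Sk}, this means $\rho$ satisfies property $S_k$ globally. Since by assumption $\rho$ is also path-rigid and minimal, Theorem~\ref{thm:SkImpliqueGeom} applies directly and yields that $\rho$ is geometric.

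There is no real obstacle to this argument; the work has already been done in setting up Lemma~\ref{lem:BallonsConnexe2} (connectedness of the graph of completable $3$-chains), Proposition~\ref{prop:SkPPTransitif} (propagation of $S_k^{++}$ along such chains with a fixed $k$), and Theorem~\ref{thm:SkImpliqueGeom} (global $S_k$ plus path-rigidity and minimality implies geometric). The only small point to be careful about is that the exponent $k$ must be preserved uniformly along the chain, but this is built into the statement of Proposition~\ref{prop:SkPPTransitif}.
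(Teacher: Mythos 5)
Your proof is correct and follows exactly the route the paper intends: the paper itself gives no written proof beyond asserting that Proposition~\ref{prop:SkPPTransitif}, Theorem~\ref{thm:SkImpliqueGeom}, and the connectedness of the graph in Lemma~\ref{lem:BallonsConnexe2} ``immediately give'' the corollary, and you have simply spelled out that deduction (the symmetry $S_k^{++}(a,b)\Leftrightarrow S_k^{++}(b,a)$ lets you traverse edges of $G$ in either direction, so the propagation is genuinely along the whole connected graph).
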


This consequence is strong enough to imply the main result of the companion
article \cite{AuMoinsG}. We explain this now, as it will be used again in
Section~\ref{sec:Fin}.
\begin{corollary}\label{cor:AuMoinsG}
Let $\rho$ be a path-rigid, minimal representation, and suppose that there is some torus $T(a,b)$ such that the relative Euler number of $T(a,b)$ is $\pm 1$.  Then $\rho$ is semi-conjugate to a Fuchsian representation.
\end{corollary}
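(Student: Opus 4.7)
The plan is to apply Corollary \ref{cor:TwoCurvesSuffice}, which reduces the question to exhibiting a single pair $(a', b')$ with $i(a', b') = \pm 1$ satisfying $S_k^{++}(a', b')$ for some $k \geq 1$. We will verify $S_k^{++}(a,b)$ for the given pair, and then use Theorem \ref{thm:Pants} to force $k = 1$, giving the desired Fuchsian conclusion.

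Since path-rigidity, the relative Euler number on $T(a,b)$, and the property ``semi-conjugate to a Fuchsian representation'' are all invariant under semi-conjugacy (Proposition \ref{prop:ChiSemiConj}), we may replace $\rho$ by its minimal semi-conjugate model and assume $\rho$ is minimal. The hypothesis $\eu_T(\rho) = \pm 1$ is exactly the Milnor--Wood bound for the one-holed torus, i.e., $\rho|_{\pi_1 T(a,b)}$ is a maximal representation.

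The critical step is to invoke Matsumoto's rigidity in this relative setting: $\rho|_{\pi_1 T}$ is semi-conjugate to a discrete faithful Fuchsian representation $\rho_0 \colon \pi_1 T \to \PSL$. This can be extracted from the Basic Configuration machinery of \cite{Matsumoto16} (which already underpins the proof of Theorem \ref{thm:SkImpliqueGeom} in Section \ref{subsec:Matsumoto}), or alternatively by doubling $T$ along its boundary to reduce to classical Matsumoto \cite{Matsumoto87}. Let $h \colon S^1 \to S^1$ denote the corresponding monotone equivariant map. In $\rho_0$, the elements $\rho_0(a)$ and $\rho_0(b)$ are hyperbolic with disjoint, alternating fixed point sets; moreover $\rho_0(a)\Fix(\rho_0(b)) \cap \Fix(\rho_0(b)) = \emptyset$ (a hyperbolic $\beta \in \PSL$ preserving $\Fix(\alpha)$ would have to share both fixed points with $\alpha$, violating freeness of the image), and symmetrically. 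Each such disjointness condition pulls back to $\rho$ via $h$: if for example $x \in \Per(\rho(b))$ with $\rho(a)(x) \in \Per(\rho(b))$, then applying $h$ produces $h(x), \rho_0(a)(h(x)) \in \Per(\rho_0(b))$, a contradiction. In particular $\Per(\rho(a)) \cap \Per(\rho(b)) = \emptyset$, and so Theorem \ref{theo:PerDisjointsSk} yields $S_k(a,b)$ for some $k \geq 1$; the translate-disjointness conditions just established then upgrade this to $S_k^{++}(a,b)$.

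Corollary \ref{cor:TwoCurvesSuffice} now implies $\rho$ is geometric. To pin down $k$, note that by Theorem \ref{thm:Pants} every pair of pants $P \subset \Sigma_g$ contributes Euler number $\pm 1/k$. Cutting $T(a,b)$ along the non-separating curve $a$ yields a pair of pants $P$ whose boundary components are two oppositely oriented copies of $a$ and $[a,b]^{-1}$; the formula in Definition \ref{def:Euler} then gives $|\eu_P(\rho)| = |\eu_T(\rho)| = 1$, so $1/k = 1$ and $k=1$. Thus $\rho$ is geometric with $k=1$, i.e., semi-conjugate to a Fuchsian representation. The principal obstacle is the first invocation of Matsumoto-type rigidity for a one-holed torus with maximum relative Euler class; while the classical statement concerns closed surfaces, its adaptation to the bordered setting should mirror the Basic Configuration argument of Section \ref{subsec:Matsumoto}.
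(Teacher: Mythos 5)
Your proof is correct and follows essentially the same route as the paper's: both deduce from $|\eu_{T(a,b)}(\rho)|=1$ and Matsumoto's work that $\rho|_{\langle a,b\rangle}$ is semi-conjugate to a Fuchsian (equivalently, $1$-Schottky) pair, upgrade this to $S_k^{++}(a,b)$, and conclude via Corollary~\ref{cor:TwoCurvesSuffice}. The paper is just terser, citing the $1$-Schottky property directly from \S 3 of \cite{Matsumoto16} and thereby getting $k=1$ immediately, whereas you recover $k=1$ at the end via Theorem~\ref{thm:Pants}; note also that one could read off $k=1$ already from the fact that $m(\rho)=2$ for the semi-conjugacy class of the restriction.
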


\begin{proof}
Since $T(a,b)$ has Euler number 1, it follows form \cite{Matsumoto16} that the restriction of $\rho$ to $\langle a, b \rangle$ is semi-conjugate to a geometric representation in $\PSL$.  (This is not difficult: that $\rotild([\widehat{\rho(a)}, \widehat{\rho(b)}] = \pm1$ easily implies that $\rho(a)$ and $\rho(b)$ are 1-Schottky, hence are semi-conjugate to a geometric representation in $\PSL$. See the beginning of $\S$3 in \cite{Matsumoto16}.)    In particular, property $S_1^{++}(a,b)$ holds, and Corollary \ref{cor:TwoCurvesSuffice} implies that $\rho$ is geometric.
\end{proof}

Given Corollary \ref{cor:TwoCurvesSuffice}, the main goal of this section reduces to the following.
\begin{proposition}\label{prop:PasDeuxBons}
  Let $(a,b,c,d,e)$ be a $5$-chain, and 
  suppose that $P(a,b) = P(e,d) = \emptyset$. Then we have $S_k^{++}(b,c)$.
\end{proposition}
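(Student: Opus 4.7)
The plan is to apply Proposition~\ref{prop:DeuxAlorsQuatre} in both forward and reversed forms to obtain $S_k(b,c)$ and $S_k(c,d)$ for the same integer $k\geq 1$ (equal because both assert $|\Per(c)|=2k$), and then to prove the two halves of $S_k^{++}(b,c)$ separately by bending arguments along the two sides of the 5-chain. Combined with Lemma~\ref{lem:ProprietesPNU}(1), $S_k(b,c)$ and $S_k(c,d)$ also give $P(b,c)=P(d,c)=\emptyset$ (each contained in $\Per(c)\cap\partial\Per(b)=\Per(c)\cap\Per(b)$ or $\Per(c)\cap\Per(d)$, both empty by the alternation).

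As a preliminary I would show $\Per(b)\cap\Per(d)=\emptyset$: using $P(a,b)=\emptyset$ and the argument of Lemma~\ref{lem:PerLabiles} with $F=\Per(d)$ (finite) to bend $b$ along $a$ so that its new periodic set avoids $\Per(d)$, and then Lemma~\ref{lem:PerCommuns} to transport the disjointness back to $\rho$. In particular $\Per(b)\cap\partial\Per(d)=\emptyset$. To prove $S_k^+(c,b)$, let $d_t$ be a positive one-parameter family commuting with $d$ and consider the bending $\rho\to\rho_t$ along $d$ that replaces $c$ with $d_{-t}c$ (while $a$ and $b$ are unchanged). Then $\Per(\rho_t(cbc^{-1}))=d_{-t}c\Per(b)$, so the semi-conjugacy invariant $\Per(\rho_t(b))\cap\Per(\rho_t(cbc^{-1}))=\emptyset$ is equivalent to $d_t\Per(b)\cap c\Per(b)=\emptyset$. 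Since $\Per(b)\cap\partial\Per(d)=\emptyset$, $d_t$ moves each point of the finite set $\Per(b)$ monotonically while the finite set $c\Per(b)$ is fixed, so only finitely many $t$ are bad; for any other $t$, Lemma~\ref{lem:PerCommuns} yields $c\Per(b)\cap\Per(b)=\emptyset$ in $\rho$.

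For the second half $S_k^+(b,c)$, the dual recipe uses a positive one-parameter family $a_t$ commuting with $a$ to bend $b$ along $a$ and seek $t$ with $\Per(c)\cap a_tb\Per(c)=\emptyset$. The analysis goes through as soon as $\Per(c)\cap\partial\Per(a)=\emptyset$, but unlike $\Per(d)$, the set $\Per(a)$ is not known to be finite. To arrange this disjointness I would first perform a preliminary bending of $c$ along $d$ (allowed by $P(d,c)=\emptyset$) into a deformation $\rho'$ with $\rho'(c)=c'$ satisfying $\Per(c')\cap\partial\Per(a)=\emptyset$; the existence of a good bending parameter relies on the continuous parametrization of $\Per(\rho_t(c))$ from Lemma~\ref{lem:ProprietesPNU}(3), together with the fact that $U(d,c)$ is a finite union of open intervals (since $\partial N(d,c)$ is finite by Proposition~\ref{prop:dNinP}). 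Once $\rho'$ is in hand, the preceding paragraph's argument runs verbatim, and two invocations of Lemma~\ref{lem:PerCommuns} --- first from the second deformation back to $\rho'$, then from $\rho'$ to $\rho$ --- yield $b\Per(c)\cap\Per(c)=\emptyset$. The main technical obstacle, and where I expect the proof to require the most care, is justifying the existence of the preliminary good bending parameter when $\partial\Per(a)$ is possibly uncountable.
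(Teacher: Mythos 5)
Your proof of $S_k^+(c,b)$ is essentially the paper's: obtain $S_k(b,c)$ and $S_k(c,d)$ from Proposition~\ref{prop:DeuxAlorsQuatre} applied in both directions, establish $\Per(b)\cap\Per(d)=\emptyset$ by a bending plus Lemma~\ref{lem:PerCommuns}, and then separate $c\Per(b)$ from $\Per(b)$ by bending $c$ along $d$, where finiteness of $\Per(b)$ together with $\Per(b)\cap\Per(d)=\emptyset$ leaves only finitely many bad parameters. That half is correct.

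The second half has the gap you yourself flag, and as written it is fatal. To arrange $\Per(c')\cap\partial\Per(a)=\emptyset$ by bending $c$ along $d$, the set of bad parameters is $T_{d,c}\bigl(\partial\Per(a)\cap U(d,c)\bigr)$. You only know that $T_{d,c}$ is continuous and that $U(d,c)$ is a finite union of intervals; a continuous map can send a Cantor set (which $\partial\Per(a)$ may well be) onto an entire interval of parameters, so nothing guarantees a good $t$ exists. The paper sidesteps this with a step you are missing: from $S_k^+(c,b)$, Lemma~\ref{lem:SkPlusSk} applied to the $3$-chain $(c,b,a)$ yields $S_k(a,b)$, so $\Per(a)$ is a \emph{finite} set of $2k$ points. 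Bending $c$ along $d$ off this finite set is then routine (only countably many bad $t$, exactly as in Lemma~\ref{lem:PerLabiles}, using $P(d,c)=\emptyset$), giving $\Per(a)\cap\Per(c)=\emptyset$. With that in hand, the first-half argument repeats verbatim for the pair $(b,c)$ bent along $a$: any point of $b\Per(c)$ that $a_t$ fails to move lies in $\partial\Per(a)\subset\Per(a)$, hence is automatically outside $\Per(c)$, and every other point contributes at most one bad $t$. So the missing idea is precisely the use of Lemma~\ref{lem:SkPlusSk} to convert $S_k^+(c,b)$ into finiteness of $\Per(a)$; no disjointness from the possibly uncountable set $\partial\Per(a)$ ever needs to be engineered.
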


\begin{proof}
  Suppose $P(a,b)=P(e,d)=\emptyset$.  By Proposition~\ref{prop:DeuxAlorsQuatre},
  we have $S_k(b,c)$ and $S_k(c,d)$ for some $k\geq 1$.
  Since $P(e,d)=\emptyset$ and $\Per(b)$ is finite, we have a bending
  deformation $e_td$ such that $\Per(b)\cap\Per(e_td)=\emptyset$, hence
  $\Per(b)\cap\Per(d)=\emptyset$.
  Hence, $\Per(b)\cap d_tc\Per(b)=\emptyset$ for some $t$, so we have
  $\Per(b)\cap c\Per(b)=\emptyset$, ie,
  $S_k^+(c,b)$ holds.
  By Lemma~\ref{lem:SkPlusSk}, this gives $S_k(a,b)$.
  In particular, $\Per(a)$ is finite, and so there exists a bending
  deformation replacing $c$ with $d_t c$ such that
  $\Per(a)\cap\Per(d_t c)=\emptyset$, and hence
  $\Per(a)\cap\Per(c)=\emptyset$.
  Repeating the argument above, we conclude $S_k^+(b,c)$ holds.
\end{proof}

The main result of this section is now a quick corollary.  We restate it here for convenience and to summarize our work.  

\begin{corollary}\label{cor:PasDeuxBons}
  Let $\rho$ be a path-rigid, minimal representation. Suppose $\rho$ admits
  two disjoint good tori that are not very good. Then $\rho$ is geometric.
\end{corollary}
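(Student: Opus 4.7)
The plan is to combine Proposition~\ref{prop:PasDeuxBons} with Corollary~\ref{cor:TwoCurvesSuffice}, exploiting the observation that ``not very good'' translates directly into a statement about the sets $P(\cdot,\cdot)$ being empty, provided the rotation number of one generator is zero. So the whole corollary amounts to checking hypotheses.

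First, since $T_1$ is good, I would choose standard generators $(a,b)$ of $\pi_1 T_1$ with $\rot(\rho(a))=0$; such a generator exists precisely because $T_1$ contains a nonseparating simple loop of rotation number zero, and any such loop can be completed to a standard generating pair by choosing any simple closed curve intersecting it once. Similarly, choose $(d,e)$ with $\rot(\rho(e))=0$ for $T_2$. Because $T_1$ and $T_2$ are disjoint subsurfaces of $\Sigma_g$, their boundary circles can be joined by a simple arc in the complement, and, using paths from the basepoint into each torus, this arc produces a based simple loop $c$ such that $(a,b,c,d,e)$ is a $5$-chain in the sense of Section~\ref{ssec:Marquage}.

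Next I would verify $P(a,b)=\emptyset$: since $\rot(\rho(a))=0$, Lemma~\ref{lem:ProprietesPNU}(1) tells us that every element of $P(a,b)$ has a finite orbit under $\langle\rho(a),\rho(b)\rangle=\rho(\pi_1 T_1)$; but $T_1$ is assumed not very good, so $\pi_1 T_1$ has no finite orbit on $S^1$ and $P(a,b)$ is empty. The symmetric argument, using $\rot(\rho(e))=0$ and that $T_2$ is not very good, gives $P(e,d)=\emptyset$. Proposition~\ref{prop:PasDeuxBons} now yields $S_k^{++}(b,c)$ for some $k\geq 1$, and Corollary~\ref{cor:TwoCurvesSuffice} concludes that $\rho$ is geometric.

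The main obstacle in assembling this argument is the construction of a simple based loop $c$ realizing the $5$-chain $(a,b,c,d,e)$. For $g\geq 3$ there is plenty of room in the complement of $T_1\cup T_2$ to draw the required graph. For $g=2$, where the complement of $T_1\cup T_2$ is an annulus, a single arc through this annulus (together with paths to the basepoint) still produces a curve $c$ satisfying the chain condition; one checks the cyclic order of tangent vectors at the basepoint matches the one required in Section~\ref{ssec:Marquage}. Nothing else in the argument is more than a direct invocation of the results proved earlier in the section.
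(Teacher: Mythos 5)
Your proof is correct and follows essentially the same route as the paper: pick generators with $\rot(a)=\rot(e)=0$, deduce $P(a,b)=P(e,d)=\emptyset$ from ``not very good'' via Lemma~\ref{lem:ProprietesPNU}(1), form the $5$-chain $(a,b,c,d,e)$, and invoke Proposition~\ref{prop:PasDeuxBons} followed by Corollary~\ref{cor:TwoCurvesSuffice}. The extra detail you supply on constructing $c$ and on why $P(a,b)=\emptyset$ is exactly the implicit content of the paper's two-line argument.
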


\begin{proof}
  Let $T(a,b)$ and $T(d,e)$ be two disjoint good tori. Since they
  are good, we may suppose $\rot(a)=\rot(e)=0$. Since they are not very
  good, we have $P(a,b)=\emptyset$ and $P(e,d)=\emptyset$. We may
  find a curve $c$ such that $(a,b,c,d,e)$ is a $5$-chain, and then
  Proposition~\ref{prop:PasDeuxBons} and Corollary~\ref{cor:TwoCurvesSuffice}
  imply that $\rho$ is geometric.
\end{proof}


\subsection{Finite orbits}\label{ssec:OrbitesFinies}

The goal of this section is the proof of the following proposition.
\begin{proposition}\label{prop:OrbiteFinie}
  Let $\rho\colon\Gamma_g\to\HOS$ be a path-rigid representation, and let
  $\Sigma=\Sigma_{g-1,1}$ be a subsurface containing only very good tori.
  Then $\rho_{|\pi_1\Sigma}$ has a finite orbit.
\end{proposition}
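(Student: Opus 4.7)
I plan to prove this by induction on $g' := g - 1$, the genus of the subsurface $\Sigma = \Sigma_{g',1}$. The base case $g' = 1$ is immediate: $\Sigma$ is itself a very good one-holed torus by hypothesis. For the inductive step with $g' \geq 2$, I would realize $\Sigma$ as a boundary connected sum of $\Sigma_{g'-1, 1}$ with a one-holed torus $T = T(a_0, b_0)$, yielding a free product decomposition $\pi_1 \Sigma = \pi_1 \Sigma_{g'-1, 1} * \pi_1 T$ via Seifert--van Kampen. Since every torus in $\Sigma_{g'-1, 1}$ is also a very good torus in $\Sigma$, the inductive hypothesis provides a finite orbit $F \subset S^1$ for $\pi_1 \Sigma_{g'-1, 1}$. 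The proposition then reduces to showing that $\rho(\pi_1 T)$ preserves $F$ as a set, which combined with $\pi_1 \Sigma_{g'-1,1}$-invariance of $F$ gives the desired conclusion.

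The heart of the argument is to establish an analog of Lemma~\ref{lem:FixBad} adapted to the very good case: if $T'$ is a very good one-holed torus with finite $\pi_1 T'$-orbit $F'$, and $\gamma$ is a non-separating simple closed curve outside $T'$ with $\rot(\rho(\gamma)) = 0$, then $F' \subset \Fix(\rho(\gamma))$. Choosing generators so that $\rot(\rho(a_0))=0$ (possible because $T$ is very good, hence good), and applying this analog with $T' = T$ for $\gamma$ ranging over zero-rotation simple closed curves in $\Sigma_{g'-1,1}$, would show that $F \subset \Fix(\rho(a_0))$. A symmetric application with the roles of $T$ and $\Sigma_{g'-1,1}$ swapped, using zero-rotation simple closed curves $\gamma \in \pi_1 T$ and finite orbits of $\pi_1 \Sigma_{g'-1,1}$, would force $F$ to be preserved by $\rho(b_0)$ as well: explicitly, since $\rho(a_0)$ fixes $F$, any finite $\pi_1 T$-orbit in the $\rho(b_0)$-saturation of $F$ is controlled, and pairing with zero-rotation curves in $\Sigma_{g'-1,1}$ forces compatibility.

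The main obstacle is proving this key analog. The strategy parallels Lemma~\ref{lem:FixBad} and Proposition~\ref{prop:PasDeuxBons}: assume for contradiction that $\rho(\gamma)$ fails to fix some $x \in F'$. Choose a positive 1-parameter family $\gamma_t$ commuting with $\rho(\gamma)$, and embed the curves into a $5$-chain of the form $(a, b, \delta, \gamma, \epsilon)$ where $T' = T(a, b)$. A bending along $\gamma$ (which modifies $\rho(\delta)$ and $\rho(\epsilon)$ but leaves $\rho(a), \rho(b), \rho(\gamma)$ unchanged) produces a path of representations $\rho_t$; tracking the variation of rotation numbers of compositions such as $\rho_t(\delta^N a)$ or $\rho_t(\epsilon^N \gamma)$, and using that $F'$ must be displaced by the deformation to remain consistent with path-rigidity, yields a contradiction. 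The subtle points are: controlling the motion of the finite periodic set $F'$ under $\rho_t$, for which the persistence sets $P(\cdot,\cdot), N(\cdot,\cdot), U(\cdot,\cdot)$ of Section~\ref{ssec:P} and Lemma~\ref{lem:ProprietesPNU}(4) on convergence of $\Per$ to $P \cup \partial N$ will be the essential tools; and ensuring that $\rho(\gamma)$ has enough local dynamics near $x$ to permit the required displacement, which, unlike in the bad-torus case where a sequence $(a_k, b_k)$ of generators with decreasing rotation numbers supplied the rich structure, must here be extracted directly from the finite orbit and from Proposition~\ref{prop:RatRot}.
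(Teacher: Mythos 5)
There is a genuine gap, and it sits exactly where you place the ``main obstacle'': the proposed analog of Lemma~\ref{lem:FixBad} for very good tori is not provable with the tools at hand, and is stronger than what is true. The proof of Lemma~\ref{lem:FixBad} runs entirely on the domination machinery of Section~\ref{ssec:BadTori}: one squeezes $\widehat{\gamma}$ between canonical lifts $\widehat{a_k}$ and $\widecheck{\alpha}$ coming from the sequence of Proposition~\ref{prop:SmallRot}, and every domination there is extracted from Observation~\ref{obs:VersRotZero} and Lemma~\ref{lem:OneBad}, which require the relevant simple closed curves to have \emph{nonzero} rotation number --- this is precisely what ``bad'' buys you. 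In a very good torus almost everything in sight has rotation number zero, so no domination is available, and there is no mechanism forcing the \emph{given} finite orbit $F'$ of $\langle a,b\rangle$ to sit inside $\Fix(\rho(\gamma))$. Your fallback --- bend along $\gamma$ and argue that ``$F'$ must be displaced to remain consistent with path-rigidity'' --- does not close the gap: path-rigidity only says the deformed representation is semi-conjugate to the original, and semi-conjugacy does not pin down the \emph{location} of periodic sets, so no contradiction is produced. The same vagueness infects the step where you want $\rho(b_0)$ (whose rotation number need not be zero) to preserve $F$.

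What the paper does instead is prove a weaker statement that suffices: it never shows that the original finite orbit of a subgroup is fixed by the next curve, but rather that \emph{some} finite orbit of the subgroup survives into the fixed set of the next curve. Concretely, Proposition~\ref{prop:SiBonTresBon} first uses Dehn twists (acting on the vector of rotation numbers) to produce a standard chain $(a_1,\delta_1,a_2,\ldots,a_{g-1},b_{g-1})$ in which all curves have rotation number zero; Proposition~\ref{prop:Promenade} then produces, for a pair with zero rotation numbers, a one-parameter deformation under which the fixed points away from $P(a,b)$ are finite in number and move \emph{continuously}; and Proposition~\ref{prop:MagicProposition} propagates finite orbits along the chain by deforming $\gamma_{k-1}$ so that its fixed points meeting $\Per(a)$ are pushed into a $1/n$-neighborhood of $\Fix(\gamma_{k-1})\cap\Fix(\gamma_k)$, invoking path-rigidity only to guarantee each deformed representation still has a finite orbit, and finally extracting a limit orbit. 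This ``move continuously and take a limit'' device is the missing idea in your proposal; if you replace your key lemma by this propagation-along-chains argument, the induction you envision becomes essentially the paper's proof.
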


If $T(a,b)$ is very good, then $a$ and $b$ act with a finite orbit, so $\rot(ab) = \rot(a) + \rot(b)$.  Thus, in a subsurface where all tori are very good, rotation number is additive on any pair of curves with intersection number $\pm 1$.  This motivates the following proposition, which gives our first step.  

\begin{proposition}\label{prop:SiBonTresBon}
  Let $\Sigma$ be a one-holed surface of genus $\geq 2$.
  Suppose that $\pi_1\Sigma$ acts on the circle in such a way that
  all nonseparating simple curves have rational rotation number, and
  that for all $\gamma_1$, $\gamma_2$ with $i(\gamma_1,\gamma_2)=\pm 1$,
  we have $\rot(\gamma_1\gamma_2)=\rot(\gamma_1)+\rot(\gamma_2)$.
  
  Then, there exist two curves $\gamma_1$, $\gamma_2$ with
  $i(\gamma_1,\gamma_2)=\pm 1$ and $\rot(\gamma_1)=\rot(\gamma_2)=0$.
\end{proposition}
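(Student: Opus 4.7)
The plan has three stages. First, normalize the standard generators $(a_1,b_1,\ldots,a_g,b_g)$ of $\pi_1\Sigma$ so that $\rot(a_i)=0$ for each $i$. On every torus $T_i=T(a_i,b_i)$, iterating the 2-chain additivity hypothesis inside $T_i$ yields $\rot(a_i^p b_i^q) = p\,\rot(a_i) + q\,\rot(b_i)$ for every coprime pair $(p,q)$ (by induction, using that $(a_i^{p-1}b_i^q, a_i)$ and $(a_i^p b_i^{q-1}, b_i)$ are 2-chains in $T_i$). Since the rotations are rational, a Bezout argument produces a primitive $(p_i,q_i)$ with $p_i\rot(a_i)+q_i\rot(b_i) \equiv 0 \pmod \Z$; replacing $a_i$ with $a_i^{p_i}b_i^{q_i}$ and $b_i$ with a primitive dual element of $T_i$ yields a new standard basis of $T_i$ whose first element has rotation zero. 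Write $\beta_i := \rot(b_i)$ after this normalization.

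Second, I would show that for every nonseparating simple curve $\gamma$, the rotation $\rot(\gamma)$ depends only on $[\gamma] \in H_1(\Sigma;\Z) \cong \Z^{2g}$, and equals $F([\gamma])$, where $F$ is the linear extension of $F(a_i)=0$, $F(b_i)=\beta_i$. For the ``connecting'' curves $\delta_i = a_{i+1}^{-1}b_{i+1}a_{i+1}b_i^{-1}$ from the chain $(a_1,\delta_1,a_2,\ldots,a_g,b_g^{-1})$ of Section~\ref{ssec:Marquage}, this follows by combining the algebraic identity $\delta_i\cdot b_i = a_{i+1}^{-1}b_{i+1}a_{i+1}$ (whose rotation equals $\beta_{i+1}$ by conjugation-invariance of $\rot$ in $\HOS$) with the 2-chain additivity applied to appropriate based-homotopic representatives of $(\delta_i,b_i)$, giving $\rot(\delta_i)=\beta_{i+1}-\beta_i$, which matches $F([b_{i+1}]-[b_i])$. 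For a general simple nonseparating curve $\gamma$, use the connectedness of the pair graph $G$ of Lemma~\ref{lem:BallonsConnexe2} to join an arbitrary 2-chain containing $\gamma$ to the standard pair $(a_1,b_1)$ through a sequence of 3-chain moves; at each move, 2-chain additivity together with the analogous algebraic identities propagates the relation $\rot = F\circ[\cdot]$ from the initial pair to the final one.

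Third, the problem reduces to finding primitive $v_1,v_2 \in \ker F \subset \Z^{2g}$ with symplectic intersection $\omega(v_1,v_2) = \pm 1$. Since $g \geq 2$ and $F$ has finite image, $\ker F$ is a finite-index sublattice of rank at least $4$, and such a symplectic pair can always be built by an explicit Bezout construction spanning two different handles of $\Sigma$. By the change-of-coordinates principle for one-holed surfaces, $(v_1,v_2)$ is realized by a pair of based simple nonseparating curves $(\gamma_1,\gamma_2)$ with $i(\gamma_1,\gamma_2)=\pm 1$, and by the second step $\rot(\gamma_i)=F(v_i)=0$, completing the proof.

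The main obstacle I expect is the second stage: promoting 2-chain additivity into a genuine homomorphism on homology. The delicate point is handling based curves that, while freely or based homotopic, do not literally form 2-chains in the paper's sense (e.g.\ pairs that share a tangent vector at the basepoint). There one must choose based-homotopic representatives carefully and exploit conjugation-invariance of rotation in $\HOS$ together with the chain-additivity argument used to compute $\rot(\delta_i)$.
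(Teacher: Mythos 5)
Your Stage 2 is a genuine gap, and the architecture of the proof collapses without it. The claim that $\rot(\gamma)=F([\gamma])$ for \emph{every} nonseparating simple closed curve is not justified by the proposed ``propagation along the pair graph,'' and the propagation argument is circular: given a $3$-chain $(a,b,c)$ with $\rot(a)$ and $\rot(b)$ known, the only new information the hypothesis supplies is $\rot(bc)=\rot(b)+\rot(c)$, an identity in which the unknown $\rot(c)$ appears on both sides. Nothing determines $\rot(c)$ from $\rot(a)$, $\rot(b)$ and $[c]$; two homologous but non-isotopic nonseparating curves (e.g.\ $b_1$ and its image under a twist about a separating curve) are not conjugate in $\pi_1\Sigma$, and the hypotheses give no reason for their rotation numbers to agree. (The difficulty you flag at the end --- based versus free representatives --- is not the real obstruction.) Since Stage 3 only produces homology classes in $\ker F$, without Stage 2 you cannot conclude that the curves realizing $v_1,v_2$ have rotation number zero. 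There is also a smaller slip in Stage 1: $(a^{p-1}b^q,a)$ need not be a $2$-chain since $\gcd(p-1,q)$ can exceed $1$ (e.g.\ $(p,q)=(3,2)$); the correct induction goes through continued fractions, i.e.\ Dehn twists in the one-holed torus.

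The fix is to avoid any global homological statement and only ever evaluate $\rot$ on curves you explicitly construct, which is what the paper does. Fix the single (non-completable) directed $5$-chain $(a_1^{-1}b_1a_1,\,a_1,\,\delta_1,\,a_2,\,b_2^{-1})$ and perform Dehn twists along its curves: twisting along $\gamma_i$ replaces $\gamma_{i\pm 1}$ by $\gamma_i^{\mp 1}\gamma_{i\pm1}$ (up to the paper's conventions), and since adjacent curves in a chain have $i=\pm1$, the additivity hypothesis computes the new rotation numbers as $r_{i\pm1}\pm r_i$. Dehn twists preserve chains, so the hypothesis keeps applying after each move, and the whole proposition reduces to the arithmetic exercise of driving a vector in $(\Q/\Z)^5$ to one of the form $(0,0,r_3,r_4,r_5)$ by these transvection-like operations --- essentially the Euclidean algorithm, using rationality of the entries. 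The first two curves of the resulting chain are then the desired $\gamma_1,\gamma_2$. Your Stage 1 and the Bezout/symplectic intuition of Stage 3 are the right instincts, but they must be run \emph{inside one chain via twists}, not through an unproven homomorphism on $H_1$.
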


\begin{proof}
  Let
  $(a_1,\ldots,b_g)$ be a standard generating set of $\pi_1\Sigma$,
  and consider the non-completable directed $5$-chain
  $(\gamma_1,\gamma_2,\gamma_3,\gamma_4,\gamma_5)=
  (a_1^{-1}b_1a_1,a_1,\delta_1,a_2,b_2^{-1})$, with the notation of
  Section~\ref{ssec:Marquage}.  
  
  Let $r_i = \rot(\gamma_i)$ and let $\tau_i$ denote the map on rotation numbers induced by the Dehn twist along 
  $\gamma_i$.  Then $\tau_i(r_1,r_2,r_3,r_4,r_5) = (r_1',\ldots,r_5')$ 
  where $r_{i-1}'=r_{i-1}-r_i$ and $r_{i+1}'=r_{i+1}+r_i$, and $r_j'=r_j$.  
  As Dehn twists preserve chains, the proof of the proposition is reduced to showing that 
  the operations $\tau_i$ can be iterated to transform any vector in $(\Q/\Z)^5$ to a vector of the form $(0,0,r_3,r_4,r_5)$.
    This is a straightforward exercise (and should be familiar to anyone familiar with the symplectic group $\mathrm{Sp}(2g,\Z)$); we leave the details to the reader.   \end{proof}

Proposition \ref{prop:SiBonTresBon} is useful because it is much easier
to control the dynamics of two curves if their rotation numbers are zero, as in the next Proposition.
\begin{proposition}\label{prop:Promenade}
  Suppose $\rot(a)=\rot(b)=0$. Then for every $\varepsilon>0$, there exists a one-parameter family
  $(a_t)_{t \in \R}$ commuting with $a$, an interval $J \subset \R$, and a finite collection of homeomorphisms 
  $\phi_i: J \to S^1$ with disjoint images, such that 
   for all $t\in J$, 
  \[ \Fix(a_tb)\cap\left(S^1\smallsetminus V_\varepsilon(P(a,b))\right)
    =\lbrace \phi_1(t),\cdots,\phi_n(t) \rbrace. \]  
\end{proposition}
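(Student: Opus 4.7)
The plan is to reduce the statement to a claim about level sets of a continuous function $T_{a,b}$ on a finite disjoint union of open intervals, then find a value $t_0$ whose preimage is finite, and finally use strict monotonicity of the lifted fixed-point equation in $t$ to parametrize the nearby fixed points by continuous paths.

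I would first take a positive one-parameter family $(a_t)_{t\in\R}$ commuting with $a$ from Lemma~\ref{lem:PosDeform}, and set $E := S^1 \smallsetminus V_\varepsilon(P(a,b))$. Since the accumulation points of $\partial N(a,b)$ lie in $P(a,b) \subset V_\varepsilon(P(a,b))$ by Proposition~\ref{prop:dNinP}, the set $\partial N(a,b) \cap E$ is finite. Combined with $P(a,b) \cap E = \emptyset$, this gives a decomposition $U(a,b) \cap E = I_1 \sqcup \cdots \sqcup I_M$ into finitely many open intervals, on each of which $T := T_{a,b}$ is continuous by Lemma~\ref{lem:ProprietesPNU}(3). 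Moreover, the lifted equation $\Phi(x,t) := \widehat{a_t}\widehat{b}(\tilde x) - \tilde x$ is strictly increasing in $t$ for each $x \in U \cap E$, since $a_t$ is positive and $\widehat b(\tilde x) \notin \partial\Fix(\widehat a)$ (else $x$ would lie in $P$); so any fixed point of $a_{t_0}b$ lying in $E$ extends uniquely and continuously to a fixed point of $a_t b$ for $t$ in a neighborhood of $t_0$.

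The crux of the argument is to find $t_0 \in \R$ and $\delta > 0$ such that $T^{-1}([t_0-\delta, t_0+\delta]) \cap E$ is a finite disjoint union of compact intervals $K_1, \ldots, K_n$, each mapped homeomorphically onto $[t_0-\delta, t_0+\delta]$ by $T$. Once this is established, one sets $J := [t_0-\delta, t_0+\delta]$ and $\phi_i := (T|_{K_i})^{-1}\colon J \to S^1$, obtaining the required homeomorphisms with pairwise disjoint images such that $\Fix(a_t b) \cap E = \{\phi_1(t),\ldots,\phi_n(t)\}$ for all $t \in J$. The main obstacle is precisely this finiteness step, since for a general continuous $T$ level sets can be Cantor-like. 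To overcome it, I expect one must refine the family $(a_t)$ beyond the one-parameter group provided by Lemma~\ref{lem:PosDeform}, in the spirit of the construction in Lemma~\ref{lemma:PerContPath}: modify $a_t$ within the centralizer of $a$ on components of $S^1 \smallsetminus \partial\Fix(a)$ so that $T$ becomes piecewise monotone with only finitely many critical points on each $I_j$. A Sard-type argument, or a direct intermediate-value computation in the spirit of Lemma~\ref{lem:EHN}, then yields a regular $t_0$ outside a set of finite Lebesgue measure, completing the proof.
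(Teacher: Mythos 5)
Your setup is correct and you have located the real difficulty honestly, but the proof stops exactly where the work begins. Two concrete problems. First, the intermediate claim that a fixed point of $a_{t_0}b$ in $E$ ``extends uniquely and continuously'' to nearby $t$ is false for a generic positive one-parameter group: strict monotonicity of $\delta_{a,b}(x,\cdot)$ in $t$ gives continuity of $T_{a,b}$ on $U(a,b)$ (Lemma~\ref{lem:ProprietesPNU}(3)), but says nothing about the level sets $T_{a,b}^{-1}(t)$ --- if $T_{a,b}$ has a local extremum at $x_0$ with value $t_0$, the fixed point disappears for $t$ on one side of $t_0$ and doubles on the other, and for a merely continuous $T_{a,b}$ the level sets over a positive-measure set of values can be uncountable. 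Second, and decisively, the finiteness step is deferred to ``I expect one must refine the family $(a_t)$ \dots\ a Sard-type argument then yields a regular $t_0$.'' That refinement \emph{is} the content of the proposition; making $T_{a,b}$ piecewise monotone with finitely many critical points is not something one gets for free, and no mechanism for doing it is given. (A smaller slip: if $b(x)\in\partial\Fix(a)$ but $x\notin\Fix(b)$ then $x\in N(a,b)$, not $P(a,b)$; the conclusion that $\widehat{b}(\tilde x)\notin\partial\Fix(\widehat a)$ for $x\in U(a,b)$ is still right, but for the reason that $\delta_{a,b}(x,\cdot)$ would otherwise be constant and could not have a unique zero.)

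The paper's proof exploits $\rot(a)=\rot(b)=0$ far more than your argument does. In that case $U(a,b)=\bigcup_I\bigl(I\cap b^{-1}(I)\bigr)$, where $I$ runs over the connected components of $S^1\smallsetminus\partial\Fix(a)$; each such $I$ is $a$-invariant, so the family $a_t$ can be designed \emph{independently on each component}, and only finitely many components carry fixed points outside $V_\varepsilon(P(a,b))$ (the endpoints of $I\cap b^{-1}(I)$ lie in $\partial N(a,b)\cup P(a,b)$, so Lemma~\ref{lem:ProprietesPNU} applies). On each relevant $I$ one then does a case analysis on the position of $b^{-1}(I)$ relative to $I$: if they share an endpoint or $b$ shifts $I$, choosing $a_t=\alpha_{\pm t}$ pushes $\Fix(a_tb)\cap I$ into $V_\varepsilon(P(a,b))$ or empties it for large $t$; in the remaining case $b(\overline{I})\subset I$ (or its mirror), Lemma~\ref{lem:EHN} with $n=1$ produces a time $s$ at which $a_sb$ has a \emph{unique}, necessarily attracting, fixed point in $I$, which is then moved continuously by an explicit bump-function modification of the family on a small parameter window $(s,s+\delta')$. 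The resulting $a_t$ is not a positive one-parameter group --- which is precisely why the statement only asks for a ``one-parameter family'' --- and this per-component construction is what replaces the level-set/Sard argument you were hoping for.
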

In other words, for all $t \in J$, the fixed points of $a_tb$ at distance $\geq\varepsilon$
to $P(a,b)$ are finite in number and move continuously in $t$.   Compare with Lemma~\ref{lemma:PerContPath}.
Note that we do not require $a_t$ to be a \emph{positive} family. 

\begin{proof}
Fix a positive one-parameter family $\alpha_t$ commuting with $a$.  We will modify $\alpha_t$ to obtain the desired family $a_t$.   

  When $\rot(a)=\rot(b)=0$, we have $P(a,b)=\Fix(b)\cap\partial\Fix(a)$,
  and the set $U(a,b)$ has a very simple description: 
  $x\in U(a,b)$ if and only if $x$ and $b(x)$ are in the same
  connected component of $S^1\smallsetminus\partial\Fix(a)$.
  Thus,
$U(a,b) = \bigcup_I (I\cap b^{-1}(I))$,
  where $I$ ranges over the connected components of
  $S^1\smallsetminus\partial\Fix(a)$.  As each connected component $I$ is $a$-invariant, we may define $a_t$ separately on each, affecting only $\Fix(a_tb)\cap I$. 
  
  For every connected component $I$ of $S^1\smallsetminus\partial\Fix(a)$,
  let $U(I)$ denote $I\cap b^{-1}(I)$.  By definition, 
  each endpoint of $U(I)$ lies in $\partial N(a,b)\cup P(a,b)$.
  Thus, by Lemma~\ref{lem:ProprietesPNU}, all but finitely many intervals
  $U(I)$ lie in $V_\varepsilon(P(a,b))$. On all the corresponding connected
  components $I$ of $S^1\smallsetminus\partial\Fix(a)$ we set $a_t = \alpha_t$.  
  
 Now we treat the remaining (finitely many) intervals $I$ of $S^1 \smallsetminus \Fix(a)$ such that $U(I)$ is nonempty, considering the configuration of $I$ and $b^{-1}(I)$.   
 As a first case, suppose that $I$ and $b^{-1}(I)$ share an endpoint, i.e. a point in $P(a,b)$.  If this is the right endpoint, define $a_t = \alpha_t$ on $I$.  If the left endpoint is shared, take instead $a_t= \alpha_{-t}$.   If $I = b(I)$, either choice will work.  In each case, for all $s$ sufficiently large, we have
 \begin{equation}  \label{eq1}
 \Fix(a_sb) \cap I \subset V_\varepsilon(P(a,b)).
 \end{equation} 
  
  As a second case, suppose $b$ shifts $I$.  If the shift is to the right, i.e. $I=(x_1,x_3)$ and
  $b(I)=(x_2,x_4)$ with $x_1,x_2,x_3,x_4$ in cyclic order, define $a_t =\alpha_t$ on $I$, and if 
  the shift is to the left, set $a_t =\alpha_{-t}$.   In either case, for all $s$ sufficiently large, we have 
   \begin{equation}  \label{eq2}
    \Fix(a_sb)\cap I=\emptyset.
   \end{equation}

  We are left with the case where either $b\bar{(I)} \subset I$ or $\bar{I} \subset b(I)$.
  Suppose the first holds, as the second can be dealt with by a symmetric argument.   
  Note that (using $\alpha_t$ and $b$) we are in the case
   $n=1$ of Lemma~\ref{lem:EHN}
  of the preceding section. Thus, there exists $s\in\R$ such that
  $\alpha_s b$ has a unique fixed point in $I$. Moreover, $b\bar{(I)} \subset I$ implies that this unique
  fixed point is an attracting point, i.e. we may take local coordinates so that the map $\alpha_sb$
  agrees with $x\mapsto x/2$ at the origin.   After reparametrization of $\alpha_t$ on $I$, we may assume that this time $s$ is sufficiently large to satisfy \eqref{eq1} and \eqref{eq2} above.  
  Working in coordinates, let $(-\delta, \delta)$ be a neighborhood of $0$ contained in a fundamental domain for $a$.  Let $\tau_t$ be a smooth family of bump functions supported on $(-\delta, \delta)$ and agreeing with $x \mapsto x+t$ on an even smaller (fixed) neighborhood of $0$, for all $t < \delta' < \delta$.  Extend $\tau_t$ $a$-equivariantly to a homeomorphism of $I$ 
   Now define $a_t$ on $I$ to agree with $\alpha_t$ for $t<s$, to agree with $\tau_{t-s}\alpha_s$ for $s\leq t \leq s+\delta'$, and arbitrarily (for example, constant in $t$) for $t \geq s+\delta'$.  
 Varying $t$ in $J:= (s, s+\delta')$, the homeomorphism $a_tb$ has a unique fixed point in $I$ that moves continuously with $t$, as desired.  
  Of course, we can choose parameterizations of $a_t$ on each of these (finitely many) intervals so that $J$ does not
  depend on $I$.  This proves the lemma.   
  \end{proof}

Using this tool, we can propagate finite orbits over chains.

\begin{proposition}  \label{prop:MagicProposition}
Let $a, \gamma_1, \gamma_2, \gamma_3, ... \gamma_k$ be a chain.    Suppose that $\Per(a)$ has empty interior, $\rot(\gamma_i) = 0$ for all $i$,  the subgroup $\langle a, \gamma_1 \rangle$ has a finite orbit and $\langle \gamma_i,  \gamma_{i+1} \rangle $ has a global fixed point.   Then 
$\langle a, \gamma_i, ..., \gamma_{k} \rangle$ has a finite orbit.  
\end{proposition}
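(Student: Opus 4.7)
The plan is to proceed by induction on $j$, showing $\langle a, \gamma_1, \ldots, \gamma_j\rangle$ admits a finite orbit for each $j \le k$. The base case $j=1$ is the hypothesis. For the inductive step, suppose $F$ is a finite orbit of $\langle a, \gamma_1, \ldots, \gamma_{j-1}\rangle$. Each $\gamma_i$ with $i < j$ fixes $F$ pointwise: since $\rot(\gamma_i) = 0$, it has a fixed point in $S^1$, and an orientation-preserving homeomorphism of $S^1$ with a fixed point cannot permute a finite invariant set nontrivially (otherwise iterates of any non-fixed element would accumulate at the fixed point, contradicting finiteness of the orbit). Moreover, $\Per(a)$ having empty interior forces $F \subset \Per(a) = \partial\Per(a)$, whence $F \subset P(a, \gamma_i)$ for each $i < j$ by Lemma~\ref{lem:ProprietesPNU}(1).

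It suffices to show $\gamma_j$ preserves $F$ setwise, since $\rot(\gamma_j) = 0$ then forces $\gamma_j|_F = \mathrm{id}$, and $F$ itself is the desired finite orbit of $\langle a, \gamma_1, \ldots, \gamma_j\rangle$. Suppose for contradiction that some $x_0 \in F$ has $\gamma_j(x_0) \notin F$, and let $p$ be a common global fixed point of $\gamma_{j-1}$ and $\gamma_j$. The key tool will be the bending deformation $\rho_s$ that replaces $\gamma_j$ with $(\gamma_{j-1})_s \gamma_j$, where $(\gamma_{j-1})_s$ is a positive one-parameter family commuting with $\gamma_{j-1}$ (Lemma~\ref{lem:PosDeform}), and leaves $a, \gamma_1, \ldots, \gamma_{j-1}$ unchanged. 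Under this deformation, $F$ remains invariant under the unchanged generators (so $x_0 \in \Per(\rho_s(a))$ for all $s$); and by path-rigidity combined with Lemma~\ref{lem:PerCommuns}, the combinatorial structure of shared periodic points of $\rho_s(a)$ and $\rho_s(\gamma_j)$---in particular the count described in Remark~\ref{rem:PerNonCommun}---is constant in $s$.

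The crux is to engineer $s_0$ with $\rho_{s_0}(\gamma_j)(x_0) = x_0$ by an intermediate-value argument using monotonicity of $(\gamma_{j-1})_s$, producing a shared periodic point of $\rho_{s_0}(a)$ and $\rho_{s_0}(\gamma_j)$ at $x_0$, and then to translate back to the original representation via invariance of the shared-periodic-point count to force $\gamma_j(x_0) = x_0$, the desired contradiction. Proposition~\ref{prop:Promenade}, applied to the pair $(\gamma_{j-1}, \gamma_j)$ of zero-rotation elements, provides control on how $\Fix(\rho_s(\gamma_j))$ varies with $s$ outside a neighborhood of $P(\gamma_{j-1}, \gamma_j)$ and is the principal input for this step. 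The main obstacle is the intermediate-value argument itself: $(\gamma_{j-1})_s$ fixes $\partial\Fix(\gamma_{j-1})$ pointwise, so its orbits decompose along the components of $S^1 \smallsetminus \partial\Fix(\gamma_{j-1})$, and $x_0$ and $\gamma_j(x_0)$ must lie in the same component for the scheme to succeed directly. Handling the opposing configuration will likely require auxiliary deformations along other $\gamma_i$ or along $a$, exploiting both the common fixed point $p$ and the hypothesis that $\mathrm{int}\,\Per(a) = \emptyset$.
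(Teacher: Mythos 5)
Your inductive setup and your identification of Proposition \ref{prop:Promenade} as the key input are both on target, but the core of your argument has two genuine gaps. First, your reduction is too strong: you try to show that $\gamma_j$ preserves the \emph{same} finite orbit $F$ produced by the inductive hypothesis. Nothing in the hypotheses forces this --- $\gamma_j$ is tied to $\gamma_{j-1}$ only through a common fixed point, and there is no reason it should fix points of $F$ lying away from $\Fix(\gamma_{j-1})\cap\Fix(\gamma_j)$. The proposition only asserts that \emph{some} finite orbit exists, and in general it is a different one, so your contradiction scheme is attempting to prove something false. Second, even within your scheme, the step ``translate back to the original representation via invariance of the shared-periodic-point count'' cannot work: Lemma \ref{lem:PerCommuns} and Remark \ref{rem:PerNonCommun} are semi-conjugacy invariants detecting only the \emph{existence} (and combinatorial pattern) of common periodic points, so producing a deformation $\rho_{s_0}$ for which $x_0$ is a common periodic point does not force the specific point $x_0$ to be fixed by $\gamma_j$ in the original representation. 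You also explicitly leave open the case where $x_0$ and $\gamma_j(x_0)$ lie in different components of $S^1\smallsetminus\partial\Fix(\gamma_{j-1})$, which is exactly where your intermediate-value argument breaks down.

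The paper's proof avoids both problems by not insisting on preserving $F$. It deforms the \emph{second-to-last} element: bending along $\gamma_k$ replaces $\gamma_{k-1}$ by $c(n)\gamma_{k-1}$ with $c(n)$ commuting with $\gamma_k$, leaving $a,\gamma_1,\dots,\gamma_{k-2}$ untouched. Proposition \ref{prop:Promenade}, applied to the pair $(\gamma_k,\gamma_{k-1})$, together with the empty-interior hypothesis on $\Per(a)$, lets one choose $c(n)$ so that $\Fix(c(n)\gamma_{k-1})\cap\Per(a)\subset V_{1/n}\bigl(\Fix(\gamma_{k-1})\cap\Fix(\gamma_k)\bigr)$. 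Since having a finite orbit is a semi-conjugacy invariant and each deformation is a path, the inductive hypothesis yields a full $a$-orbit $\mathcal{O}_n\subset\bigcap_{i\le k-2}\Fix(\gamma_i)\cap\Fix(c(n)\gamma_{k-1})$, which is therefore trapped in $V_{1/n}\bigl(\Fix(\gamma_{k-1})\cap\Fix(\gamma_k)\bigr)$. A pointwise limit of (a subsequence of) the $\mathcal{O}_n$ is then a possibly new finite orbit of $a$ inside $\bigcap_{i\le k}\Fix(\gamma_i)$. This limiting argument over a sequence of bending deformations is the idea missing from your write-up.
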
 

\begin{proof}
Inductively, suppose the statement holds for chains of length $k$ and take a chain of length $k+1$ of the form $a, \gamma_1, ..., \gamma_{k}$.    By inductive hypothesis the group generated by the first $k$ elements $\langle a, \gamma_1, ..., \gamma_{k-1} \rangle$ has a finite orbit, i.e. there is a periodic orbit of $a$ contained in $\bigcap_{i=1}^{k-1} \Fix(\gamma_i)$.  

Since $\Per(a)$ has empty interior, for any $n \in \N$,
we can use Proposition~\ref{prop:Promenade} to produce a homeomorphism
$c(n)$ lying in a one-parameter family commuting with $\gamma_k$ such that
$\Fix(c(n)\gamma_{k-1})\cap\Per(a)\subset V_{1/n}(P(\gamma_{k-1},\gamma_k))$.
Indeed, with the notation of that proposition, there exists $t\in J$
such that $\phi_j(t)\not\in\Per(a)$ for all $j$, because
$\bigcap_j\phi_j^{-1}(\Per(a))$ has empty interior in $J$.
Do this for each $n \in \N$; we do not require that the $c(n)$ all belong to a common one-parameter family, all that is important is that they are each obtainable by a bending deformation, hence give a semi-conjugate representation.

The result is a sequence of bending deformations $c(n) \gamma_{k-1}$ of $\gamma_{k-1}$ such
that
$$\Fix(c(n) \gamma_{k-1}) \cap \Per(a) \subset V_{1/n}(\Fix(\gamma_{k-1}) \cap \Fix(\gamma_k)).$$

Since $\langle a, \gamma_1, ..., \gamma_{k-1} \rangle$ has a finite orbit,
and this property is stable under semi-conjugacy, it follows that, for every
$n$, $\bigcap_{i=1}^{k-2}\Fix(\gamma_i) \cap\Fix(c(n) \gamma_{k-1})$ contains
a full orbit of $a$. For each $n$, choose one such full orbit, and denote it by $\mathcal{O}_n$.   After passing to a subsequence, the sets $\mathcal{O}_n$ converge pointwise to a finite subset of $\bigcap_{i=1}^{k-2}\Fix(\gamma_i) \cap \Per(a)$ that is invariant under $a$ (as these are both closed conditions) so the limit is a full orbit.  Moreover, this orbit is contained in every open neighborhood of $\Fix(\gamma_{k-1}) \cap \Fix(\gamma_k)$, so also lies in $\Fix(\gamma_{k-1}) \cap \Fix(\gamma_k)$.  
This gives a periodic orbit of $a$ in $\bigcap_{i=1}^{k}\Fix(\gamma_i)$, as desired.
\end{proof} 

We now prove the main result advertised at the beginning of this section.

\begin{proof}[Proof of proposition \ref{prop:OrbiteFinie}]
  Let $\Sigma_{g, 1}$ be a surface with one boundary component,
  in which all tori are very good.
  Recall that our goal is to show that $\rho$ has a finite orbit.  
  Since all tori are very good, we may use Proposition~\ref{prop:SiBonTresBon}
  to find a standard system of generators 
  $a_1, b_1, \ldots, a_{g-1}, b_{g-1}$ where $\rot(a_i) = \rot(b_i) = 0$
  for all $i = 2, 3, \ldots, g-1$. Since $T(a_1, b_1)$ is good,
  we may also assume that $\rot(b_1) = 0$.
  
  Let $\delta_i = a_{i+1}^{-1}b_{i+1}a_{i+1}b_i^{-1}$ as in
  Section~\ref{ssec:Marquage}, so that 
  $(a_1, \delta_1, a_2, \delta_2, \ldots \delta_{g-2}, a_{g-1}, b_{g-1})$
  forms a chain.
  For each $i$, we can use Lemma~\ref{lemma:int_per_vide} in order to assume
  without loss of generality that $\Per(\delta_i)$ has empty interior,
  and then apply Proposition~\ref{prop:MagicProposition} to the chain
  $(\delta_i,a_i,b_i)$. It follows that $\langle\delta_i,b_i\rangle$
  has a finite orbit, hence
  $$\rot(\delta_{i}) + \rot(b_i) = \rot(a_{i+1}^{-1} b_{i+1} a_{i+1}) =
  \rot(b_{i+1}).$$
  Thus, $\rot(\delta_i) = 0$ for all $i$.  
  
  Lemma \ref{lemma:int_per_vide} implies that, after a deformation, we may assume that $\Per(a_1)$ has empty interior.  Thus, we can 
  apply Proposition~\ref{prop:MagicProposition} to the chain $(a_1, \delta_1, a_2, \delta_2, \ldots \delta_{g-2}, a_{g-1}, b_{g-1})$ 
  to conclude that the subgroup generated by these elements has a finite orbit.  As this subgroup is equal to $\pi_1(\Sigma_{g-1,1})$, this proves the proposition.  
\end{proof}


\subsection{Proof of Theorem \ref{theo:Main}}

Theorem \ref{theo:Main} is now a quick consequence of Proposition \ref{prop:OrbiteFinie} and Corollary \ref{cor:AuMoinsG}.  

\begin{proof}[Proof of Theorem \ref{theo:Main}]
Let $\rho: \pi_1(\Sigma_g) \to \Homeo_+(S^1)$ be a path-rigid representation, and suppose that $\rho$ is not geometric. 
If $\Sigma$ contains a bad torus $T$, then by Proposition \ref{prop:NoTwoJustGoodIntro}, $\Sigma \smallsetminus T$ contains only very good tori.   If $\Sigma$ contains no bad torus, but some torus $T'$ that is not very good, then Proposition \ref{prop:NoTwoJustGoodIntro} implies that $\Sigma \smallsetminus T'$ contains only very good tori.   In either case, there is a genus $g-1$ subsurface $\Sigma_{g-1, 1}$ containing only very good tori, hence by Proposition \ref{prop:OrbiteFinie} the restriction of $\rho$ to $\Sigma_{g-1, 1}$ has a finite orbit.  
In particular, the boundary curve of this subsurface has zero rotation number, and the restriction of $\rho$ to this subsurface has relative Euler number zero.

It follows that the Euler number of the remaining (not very good) torus is either 0 or $\pm1$.   By Corollary \ref{cor:AuMoinsG}, if it is $\pm1$, then $\rho$ is geometric.  Thus, the remaining torus has Euler number 0, and by additivity the Euler number of $\rho$ is zero.
\end{proof}

We conclude by noting that if $\Sigma$ has only very good tori, then the proof of Proposition \ref{prop:OrbiteFinie} actually shows that $\rho$ has a finite orbit (hence automatically Euler number zero).


\section{Proof of Theorem~\ref{theo:RigidGeom} and last comments}\label{sec:Fin}

\subsection{Proof of Theorem~\ref{theo:RigidGeom}}
Here is where we use the stronger hypothesis of rigidity. 
Our proof relies on the following observation, 
inspired by work in
the recent article~\cite{ABR}.  

\begin{lemma}\label{lem:WeakABR}
  Let $\rho$ be a rigid, minimal representation. Let $T=T(a,b)$ be a very
  good torus. Then only finitely many points of $S^1$ have a finite
  orbit under $\langle a,b\rangle$. In particular, if $\rot(a)=0$ then
  $P(a,b)$ is a finite set.
\end{lemma}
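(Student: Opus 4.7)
The plan is to argue by contradiction: if $F$, the set of points with finite $\langle a,b\rangle$-orbit, is infinite, then I will construct a continuous family of bending deformations of $\rho$ along $[a,b]$ that escapes the semi-conjugacy class of $\rho$, contradicting rigidity.

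First I would observe that on any finite $\langle a,b\rangle$-orbit $\mathcal{O}\subset S^1$, the action preserves the cyclic order on $\mathcal{O}$ inherited from $S^1$, so the homomorphism $\langle a,b\rangle\to\mathrm{Sym}(\mathcal{O})$ factors through the cyclic (hence abelian) group of cyclic-order-preserving bijections of $\mathcal{O}$; this forces $\rho([a,b])$ to act as the identity on $\mathcal{O}$. Hence $F\subset\Fix(\rho([a,b]))$, and infinitude of $F$ gives infinitude of $\Fix(\rho([a,b]))$. Since $[a,b]$ represents the boundary of $T=T(a,b)$, I would then use the amalgamated product decomposition $\Gamma_g=\langle a,b\rangle *_{\langle[a,b]\rangle}\pi_1(\Sigma')$, with $\Sigma'$ the complementary genus-$(g{-}1)$ one-holed subsurface.

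Next I would construct the bending. Since $\rot(\rho([a,b]))=0$ and $\Fix(\rho([a,b]))\neq S^1$ (by minimality of $\rho$), Lemma~\ref{lem:PosDeform} provides a nontrivial one-parameter family $c_s\in\HOS$ commuting with $\rho([a,b])$. The bending $\rho_s$ defined by $\rho_s=\rho$ on $\langle a,b\rangle$ and $\rho_s(\delta)=c_s\rho(\delta)c_s^{-1}$ for $\delta\in\pi_1(\Sigma')$ is a continuous path of homomorphisms. Rigidity forces $\rho_s$ to be semi-conjugate to $\rho$ for all small $s$, and minimality (via Observation~\ref{obs:SCMinimal}, after passing to a minimal model in the semi-conjugacy class if necessary) yields $h_s\in\HOS$ with $\rho_s=h_s\rho h_s^{-1}$. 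Comparing on $\langle a,b\rangle$ shows that $h_s$ centralizes $\rho(\langle a,b\rangle)$, and comparing on $\pi_1(\Sigma')$ shows that $c_s^{-1}h_s$ centralizes $\rho(\pi_1(\Sigma'))$; thus $c_s$ lies in the product of these two centralizers in $\HOS$.

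The main obstacle will be ruling out this decomposition for nontrivial small $s$. My plan is to exploit the infinitude of $F$ crucially: any element of the centralizer of $\rho(\langle a,b\rangle)$ must preserve the infinite set of finite $\langle a,b\rangle$-orbits, and continuity together with this discrete invariant should force a one-parameter family of such centralizer elements to be severely restricted; meanwhile the centralizer of $\rho(\pi_1(\Sigma'))$ is constrained by the richness of $\pi_1(\Sigma')$ (genus at least one) and the minimality of the global $\Gamma_g$-action. Choosing $c_s$ supported in a carefully chosen component of $S^1\setminus\Fix(\rho([a,b]))$, disjoint from the data preserved by the first centralizer, should prevent the product decomposition and yield the contradiction. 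The ``in particular'' statement is then immediate: when $\rot(a)=0$, Lemma~\ref{lem:ProprietesPNU}(1) gives $P(a,b)\subset F$, so $F$ finite implies $P(a,b)$ finite.
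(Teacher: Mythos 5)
Your proposal diverges from the paper's proof and, as written, has a genuine gap at the decisive step. The preliminary observations are fine: on a finite orbit the action factors through the cyclic group of cyclic-order-preserving bijections, so $F\subset\Fix(\rho([a,b]))$; and the ``in particular'' clause does follow from Lemma~\ref{lem:ProprietesPNU}(1). But the heart of your argument --- showing that the bending $\rho_s$ along $[a,b]$ escapes the semi-conjugacy class of $\rho$ --- is only announced, not carried out (``should force\dots should prevent\dots''). Concretely, you reduce to excluding $c_s\in C(\rho(\langle a,b\rangle))\cdot C(\rho(\pi_1\Sigma'))$, but neither centralizer is under control here: precisely in the situation where the lemma is applied, the complementary subsurface group may have a finite orbit, so $C(\rho(\pi_1\Sigma'))$ can be enormous, and an infinite set $F$ makes $\rho(\langle a,b\rangle)$ degenerate on a large part of the circle, which tends to \emph{enlarge} its centralizer rather than constrain it. There is also a smaller issue: semi-conjugacy of $\rho_s$ to the minimal $\rho$ gives a continuous monotone surjection intertwining them, but upgrading this to an honest conjugacy $h_s$ requires $\rho_s$ to be minimal, which a bending need not preserve.

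There is a structural reason to be pessimistic about your route. A bending along $[a,b]$ is a \emph{path} of representations, so your argument would only ever invoke path-rigidity; yet the paper emphasizes that this lemma is the unique place where full rigidity is used, and poses the path-rigid version as an open problem (Question~\ref{q:WeakABR}). Completing your plan would therefore amount to resolving that question. The paper's proof instead exploits rigidity in its topological (non-path) form: assuming $F$ is infinite, it covers $S^1$ by finitely many long complementary intervals and finitely many intervals $I_i$ of length $\le\varepsilon$ with endpoints in $F$, then surgers $a,b$ on $\bigcup I_i$ into $a',b'$ with $[a',b']=[a,b]$ but with $\Per(b')$ avoiding $\bigcup I_i$ (solving $b'[a,b]=a'^{-1}ba'$ as in \cite[Lemma~2.7]{EHN}). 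The resulting $\rho'$ is $\varepsilon$-close to $\rho$ --- but not connected to it by any constructed path --- and has only finitely many finite $\langle a',b'\rangle$-orbits; rigidity plus minimality gives a continuous $h$ with $h(F')=F(a,b)$, forcing $F(a,b)$ to be finite. You would need either to fill the centralizer argument (likely hard for the reasons above) or to switch to a perturbation of $(a,b)$ itself that preserves the commutator, as the paper does.
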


This lemma is the \emph{only} place where we use rigidity instead of path-rigidity. 

\begin{proof}
  Let $F(a,b)$ denote the set of points whose orbit under $\langle a,b\rangle$
  is finite.  To simplify the exposition of the proof, fix a metric on $S^1$
  so that $a$ and $b$ act on
  $F(a,b)$ by rigid rotations. 
  Given any $\varepsilon>0$, let $J_1, J_2, ... $ denote the (finitely many)
  connected components of $S^1 \smallsetminus F(a,b)$ consisting of intervals
  of length greater than $\varepsilon$ (by our choice of metric, this is a
  $\langle a, b\rangle$-invariant set).
  If $F(a,b)$ is finite, and $\varepsilon$ small enough, then
  $\bigcup_i\overline{J_i} = S^1$. Otherwise (even in the case where
  $\bigcup_i \overline{J_i} = \emptyset$), we may divide
  $S^1 \smallsetminus \bigcup_i \overline{J_i}$ into finitely many disjoint
  open intervals $I_1, I_2, ...$ each of length at most $\varepsilon$ and
  with endpoints in $F(a,b)$, such that these intervals are permuted by
  $\langle a, b\rangle$, and such that
  $S^1 = (\bigcup_i \overline{J_i}) \cup (\bigcup_i \overline{I_i})$.
  
  Since $T$ is very good, we can suppose without loss of generality that
  $\rot(a)=0$. We claim that there exist $a', b' \in \HOS$, agreeing with
  $a$ and $b$ on $S^1 \smallsetminus \bigcup_i I_i$, such that
  $[a', b'] = [a,b]$ holds globally,  and such that
  $\Per(b') \cap \bigcup I_i = \emptyset$.
  
 Let $c = [a,b]$.  As $\bigcup_i I_i$ is $a,b$-invariant, constructing $a'$
  and $b'$ amounts to solving the equation
  $b'c = a'^{-1} b a'$ on $\bigcup_i J_i$. That this can be solved is shown
  in \cite[Lemma~2.7]{EHN}; as their notation and context is slightly
  different, we explain the strategy. Take coordinates identifying each
  $J_i$ with $\R$. If $b'$ is defined on some $J_i$ (with image in $J_j$)
  to increase sufficiently quickly (as a homomorphism $\R \to \R$), then
  $b' c$ will also be strictly increasing, hence conjugate to $b'$.
  One then defines $a'$ to be this conjugacy.
  
  Let $\rho'$ be the representation obtained from $\rho$ by replacing $(a,b)$
  by $(a',b')$. As $\varepsilon>0$ is arbitrary, this $\rho'$ can be taken
  arbitrarily close to $\rho$ in $\Hom(\Gamma_g,\HOS)$.
  Rigidity implies that, for small enough $\varepsilon$, $\rho'$ is
  semi-conjugate to $\rho$. Minimality implies that 
  there is a \emph{continuous} semi-conjugacy $h: S^1 \to S^1$ such that
  $h \circ \rho' = \rho \circ h$. Let
  \[F':=  \{ x \in S^1 \mid x \text{ has finite orbit under } \langle \rho'(a), \rho'(b) \rangle \}. \]
  By construction of $\rho'$, this set is finite.
  However, $h(F') = F(a,b)$. It follows that $F(a,b)$ was finite as well.
\end{proof}

To apply this to the proof of Theorem~\ref{theo:RigidGeom}, let $\rho$ be a rigid,
minimal representation, and assume for contradiction that $\rho$ is
non-geometric. Then Lemma~\ref{lem:WeakABR},
Proposition~\ref{prop:Jump} and Lemma~\ref{lem:FixBad} imply that $\rho$ has no bad tori. In order to derive a contradiction, we will show that all good tori are actually 
very good. We pursue this with an argument in the spirit of Proposition~\ref{prop:DeuxAlorsQuatre}.  

\begin{lemma}\label{lem:SiPerFini1}
  Suppose $P(a,b)=\emptyset$. Then $\partial N(a,b) \subset \partial\Per(a)\cup b^{-1}(\partial\Per(a))$.
\end{lemma}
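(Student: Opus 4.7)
The plan is to argue by contradiction. Suppose $x_0 \in \partial N(a,b)$ with $x_0 \notin \partial\Per(a) \cup b^{-1}(\partial\Per(a))$; I will produce a neighborhood of $x_0$ contained in $N(a,b)$, contradicting $x_0 \in \partial N$. Because $P(a,b)=\emptyset$, Lemma~\ref{lem:ProprietesPNU} gives $S^1 = N(a,b) \sqcup U(a,b)$ with $U(a,b)$ open, so $\overline{N(a,b)} \cap U(a,b) = \emptyset$ and consequently $x_0 \in N$. Hence $\delta_{a,b}(x_0,t) \neq 0$ for every $t$, and since $\delta_{a,b}(x_0,\cdot)$ is continuous and monotone nondecreasing in $t$, we may assume $\delta_{a,b}(x_0,t) > 0$ for all $t$, the opposite-sign case being handled symmetrically using $L^+$ and right endpoints in place of $L^-$ and left endpoints.

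The key object is the limit
\[ L^-(x) := \lim_{t\to -\infty} (\widehat{a_t}\widehat{b})^{q(b)}(\tilde x) = \inf_t (\widehat{a_t}\widehat{b})^{q(b)}(\tilde x), \]
which can be computed explicitly by a push-left iteration: set $z_0 = \tilde x$, and for each $j \geq 1$, let $z_j$ be the largest element of $\widetilde{\partial\Per(a)}$ not exceeding $\widehat{b}(z_{j-1})$ (this equals $\widehat{b}(z_{j-1})$ itself whenever the latter already lies in $\widetilde{\partial\Per(a)}$). Then $L^-(x) = z_{q(b)}$, and a direct induction shows $z_j \in \widetilde{\partial\Per(a)}$ for every $j \geq 1$, so $L^-(x) \in \widetilde{\partial\Per(a)}$. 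Since $\widetilde{\partial\Per(a)}$ is $\Z$-invariant, the hypothesis $x_0 \notin \partial\Per(a)$ rules out $L^-(x_0) = \tilde x_0 + p(b)$; combined with the positivity of $\delta_{a,b}(x_0,\cdot)$ this yields the strict inequality $L^-(x_0) > \tilde x_0 + p(b)$.

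The hypothesis $x_0 \notin b^{-1}(\partial\Per(a))$ now ensures local constancy of $L^-$ at $x_0$: since $\widehat{b}(\tilde x_0)$ lies in the interior of some component of $\R \smallsetminus \widetilde{\partial\Per(a)}$, for $x$ in a small neighborhood $V$ of $x_0$ the point $\widehat{b}(\tilde x)$ lies in the same component, so $z_1(x) = z_1(x_0)$; as $z_1$ no longer depends on $x$, neither do $z_2, \ldots, z_{q(b)}$, giving $L^-(x) = L^-(x_0)$ on $V$. Shrinking $V$ if necessary, continuity of $\tilde x \mapsto L^-(x_0) - \tilde x - p(b)$ together with the strict inequality from the previous paragraph gives $L^-(x) - \tilde x - p(b) > 0$ for all $x \in V$; since $\delta_{a,b}(x,t) \geq L^-(x) - \tilde x - p(b)$ for all $t$ by monotonicity in $t$, we conclude $V \subseteq N(a,b)$, contradicting $x_0 \in \partial N$. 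The main subtle step is the identification $L^-(x) \in \widetilde{\partial\Per(a)}$: this is what allows the hypothesis $x_0 \notin \partial\Per(a) \cup b^{-1}(\partial\Per(a))$ to suffice, because after one application of $\widehat{b}$ the orbit is already trapped in $\widetilde{\partial\Per(a)}$ and further preimages $b^{-j}(\partial\Per(a))$ need not be excluded.
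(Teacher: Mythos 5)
Your proof is correct, and it takes a genuinely different route from the paper's. The paper exploits the other half of the hypothesis $x_0\in\partial N(a,b)$, namely $x_0\in\overline{U(a,b)}$: it chooses $u\in U(a,b)$ close to $x_0$, lying in the same component of $S^1\smallsetminus\partial\Per(a)$ as $x_0$ and with $b(u)$ in the same component of $S^1\smallsetminus\partial\Per(a)$ as $b(x_0)$, and then perturbs only the first and last entries of the multi-variable quantity $\Delta_{a,b}$ so that the orbit of $x_0$ tracks the orbit of $u$ at time $T(u)$; this forces $\Delta_{a,b}(x_0,\cdot)=0$ for some choice of parameters, hence $\delta_{a,b}(x_0,t)=0$ for a single $t$ by the monotonicity observation in the proof of Lemma~\ref{lem:ProprietesPNU}, so $x_0\in U(a,b)$, contradicting $x_0\in N(a,b)$. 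You instead compute the extremal value $\inf_t\delta_{a,b}(x,t)=L^-(x)-\tilde x-p(b)$ in closed form, note that $L^-(x)$ lands in the lift of $\partial\Per(a)$ already after one application of $\widehat{b}$ (which is exactly why only $\partial\Per(a)$ and $b^{-1}(\partial\Per(a))$ appear in the statement), and show $L^-$ is locally constant wherever $b(x)\notin\partial\Per(a)$; this proves the slightly stronger fact that $N(a,b)\smallsetminus\bigl(\partial\Per(a)\cup b^{-1}(\partial\Per(a))\bigr)$ is open, without ever using the approximating points of $U(a,b)$. Your approach is more explicit and self-contained; the paper's is shorter because $T$ and the multi-variable $\Delta$ are already in place. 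Two small points deserve a line each in a fully written version: the interchange of limits in the induction $\lim_{t\to-\infty}(\widehat{a_t}\widehat{b})^{j}(\tilde x)=z_j$ (the point to which $\widehat{a_t}$ is applied moves with $t$, so one needs the sandwich $\widehat{a_t}(y_\infty)\le\widehat{a_t}(y_t)\le\widehat{a_t}(y_{t_1})$ for $t\le t_1$), and the fact, read off from the construction in Lemma~\ref{lem:PosDeform}, that $\widehat{a_t}(y)$ converges as $t\to-\infty$ to the largest point of the lift of $\partial\Per(a)$ not exceeding $y$.
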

\begin{proof}
  Assume $P(a,b) = \emptyset$ and let $x \in \partial N(a,b)$.  Since 
  $P(a,b)=\emptyset$, the set $N(a,b)$ is closed, hence
  $x\in N(a,b) \cap \overline{U(a,b)}$.  

  Suppose that
  $x\not\in(\partial\Per(a)\cup b^{-1}(\partial\Per(a))$.
  Then, there exists two intervals, $I,J$, neighborhoods of $x$, with
  $I\subset S^1\smallsetminus\partial\Per(a)$ and
  $J\subset S^1\smallsetminus b^{-1}(\partial\Per(a))$. As $x\in\overline{U(a,b)}$,
  there exists $u\in U(a,b)\cap I\cap J$.
  Let $a_t$ be a positive one-parameter family commuting with $a$.
  Since $b(J)$ contains $b(x)$ and $b(u)$ and $b(J)\cap \partial\Per(a) = \emptyset$, there exists $t_0\in\R$ such that $a_{t_0}b(x)=b(u)$.
  Similarly, there exists $t_1\in\R$ such that $a_{t_1}(u)=x$.
  Thus, $\Delta_{a,b}(x,t_1+T(u),T(u),\ldots, T(u), T(u)+t_0)=0$, and it now follows easily that
  $x\in U(a,b)$. This proves the lemma. 
\end{proof}

\begin{lemma}\label{lem:SiPerFini2}
  Suppose $\rot(a)=0$ and suppose $\langle a,b\rangle$ has no finite orbit.  
  Choose a positive one-parameter group $b_t$ commuting with $b$.
  Then for all $x\in S^1$, there exist at most two values of $t$ such that
  $x\in\partial N(b_t a,b)$.
\end{lemma}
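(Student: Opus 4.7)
The plan is to invoke Lemma~\ref{lem:SiPerFini1}---which, under the hypothesis $P(b_t a, b)=\emptyset$, gives the inclusion $\partial N(b_t a, b)\subset\partial\Per(b_t a)\cup b^{-1}(\partial\Per(b_t a))$---and then to bound, by a direct case analysis, how many values of $t$ can place the given $x$ on the right-hand side. The argument thus splits into two parts: first, establishing that $P(b_t a, b)=\emptyset$ for every $t$; second, carrying out the count.

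For the first part, I would consider the bending deformation $\rho_t$ defined by $\rho_t(a)=b_t a$, leaving $\rho$ unchanged on the other generators of $\Gamma_g$. By path-rigidity, $\rho_t$ is $\chi$-equivalent, hence semi-conjugate, to $\rho$; since $\rho$ is minimal, Observation~\ref{obs:SCMinimal} delivers a continuous semi-conjugacy $h_t\colon S^1\to S^1$ satisfying $h_t\circ\rho_t(\gamma)=\rho(\gamma)\circ h_t$ for every $\gamma\in\Gamma_g$. Because semi-conjugacies preserve rotation numbers, $\rot(b_t a)=\rot(a)=0$ for all $t$. Moreover, if $\langle b_t a, b\rangle=\rho_t(\pi_1 T(a,b))$ had a finite orbit at some $y$, then $h_t(y)$ would have a finite $\langle a, b\rangle$-orbit, contradicting the hypothesis. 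Combined with $\rot(b_t a)=0$, Lemma~\ref{lem:ProprietesPNU}(1) then yields $P(b_t a, b)=\emptyset$.

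Now Lemma~\ref{lem:SiPerFini1} applies; since $\rot(b_t a)=0$, we have $\Per(b_t a)=\Fix(b_t a)$, and it suffices to bound the number of $t$ for which either $x\in\Fix(b_t a)$ or $b(x)\in\Fix(b_t a)$. The condition $x\in\Fix(b_t a)$ reads $b_t(a(x))=x$. Setting $y:=a(x)$, and recalling that the positive one-parameter family $b_t$ pointwise fixes $\partial\Per(b)$ and acts freely and transitively on each connected component of $S^1\smallsetminus\partial\Per(b)$, the equation $b_t(y)=x$ admits either no solution, a unique solution, or all of $\R$ as solutions---the last case occurring exactly when $y\in\partial\Per(b)$ and $y=x$. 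But this case would force $x\in\Fix(a)\cap\Per(b)$, endowing $x$ with a finite $\langle a, b\rangle$-orbit, contradicting the hypothesis. So at most one $t$ places $x$ in $\Fix(b_t a)$; the symmetric argument applied to $b(x)$ gives at most one $t$ placing $b(x)$ in $\Fix(b_t a)$. Altogether $|\{t:x\in\partial N(b_t a, b)\}|\leq 2$.

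The main obstacle is in the first part: securing $P(b_t a, b)=\emptyset$ for every $t$ requires transporting the no-finite-orbit hypothesis from $\rho$ to each $\rho_t$, and this transport needs a semi-conjugacy precisely in the direction $h_t\colon\rho_t\to\rho$. This direction is exactly what Observation~\ref{obs:SCMinimal} delivers under the minimality of $\rho$, and this is the only place in the argument where minimality is essential. Once $P(b_t a, b)=\emptyset$ is in hand, the final case analysis reduces to a short observation on the orbit structure of positive one-parameter families in $\HOS$.
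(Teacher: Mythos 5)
Your overall architecture is the same as the paper's: first secure $P(b_ta,b)=\emptyset$ for all $t$ (you do this more carefully than the paper, which just writes ``hence''; your transport of the no-finite-orbit hypothesis through the semi-conjugacy is fine), then apply Lemma~\ref{lem:SiPerFini1} and count the values of $t$ for which $x$ or $b(x)$ lies in $\Fix(b_ta)$. The non-degenerate part of your count is also correct: when $a(x)\notin\partial\Per(b)$, the free transitive action of $(b_t)$ on the component of $S^1\smallsetminus\partial\Per(b)$ containing $a(x)$ gives at most one solution of $b_t(a(x))=x$, and likewise for $b(x)$.

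The gap is in your degenerate case. When $a(x)=x\in\partial\Per(b)$ you assert that $x$ then has a finite $\langle a,b\rangle$-orbit, contradicting the hypothesis. That implication is false unless $\rot(b)=0$: a point of $\Fix(a)\cap\Per(b)$ only has finite orbit under the whole group if its \emph{entire} $b$-orbit lies in $\Fix(a)$ --- this is exactly the content of Lemma~\ref{lem:ProprietesPNU}(1), whose hypothesis is $x\in P(a,b)=\Per(b)\cap\bigcap_{k}b^k(\partial\Per(a))$, not merely $x\in\Fix(a)\cap\Per(b)$. If $q(b)>1$ and $a$ fixes $x$ but moves $b(x)$, the orbit of $x$ under $\langle a,b\rangle$ is typically infinite, so no contradiction is available, and your count then fails for such $x$ (all $t$ satisfy the necessary condition $x\in\Fix(b_ta)$). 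The repair is what the paper does at the outset: dispose of every $x\in\Per(b)$ before counting. Indeed $\delta_{b_ta,b}(x,0)=0$ for $x\in\Per(b)$, so $x\notin N(b_ta,b)$; and since $P(b_ta,b)=\emptyset$ the set $N(b_ta,b)=S^1\smallsetminus U(b_ta,b)$ is closed, hence $\partial N(b_ta,b)\subset N(b_ta,b)$ and such an $x$ lies in $\partial N(b_ta,b)$ for \emph{no} value of $t$. Your degenerate cases (for both $x$ and $b(x)$) force $x\in\Per(b)$, so they are covered by this preliminary reduction rather than by a contradiction.
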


\begin{proof}
  Since $\langle a,b\rangle$ has no finite orbit, $P(a,b)=\emptyset$ and
  hence $P(b_ta, b) = \emptyset$ for all $t$.
  Let $x\in S^1$; we will apply Lemma~\ref{lem:SiPerFini1} to the pairs
  $(b_ta, b)$.
  If $x\in\Per(b)$, then $x \notin N(b_ta,b)$, and in particular
  $x\not\in\partial N(b_ta,b)$ for all $t\in\R$.
  Thus, suppose $x\not\in\Per(b)$.
  
  By Lemma~\ref{lem:SiPerFini1}, if $x\in\partial N(b_t a,b)$, then
  $x\in\partial\Per(b_t a)\cup b^{-1}(\partial\Per(b_t a))$.
  Note that $x$ cannot be in $P(b,a)$, as $x\not\in\Per(b)$. Hence,
  if there exists some $t\in\R$ such that $x\in\Per(b_ta)$,
  then $x\in U(b,a)$, and
  this $t$ is unique. Similarly, if there exists some $t\in\R$ such that
  $b(x)\in\Per(b_ta)$, then $b(x)\in U(b,a)$, and this $t$ is unique.
  This concludes the proof.
\end{proof}

Using these tools, we will now show that $\rho$ (always assumed rigid and minimal) satisfies hypothesis $S_k$.

\begin{lemma}\label{lem:Fin1}
  Let $(a,b,c,d)$ be a $4$-chain, and suppose $\rot(a)=\rot(d)=0$ holds.
  Suppose that $T(a,b)$ is good but not very good. Then we have $S_k(b,c)$.
\end{lemma}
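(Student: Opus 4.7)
We aim to show $\Per(b)\cap\Per(c)=\emptyset$, which by Theorem~\ref{theo:PerDisjointsSk} is equivalent to $S_k(b,c)$ holding for some $k\geq 1$. The strategy is to verify the hypotheses of Lemma~\ref{lem:DeuxAlorsQuatre} for the $4$-chain $(a,b,c,d)$.

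The first hypothesis, $P(a,b)=\emptyset$, is immediate: since $\rot(a)=0$ and $T(a,b)$ is good but not very good, the group $\langle a,b\rangle$ has no finite orbit in $S^1$, so Lemma~\ref{lem:ProprietesPNU}(1) gives $P(a,b)=\emptyset$. Then Lemma~\ref{lem:PasDAcc} together with compactness of $S^1$ imply that $\partial N(a,b)$ is a finite set.

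It remains to arrange the condition $\partial N(a,b)\cap P(d,c)=\emptyset$. I would split into two cases according to whether $T(c,d)$ is itself very good. If $T(c,d)$ is not very good, then the symmetric argument using $\rot(d)=0$ immediately yields $P(d,c)=\emptyset$, so Lemma~\ref{lem:DeuxAlorsQuatre} applies directly and gives the conclusion. If instead $T(c,d)$ \emph{is} very good, then I would invoke rigidity (this is the key place where one needs more than path-rigidity): by Lemma~\ref{lem:WeakABR}, only finitely many points of $S^1$ have a finite orbit under $\langle c,d\rangle$, and in particular $P(d,c)$ is a finite set.

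To arrange the intersection condition in this case, fix a positive one-parameter group $b_t$ commuting with $\rho(b)$, and consider the bending deformation of $\rho$ along $b$, which replaces $\rho(a)$ by $b_t\rho(a)$ while leaving $\rho(d)$ unchanged. Now apply Lemma~\ref{lem:SiPerFini2} to the pair $(a,b)$: for each $y\in S^1$ there are at most two values of $t$ for which $y\in\partial N(b_t\rho(a),\rho(b))$. Since $P(d,c)$ is finite, one may choose $t$ outside the resulting finite set of bad values, so that in the deformed representation both hypotheses of Lemma~\ref{lem:DeuxAlorsQuatre} are satisfied for the chain $(a,b,c,d)$. This yields disjointness of the periodic sets of the deformed $b$ and $c$, which pulls back to $\Per(b)\cap\Per(c)=\emptyset$ in $\rho$ via Lemma~\ref{lem:PerCommuns} together with path-rigidity. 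Finally, Theorem~\ref{theo:PerDisjointsSk} upgrades this disjointness to the property $S_k(b,c)$, as desired.

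The main obstacle is the second case, where $T(c,d)$ is very good. Here one must combine rigidity (via Lemma~\ref{lem:WeakABR}, to make $P(d,c)$ finite) with the fine quantitative control provided by Lemma~\ref{lem:SiPerFini2} on how $\partial N(\cdot,b)$ moves under bending along $b$. The delicate point is tracking both $\partial N$ and $P$ simultaneously under the deformation and ensuring that a generic choice of parameter $t$ indeed decouples these two finite subsets of $S^1$.
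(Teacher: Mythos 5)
Your argument is essentially the paper's own proof: establish $P(a,b)=\emptyset$ from $\rot(a)=0$ and $T(a,b)$ not very good, get finiteness of $P(d,c)$ from Lemma~\ref{lem:WeakABR} (this is indeed the one place rigidity rather than path-rigidity is used), use Lemma~\ref{lem:SiPerFini2} to choose a bending parameter $t$ so that $\partial N(b_ta,b)$ misses $P(d,c)$, and conclude via Lemma~\ref{lem:DeuxAlorsQuatre} and Theorem~\ref{theo:PerDisjointsSk}. The only difference is your explicit case split on whether $T(c,d)$ is very good, which the paper leaves implicit when it invokes Lemma~\ref{lem:WeakABR}; this is a harmless (if anything, clarifying) refinement, not a different route.
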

\begin{proof}
  By Lemma~\ref{lem:WeakABR}, the set $P(d,c)$ is finite, and using Lemma~\ref{lem:SiPerFini2}, we can
  first deform $a$, to some $b_ta$, so that $\partial N(a,b)$ does not intersect
  $P(d,c)$. Then by Lemma~\ref{lem:DeuxAlorsQuatre}, we have $\Per(b)\cap\Per(c)=\emptyset$.
\end{proof}

\begin{lemma}\label{lem:Fin2}
  Let $(a,b,c,d)$ be a $4$-chain, and suppose $S_k(a,b)$ and $\rot(d)=0$ hold.
  Then we have $S_k(b,c)$.
\end{lemma}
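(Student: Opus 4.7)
Plan. The approach is to reduce to Lemma~\ref{lem:DeuxAlorsQuatre} via a bending deformation; the hypothesis $S_k(a,b)$ will play the role that $\rot(a)=0$ played in the proof of Lemma~\ref{lem:Fin1}. First, I would combine the standing assumptions of this section (so $\rho$ is rigid, minimal, non-geometric, hence has no bad tori) with $S_k(a,b)$ to force $T(c,d)$ to be very good. Indeed, $S_k(a,b)$ gives $\Per(a)\cap\Per(b)=\emptyset$ (the two $2k$-point periodic sets alternate), so $\langle a,b\rangle$ has no finite orbit, hence $T(a,b)$ is not very good. Since there are no bad tori, $T(a,b)$ is good but not very good; Corollary~\ref{cor:PasDeuxBons} then prevents $T(c,d)$, disjoint from $T(a,b)$ in the $4$-chain, from being another such torus, so $T(c,d)$ must be very good. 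Combined with $\rot(d)=0$, Lemma~\ref{lem:WeakABR} yields that $P(d,c)$ is finite. Separately, from $\Per(a)\cap\Per(b)=\emptyset$ and Lemma~\ref{lem:ProprietesPNU}(1), $P(a,b)=\emptyset$; Proposition~\ref{prop:dNinP} then forces $\partial N(a,b)$ to be finite, and Lemma~\ref{lem:SiPerFini1} refines this to $\partial N(a,b)\subset\Per(a)\cup\rho(b)^{-1}\Per(a)$.

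The crux is to realize $\partial N(a,b)\cap P(d,c)=\emptyset$ via a deformation. Let $\beta_s$ be a positive $1$-parameter family commuting with $\rho(b)$, and consider the bending $\rho_s$ defined by $\rho_s(a):=\beta_s\rho(a)$ and $\rho_s=\rho$ on the remaining generators. By path-rigidity, each $\rho_s$ is semi-conjugate---hence, by minimality, conjugate---to $\rho$, so $S_k(a,b)$ persists along the family, giving $|\Per(\beta_s\rho(a))|=2k$ and $P(\rho_s(a),\rho_s(b))=\emptyset$. Because $P(b,a)=\emptyset$, the slices $\Per(\beta_s\rho(a))=T_{b,a}^{-1}(s)\cap U(b,a)$ are pairwise disjoint for distinct~$s$. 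Therefore the ``bad'' parameters---those $s$ for which $\Per(\beta_s\rho(a))\cup\rho(b)^{-1}\Per(\beta_s\rho(a))$ meets $P(d,c)$---are contained in the finite set $T_{b,a}\bigl((P(d,c)\cup\rho(b)(P(d,c)))\cap U(b,a)\bigr)$. For any $s$ outside this finite set, Lemma~\ref{lem:SiPerFini1} applied to $\rho_s$ gives $\partial N(\rho_s(a),\rho_s(b))\cap P(d,c)=\emptyset$; since $\rho_s$ agrees with $\rho$ on $c,d$, we also have $P(\rho_s(d),\rho_s(c))=P(d,c)$.

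Lemma~\ref{lem:DeuxAlorsQuatre} applied to $\rho_s$ then yields $\Per(\rho_s(b))\cap\Per(\rho_s(c))=\emptyset$, i.e.\ $\Per(b)\cap\Per(c)=\emptyset$ in $\rho$, and Theorem~\ref{theo:PerDisjointsSk} gives $S_{k'}(b,c)$ for some $k'$; since $|\Per(b)|=2k$, one must have $k'=k$. The main subtlety, and the reason this proof must differ from Lemma~\ref{lem:Fin1}, is that we cannot invoke Lemma~\ref{lem:SiPerFini2} (which requires $\rot(a)=0$); instead, $S_k(a,b)$ is used directly both to control the cardinality of $\Per(\beta_s\rho(a))$ along the deformation and to parametrize the sweep of these periodic sets continuously in~$s$ via~$T_{b,a}$.
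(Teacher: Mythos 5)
Your route is genuinely different from the paper's, and it has a gap at its central step. You define the deformation by $\rho_s(a):=\beta_s\rho(a)$ with $\beta_s$ commuting with $\rho(b)$ and assert that $\rho_s$ agrees with $\rho$ ``on the remaining generators,'' in particular on $c$ and $d$. But this is a bending along $b$, and in the $4$-chain $(a,b,c,d)$ the curve $c$ satisfies $i(b,c)=\pm 1$: it crosses $b$ once, so it does not lie in the vertex group of the HNN decomposition $\Gamma_g=A\ast_{\langle b\rangle}$ and its image \emph{must} change under any bending along $b$ (per the paper's Convention, $c\mapsto c\,b^{s}$). Consequently $P(\rho_s(d),\rho_s(c))\neq P(d,c)$ in general, the target set you are trying to avoid moves with $s$, and the ``finitely many bad parameters'' count via $T_{b,a}$ no longer closes: you would need to control how $P(\rho_s(d),\rho_s(c))$ varies, which your argument does not do. (Your use of $|\Per(\beta_s\rho(a))|=2k$ for all $s$ is also slightly too strong --- intermediate-time deformations are only semi-conjugate, not conjugate, to $\rho$ --- but that is not where the argument breaks.)

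The paper's proof avoids this entirely by never bending along an interior curve of the chain, and by not going through Lemma~\ref{lem:DeuxAlorsQuatre} or Lemma~\ref{lem:SiPerFini1} at all. It runs: $P(d,c)$ is finite (Lemma~\ref{lem:WeakABR}, or trivially empty if $T(c,d)$ is not very good --- your detour through Corollary~\ref{cor:PasDeuxBons} to force $T(c,d)$ very good is correct but unnecessary); a bending along $a$ (which changes only $b$, leaving $c,d$ untouched) together with Lemma~\ref{lem:PerLabiles} puts the finite set $\Per(b)$ --- here is where $S_k(a,b)$ is really used, to know $|\Per(b)|=2k$ --- off of $P(d,c)$, so $\Per(b)\subset U(d,c)\cup N(d,c)$; then each of the finitely many points of $\Per(b)$ lies in $\Per(d_tc)$ for at most one $t$, so some bending along $d$ (which changes only $c$, leaving $a,b$ untouched) makes $\Per(b)\cap\Per(d_tc)=\emptyset$, and Lemma~\ref{lem:PerCommuns} transfers this back to $\rho$. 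Note that any attempt to repair your argument by replacing the bending along $b$ with one that leaves $c$ and $d$ fixed essentially collapses onto this shorter proof.
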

\begin{proof}
  By Lemma~\ref{lem:WeakABR}, the set $P(d,c)$ is finite.
  By Lemma~\ref{lem:PerLabiles} in the torus $T(a,b)$, the set $\Per(b)$
  is disjoint from $P(d,c)$.
  
  Hence, $\Per(b)\subset U(d,c)\cup N(d,c)$, and $\Per(b)$ is finite.
  Thus, for all but finitely many $t$, we have $\Per(b)\cap\Per(d_tc)=\emptyset$.
  Hence $\Per(b)\cap\Per(c)=\emptyset$ by Lemma \ref{lem:PerCommuns}.
\end{proof}

Now we can complete the proof of the Theorem.  

\begin{proof}[Proof of Theorem~\ref{theo:RigidGeom}]
  Let $\rho$ be a rigid, minimal representation. As remarked above,
  $\rho$ has no bad torus.
  If all tori are very good, then as in the proof of Theorem~\ref{theo:Main}, we know
  that $\rho$ admits a finite orbit, a contradiction.
  
  Thus, $\rho$ admits a good torus, $T(a,b)$, which
  is not very good. We may suppose $\rot(a)=0$.
  As all tori are good, we may choose a curve $d$ outside $T(a,b)$ with $\rot(d)=0$,
  and we may form a $4$-chain $(a,b,c,d)$. By Lemma~\ref{lem:Fin1},
  we have $S_k(b,c)$ for some $k$.
  
  Now rename $(b,c)$ into $(a,b)$, and forget about the other curves, remembering only that we have two curves $a, b$ with $S_k(a,b)$.  
  Since all tori are good, we may choose a curve $d$
  outside $T(a,b)$ such that $\rot(d)=0$, and such that
  there exists a standard generating system beginning with $(a,b,d,\gamma)$
  Define $u=\gamma a^{-1}b^{-1}a$ and $v=\gamma a^{-1}$.
  Then $(u,a,b,v)$, $(d,u,a,b)$ and $(a,b,v,d)$
  are $4$-chains (we encourage the reader to refer to Figure~\ref{fig:BaseStandard}
  and draw these curves $u$ and $v$ for him/herself).
  Apply Lemma~\ref{lem:Fin2} to the $4$-chain $(a,b,v,d)$. This
  proves that $S_k(b,v)$ holds. The same lemma applied
  to the $4$ chain $(d,u,a,b)$ implies $S_k(u,a)$.
  Hence, the $4$-chain $(u,a,b,v)$ satisfies $S_k(u,a)$, $S_k(a,b)$ and
  $S_k(b,v)$. We can deform $a$ along $u$, thanks to Lemma~\ref{lem:PerLabiles},
  in such a way that $\Per(a)\cap\Per(v)=\emptyset$, hence we have
  $S_k^+(b,a)$, and we can deform $b$ along $v$, in such a way that
  $\Per(b)\cap\Per(u)=\emptyset$, hence we have $S_k^+(a,b)$. Finally,
  this proves $S_k^{++}(a,b)$, and thus $\rho$ is geometric by
  Corollary~\ref{cor:TwoCurvesSuffice}.
\end{proof}

\subsection{Comments and further questions}
We conclude this paper by discussing some natural questions and directions for further work.


\subsubsection{Path-rigidity}
Given Theorem \ref{theo:Main}, we expect that path-rigiditity should suffice to imply that a representation is geometric.
The most obvious route to this result would be through an improvement of
Lemma~\ref{lem:WeakABR}, as it is the only place where we use the stronger
hypothesis of rigidity.

\begin{question}\label{q:WeakABR} Does Lemma \ref{lem:WeakABR} hold when
  ``rigid'' is replaced by ``path-rigid''? 
\end{question}

This question also arises naturally out of the work of Alonso--Brum--Rivas in \cite{ABR}.
Their main result is the following.  

\begin{theorem}[Alonso--Brum--Rivas \cite{ABR}]
  Let $\rho \in \Hom(\Gamma_g, \HOS)$ or in $\Hom(\Gamma_g, \Homeo^+(\R))$.
  In any neighborhood $U$ of $\rho$, there exists a representation $\rho'$
  without global fixed points.
\end{theorem}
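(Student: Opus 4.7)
The plan is to produce a small perturbation of $\rho$ that eliminates all global fixed points, exploiting the surface relation $[a_g,b_g]\cdots[a_1,b_1]=1$ together with the flexibility of $\Homeo^+(\R)$ and $\HOS$. I will focus on the $\Homeo^+(\R)$ case; the $\HOS$ case proceeds by essentially the same argument, either by working on the complement of a global fixed point (an interval, hence a copy of $\R$) or, when $\rho$ is trivial, through an abelianization homomorphism $\Gamma_g\to\Z^{2g}\to\SO(2)$ whose generators map to small irrational rotations.

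If $\rho$ has no global fixed points there is nothing to prove, so assume the common fixed-point set $F=\bigcap_\gamma\Fix(\rho(\gamma))$ is nonempty, and fix a standard generating set $(a_1,b_1,\ldots,a_g,b_g)$. The crucial observation is that, since $g\geq 2$, the relation contains at least two commutators, so perturbations of $(\rho(a_1),\rho(b_1))$ can be balanced by perturbations of $(\rho(a_2),\rho(b_2))$: if $(f_1',g_1')$ is close to $(\rho(a_1),\rho(b_1))$ with $[f_1',g_1']=h_1'$ differing from $h_1=[\rho(a_1),\rho(b_1)]$, then setting $[f_2',g_2']=h_2\,h_1\,h_1'^{-1}$ (where $h_2=[\rho(a_2),\rho(b_2)]$) and leaving all other generators unchanged preserves the full relation.

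The central technical step is a \emph{commutator perturbation lemma} with support control: given $f,g\in\Homeo^+(\R)$ and any $h'$ sufficiently $C^0$-close to $h=[f,g]$ with $h'h^{-1}$ supported in an open set $W$, there exist $f',g'$ close to $f,g$, agreeing with $f,g$ outside $W$, such that $[f',g']=h'$. Granted this lemma, the construction is as follows. Choose a neighborhood $V$ of $F$ and a small $\phi\in\Homeo^+(\R)$ supported in $V$ with $\phi(p)\neq p$ for every $p\in F$ (such a $\phi$ exists as $F$ is a proper closed subset). Set $f_1'=\phi\circ\rho(a_1)$; for every $p\in F$ we have $f_1'(p)=\phi(p)\neq p$, so no point of $F$ is fixed by $f_1'$. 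The new commutator $h_1'=[f_1',\rho(b_1)]$ is $C^0$-close to $h_1$, with $h_1'h_1^{-1}$ supported in a slightly larger open set $W\supset V$. Applying the perturbation lemma yields $(f_2',g_2')$ close to $(\rho(a_2),\rho(b_2))$, agreeing with them outside $W$, realizing the commutator $h_2h_1h_1'^{-1}$. The representation $\rho'$ obtained by replacing $\rho(a_1),\rho(b_1),\rho(a_2),\rho(b_2)$ with the primed versions is close to $\rho$, satisfies the relation, and has no global fixed point in $F$.

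Two difficulties remain. The first and harder is proving the commutator perturbation lemma with prescribed support: the commutator map $(f,g)\mapsto[f,g]$ is not a submersion in any classical Lie-group sense, and the argument must be built by hand, exploiting that every compactly supported element of $\Homeo^+(\R)$ is itself a commutator together with explicit local constructions. The second is to ensure no new global fixed point of $\rho'$ arises outside $F$: for points outside $W$ all generators are unchanged so the global fixed-point set is unchanged there (and disjoint from $F\subset V\subset W$), while inside $W$ a uniform-continuity argument on the compact set $\overline{W}$ shows that taking the perturbation sufficiently $C^0$-small prevents any point $q\in W\smallsetminus F$ (necessarily moved by some original generator $\rho(\gamma)$) from being fixed by $\rho'(\gamma)$. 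Together these yield $F'=\emptyset$, completing the proof.
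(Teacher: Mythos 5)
This statement is quoted from Alonso--Brum--Rivas \cite{ABR}; the paper you were given contains no proof of it, so your argument has to stand on its own, and it does not. The central gap is the ``commutator perturbation lemma with support control,'' which you state but do not prove and which is essentially the entire content of the theorem. What you need is local surjectivity of the commutator map $(f,g)\mapsto[f,g]$ near a given solution, with prescribed supports and $C^0$-closeness of $(f',g')$ to $(f,g)$ --- that is, a local description of the set $\nu_h=\{(f,g)\mid [f,g]=h\}$ near a point. This is exactly the problem the present paper singles out (Problem~\ref{prob:nu_h}) as one about which ``remarkably little is known''; asserting it can be ``built by hand'' from the fact that compactly supported homeomorphisms are commutators is not a proof, and the known constructions do not give closeness to a \emph{given} pair $(f,g)$. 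The technique actually used in this circle of ideas (see Lemma~\ref{lem:WeakABR} here, which invokes \cite[Lemma~2.7]{EHN}, and \cite{ABR} itself) is different: one perturbs $(a,b)$ to $(a',b')$ while keeping $[a',b']=[a,b]$ \emph{exactly}, by solving the conjugacy equation $b'c=a'^{-1}ba'$ on invariant intervals. That sidesteps entirely the need to realize a prescribed nearby commutator in a second handle, which is the step your scheme cannot currently carry out.

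There are two further problems. First, your argument that no new global fixed point appears in $W\smallsetminus F$ fails: for $q\in W\smallsetminus F$ the displacement $\max_\gamma|\rho(\gamma)(q)-q|$ tends to $0$ as $q\to F$, so uniform continuity on $\overline{W}$ gives no uniform lower bound, and a perturbation that is merely $C^0$-small can create new common fixed points accumulating on $\partial F$ (controlling precisely this is the role of \cite[Lemmas~3.9, 3.10]{ABR}, which arrange isolated attracting/repelling fixed points before globalizing). Second, the circle case does not reduce to the line case by deleting a global fixed point $p$: any perturbation supported in $S^1\smallsetminus\{p\}$ leaves $p$ globally fixed, so the reduction only removes the \emph{other} global fixed points and a separate argument is needed to destroy $p$ itself.
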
 

Since it is unknown whether these representation spaces are locally connected, their result does not imply that there is a {\em path-deformation} of $\rho$ without global fixed points.
Thus, the obvious problem arising out of their work is to upgrade this result to path-deformations.  
A first step in this direction would be to attempt to
reprove \cite[Lemma~3.9,~3.10]{ABR}.
These lemmas show that, in any neighborhood of $\rho$, there exists a
representation $\rho'$ whose fixed points are isolated and either attracting
or repelling points.  Can $\rho'$ be attained by deforming along a path?
If so, can this be generalized to finite orbits, rather than fixed points,
for actions on $S^1$?


\subsubsection{The commutator equation}

More general than Question~\ref{q:WeakABR} above,
the following basic problem appears to be essential in understanding the topology of $\Hom(\Gamma_g, \HOS)$.  

\begin{problem} \label{prob:nu_h}
For fixed $h \in \HOS$, describe the topology of the set 
$$\nu_h := \{f, g \in \HOS \times \HOS \mid [f,g]=h\}.$$
\end{problem}

As it stands, remarkably little is known about this space.  If $\rot(h) \in \Q \smallsetminus \{0\}$, then it is known that $\nu_h$ is not connected; however, we do not know the number of connected components, nor do we know in any circumstances whether $\nu_h$ is locally connected or not.

This problem is strongly related to Question~\ref{q:CC} on classifying connected components of $\Hom(\Gamma_g, \HOS)$ that we raised in the introduction.  For instance, Goldman's classification of connected components of $\Hom(\Gamma_g, \PSL)$ given in~\cite{Goldman88} is built upon a complete understanding of this space for $\nu_h \cap \PSL \times \PSL$.   This is of course a much easier problem, as $\PSL$ is a finite dimensional Lie group, and the commutator map is smooth.  
The result of the first author in~\cite{KatieInvent} (that Euler number does not classify connected components of $\Hom(\Gamma_g, \HOS)$, unlike the $\PSL$ case) may also serve as warning that the topology of $\nu_h$ should be more complicated than its intersection with $\PSL \times \PSL$.

Throughout this paper, we navigated within $\nu_h$ by making bending deformations. 
This raises a few obvious questions, such as the following.

\begin{question} \label{q:bending}
  Let $h\in\HOS$, and let $(f,g)$ and $(f',g')$ be in the same path-component of
  $\nu_h$. Identifying $f, g$ with the image of generators of a one-holed torus, is there a path
  from $(f,g)$ to $(f',g')$ consisting of a sequence of bending
  deformations? 
  More generally, given $\rho$ and $\rho'$ in the same path-component of $\Hom(\Gamma_g, \HOS)$, is there a path from $\rho$ to $\rho'$ using bending deformations in simple closed curves on $\Sigma_g$? 
\end{question}

This question is reminiscent of Thurston's earthquake theorem for Teichm\"uller space.  It also calls to mind work of Goldman--Xia \cite{GoldmanXia}, who use the analogous (positive) result for bending deformations in connected components of classical character varieties in order to studying the action of the mapping class group on these varieties.
As well as justifying our use of bending deformations alone, a positive answer to Question \ref{q:bending} would give another analogy between classical character varieties and $\chi(\Gamma_g, \HOS)$.


\subsubsection{Bad tori}
In Section \ref{sec:Section5}, we  
used a long series of lemmas to prove that a
path-rigid representation cannot contain two disjoint bad tori.
However, we do not know any example of a path-rigid representation with even
a single bad torus.
Besides being an interesting question in itself, the question of existence bad tori could provide 
a means of showing  path-rigid representations are geometric: if one showed that path-rigid representations of $\Gamma_g$ have no bad tori, an enhanced version of Lemma~\ref{lem:DeuxAlorsQuatre} would complete the proof.

However, we were
somewhat surprised to be unable to tackle the following even more basic 
question.

\begin{question}\label{q:BadTorus}
  Let $T(a,b)$ be a one-holed torus.
  Does there exist a representation
  $\rho\colon\pi_1(T) \to \HOS$ such that the rotation number of every
  nonseparating simple closed curve is 
  rational, but nonzero?   
\end{question}
This is obviously related to understanding mapping class group
actions on character varieties, as we are asking for a
nonseparating simple closed curve.

By contrast, relaxing the condition that curves be simple gives a problem already solved by
a classical (though not widely known) result of Antonov.

\begin{theorem}[Antonov \cite{Antonov}]
Let $\rho: \langle a, b \rangle \to \HOS$ be a minimal action.  
Either $\rho$ has abelian image and is conjugate to an action by rotations, or (up to taking a quotient of $S^1$ by a finite order rotation commuting with $\rho$), the probability that the rotation number of the image of a random word of length $N$ in $\{ a, b, a^{-1}, b^{-1}\}$ is zero tends to 1 as $N$ tends to $\infty$.  
\end{theorem}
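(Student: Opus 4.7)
The plan is to follow the standard dichotomy for minimal actions on $S^1$ via the analysis of the diagonal action of $\Gamma = \langle a, b \rangle$ on the Möbius band $\Omega := (S^1 \times S^1) \smallsetminus \Delta$. Either this action admits a $\Gamma$-invariant probability measure, or it does not, and the two cases correspond to the two alternatives of the theorem.

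In the first case, a classical Furstenberg-type argument—relying on the minimality of $\rho$—shows that the action of $\Gamma$ on $S^1$ is equicontinuous after passing to a finite cyclic quotient. More precisely, the centralizer of $\rho(\Gamma)$ in $\HOS$ is a finite cyclic group $C$ of rotations, the quotient $S^1/C$ is again a circle, and the induced action of $\Gamma$ on $S^1/C$ is conjugate to a (necessarily abelian) action by rotations. This yields the first alternative, after collapsing by $C$.

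In the second case, I would invoke the random-walk argument underlying Antonov's theorem. Let $\mu$ be the uniform probability measure on $S := \{a^{\pm 1}, b^{\pm 1}\}$. Absence of a $\Gamma$-invariant probability measure on $\Omega$ forces an almost-sure contraction property for the associated random walk: for $\mu^{\mathbb{N}}$-a.e.\ sequence $(g_n)$ and every $(x,y) \in \Omega$, the pair $(W_n(x), W_n(y))$, with $W_n := g_n \cdots g_1$, accumulates on $\Delta$. The contraction is established by a martingale convergence / harmonic-function argument on $\Omega$, exploiting the symmetry of $\mu$ and the non-existence of an invariant probability measure; quantitatively, one controls it by showing that a suitable Lyapunov functional is a non-constant bounded superharmonic function along the walk.

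The main obstacle is the final step: converting almost-sure contraction of pairs into the statement that $\rot(W_N) = 0$ with probability tending to $1$. The heuristic is that once $W_N$ contracts distances uniformly by a factor much smaller than $1$, the graph of $W_N$ in $S^1 \times S^1$ cannot wind nontrivially around the diagonal, so $W_N$ must have a fixed point—hence $\rot(W_N) = 0$. Making this rigorous requires simultaneously (i) promoting pointwise contraction to uniform contraction outside an exceptional set of small measure, via a covering argument and concentration inequalities for the random walk, and (ii) ruling out the scenario in which the exceptional set supports a nontrivial winding of the graph, using the fact that $W_N$ is an orientation-preserving homeomorphism. It is exactly this translation from dynamics on $\Omega$ to the homotopy class of the graph of $W_N$ that contains all the subtlety, and where I would expect to need the full strength of the uniform contraction rate rather than merely the qualitative accumulation on $\Delta$.
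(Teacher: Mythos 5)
First, a point of comparison: the paper does not prove this statement at all. It is quoted as a classical result of Antonov, cited in the discussion following Question~\ref{q:BadTorus}, so there is no internal proof to measure your argument against. Your outline is the standard modern route to Antonov's theorem (an invariant-measure dichotomy followed by a contraction/synchronization argument for the symmetric random walk), but as written it contains a genuine structural error.

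The finite cyclic quotient is attached to the wrong branch of your dichotomy. If the diagonal action preserves a probability measure on $\Omega$, then pushing it forward by either projection gives a $\Gamma$-invariant probability measure on $S^1$; minimality forces this measure to have full support and no atoms, so it is the image of Lebesgue measure under a homeomorphism and the action is conjugate, with no quotient whatsoever, to a group of rotations. (In particular, your claim that the centralizer of $\rho(\Gamma)$ is a \emph{finite} cyclic group is false in that case: after the conjugation, the centralizer contains all rigid rotations.) The finite-order rotation belongs to the \emph{second} case, and it is precisely the obstruction to the contraction you assert there. If $R\neq\id$ has finite order $d>1$ and commutes with $\rho(\Gamma)$ (which can happen for a minimal action with no invariant measure), then after normalizing $R$ to a rigid rotation its graph is a closed $\Gamma$-invariant subset of $\Omega$ at positive distance from $\Delta$, so the pair $\left(W_n(x),W_n(R(x))\right)$ never accumulates on the diagonal. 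Thus the statement ``for every $(x,y)\in\Omega$ the pair $(W_n(x),W_n(y))$ accumulates on $\Delta$'' is false in general. What is true is that the almost-sure set of attracting points of the reversed walk has constant cardinality $d\geq 1$, that $d$ is the order of the maximal finite cyclic group commuting with $\rho(\Gamma)$, and that contraction to the diagonal holds only for the induced action on $S^1/\langle R\rangle$. You must identify this group and pass to the quotient \emph{before} running the martingale argument, not in the other branch of the dichotomy.

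By contrast, the step you single out as the main obstacle is the easy one. Once one knows that, with probability tending to $1$, $W_N$ (acting on the quotient circle) maps the complement of a short interval $I^-$ into a short interval $I^+$ with $\overline{I^+}\cap\overline{I^-}=\emptyset$ --- and this uniform form is exactly what the synchronization theorem delivers --- then $W_N(\overline{I^+})\subset I^+$, so $W_N$ has a fixed point by the intermediate value theorem and $\rot(W_N)=0$. No analysis of the winding of the graph is required: in the paper's own terminology $W_N$ is savage, and Observation~\ref{obs:Sauvagerie} already records that savage homeomorphisms have a periodic point of period one. The genuine analytic work lies in the contraction theorem itself and in pinning down the integer $d$, not in the passage from contraction to rotation number.
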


In the case where $\rho$ commutes with a finite order rotation, say of order $n$, the rotation numbers of random words equidistribute in $\{0, \frac{1}{n}, \ldots, \frac{n-1}{n} \}$.  Thus, for any action, almost all words have rational rotation number.


\subsubsection{Local versus global rigidity} 

Thus far, we have discussed rigidity and path-rigidity of representations; rigidity being the natural notion to study from our interest in character spaces, and path deformations being easier to work with in practice.   
However, from a dynamical perspective, it is also interesting to study {\em local rigidity} or {\em stability} of actions.   

\begin{definition}[3.1 in \cite{KatieSurvey}, see also \cite{ABR}]
A representation $\rho$ is \emph{locally rigid} if it has a neighborhood
{\em in the representation space} $\Hom(\pi_1\Sigma_g,\HOS)$
containing only representations semi-conjugate to $\rho$.
\end{definition} 

In many circumstances, this condition is much easier to satisfy than rigidity or path-rigidity. 
For example,
a savage element $g\in\HOS$ (as in Definition~\ref{def:Intervalles} above),
thought of as a representation of $\Z$,
is easily seen to be locally rigid, but it is semi-conjugate to 
the identity. 
We do not know if this phenomenon generalizes to representations
of~$\Gamma_g$.

\begin{question}
Is there a representation $\rho \in \Hom(\Gamma_g; \HOS)$ that is locally rigid, but not rigid?
\end{question}

Again, a natural first step to this question could be to study the local topology of the sets $\nu_h$ defined above.

\bibliographystyle{plain}

\bibliography{Biblio}

\end{document}